\newcommand{\sslash}{\mathbin{/\mkern-6mu/}}
\def\CC{\mathbb{C}}
\def\NN{\mathbb{N}} 
\def\PP{\mathbb{P}} 
\def\RR{\mathbb{R}}
\def\ZZ{\mathbb{Z}}
\def\Sph{\mathbb{S}}
\def\RP2{\RR\PP^2}
\def\C{\mathcal{C}}
\DeclareMathOperator{\GL}{GL}
\DeclareMathOperator{\SL}{SL}
\DeclareMathOperator{\SU}{SU}
\DeclareMathOperator{\PSL}{PSL}
\def\X{\mathfrak{X}}
\def\A{\mathcal{A}}
\def\graph{\mathcal{G}}
\def\P{\mathcal{P}}
\def\tri{\mathrm{T}}
\def\F{\mathcal{F}}
\def\Ttp{\mathcal{T}_3^+}
\def\Ttpd{\mathcal{T}_3^-}
\def\Tt{\mathcal{T}_3}
\def\Ttfin{\mathcal{T}_3^{f}}
\def\Ttm{\mathcal{T}_3^{m}}
\def\Ttframe{\mathcal{T}_3^\times}
\def\Ttpm{\mathcal{T}_3^\pm}
\def\Bf{\boldsymbol{f}}
\def\Bg{\boldsymbol{g}}
\def\Bh{\boldsymbol{h}}
\def\cOmega{\overline{\Omega}}
\def\bOmega{\partial\cOmega}
\DeclareMathOperator{\Teich}{Teich}
\DeclareMathOperator{\Hom}{Hom}
\DeclareMathOperator{\tr}{tr}
\DeclareMathOperator{\im}{im}
\DeclareMathOperator{\cross}{CR}
\DeclareMathOperator{\PGL}{PGL}
\DeclareMathOperator{\Tr}{Tr}
\DeclareMathOperator{\dev}{dev}
\DeclareMathOperator{\hol}{hol}
\DeclareMathOperator{\Hol}{Hol}
\DeclareMathOperator{\Conf}{Conf}
\DeclareMathOperator{\Mon}{Mon}
\DeclareMathOperator{\Par}{Par}
\DeclareMathOperator{\Sym}{Sym}
\DeclareMathOperator{\Gol}{Gol}
\def\n@te#1{\textsf{\boldmath \textbf{$\langle\!\langle$#1$\rangle\!\rangle$}}\leavevmode}
\def\note#1{\textcolor{red}{\n@te {#1}}}
\newcommand\restr[2]{{% we make the whole thing an ordinary symbol
		\left.\kern-\nulldelimiterspace % automatically resize the bar with \right
		#1 % the function
		\vphantom{\big|} % pretend it's a little taller at normal size
		\right|_{#2} % this is the delimiter
	}}
\theoremstyle{plain}
\newtheorem{thm}{Theorem}
\newtheorem*{thm*}{Theorem}
\newtheorem{lem}[thm]{Lemma}
\newtheorem*{lem*}{Lemma}
\newtheorem{cor}[thm]{Corollary}
\newtheorem*{cor*}{Corollary}
\newtheorem*{cla*}{Claim}
\newtheorem{pro}[thm]{Proposition}
\newtheorem*{pro*}{Proposition}
\newtheorem{rem}[thm]{Remark}
\newtheorem*{rem*}{Remark}
\newtheorem*{defn*}{Definition}
\newtheorem*{exa*}{Example}
\newtheorem{exa}{Example}
\newtheorem*{exe*}{Exercise}
\newtheorem*{que*}{Question}
\numberwithin{equation}{section}
\begin{document}

\thispagestyle{empty}

\title{Moduli spaces of real projective structures on surfaces\\ \large Notes on a paper by V.V. Fock and A.B. Goncharov}
\author{Alex Casella, Dominic Tate and Stephan Tillmann}

\begin{abstract}
These notes grew out of our learning and applying the methods of Fock and Goncharov concerning moduli spaces of real projective structures on surfaces with ideal triangulations. We give a self-contained treatment of Fock and Goncharov's description of the moduli space of framed marked properly convex projective structures with minimal or maximal ends, and deduce results of Marquis and Goldman as consequences. We also discuss the Poisson structure on moduli space and its relationship to Goldman's Poisson structure on the character variety.
\end{abstract}
\primaryclass{57M50, 51M10, 51A05; 20F65}
\keywords{Convex projective surface, hyperbolic surface, moduli space, Teichm\"uller space, flag, triple ratio}

%%%%%%%%%%%%%%%%%%%%%%%%%%%%
%%%%%%%%%%%%%%%%%%%%%%%%%%%%

\maketitle

%%%%%%%%%%%%%%%%%%%%%%
\vfill
%%%%%%%%%%%%%%%%%%%%%%

\section*{Prologue}

The set of all projective structures on a surface turns out to be too large to yield interesting general results, and one therefore restricts to what are called \emph{properly convex projective structures} or \emph{strictly convex projective structures}. The foundations of this study were laid by the work of Kuiper (1953, \cite{Kuiper-surfaces-1953, Kuiper-convex-1954}), Vinberg and Kac (1967, \cite{Vinberg-quasi-1967}) and Benzecri (1960, \cite{Benzecri-varietes-1960}). Parameterisations, structure and dimension of the moduli spaces were  given by Goldman (1990, \cite{Goldman-convex-1990}), Choi--Goldman (1993, \cite{Choi-convex-1993}) and Marquis (2010, \cite{Marquis-espace-2010}). Finding good parameterisations is the key to applications and further advances of the theory. In these notes, a particular parameterisation due to Fock and Goncharov~(2007, \cite{Fock-moduli-2007}) is used to give a self-contained treatment of the key facts about the moduli spaces of these structures on surfaces. We have elaborated many subtle details that are not made explicit in \cite{Fock-moduli-2007}, and hope that this expanded treatment of their work gives a nice introduction to the study of moduli spaces of real projective structures on surfaces.
For foundational and further material on real projective manifolds we refer to Goldman~\cite{Goldman-convex-1990}, Marquis~\cite{Marquis-espace-2010}, Benoist~\cite{Benoist-survey-2008}, Cooper-Long-Tillmann~\cite{Cooper-convex-2015} and the references therein.

Much of the material presented here is based on a seminar series at the University of Sydney, and we heartily thank the following people who lectured or participated in the talks: Grace Garden, Montek Gill, Robert Haraway, James Parkinson, and Robert Tang. The last author learned most of what he knows about projective geometry from Daryl Cooper, who introduced him to this rich, puzzling and at times amusing field of research. He also thanks Bill Goldman for first bringing the work of Fock and Goncharov to his attention. The authors acknowledge support by the Commonwealth of Australia and the Australian Research Council (DP140100158).

%%%%%%%%%%%%%%%%%%%%%%
\newpage
%%%%%%%%%%%%%%%%%%%%%%

%%%%%%%%%%%%%%%%%%%%%%%%%%%%%%%%%%%%%%%%%%%%%%%%%%%
\begin{spacing}{0.5}
\tableofcontents
\end{spacing}
%%%%%%%%%%%%%%%%%%%%%%%%%%%%%%%%%%%%%%%%%%%%%%%%%%%

%%%%%%%%%%%%%%%%%%%%%%
\newpage
%%%%%%%%%%%%%%%%%%%%%%

\section{Introduction}

%This introduction starts with an example, which also serves in lieu of a detailed discussion of basic definitions and notions. The main results found in this paper are then summarised.
%
%\subsection{An example}
%
%Motivating example: once-punctured torus with totally geodesic boundary. This should in passing define what a projective structure is, a convex projective structure etc.
%
%Show inequivalent domains for the same group action; deformations of the action; minimal and maximal ends and their framings.
%
%Then list further references for basic definitions and facts. Goldman, Marquis, CLT.

 %%%%%%%%%%%%%
%
%\subsection{Moduli spaces of projective structures}
%\label{sec:Moduli spaces of projective structures}

Let $\RP2$ denote the real projective plane and $\PGL(3, \RR)$ denote the group of projective transformations $\RP2\to\RP2$. A \emph{convex projective surface} is a quotient $\Omega / \Gamma,$ where $\Omega \subset \RP2$ is an open convex subset and $\Gamma$ is a discrete (torsion-free) subgroup of $\PGL(3, \RR)$ that leaves $\Omega$ invariant. Examples of convex projective surface include Euclidean tori (via the embedding of the Euclidean plane in $\RP2$ as an affine patch), and hyperbolic surfaces (via the Klein model of the hyperbolic plane). Some convex projective structures on the once-punctured torus are indicated in Figure~\ref{fig:clover_pictures}.

\begin{figure}[h]
	\centering	
		\includegraphics[width=4cm]{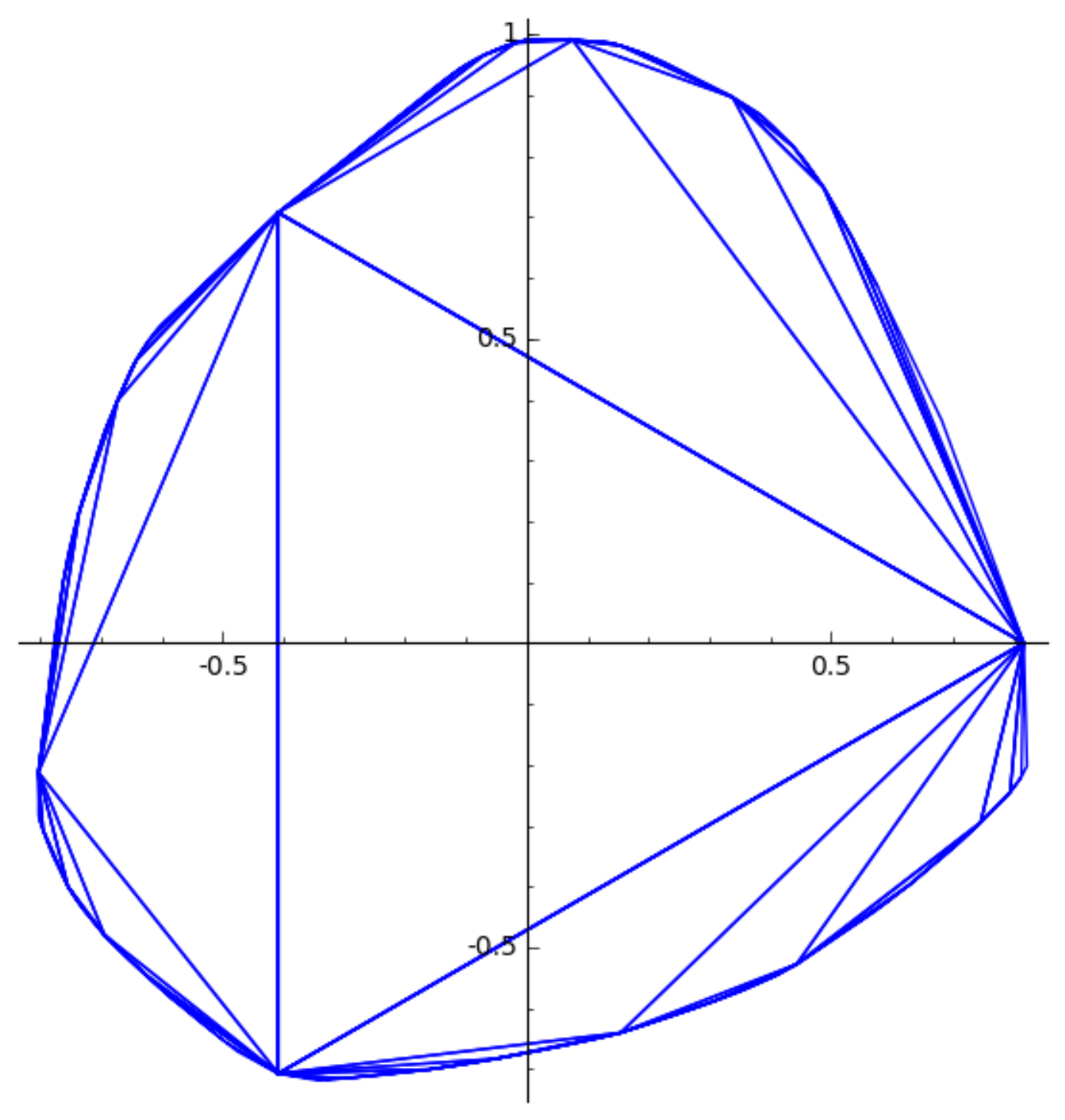} \qquad
		\includegraphics[width=4cm]{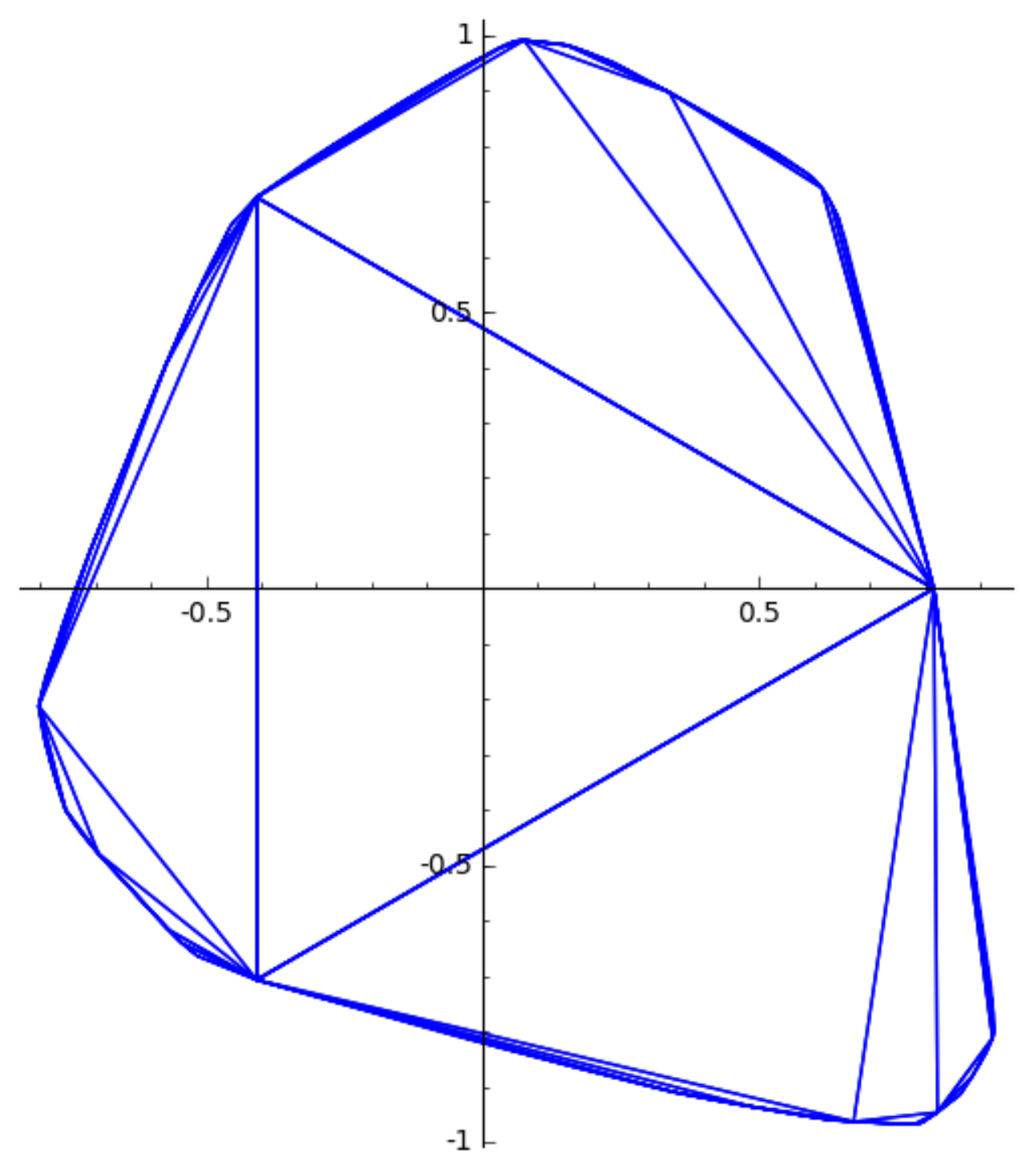} \qquad
		\includegraphics[width=4cm]{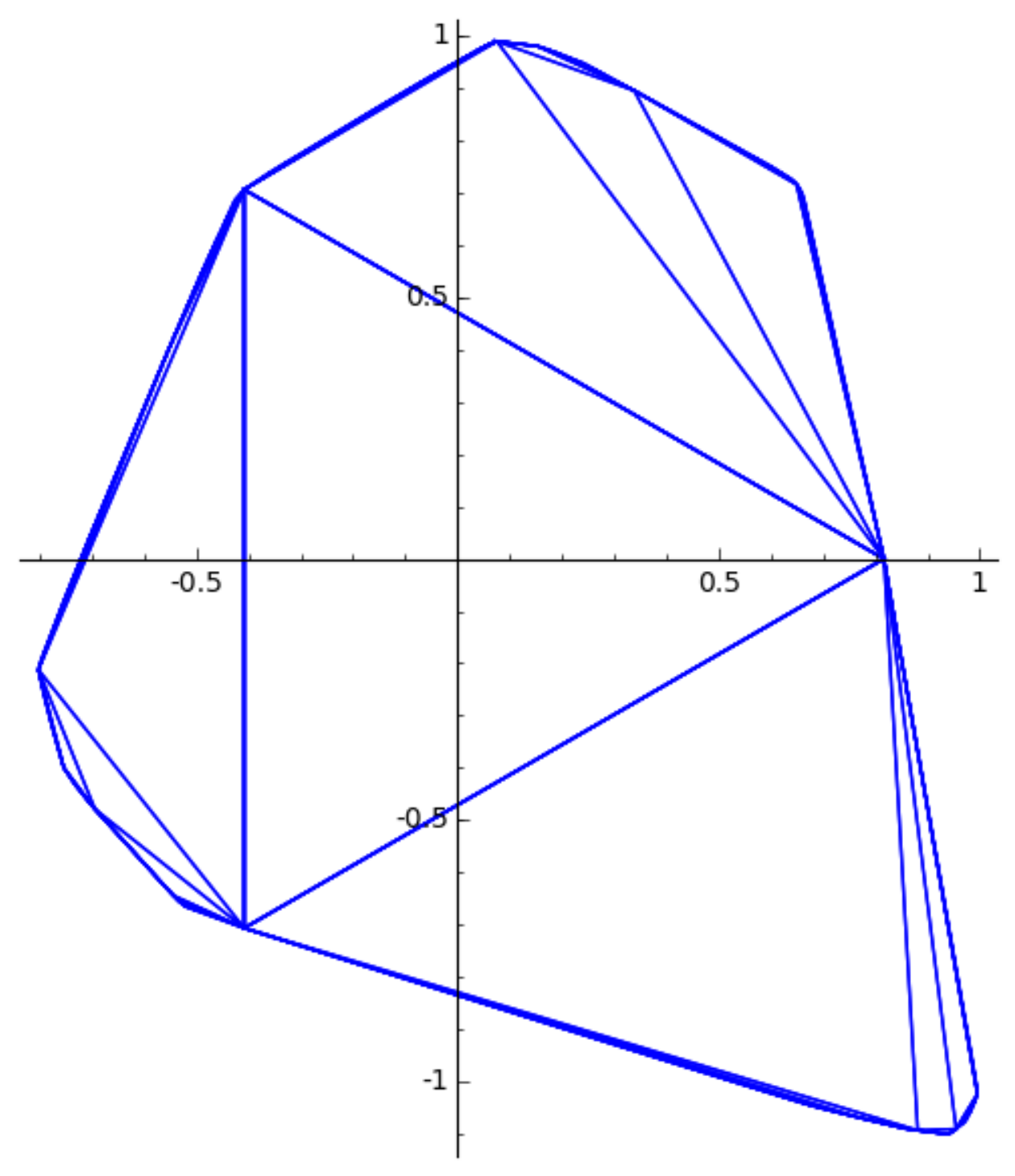}
		\caption{Developing maps for convex projective structures on the once-punctured torus \cite{Haraway-tessellating-2018}}
	\label{fig:clover_pictures}
\end{figure}

This paper is concerned with \emph{moduli spaces} of convex projective structures on surfaces satisfying additional hypotheses, which will now be described.
Let $S$ be a smooth surface. A \emph{marked properly convex projective structure} on $S$ is a triple $( \Omega, \Gamma, f),$ where
\begin{itemize}
\item $\Omega \subset \RP2$ is an open subset of the real projective plane whose closure is contained in an affine patch and which is convex in that patch,
\item $\Gamma$ is a discrete (torsion-free) subgroup of the group $\PGL(3, \RR)$ of projective transformations that leaves $\Omega$ invariant,
\item $f \co S \to \Omega / \Gamma$ is a homeomorphism.
\end{itemize}

Denote $\widetilde{S}$ the universal cover of $S.$ The map $f \co S \to \Omega / \Gamma$ lifts to the \emph{developing map} $\dev \co \widetilde{S} \to \RP2$ with image equal to $\Omega$, and induces the \emph{holonomy representation} $\hol \co \pi_1(S) \to \PGL(3, \RR)$  with image equal to $\Gamma.$

Two marked properly convex projective structures $( \Omega_0, \Gamma_0, f_0) $ and $( \Omega_1, \Gamma_1, f_1)$ are equivalent if there is $A \in \PGL(3, \RR)$ such that $A \cdot \Omega_0 = \Omega_1,$ $A \cdot \Gamma_0 \cdot A^{-1} = \Gamma_1$ and $f_1$ is isotopic to $A^* \cdot f_0$, where $A^*$ is the homeomorphism $\Omega_0 / \Gamma_0 \rightarrow \Omega_1 / \Gamma_1$ induced by $A$. The set of all such equivalence classes, which we will refer to as the \emph{moduli space of marked properly convex projective structures}, is denoted $\Tt(S)$. As such, we view a moduli space as a space of objects, and seek suitable parameterisations of this moduli space.
The subscript alludes to the place this moduli space holds in the more general study of discrete and faithful representations of $\pi_1(S)$ into $\PGL(n, \RR)$; see \cite{Fock-moduli-2006, Labourie-cross-2007, Guichard-convex-2008, Bonahon-parameterizing-2014, Gueritaud-compactification-2017, Kapovich-dynamics-2018}. In the case of properly convex projective structures on a closed surface, no further qualification is necessary:

\begin{thm}[Goldman 1990, \cite{Goldman-convex-1990}]\label{thm:Goldman-main}
If $S$ is a closed, orientable surface of negative Euler characteristic, then $\Tt(S)$ is an open cell of dimension $- 8 \chi(S).$
\end{thm}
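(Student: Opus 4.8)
The plan is to identify $\Tt(S)$ with an open subset of the $\PGL(3,\RR)$-character variety of $\pi_1(S)$ through the holonomy map, to compute its dimension by deformation theory, and then to upgrade this open subset to a single cell. First I would set up the map $\hol \co \Tt(S) \to X$, where $X = \Hom(\pi_1(S), \PGL(3,\RR))/\PGL(3,\RR)$, sending a marked structure to the conjugacy class of its holonomy representation. Injectivity follows from the rigidity of properly convex structures, since the convex domain $\Omega$ is recovered from its holonomy group $\Gamma$; and by the Ehresmann--Thurston deformation principle this map is a local homeomorphism onto its image. By Koszul's openness theorem, proper convexity is preserved under small deformations of the projective structure, so the image is open. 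The structural input I would extract from proper convexity is that the holonomy is \emph{irreducible} (it preserves no point, line, or triangle in $\RP2$), which places the image inside the smooth locus of $X$.

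Next I would compute the dimension. At an irreducible $\rho$ the character variety $X$ is a smooth manifold whose tangent space is the twisted cohomology $\Homo^1(\pi_1(S), \mathfrak{g})$, where $\mathfrak{g} = \mathfrak{sl}(3,\RR)$ carries the $\mathrm{Ad}\circ\rho$-action. Irreducibility forces $\Homo^0 = \mathfrak{g}^{\pi_1(S)} = 0$, and Poincar\'e duality for the closed oriented surface (using the Killing form to pair the coefficients) gives $\Homo^2 \cong (\Homo^0)^\ast = 0$. Since the alternating sum of the twisted Betti numbers equals $\chi(S)\dim\mathfrak{g}$ and the center of $\PGL(3,\RR)$ is trivial, I conclude $\dim \Homo^1 = -\chi(S)\dim\mathfrak{g} = -8\chi(S)$, matching the claimed dimension.

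The main obstacle is to promote ``open manifold of the correct dimension'' to ``open cell'', that is, to prove connectedness and contractibility. One route is Goldman's own: fibre $\Tt(S)$ over Teichm\"uller space $\Teich(S)$, using the Klein-model inclusion of hyperbolic structures as a section, and show each fibre is a cell by a direct analysis of the convex structures sharing an underlying conformal type. A second route, closer to the methods of these notes, is to pass to the once-punctured surface $S' = S \setminus \{\mathrm{pt}\}$, which admits an ideal triangulation and hence Fock--Goncharov coordinates identifying its framed moduli space with a positive orthant $\RR_{>0}^N \cong \RR^N$; the closed-surface structures then appear as the sublocus on which the holonomy about the puncture degenerates so that the structure fills in across $\mathrm{pt}$. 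I expect the genuine difficulty to be twofold: verifying that this filling sublocus is itself contractible (rather than merely nonempty), and bookkeeping the coordinate count against the number of filling constraints so that the net dimension is again $-8\chi(S)$. Controlling the end behaviour at the puncture and proving that the filling locus is a cell is where the real work lies.
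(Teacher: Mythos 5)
Your deformation-theoretic first half is fine as far as it goes (and is closer in spirit to Choi--Goldman than to anything in these notes): holonomy plus Ehresmann--Thurston and Koszul openness exhibits $\Tt(S)$ as an open subset of the smooth locus of the character variety, and the twisted-cohomology count correctly yields $-8\chi(S)$. But, as you concede, this only produces an open manifold of the right dimension; the entire content of the theorem is the cell structure, and neither of your two routes for that step is carried out. So the proposal has a genuine gap exactly where you locate ``the real work''.

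Moreover, the second route you sketch --- puncture $S$ at a point, use Fock--Goncharov coordinates on $S' = S\setminus\{\mathrm{pt}\}$, and realise closed structures as a ``filling sublocus'' --- would fail in this framework. A properly convex structure on the closed surface $S$ does not restrict to an element of $\Ttp(S')$: the quotient $\Omega/\Gamma$ is still the closed surface, so the marking condition that $f\colon S' \to \Omega/\Gamma$ be a homeomorphism fails; equivalently, the holonomy around the puncture is the identity, which is none of the three allowed peripheral types (totally hyperbolic, quasi-hyperbolic, parabolic), fixes no distinguished flag, and so admits no framing and no coordinates. The filling locus is therefore not a subvariety of $\RR_{>0}^{\triangle\cup\underline{E}}$ at all. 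The paper avoids this by cutting $S$ along a non-separating simple closed geodesic rather than removing a point: the cut surface $F$ has the same Euler characteristic and two genuinely hyperbolic (minimal) ends, and closed structures correspond to the locus $\mathcal{V}\subset \Ttp(F)$ on which the two peripheral holonomies are conjugate --- two independent eigenvalue equations, read off from the coordinates via the monodromy formulas of \S\ref{subsubsec:max-min-coordinates} --- so $\mathcal{V}$ is a cell of dimension $-8\chi(S)-2$. Surjectivity of the cutting map onto $\mathcal{V}$ is proved by an explicit convex gluing, and its fibers are $2$--cells (the choices of gluing matrix in the centraliser of the peripheral holonomy), giving $\Tt(S)\cong \mathcal{V}\times\RR^2$, an open cell of dimension $-8\chi(S)$. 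If you want to salvage your approach, replace ``puncture and fill'' by ``cut along a curve and glue''; that is precisely the paper's proof.
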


Suppose that $S_g$ is a closed, orientable surface of finite genus $g.$ Let $D_1, \ldots, D_n$ be open discs on $S_g$ with pairwise disjoint closures. Choose $p_k \in D_k.$ Then $S_{g,n} = S_g \setminus \{p_1, \ldots, p_n\}$ is a \emph{punctured surface} and we call $E_k = \overline{D_k} \setminus \{p_k\}$ an \emph{end} of $S_{g,n}.$ A \emph{compact core} of $S_{g,n}$ is $S_{g,n}^c = S_g \setminus \cup D_k.$ We also write $S_g = S_{g,0}.$

Goldman's theorem states that $\Tt(S_g)$ is an open cell of dimension $16g-16.$ The most immediate generalisation of Goldman's theorem to punctured surfaces makes the additional hypothesis that the structures on $S$ have \emph{finite volume}. The volume arises in this context by taking the Hausdorff measure computed from the \emph{Hilbert metric} on the domain $\Omega,$ though we remark that there are other ways to define volume that can be used in our context. Denote $\Ttfin(S) \subseteq \Tt(S)$ the subset of all finite volume structures.

\begin{thm}[Marquis 2010, \cite{Marquis-espace-2010}]\label{thm:Marquis-main-intro}
If $S = S_{g,n}$ has negative Euler characteristic and at least one puncture, then $\Ttfin(S)$ is an open cell of dimension $16g-16+6n.$
\end{thm}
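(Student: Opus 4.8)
The plan is to use the Fock--Goncharov parameterisation, which is the central tool developed in these notes, to construct an explicit diffeomorphism between $\Ttfin(S_{g,n})$ and a Euclidean space of the stated dimension. The strategy begins by fixing an ideal triangulation $\mathcal{T}$ of the punctured surface $S_{g,n}$, where the vertices of the triangulation are placed at the punctures. Associated to such a triangulation are the coordinate data assigned by the Fock--Goncharov construction: a \emph{triple ratio} parameter for each ideal triangle (recording the projective invariant of the three flags decorating its vertices) and an \emph{edge parameter} for each internal edge (recording how adjacent triangles are glued). The first step is therefore to count these coordinates via an Euler-characteristic computation: for an ideal triangulation of $S_{g,n}$ one has relations among the numbers of triangles, edges, and vertices governed by $\chi(S_{g,n}) = 2 - 2g - n$, and carefully tabulating one triple-ratio coordinate per triangle and one parameter per edge should recover the dimension $16g - 16 + 6n$.

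Next I would impose the finite-volume hypothesis as an explicit constraint on these coordinates. The key point is that finite volume forces each end $E_k$ to have a prescribed geometric type, concretely a \emph{parabolic} or cusped end whose holonomy is unipotent; this translates into a closing-up condition around each puncture. The natural formulation is a \emph{monodromy} or \emph{trace} condition: the product of the edge and triple-ratio parameters encircling each puncture must satisfy a specific equation guaranteeing that the holonomy around that puncture is conjugate into the parabolic subgroup. One then shows that the subvariety of the full Fock--Goncharov coordinate space cut out by these $n$ cusp conditions is itself a smooth cell, and that the remaining free coordinates number exactly $16g - 16 + 6n$. Establishing that the cusp conditions are transverse and jointly define a smooth submanifold of the correct codimension is the technical heart of this step.

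The crucial remaining task is to prove that every choice of coordinates satisfying the cusp conditions actually arises from, and uniquely determines, a marked properly convex projective structure of finite volume --- that is, that the coordinate map is a genuine \emph{bijection} onto $\Ttfin(S_{g,n})$, not merely a parameterisation of some auxiliary representation space. This requires two reconstruction results that should already be available from the development of the Fock--Goncharov method earlier in these notes: first, that from the coordinate data one can build a developing map $\dev \co \widetilde{S} \to \RP2$ whose image $\Omega$ is open, convex, and has closure in an affine patch (proper convexity), together with a discrete faithful holonomy $\hol \co \pi_1(S) \to \PGL(3,\RR)$; and second, that finiteness of volume is equivalent to the imposed cusp conditions, via the Hilbert metric. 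Proper convexity and discreteness are the delicate analytic points, since they are global properties of the developed image that must be deduced from the local gluing data.

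The main obstacle I expect is precisely this globalisation: showing that local positivity of the Fock--Goncharov coordinates guarantees that the infinitely many developed triangles assemble into an embedded, properly convex domain $\Omega$ rather than overlapping or escaping the affine patch. My plan is to handle this by the standard Fock--Goncharov argument that positivity of all coordinates yields a \emph{positive} (in the sense of total positivity) holonomy representation, so that the images of successive ideal triangles nest in a convex, monotone fashion; one then takes the union of the developed triangles and verifies that its closure is convex and avoids a supporting projective line, yielding proper convexity. Combined with Theorem~\ref{thm:Goldman-main} as a consistency check in the closed case and a direct comparison of dimensions, this establishes the homeomorphism with an open cell of dimension $16g-16+6n$, completing the proof.
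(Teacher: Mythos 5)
Your overall strategy --- parameterise by Fock--Goncharov coordinates, characterise finite volume as cusp conditions at the punctures, and compute the codimension of the resulting subvariety of $\Ttp(S)$ --- is the same as the paper's, but your dimension bookkeeping contains two compounding errors that break the proof at its quantitative heart. First, the ambient coordinate space is not $16g-16+6n$-dimensional: the Fock--Goncharov data consist of one triple ratio per triangle and one edge ratio per \emph{oriented} edge (equivalently, two per edge), so the coordinate space $\RR^{\triangle\cup\underline{E}}_{>0}$ has dimension $-8\chi(S)=16g-16+8n$; with your count of one parameter per unoriented edge you would get $-5\chi(S)=10g-10+5n$, which is neither the dimension of $\Ttp(S)$ nor of $\Ttfin(S)$. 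Second, parabolicity of the peripheral holonomy at a puncture is \emph{two} equations, not one: the holonomy of a peripheral curve preserving a properly convex domain has three positive eigenvalues of product $1$, and unipotence forces two independent eigenvalue ratios to equal $1$. In coordinates these are the two monomial conditions $\mathcal{X}_i=\prod_k e_{ki}=1$ and $\mathcal{Y}_i=\prod_k t^i_k\, e_{ik}=1$ around the $i$-th puncture. The cusp locus therefore has codimension $2n$, and the correct arithmetic is $16g-16+8n-2n=16g-16+6n$. Your version ($n$ conditions imposed on a space you claim already has dimension $16g-16+6n$) is internally inconsistent: it would leave $16g-16+5n$ free parameters, not the stated dimension.

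Beyond fixing the count, the transversality you flag as "the technical heart" still needs an actual argument, and with the wrong number of equations it cannot be carried out. In the paper this is a combinatorial independence argument: one shows no nontrivial monomial relation $\mathcal{X}_1^{c_1}\mathcal{Y}_1^{d_1}\cdots\mathcal{X}_n^{c_n}\mathcal{Y}_n^{d_n}=1$ holds identically, using that each oriented edge $e_{ij}$ appears in exactly one $\mathcal{X}_i$ and exactly one $\mathcal{Y}_j$ (forcing $c_i=-d_j$ across each edge), followed by a degree count on the triangle parameters, which forces all exponents to vanish. Also, your final "globalisation" worry (assembling a properly convex $\Omega$ from positive coordinates) need not be redone here: that is precisely the content of the already-established parameterisation of $\Ttp(S)$ in Theorem~\ref{thm:global-coord}. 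What must be supplied instead are the two facts the paper combines with it: that finite area is equivalent to every end being a cusp (Lemma~\ref{lem:para-fin}, proved via the Hilbert metric), and that cusped structures carry a unique framing, so that $\Ttfin(S)$ genuinely embeds in $\Ttp(S)$ and the cusp equations cut it out there.
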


Note that $S = S_{g,n}$ has negative Euler characteristic if and only if $2g+n>2.$ All ends in a structure of finite volume are cusps in the same manner that non-compact complete hyperbolic surfaces of finite volume have cusps at their ends.

When considering structures of infinite area in $\Tt(S)$, the first consideration is to enumerate the various possibilities for the geometry at each end of $S$. Consider the interior of a hyperbolic surface $F$ with totally geodesic boundary. One can, in hyperbolic space, equivariantly add a convex collar to the boundary. This collar could be smooth or piecewise linear or a combination thereof. Hence there are infinitely many inequivalent points in $\Tt(F)$ whose structures agree on a compact core of $F.$ In particular, all of these points have the same holonomy. Returning to the general case of a surface $S = S_{g,n}$, denote by $\Hol(S)$ the set of all holonomies of strictly convex projective structures on $S.$ The equivalence relation between structures that defines $\Tt(F)$
descends to the action of $\PGL(3,\RR)$ by conjugation on $\Hol(S),$ and we denote the quotient by
\[ \X(S) = \Hol(S)/\PGL(3,\RR).\]
Following Fock and Goncharov~\cite{Fock-moduli-2007}, we will now \emph{frame} the holonomies. For each end $E_k$ of $S,$ there is a \emph{peripheral subgroup} $\Gamma_k\le \Gamma$ corresponding to $\im (\pi_1(E_k) \to \pi_1(S)).$ Since $\Gamma_k \cong \ZZ,$ it fixes at least one point in $\RP2$ and preserves at least one line in $\RP2$ through that point. 
A \emph{framing} of the holonomy $\hol \co \pi_1(S) \to \PGL(3, \RR)$ consists of the choice, for each end $E_k$, of such an invariant flag consisting of a point and a line. The set of framed holonomies is denoted
\[\Hol^\times(S_{g,n}) \subset \Hom\big(\pi_1(S_{g,n}), \PGL(3, \RR)\big) \times \big( \RP2 \times (\RP2)^*\big)^n\]
There is a natural action of $\PGL(3, \RR)$ on $\Hol^\times(S),$ where the action of an element $A$ of $\PGL(3, \RR)$ on the dual plane $(\RP2)^*$ is given by taking the inverse of the transpose of $A.$
We denote the quotient by
\[ \X^\times(S) = \Hol^\times(S)/ \PGL(3, \RR). \]
\begin{thm}[Fock-Goncharov 2007, \cite{Fock-moduli-2007}]\label{thm:FG-main-X}
If $S = S_{g,n}$ has negative Euler characteristic and at least one puncture, then $\X^\times(S)$ is an open cell of dimension $- 8 \chi(S) = 16g-16+8n.$
\end{thm}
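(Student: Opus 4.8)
The plan is to fix an ideal triangulation $\tau$ of $S = S_{g,n}$ and to read off from it a system of \emph{Fock--Goncharov coordinates} on $\X^\times(S)$. An Euler characteristic count shows that $\tau$ has exactly $-2\chi(S)$ ideal triangles and $-3\chi(S)$ edges. Given a framed holonomy in $\Hol^\times(S)$, I would lift $\tau$ to an ideal triangulation of the universal cover $\widetilde S$ and transport it by the developing map $\dev \co \widetilde S \to \RP2$. Since each ideal vertex is fixed by a peripheral subgroup, the framing assigns to it an invariant flag (a point together with a line through it), and developing the triangulation equivariantly yields a $\hol$-equivariant map from the ideal vertices of $\widetilde S$ to the space of flags in $\RP2$.

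To each simplex of $\tau$ I would then attach real numbers: to every triangle the \emph{triple ratio} of the three flags at its vertices, and to every edge two \emph{edge invariants} built from the flags of the two triangles meeting along it. Each such quantity is a projective invariant of a flag configuration, hence is fixed by the diagonal $\PGL(3,\RR)$-action and descends to a function on $\X^\times(S)$. Strict convexity of the domain $\Omega$ puts every local flag configuration into positive general position, and a direct computation then shows that all of these invariants are strictly positive. Collecting them gives a map
\[
\Phi \co \X^\times(S) \longrightarrow (\RR_{>0})^{-2\chi(S)} \times (\RR_{>0})^{-6\chi(S)},
\]
and the whole theorem reduces to proving that $\Phi$ is a homeomorphism.

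For injectivity I would reconstruct the equivariant flag map triangle by triangle: starting from one developed triangle, the edge invariants and triple ratios determine the flag at each newly adjacent vertex from the flags already placed, so the entire flag map --- and therefore $\dev$ and $\hol$ --- is pinned down by the coordinates up to a single overall element of $\PGL(3,\RR)$. Two framed holonomies with equal coordinates are thus equivalent. For surjectivity I would run the same recipe forward from an \emph{arbitrary} positive coordinate tuple, developing the triangles of $\widetilde S$ one at a time and using the invariants to place each new flag; the combinatorics guarantee that the construction is $\pi_1(S)$-equivariant, so it defines a candidate holonomy with the prescribed coordinates.

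I expect the surjectivity step to be the main obstacle, and specifically the part where one must verify that the map $\dev$ assembled from positive data is an embedding onto a properly convex domain with discrete and faithful holonomy. The algebraic gluing is mechanical, but showing that positivity forces the successively developed triangles to be disjoint and to nest toward a convex boundary curve --- so that their union $\Omega$ is genuinely convex and $\Gamma$ acts properly discontinuously --- is the geometric core of the argument. Once this is in place, $\Phi$ is a homeomorphism onto the open orthant $(\RR_{>0})^{-8\chi(S)}$, which is homeomorphic to $\RR^{-8\chi(S)}$ by taking logarithms coordinatewise; hence $\X^\times(S)$ is an open cell of dimension $-8\chi(S) = 16g - 16 + 8n$.
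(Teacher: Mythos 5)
Your outline follows the same basic route as the paper --- coordinates from triple and edge (quadruple) ratios on an ideal triangulation, injectivity by reconstructing the flag map triangle by triangle, surjectivity by developing triangles from positive data and proving convexity --- and your dimension count is correct. But there is a genuine gap at the step you dispose of in one sentence, namely that ``strict convexity of the domain $\Omega$ puts every local flag configuration into positive general position''. An element of $\X^\times(S)$ carries an \emph{arbitrary} invariant flag at each end, and this flag need not be adapted to the domain $\Omega$ of whichever structure realises the holonomy. At a hyperbolic end there are six admissible flags. If $\Omega$ is minimal at that end --- for instance a Fuchsian structure with a funnel end --- the framing is allowed to choose the saddle fixed point, which lies \emph{outside} $\overline{\Omega}$; dually, at a maximal end the framing may pair a non-saddle fixed point with the line through the two non-saddle fixed points, which can meet the interior of $\Omega$. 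In such cases the developed ideal vertices are not points of $\partial\overline{\Omega}$ equipped with supporting lines, so convexity of $\Omega$ gives neither general position nor positivity of the ratios, and your map $\Phi$ is not yet known to take values in the positive orthant.

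This is precisely the subtlety around which the paper is organised. It first introduces $\Ttp(S)$, the space of structures with a \emph{positive framing}, in which the flag at a hyperbolic end is by definition tied to the geometry of the domain (the saddle point only at a maximal end, a non-saddle point only at a minimal end); for these framings the flags really are inscribed in the domain, positivity follows from the parameterisations of inscribed triangles and quadrilaterals, and one obtains the bijection $\Ttp(S)\to\RR^{\triangle\cup\underline{E}}_{>0}$ of Theorem~\ref{thm:global-coord}. Your statement about $\X^\times(S)$ then needs the further fact that $\mu^+\colon\Ttp(S)\to\X^\times(S)$ is a bijection, which is Theorem~\ref{sym_group_action}: its proof takes a holonomy with an arbitrary framing and \emph{retracts or expands the domain at each hyperbolic end} so that the chosen flags become a positive framing of a (generally different) invariant domain with the same holonomy. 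Either this domain-adjustment argument, or a direct proof that all six flag choices at a hyperbolic end yield well-defined positive ratios, has to be supplied before $\Phi$ is even well defined into $(\RR_{>0})^{-8\chi(S)}$; once that is in place, the rest of your plan goes through essentially as in the paper.
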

The above result is not stated in this form in \cite{Fock-moduli-2007}. Indeed, what is hidden in the statement is the ingenious observation in \cite{Fock-moduli-2007} that there is a section of $\Hol^\times(S)\to \X^\times(S)$ which consists of the so-called \emph{positive representations}. To describe them, we will introduce two constructions of \emph{framed} marked properly convex projective structures on $S$ using developing maps of ideal triangulations. The distinction between these constructions is not made explicit in \cite{Fock-moduli-2007}, so we will briefly explain it.

In the above example of the hyperbolic surface $F$, we concluded that there are infinitely many inequivalent points in $\Tt(F)$ whose structures agree on a compact core of $F.$ This situation was remedied by focussing on the associated holonomies, which were subsequently decorated with flags. Similarly, one can frame a properly convex projective structure as follows. Given $( \Omega, \Gamma, f),$ a framing is the $\Gamma$--invariant choice for each peripheral subgroup of a fixed point $p$ in the frontier of $\Omega$ and an invariant supporting line to $\Omega$ at $p.$ Denote the resulting space $\Ttframe(S).$ Then there is a natural map $\Ttframe(S)\to \X^\times(S).$

The flags can then be used to construct two different sections for this map.
The first section chooses, for each element in $\X^\times(S)$ the interior of the convex hull of the $\Gamma$--orbits of the fixed points in all flags, framed with the corresponding lines. This convex hull will be represented as an infinite \emph{union} of ideal triangles, and the set of these framed structures is denoted $\Ttp(S)\subset \Ttframe(S).$ The second section chooses, for each element in $\X^\times(S)$ the intersection of the $\Gamma$--orbits of half-spaces associated to the lines in all flags. This is represented as an infinite \emph{intersection} of ideal triangles and the set of these with the appropriate framing is denoted $\Ttpd(S)\subset \Ttframe(S).$ 
In particular, Theorem~\ref{thm:FG-main-X} follows from the following result.

\begin{thm}[Fock-Goncharov 2007, \cite{Fock-moduli-2007}]\label{thm:FG-main}
If $S = S_{g,n}$ has negative Euler characteristic and at least one puncture, then $\Ttp(S)$ and $\Ttpd(S)$ are open cells of dimension $- 8 \chi(S) = 16g-16+8n.$
\end{thm}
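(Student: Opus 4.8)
The plan is to fix an ideal triangulation $\tau$ of $S$ and attach numerical coordinates to it, following Fock and Goncharov. First I would lift $\tau$ to a $\Gamma$--invariant ideal triangulation $\widetilde\tau$ of the universal cover, its ideal vertices permuted by $\Gamma$ and each fixed by a conjugate of a peripheral subgroup. A framed structure in $\Ttp(S)$ decorates each vertex of $\widetilde\tau$ with the invariant flag (a point of the frontier of $\Omega$ together with a supporting line) prescribed by the framing; thus each ideal triangle of $\widetilde\tau$ inherits a triple of flags in general position, and each edge inherits the four flags coming from its two adjacent triangles. I would then define a coordinate map
\[
\Phi \co \Ttp(S) \longrightarrow \RR_{>0}^{N}, \qquad N = t + 2e = -8\chi(S),
\]
where $t = -2\chi(S)$ and $e = -3\chi(S)$ are the numbers of triangles and edges of $\tau$, by recording for each triangle its \emph{triple ratio} and for each edge its two \emph{edge invariants}. (For the once-punctured torus, $t=2$ and $e=3$, giving $N=8$, consistent with the claimed dimension.) The same recipe defines $\Phi$ on $\Ttpd(S)$, now reading the invariants off the supporting lines rather than the points.

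The first thing to check is that $\Phi$ is well defined. The triple ratio is a complete $\PGL(3,\RR)$--invariant of three flags in general position: the space of such triples modulo $\PGL(3,\RR)$ is one-dimensional, and for flags in convex position the invariant is positive. The two edge invariants are likewise $\PGL(3,\RR)$--invariants of the four flags across an edge. Because the framing is $\Gamma$--equivariant, $\Gamma$--translates of a triangle or edge receive identical invariants, so the data descends to the quotient triangulation $\tau$, and one verifies it depends only on the equivalence class of the framed structure. This produces the continuous map $\Phi$, and the target $\RR_{>0}^N$ is manifestly an open cell of dimension $N=-8\chi(S)$.

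The heart of the theorem is that $\Phi$ is a homeomorphism onto all of $\RR_{>0}^N$, and I would prove this by constructing an explicit inverse. Given a tuple of positive numbers, I would develop a map over $\widetilde\tau$ triangle by triangle: normalise one triangle by placing a standard triple of flags realising the prescribed triple ratio, then cross each edge by the unique projective transformation determined by that edge's two invariants, defining the holonomy of a loop as the composite of the gluings met along it. Positivity of all coordinates is exactly the condition that successive flags lie in convex position, so that the developed triangles assemble into the interior of a properly convex domain $\Omega$ --- realised as the union of the triangles for $\Ttp(S)$, and dually as the intersection of the half-planes bounded by the supporting lines for $\Ttpd(S)$. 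I expect the principal obstacle to be this convexity-and-discreteness step: one must show that the developed union (respectively intersection) is open, properly convex, and has closure inside an affine patch, that the reconstructed holonomy is discrete and torsion-free, and that the nesting of developed triangles forces the developing map to be an embedding. This is an inductive convex-position argument propagated across $\widetilde\tau$ and controlled throughout by positivity; the two constructions are exchanged by projective duality, so it suffices to carry out one of them carefully.

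Finally I would confirm that the reconstruction is a genuine two-sided inverse. Applying $\Phi$ to a reconstructed structure returns the chosen coordinates by construction, and conversely a framed structure is recovered up to equivalence from its equivariant family of flags, which is pinned down by the triple ratios, the edge invariants, and the normalisation of one initial triangle; hence $\Phi$ is injective. Since both $\Phi$ and its inverse are given by rational, and therefore smooth, formulas in the flag data, $\Phi$ is a homeomorphism. Therefore $\Ttp(S)$ and $\Ttpd(S)$ are homeomorphic to the open cell $\RR_{>0}^N$ of dimension $-8\chi(S)=16g-16+8n$, which is the assertion; composing with the map $\Ttframe(S)\to\X^\times(S)$ then exhibits the sections underlying Theorem~\ref{thm:FG-main-X}.
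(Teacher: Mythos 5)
Your proposal follows essentially the same route as the paper: it defines the coordinate map by triple ratios on triangles and quadruple (edge) ratios on oriented edges, proves injectivity by normalising one triangle and propagating projective uniqueness across adjacent triangles, proves surjectivity by an inductive convex-position development yielding $\Omega^+$ as a union of inscribed triangles (respectively $\Omega^-$ as an intersection of circumscribed ones), and invokes projective duality to transfer the argument from $\Ttp(S)$ to $\Ttpd(S)$. This matches the paper's proofs of Theorems~\ref{thm:global-coord} and~\ref{thm:global-coord-dual}, including the key parametrisation lemmas for triples and quadruples of flags.
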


The key lies in a parametrisation of these spaces using an ideal triangulation of $S.$ Fock and Goncharov associate to each triangle in the triangulation a \emph{triple ratio of flags} and to each oriented edge a \emph{quadruple ratio of flags}, and show how to turn this into bijections $\psi^+ \co \RR_{>0}^{- 8 \chi(S)}\to \Ttp(S)$ and $\psi^- \co \RR_{>0}^{- 8 \chi(S)}\to \Ttpd(S)$ with the property that the following diagram commutes:

\begin{center}
\begin{tikzpicture}
  \matrix (m) [matrix of math nodes,row sep=3em,column sep=4em,minimum width=2em] {
  & \Ttp(S) &    \\
    \RR_{>0}^{- 8 \chi(S)}  & & \X^\times(S)  \\
    & \Ttpd(S) &   \\};
  \path[-stealth]
    (m-2-1) edge node [above] {$\psi^+$} (m-1-2)
    (m-2-1) edge node [below] {$\psi^-$} (m-3-2)
    (m-1-2) edge node [above]{$\mu^+$} (m-2-3)
    (m-3-2) edge node [below]{$\mu^-$} (m-2-3);
\end{tikzpicture}
\end{center}

The map $\mu^+\circ\psi^+ = \mu^-\circ\psi^-$ has a lift $\RR_{>0}^{- 8 \chi(S)}\to \Hol^\times(S)$ that is explicitly described in terms of rational functions where each numerator and denominator is a polynomial with only positive coefficients. These are the so-called \emph{positive representations}. For each $\star \in \{ +, -\}$, the corresponding \emph{monodromy map} $\mu^\star\co \Tt^\star(S)\to \X^\times(S)\to \X(S)$ is surjective and generically $6^n$--to--$1$, and corresponds to an action of (the direct product of $n$ copies of) the symmetric group in three letters.

We remark that the set $\Ttpm(S) = \Ttp(S) \cap \Ttpd(S)$ where the two constructions agree has the property that the composition $\Ttpm(S)\to \X^\times(S)\to \X(S)$ is surjective and generically $2^n$--to--$1.$ Many people we talked to in the initial phase of this project had the impression that Fock and Goncharov's coordinates parameterise the set $\Ttpm(S).$ This may arise from the comment after Definition 2.1 in \cite{Fock-moduli-2007} that the set $\Ttp(S)$ is a $2^n$--to--$1$ cover of the space of non-framed convex real projective structures on $S$ \emph{with geodesic boundary}.
In this interpretation, one compactifies those ends of $S= \Omega/\Gamma$ by adding a circle, where a corresponding supporting line in the framing meets the closure of $\Omega$ in more than one point. 

The duality principle of projective geometry gives rise to natural \emph{duality maps} $\sigma^+ \co \Ttp(S) \to \Ttpd(S)$ and $\sigma^- \co \Ttpd(S) \to \Ttp(S)$ defined by taking a framed structure to a dually framed dual structure. These are inverse to each other. There also is a natural dual map $\sigma \co \X^\times(S) \to \X^\times(S)$ taking the character of the representation $\rho$ to the character of the inverse transpose of the representation, and each flag to the dual flag. We again obtain a commutative diagram:

\begin{center}
\begin{tikzpicture}
  \matrix (m) [matrix of math nodes,row sep=3em,column sep=4em,minimum width=2em] {
  \Ttp(S) &   \X^\times(S) \\
   \Ttpd(S) &   \X^\times(S)\\};
  \path[-stealth]
    (m-1-1) edge node [above] {$\mu^+$} (m-1-2)
    (m-1-1) edge [bend right=10] node [left] {$\sigma^+$} (m-2-1)
     (m-2-1) edge [bend right=10] node [right] {$\sigma^-$} (m-1-1)
    (m-2-1) edge node [above]{$\mu^-$} (m-2-2)
    (m-1-2) edge node [left]{$\sigma$} (m-2-2)
    (m-2-2) edge  (m-1-2);
\end{tikzpicture}
\end{center}

In particular, the action of duality on the coordinate space $\RR_{>0}^{- 8 \chi(S)}$ is given by $(\psi^-)^{-1} \circ \sigma^+ \circ \psi^+ = (\psi^+)^{-1} \circ \sigma^- \circ \psi^-.$

We will give a proof of Theorem~\ref{thm:FG-main} using the methods of \cite{Fock-moduli-2007}. We also highlight various properties of the moduli space, including an explicit description of the duality map and the action of the mapping class group, in \S\ref{sec:Properties of the parameterisation}.
As consequences of Theorem~\ref{thm:FG-main} we then deduce Goldman's Theorem~\ref{thm:Goldman-main}, Marquis' Theorem \ref{thm:Marquis-main-intro}, as well as a classical result due to Fricke and Klein about the classical Teichm\"uller space of finite area hyperbolic structures on $S$ in \S\ref{sec:Applications of the parameterisation}.

%%%%%%%%%%%%%
%
%\subsection{Holonomy representations}
%
%
%
%%%%%%%%%%%%%
%
%\subsection{Symplectic structures and Poisson structures}

As a last application, we focus on the Poisson structure on the moduli space $\Ttp(S)$ in \S \ref{sec:poisson-structure}.
Natural symplectic structures and Poisson structures may often be used to distinguish between homeomorphic spaces and open the door to the world of integrable systems; see Audin~\cite{Audin-lectures-1997} for a discussion in the context of this paper. 

In the case of closed surfaces, Goldman~\cite{Goldman-symplectic-1984} gives a completely general result via the intersection pairing. Let $G$ be a Lie group preserving a non-degenerate bilinear form on its Lie algebra., and let $\X_G(S) := \Hom(\pi_1(S), G)\sslash G$ be the  $G$--character variety of $S.$ Then Goldman \cite{Goldman-symplectic-1984} showed that $\X_G(S_{g,0})$ has a natural symplectic structure.

%
%\begin{thm}[Goldman 1984, \cite{Goldman-symplectic-1984}]
%	If $S = S_{g,0}$ is closed, then $\X_G$ has a natural symplectic structure.
%\end{thm}
In the case of a punctured surface, where a natural symplectic structure is not available on $\X_G(S_{g,n})$, it can still be foliated by symplectic leaves making it into a Poisson variety \cite{Guruprasad-group-1997}. For $G = \SL(m,\RR)$, character varieties of surfaces with the same Euler characteristic are isomorphic as varieties, while they may be distinguished using their Poisson structure. A natural question to ask is whether this also applies to the moduli space $\Ttp(S).$ Following Fock and Goncharov~\cite{Fock-moduli-2007}, we explicitly construct a Poisson bracket on $\Ttp(S).$ 
The monodromy map $\mu^+$ induces an injective homomorphism $\mu^*$ between the spaces of smooth functions on $\Ttp(S)$ and on $\X(S)$. When we restrict to the trace algebra $\Tr(\X(S))$, we have the following result (see Theorem \ref{thm:compatibility-poisson-bracket} for details):

\begin{thm}
	$\mu^*$ is a Poisson homomorphism between $\Ttp(S)$ and $\X(S)$.
\end{thm}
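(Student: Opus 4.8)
The plan is to establish that $\mu^*$ preserves Poisson brackets on the trace algebra, that is, that $\mu^*\{f,g\}_{\X} = \{\mu^*f, \mu^*g\}_{\Ttp}$ for all $f,g \in \Tr(\X(S))$. Since both brackets are bilinear and satisfy the Leibniz rule, and since $\mu^*$ is an algebra homomorphism, it suffices to check compatibility on a generating set of the trace algebra. The natural generators are the trace functions $\Tr(\hol(\gamma))$ associated to elements $\gamma \in \pi_1(S)$ (and more generally words in the holonomy), so the first step is to reduce the statement to verifying the identity on these generating trace functions.

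\medskip

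The key computational input is Goldman's formula for the Poisson bracket of two trace functions on the character variety in terms of intersection numbers of the corresponding curves and the insertions of Lie-algebra-valued data at the intersection points. On the $\Ttp(S)$ side, I would use the explicit Fock--Goncharov coordinates from Theorem~\ref{thm:FG-main}: the parameterisation $\psi^+\co \RR_{>0}^{-8\chi(S)}\to \Ttp(S)$ gives a global chart, and the Poisson bracket on $\Ttp(S)$ constructed following \cite{Fock-moduli-2007} is expressed combinatorially in these coordinates (via the quiver or exchange-graph data attached to the ideal triangulation, with bracket coefficients determined by how triangles and edges meet). The trace functions pull back under $\mu^*$ to the positive rational functions in the coordinates coming from the explicit description of the positive representations. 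The heart of the argument is therefore to compute $\{\mu^* \Tr(\hol(\gamma)), \mu^* \Tr(\hol(\delta))\}_{\Ttp}$ in coordinates and match it against $\mu^*\{\Tr(\hol(\gamma)), \Tr(\hol(\delta))\}_{\X}$ as given by Goldman's formula.

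\medskip

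The strategy I expect to work is a localisation argument: both sides of the desired identity are sums of local contributions indexed by the intersection points of the curves $\gamma$ and $\delta$ (on the $\X$ side this is built into Goldman's formula, and on the $\Ttp$ side it follows because the combinatorial Poisson bivector is supported on edges and triangles, so only the portions of the two curves that traverse a common triangle contribute). I would thus isolate a single intersection point, express the relevant trace functions as products of the elementary edge- and triangle-matrices read off from the developing map crossing that triangle, and verify the local matching there. A useful sanity check along the way is to test the identity on simple explicit curves, for instance peripheral loops and the standard generators on the once-punctured torus, where the coordinates and the holonomies are fully explicit.

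\medskip

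The hardest part will be the local matching calculation itself, since it requires reconciling two a priori different-looking objects: Goldman's intersection-theoretic bracket, which lives intrinsically on the character variety and is phrased via the symmetric bilinear form on the Lie algebra $\mathfrak{sl}(3,\RR)$, and the Fock--Goncharov combinatorial bracket, which is phrased in cluster-type coordinates on $\Ttp(S)$. Carefully tracking the transport matrices and their derivatives through a single triangle, and confirming that the antisymmetric coefficients produced by the two recipes agree (rather than merely being proportional), is where the real work lies; the reduction steps before and after it are formal consequences of the Leibniz rule and the fact that $\mu^*$ is an injective algebra homomorphism.
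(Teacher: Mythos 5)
Your outline --- reduce to trace-function generators, compare Goldman's intersection formula with the Fock--Goncharov combinatorial bracket in coordinates, and match contributions locally --- is the same skeleton as the paper's proof of Theorem~\ref{thm:compatibility-poisson-bracket}. But there is a genuine gap in your localisation step. On the $\Ttp(S)$ side, the bracket $\{\mu^*\tr_\alpha,\mu^*\tr_\beta\}_{FG}$ is a sum of terms indexed not by intersection points of $\alpha$ and $\beta$, but by pairs of segments of the two curves lying in a common triangle of the ideal triangulation. Two curves in general position can run alongside each other through arbitrarily long chains of common triangles without intersecting at all, and each such shared triangle contributes a nonzero term. So the identity does \emph{not} localise at intersection points term by term: the real work is to show that the total contribution of every intersection-free overlap vanishes, and that the total contribution of an overlap containing an intersection point $p$ equals Goldman's insertion $\epsilon(p,\alpha,\beta)\bigl(\tr_{\alpha_p\beta_p}\circ\mu - \tfrac{1}{3}(\tr_\alpha\circ\mu)(\tr_\beta\circ\mu)\bigr)$. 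The paper does this by organising shared triangles into ``tunnels'' made of ``galleries'' (\S\ref{subsubsec:proof-part2}), encoding each gallery's contribution in a matrix of $3\times 3$ matrices, and proving a telescoping lemma (Lemma~\ref{lem:sum-of-contributions}) that collapses any tunnel to a length-two one, for which the finitely many configurations are checked explicitly (Lemma~\ref{lem:two-galleries-contribution}); this cancellation mechanism is the missing idea in your plan.

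A second missing piece concerns parallel curves: when $[\alpha]=[\beta^m]$ for some $m\neq 0$, the overlaps need not be open tunnels at all --- they can close up into cycles of transition triangles, and then the telescoping argument has no starting or ending triangle to anchor on. The paper treats this separately (Lemma~\ref{lem_closed_tunnel_contribution}) by cutting a closed tunnel open and showing its contribution reduces to that of an intersection-free open tunnel, hence zero. Without both ingredients, your step of ``isolate a single intersection point and verify the matching there'' would not go through; the single-point matching is indeed a finite computation of the kind you describe, but it is not where the difficulty lies.
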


We provide a detailed geometric proof of this result, which appears to be known to experts (see, for example, Audin \cite{Audin-lectures-1997} or Xie \cite{Xie-structures-2013}), but the homomorphism does not appear to have been described explicitly before.

\newpage
\section{Ratios and Configurations of Flags} \label{sec:ratios-and-config-flags}

This section is devoted to the development of the main tools used in the proof of Theorem \ref{thm:global-coord}, in \S\ref{sec:Moduli-Spaces}. Most of the definitions and conventions are taken from \cite{Fock-moduli-2007}. We introduce flags of $\RP2$ and study their properties. The rest of the chapter is completely devoted to developing the necessary tools to prove the main results in \S\ref{sec:Moduli-Spaces}: triple ratios, cross ratios, quadruple ratios and configurations of flags.

%%%%%%%%%%%%%%%%%%%%%%%%%%%%%%%%%%%%%

\subsection{Flags}

We introduce some notation and recall well-known results from projective geometry. The homomorphism $\GL(3,\RR) \to \SL(3,\RR)$ defined by
\[ A \mapsto (\det(A))^{-1/3} A\]
descends to an isomorphism $\PGL(3,\RR) \to \SL(3,\RR).$ We will therefore work with $\SL(3,\RR),$ which allows us to talk about eigenvalues of maps, and it will be clear from context whether an element of $\SL(3,\RR)$ acts on $\RP2$ or on $\RR^3.$ We also remark that the action on $\RR^3$ is by orientation preserving maps.

Points and lines of $\RP2$ are denoted by column and row vectors, respectively. In particular, a line $\left[ a : b : c \right]$ corresponds to the set of points $\left[ x : y : z \right]^t$ of $\RP2$, satisfying $ax + by + cz = 0$. Thus a point $P$ belongs to a line $l$ if and only if $l(P) = 0$. Four points (resp. four lines) of $\RP2$ are in \emph{general position} if no three are collinear (resp. no three are incident). They are often referred as a \emph{projective basis}, because:
\begin{itemize}
	\item $\SL(3,\RR)$ is simply transitive on ordered $4$--tuples of points in general position;
	\item $\SL(3,\RR)$ is simply transitive on ordered $4$--tuples of lines in general position.
\end{itemize}
Up to projective transformation, there is a canonical dual map between points and lines of $\RP2$. That is the following one:
\begin{align*}
\perp : \RP2 &\rightarrow (\RP2)^*,\\
 P = \left[
\begin{array}{c}
x\\ y\\ z
\end{array} \right] &\mapsto P^\perp =  \left[ x : y : z\right].
\end{align*}

A \emph{flag} $\F_i := (V_i,\eta_i)$ of $\RP2$ is a pair consisting of a point $V \in \RP2$ and a line $\eta \subset \RP2$ passing through $V$. An $m$--tuple of flags $\{\F_1,\dots,\F_m \}$ is in \emph{general position} if
\begin{itemize}
	\item no three points are collinear;
	\item no three lines are coincident;
	\item $\eta_i(V_j) = \delta_{ij}$, the Kronecker delta.
\end{itemize}
Henceforth, we will denote by $((\F_1,\dots,\F_m))$ a cyclically ordered $m$--tuple of flags, as opposed to an ordered $n$--tuple $(\F_1,\dots,\F_m)$. The group of projective transformations acts on the space of flags via
$$
T \cdot F_i := ( T \cdot V_i , T \cdot \eta_i ), \qquad T \in \SL(3,\RR).
$$
The action naturally extends to $m$--tuples, ordered $m$--tuples and cyclically ordered $m$--tuples of flags.

%%%%%%%%%%%%%%%%%%%%%%%%%%%%%%%%%%%%%

\subsection{Triple ratio}
\label{subsec_triple_ratio}

Suppose $\mathfrak{F} = ((\F_0,\F_1,\F_2))$ is a cyclically ordered triple of flags in general position. We define the \emph{triple ratio} of $\mathfrak{F}$ as
$$
\cancel{3}(\mathfrak{F}) := \frac{\overline{\eta_0}(\overline{V_1}) \cdot \overline{\eta_1}(\overline{V_2}) \cdot \overline{\eta_2}(\overline{V_0})}{\overline{\eta_0}(\overline{V_2}) \cdot \overline{\eta_1}(\overline{V_0}) \cdot \overline{\eta_2}(\overline{V_1})},
$$
where $\overline{V_i}$ and $\overline{\eta_i}$ are some class representatives. The above definition is well-defined, as it is independent of the choice of representatives, and manifestly invariant under a cyclic permutation of the flags. The following property is easy to verify.

\begin{lem} \label{lem:triple_ratio_proj_invariant}
	$\cancel{3}$ is invariant under projective transformations, $i.e.$ for all $T \in\SL(3,\RR)$
	$$
	\cancel{3}(T \cdot \mathfrak{F}) = \cancel{3}(\mathfrak{F}).
	$$
\end{lem}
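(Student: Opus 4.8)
The plan is to reduce the statement to the single observation that every pairing $\eta_i(V_j)$ occurring in the formula is \emph{individually} invariant under $T$, so that the numerator and denominator of $\cancel{3}(T\cdot\mathfrak{F})$ agree with those of $\cancel{3}(\mathfrak{F})$ factor by factor.

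First I would pin down the transformation laws in the column/row vector conventions already fixed above. The action on a point is $T\cdot V = TV$, while a line $\eta$ is carried to the line $\eta\, T^{-1}$: the image of the zero set $\{P : \eta(P)=0\}$ under $T$ is $\{TP\}$, and a point $Q$ lies on this image exactly when $\eta(T^{-1}Q) = (\eta\, T^{-1})(Q) = 0$. This is precisely the inverse-transpose action on $(\RP2)^*$ described in the introduction, now written for row vectors, so it agrees with the action $T\cdot\F_i = (T\cdot V_i, T\cdot\eta_i)$ declared in the definition of the flag action.

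The key step is then the one-line computation
$$
(T\cdot \eta_i)(T\cdot V_j) = (\eta_i\, T^{-1})(T V_j) = \eta_i\, (T^{-1}T)\, V_j = \eta_i(V_j),
$$
valid for every $i,j$ and for any fixed choice of representatives $\overline{V_j}$ and $\overline{\eta_i}$, on which $T$ acts linearly. Substituting this into the defining expression shows that each of the six pairings making up $\cancel{3}(T\cdot\mathfrak{F})$ equals the corresponding pairing in $\cancel{3}(\mathfrak{F})$, and hence the two triple ratios coincide.

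I do not expect a genuine obstacle here: the argument uses only that $T$ is invertible—the normalisation $\det T = 1$ plays no role—and since the invariance already holds pairing by pairing, no new ambiguity in the choice of representatives arises beyond the well-definedness noted before the lemma. The one point that requires care is the transformation law for lines, namely using $\eta\, T^{-1}$ rather than $\eta\, T$, so that the factors $T^{-1}$ and $T$ cancel in the pairing; getting this convention right is essentially the entire content of the proof.
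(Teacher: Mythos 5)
Your proposal is correct: the paper offers no written proof (the lemma is stated as "easy to verify"), and the intended verification is exactly your factor-by-factor argument, namely that with the conventions $V\mapsto TV$ and $\eta\mapsto \eta T^{-1}$ each pairing satisfies $(T\cdot\eta_i)(T\cdot V_j)=\eta_i(V_j)$, so numerator and denominator of the triple ratio are unchanged term by term. Your added remarks — that only invertibility of $T$ is used and that no new representative ambiguity arises — are accurate and complete the argument.
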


%%%%%%%%%%%%%%%%%%%%%%%%%%%%%%%%%%%%%

\subsection{Cross ratio}
\label{subsubsec:positivity_of_cross}

Let $l \subset \RP2$ be a line. Given $P_0,P_1,P_2,P_3 \in l$ with $P_0,P_1,P_2$ pairwise distinct, let $T: l \rightarrow \RR \PP^1$ be the unique projective map such that $T(P_0) = \infty$, $T(P_1) = -1$ and $T(P_2) = 0$. Then the \emph{cross ratio} of the ordered quadruple $(P_0,P_1,P_2,P_3)$ is
$$
\cross(P_0,P_1,P_2,P_3) = T(P_3).
$$
Just as for the triple ratio, it is easy to check that the cross ratio is invariant under projective transformation. 
Moreover, if $x_0,x_1,x_2,x_3$ are local coordinates for $P_0,P_1,P_2,P_3 \in l$, then
	$$
	\cross(P_0,P_1,P_2,P_3) = \frac{(x_0-x_1)(x_2-x_3)}{(x_0-x_3)(x_1-x_2)}.
	$$
	It follows that, if $\sigma$ is a permutation on four symbols and $\lambda = \cross(P_0,P_1,P_2,P_3)$, then
	$$
	\cross(P_{\sigma(0)},P_{\sigma(1)},P_{\sigma(2)},P_{\sigma(3)}) \in \left\{ \lambda,\frac{1}{\lambda},1-\lambda,\frac{1}{1-\lambda},1-\frac{1}{\lambda},\frac{\lambda}{\lambda-1}\right\}.
	$$
	In particular 
	$\cross(P_3,P_2,P_1,P_0) = \cross(P_0,P_1,P_2,P_3)$.

Similarly, we can define the cross ratio of four incident lines via duality. Let $P \in \RP2$ be a point and $l_0,l_1,l_2,l_3$ lines through $P$ with $l_0,l_1,l_2$ pairwise distinct. Then the \emph{cross ratio} of the ordered quadruple $(l_0,l_1,l_2,l_3)$ is 
$$
\cross(l_0,l_1,l_2,l_3) := \cross(l_0^\perp,l_1^\perp,l_2^\perp,l_3^\perp).
$$

A straightforward argument shows:
\begin{lem} \label{lem:cr_lines_points}
	Let $m$ be a line intersecting the lines $l_0,l_1,l_2,l_3$ transversely in $Q_0,Q_1,Q_2,Q_3$ respectively, then
	$$
	\cross(l_0,l_1,l_2,l_3) = \cross(Q_0,Q_1,Q_2,Q_3).
	$$
\end{lem}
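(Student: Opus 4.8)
The plan is to reduce both sides to the cross ratio of one and the same point on $\RR\PP^1$ — namely the parameter of the pencil of lines through $P$ — and then invoke the projective invariance of the cross ratio of points together with the coordinate formula, both of which are already established. Concretely, I would exhibit two linear (hence projective) parametrisations of a line, one for the four dual points $l_i^\perp$ and one for the four intersection points $Q_i$, and observe that they share the same source parameter $[\lambda:\mu]$.

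First I would record that $P\notin m$: if $P$ lay on $m$ then every $Q_i$ would equal $P$ and the $Q_i$ could not be pairwise distinct, so transversality forces each $Q_i=l_i\cap m$ to be a genuine point off $P$. Next I coordinatise the pencil through $P$. Since $l_0,l_1,l_2$ are pairwise distinct, $l_0$ and $l_1$ span the two--dimensional space of row vectors (functionals) vanishing at $P$, so every line of the pencil is $\lambda l_0+\mu l_1$ for a unique $[\lambda:\mu]\in\RR\PP^1$; write $l_i=\lambda_i l_0+\mu_i l_1$.

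Now I evaluate the two sides. For the left-hand side, by definition $\cross(l_0,l_1,l_2,l_3)=\cross(l_0^\perp,l_1^\perp,l_2^\perp,l_3^\perp)$, and since $\perp$ is the identity on homogeneous coordinates we get $l_i^\perp=\lambda_i l_0^\perp+\mu_i l_1^\perp$; these four points lie on the line through the distinct points $l_0^\perp,l_1^\perp$ as the linear image of $[\lambda_i:\mu_i]$, so the coordinate formula gives that $\cross(l_0,l_1,l_2,l_3)$ equals the cross ratio of the four parameters $[\lambda_i:\mu_i]$. For the right-hand side, the intersection of two lines is the cross product of their coefficient vectors, so
\[
Q_i=(\lambda_i l_0+\mu_i l_1)^t\times m^t=\lambda_i\,(l_0^t\times m^t)+\mu_i\,(l_1^t\times m^t)=\lambda_i A+\mu_i B,
\]
where $A=l_0\cap m$ and $B=l_1\cap m$. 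Because $P\notin m$ and $l_0\neq l_1$, the points $A,B$ are distinct, so $[\lambda:\mu]\mapsto\lambda A+\mu B$ is a linear parametrisation of $m$ and the same coordinate formula yields that $\cross(Q_0,Q_1,Q_2,Q_3)$ equals the cross ratio of the parameters $[\lambda_i:\mu_i]$. Comparing the two evaluations proves the claim.

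The main obstacle is not conceptual but one of bookkeeping: I must keep the degeneracies under control (ensuring $l_0^\perp\neq l_1^\perp$, $A\neq B$, and that each $Q_i$ is well defined) and verify carefully that the two maps $[\lambda:\mu]\mapsto l_i^\perp$ and $[\lambda:\mu]\mapsto Q_i$ are genuinely projective with the \emph{same} source parameter, since it is precisely this shared parameter that forces the two cross ratios to agree. An equivalent but slightly more computational route, which avoids the cross product, is to use projective invariance to first normalise $P=[0:0:1]^t$ and $m=[0:0:1]$, reducing to the transparent case in which the lines through $P$ are $[a:b:0]$ and meet $m$ in the points $[b:-a:0]^t$; there both parametrisations by $[a:b]$ are visibly linear and the equality of cross ratios is immediate.
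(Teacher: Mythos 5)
Your proof is correct. Note that the paper does not actually supply an argument for this lemma: it is stated with only the remark that ``a straightforward argument shows'' it, so there is no written proof to compare against, and your write-up fills that gap completely. The points that genuinely need checking are all present: that $P \notin m$ (forced, as you say, by the requirement that the cross ratio of the $Q_i$ be defined at all), that $l_0^\perp \neq l_1^\perp$ and $A \neq B$ (the latter because $A=B$ would force $A \in l_0 \cap l_1 = \{P\}$, contradicting $P \notin m$), and the central observation that both quadruples are images of the \emph{same} parameters $[\lambda_i : \mu_i]$ under projective parametrisations of a line, so that both sides reduce to the cross ratio of those parameters via projective invariance (or the coordinate formula), both of which are available earlier in the section. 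The normalisation route you sketch at the end --- taking $P = [0:0:1]^t$ and $m = [0:0:1]$, so that the lines $[a:b:0]$ through $P$ meet $m$ in $[b:-a:0]^t$ --- is presumably the ``straightforward argument'' the authors had in mind: there both cross ratios are computed from $[a_i:b_i]$, and the two quadruples differ by the projective map $[a:b] \mapsto [b:-a]$, which preserves cross ratios. Either version of your argument is a complete and valid proof.
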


The convention for the cross ratio used here was suggested by Fock and Goncharov~\cite[pg. 253]{Fock-moduli-2007}. It is motivated by the fact that, if $P_0,P_1,P_2$ are pairwise distinct points on a line $l \subset \RP2$, we would like $\cross(P_0,P_1,P_2,P_3) > 0$ when $P_3$ and $P_1$ lie in different components of $l\setminus \{P_0,P_2\}$. Lemma \ref{lem:cr_lines_points} implies that if $l_0,l_1,l_2$ are pairwise distinct lines passing through a point $P \in \RP2$, then $\cross(l_0,l_1,l_2,l_3) > 0$ when $l_3$ and $l_1$ lie in different regions of $\RP2 \setminus \{l_0,l_2\}$. See Figure~\ref{cross_ratios}.

\begin{figure}[ht]
	\centering
	\includegraphics[width=12.5cm]{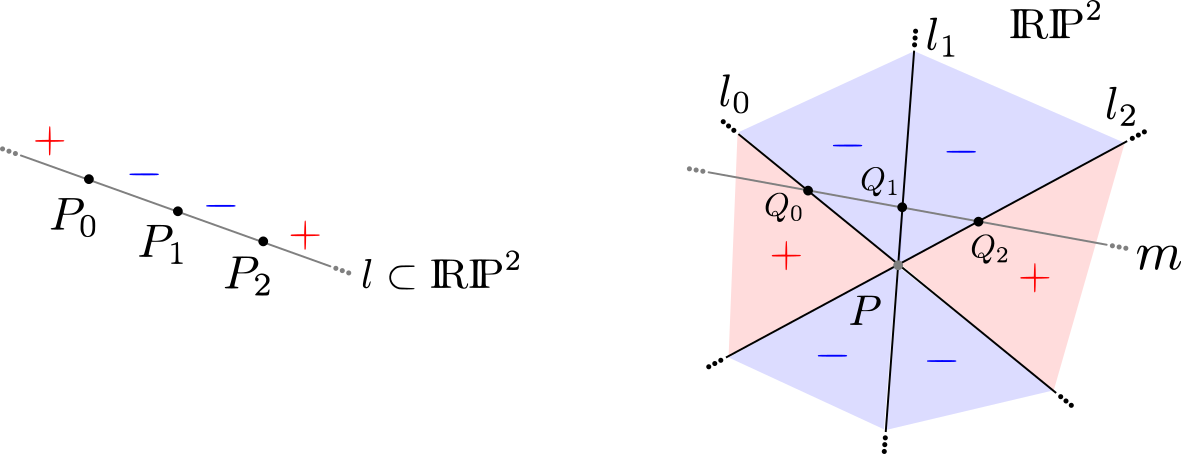}
	\caption{$\cross(P_0,P_1,P_2,P_3) > 0$ when $P_3$ and $P_1$ lie in different components of $l\setminus \{P_0,P_2\}$. Likewise, $\cross(l_0,l_1,l_2,l_3) > 0$ when $l_3$ and $l_1$ lie in different regions of $\RP2 \setminus \{l_0,l_2\}$.}
	\label{cross_ratios}
\end{figure}

Henceforth, if $P,Q \in \RP2$ are two points and $m,l \subset \RP2$ are two lines, we will denote by $PQ$ the line passing through $P$ and $Q$, and by $lm$ the point of intersection between $l$ and $m$.

\begin{thm} \label{thm:cross_triple}
	Let $\mathfrak{F} = ((\F_0,\F_1,\F_2))$ be a cyclically ordered triple of flags in general position where $\F_i = (V_0,\eta_0)$. Then
	$$
	\cross(\eta_0,V_0V_1, V_0(\eta_1\eta_2),V_0 V_2) =\cancel{3}(\mathfrak{F}).
	$$
\end{thm}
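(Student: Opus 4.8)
\section*{Proof proposal}

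The plan is to use projective invariance to reduce to an explicit normal form and then verify the identity by a direct coordinate computation. Both sides are $\SL(3,\RR)$--invariant: the right-hand side by Lemma~\ref{lem:triple_ratio_proj_invariant}, and the left-hand side because the cross ratio of four concurrent lines is defined via the (projectively invariant) cross ratio of the dual points, and the four lines $\eta_0, V_0V_1, V_0(\eta_1\eta_2), V_0V_2$ are transported by any $T\in\SL(3,\RR)$ to the corresponding four lines of $T\cdot\mathfrak{F}$. Since $V_0,V_1,V_2$ are in general position, $\SL(3,\RR)$ acts simply transitively on them, so I may assume $V_0=[1:0:0]^t$, $V_1=[0:1:0]^t$, $V_2=[0:0:1]^t$.

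In these coordinates the requirement that $\eta_i$ pass through $V_i$ forces the $i$-th entry of $\eta_i$ to vanish, so I can write $\eta_0=[0:q_0:r_0]$, $\eta_1=[p_1:0:r_1]$, $\eta_2=[p_2:q_2:0]$, where general position guarantees that the six entries $q_0,r_0,p_1,r_1,p_2,q_2$ are all nonzero (these are exactly the off-diagonal evaluations $\eta_i(V_j)$, $i\neq j$). Evaluating each $\eta_i$ on a standard basis vector $V_j$ simply reads off a coordinate, so the triple ratio collapses to the monomial $\cancel{3}(\mathfrak{F})=\tfrac{q_0 r_1 p_2}{r_0 p_1 q_2}$.

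For the left-hand side I identify the four lines through $V_0$. Two are coordinate lines, $V_0V_1=[0:0:1]$ and $V_0V_2=[0:1:0]$; the third, $V_0(\eta_1\eta_2)$, I compute by first taking the intersection point $\eta_1\eta_2=\eta_1\times\eta_2$ and then joining it to $V_0$, which gives $[0:-p_1q_2:r_1p_2]$; and the first is $\eta_0$ itself. To evaluate the cross ratio I invoke Lemma~\ref{lem:cr_lines_points}, intersecting these four lines with the transversal $\{x=0\}$ (which misses $V_0$). This produces four points on a copy of $\RR\PP^1$, and substituting their affine coordinates into the explicit cross ratio formula — with the point coming from $V_0V_2$ sent to infinity, so that its factors cancel — again yields $\tfrac{q_0 r_1 p_2}{r_0 p_1 q_2}$, matching the triple ratio.

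The only genuine subtlety is bookkeeping. The cross ratio depends on the order of its four arguments, and the triple ratio is one particular element of the six-element $S_3$--orbit, so I must track the prescribed order $(\eta_0,\,V_0V_1,\,V_0(\eta_1\eta_2),\,V_0V_2)$ together with the sign arising from the cross product $\eta_1\times\eta_2$ to land on $\cancel{3}(\mathfrak{F})$ exactly, rather than on its reciprocal or another member of the orbit. Handling the point at infinity in the cross ratio formula is then routine once the transversal is chosen so that $V_0V_2$ meets it there.
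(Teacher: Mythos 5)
Your proposal is correct and follows essentially the same route as the paper: reduce by projective invariance to a normal form, then verify the identity by explicit computation, converting the cross ratio of the four concurrent lines into a cross ratio of points via Lemma~\ref{lem:cr_lines_points}. The only difference is cosmetic — the paper normalizes the four points $V_0,V_1,V_2,\eta_1\eta_2$ so that everything collapses to a single parameter $A$ appearing on both sides, whereas you normalize only $V_0,V_1,V_2$ and check that both sides equal the monomial $q_0r_1p_2/(r_0p_1q_2)$; that computation does work out as you claim.
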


\begin{proof}
	Both cross ratio and triple ratio are projectively invariant, and the points $V_0,V_1,V_2, \eta_1 \eta_2$ are in general position so we may assume, without loss of generality, that
	$$
	V_0 = \left[
	\begin{matrix}
	0 \\ 0 \\ 1
	\end{matrix} \right], \quad
	V_1 = \left[
	\begin{matrix}
	1 \\ 0 \\ 1
	\end{matrix} \right], \quad
	V_2 = \left[
	\begin{matrix}
	0 \\ 1 \\ 1
	\end{matrix} \right], \quad
	\eta_1 \eta_2 = \left[
	\begin{matrix}
	1 \\ 1 \\ 1
	\end{matrix} \right].
	$$
	It follows that
	$$
	\eta_1 = \left[
	\begin{matrix}
	1 : 0 : -1
	\end{matrix} \right], \quad
	\eta_2 = \left[
	\begin{matrix}
	0 : 1 : -1
	\end{matrix} \right].
	$$
	The projective line $\eta_0$ passes through $V_0$ but does not pass through $V_2$ or $V_1$, so $\eta_0 = [A:1:0]$ for some $A \in \RR \setminus \{0\}$. By Lemma \ref{lem:cr_lines_points},
	$$
	\cross(\eta_0,V_0V_1, V_0(\eta_1\eta_2), V_0 V_2 ) = \cross( \eta_0 \eta_2 , (V_0V_1)\eta_2,  (V_0(\eta_1\eta_2))\eta_2 , (V_0 V_2)\eta_2 ).
	$$
	A direct calculation shows that
	\[
	\cancel{3}(\mathfrak{F}) = A = \cross( \eta_0 \eta_2 , (V_0V_1)\eta_2 , (V_0(\eta_1\eta_2))\eta_2 , (V_0 V_2)\eta_2 ). 
	\]
	This completes the proof.
\end{proof}
	
	\begin{figure}[t]
		\centering
		\includegraphics[width=5cm]{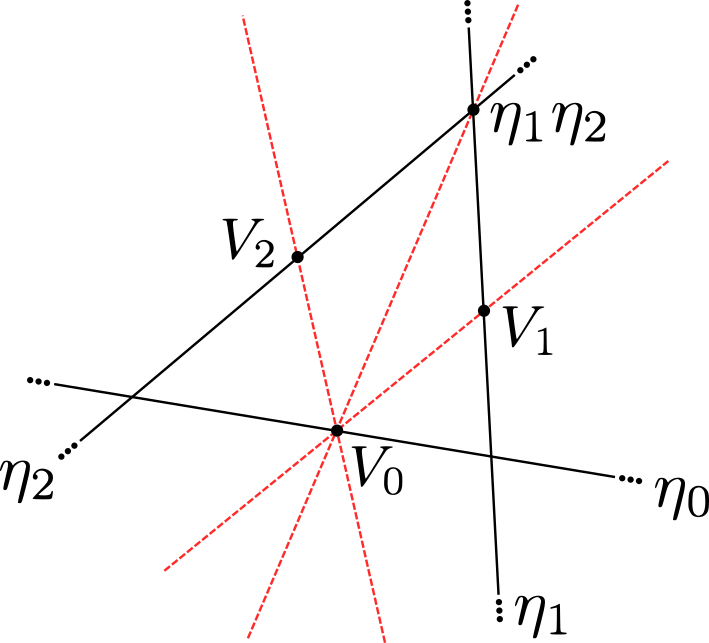}
		\caption{The cross ratio $\cross( V_0 V_2 , V_0(\eta_1\eta_2) , V_0 V_1 , \eta_0 )$ .}
		\label{cross_and_triple}
	\end{figure}

%%%%%%%%%%%%%%%%%%%%%%%%%%%%%%%%%%%%%

\subsection{Quadruple ratio}
\label{subsubsec:Quadruple ratio}

Let $\mathfrak{F} = (\F_0,\F_1,\F_2,\F_3)$ be an ordered quadruple of flags in general position, $\F_i = ( V_i , \eta_i )$. We define the \emph{quadruple ratio} of $\mathfrak{F}$ as
$$
\cancel{4}(\mathfrak{F}):= \cross( \eta_0 , V_0V_3 , V_0V_2 , V_0V_1 ).
$$
$\cancel{4}(\mathfrak{F})$ is sometimes referred as \emph{edge ratio} with respect to $V_0 V_2$. We give an intuitive description of this definition in $\S~\ref{subsubsec:param_of_flags}$. It follows from Theorem~\ref{thm:cross_triple} that
	\[
	\cancel{4}(\mathfrak{F}) = \cancel{3}\big( \big( ( V_0 , \eta_0 ) , ( V_3 , V_2 V_3 ) , ( V_1 , V_1V_2 ) \big) \big).
	\]
The requirement that $(\F_0,\F_1,\F_2,\F_3)$ are flags in general position may be relaxed, but this is not needed for this paper.

\begin{figure}[t]
	\centering
	\includegraphics[width=12.5cm]{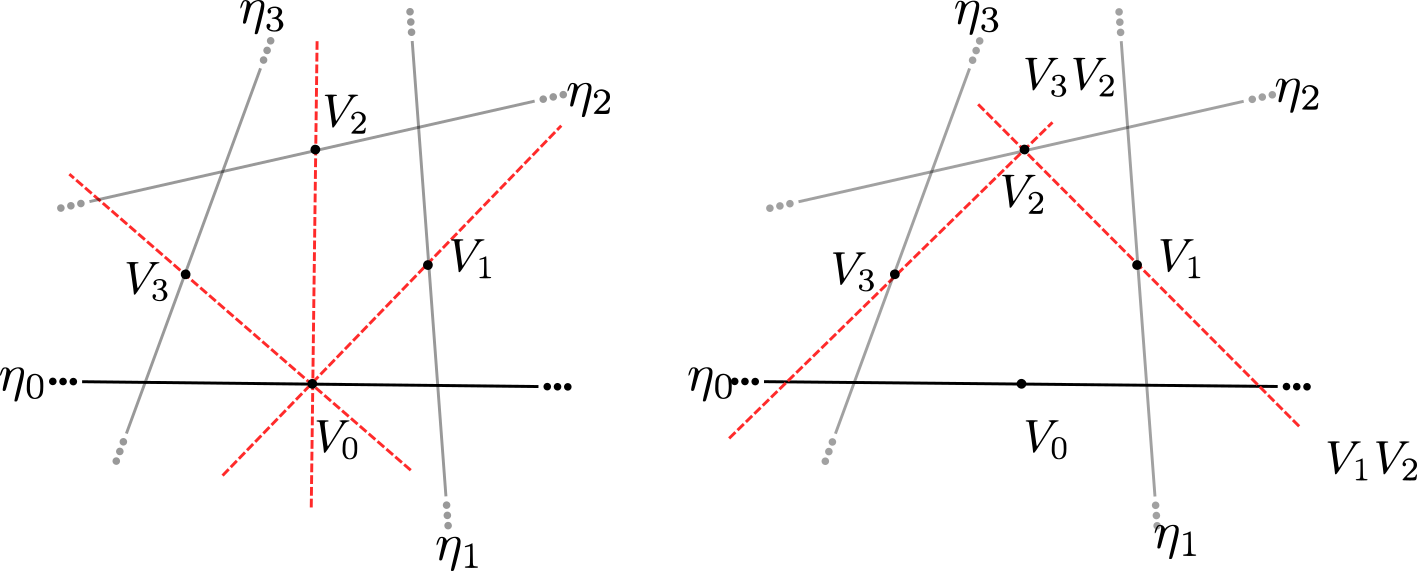}
	\caption{The quadruple ratio $\cancel{4}(\mathfrak{F})$ can be read off both in terms of $\cross( \eta_0 , V_0V_3 , V_0V_2 , V_0V_1 )$ and $\cancel{3}\left( \left( ( V_0 , \eta_0 ) , ( V_3 , V_2V_3 ) , ( V_1 , V_1V_2 ) \right) \right) $.}
	\label{quadruple_ratio}
\end{figure}	

%%%%%%%%%%%%%%%%%%%%%%%%%%%%%%%%%%%%%

\subsection{Pairs of convex polygons and configuration of flags } \label{subsec_def_toymodel}

When $m$ flags are configured properly, they form two convex polygons, one inscribed into the other. In the remainder of this section we parametrise such configurations for $m =3,4$ using the triple ratio and quadruple ratio. This will be a key ingredient in the proof of Theorem \ref{thm:global-coord}.

We say that a non-degenerate $m$--gon $\sigma_1$ is \emph{strictly inscribed} into another non-degenerate $m$--gon $\sigma_2$ if each edge of $\sigma_2$ contains one vertex of $\sigma_1$ in its interior. A pair of oriented $m$--gons is \emph{oriented} if the two $m$--gons have the same orientation. Furthermore, an oriented pair of strictly inscribed $m$--gons is \emph{marked} by a choice of a preferred vertex of the inscribed polygon (equiv. a preferred edge of the circumscribed polygon) and subsequent ordering of the vertices of the inscribed polygon (equiv. the edges of the circumscribed polygon) starting from the preferred vertex and following the orientation of the pair.

Let $\P_m$ be the space of oriented pairs of strictly inscribed convex $m$--gons in $\RP2$, and $\P^*_m$ be the space of marked oriented pairs of strictly inscribed convex $m$--gons in $\RP2$, both modulo the action of $\SL(3,\RR)$.

Similarly, $\Conf_m$ is the space of cyclically ordered $m$--tuples of flags in general position and $\Conf^*_m$ is the space of ordered $m$--tuples of flags in general position, both modulo the action of $\SL(3,\RR)$. There are natural maps
$$
\xi_m : \P_m \rightarrow \Conf_m \qquad \mbox{ and } \qquad \xi^*_m : \P^*_m \rightarrow \Conf^*_m,
$$
defined by deleting the interior of the edges of the inscribed polygon and extending the edges of the circumscribed polygon to lines. The fact that the polygons are assumed to be strictly inscribed one into the other ensures that the corresponding flags are in general position. Moreover, two pairs of polygons are projectively equivalent if and only if the associated flags are projectively equivalent. Hence $\xi_m$ and $\xi^*_m$ are injective and hence $\P_m$ and $\P^*_m$ can be identified with their images in $\Conf_m$ and $\Conf^*_m$ respectively. On the other hand, these maps are not surjective (see \S\ref{subsubsec:not injective} below). We give a characterisation of $\P_3$ and $\P^*_4$ in $\S\ref{subsubsec:param_of_flags}$.

%%%%%%%%%%%%%%%%%%%%%%%%%%%%%%%%%%%%%

\subsection{Inscribed triangles and triples of flags}
\label{subsubsec:not injective}\label{rem:example}

	We present an example where we relate $\P_3$, $\Conf_3$ and convex domains in $\RP2$. Let
\[
v_A := \begin{bmatrix} 1 \\ 0 \\ 0 \end{bmatrix}, \qquad
v_B := \begin{bmatrix} 0 \\ 0 \\ 1 \end{bmatrix}, \qquad
v_C := \begin{bmatrix} 1 \\ 1 \\ 1 \end{bmatrix}.
\]
	Now consider the cyclically ordered triple of flags $\mathfrak{F} = ((\F_A, F_B, \F_C))$, where
\[
	\F_A:= ( v_A ,  v_B^{\perp} ), \qquad
	\F_B:=( v_B ,  v_A^{\perp} ), \qquad
	\F_C:= ( v_C ,  \eta_C ), \qquad  \eta_C := [ -t : 1 + t :  -1 ], \quad t \in \RR.
\]
	This triple is in general position if and only if $t \neq 0, -1$, as these are the cases where $ \eta_C $ passes through the points $v_A$ and $v_A^\perp v_B^\perp$ respectively. The case $t = \infty$ would also violate generality as this would ensure that $\eta_C$ passes through $ v_B $ . 

	There are four triangular regions in $\RP2$ with vertices $v_A, \; v_B$ and $v_C$. The triple $\mathfrak{F}$ is in the image of $\xi_3$ if and only if one of these triangles is disjoint from each of the lines $ v_B^\perp $, $ v_A^\perp $ and $ \eta_C $. The lines $ v_B^\perp $ and $ v_A^\perp $ pass through three of the four potential triangles. The only remaining option is the triangle which is strictly contained in the affine patch $ \A := \{ [x: y: 1 ] \ | \  x,y \in \RR \} \subset \RP2$. Therefore, $\mathfrak{F} \in \xi_3(\P_3)$ if and only if $ \eta_C $ intersects the line $ v_A v_B $ at a point of the form $[x : 0 : 1]$, where $x<0$. This is the case if and only if $t > 0$.

	A direct calculation shows that $\cancel{3}(\mathfrak{F}) = t$. So the triple ratio may be used to determine whether $\mathfrak{F} \in \xi_3(\P_3)$. Moreover, there is a unique conic $\mathcal{C}$ passing through $v_A, v_B$ and $v_C$ with supporting projective lines $v_A^{\perp} $ and $ v_B^{\perp} $, namely the set of points
\[
\begin{bmatrix}
 x^2 \\ xz \\ z^2 
\end{bmatrix}, \qquad (x, z) \in \RR^2 \backslash (0,0).
\]
The line defined by $ \eta_C $ is a supporting line to $\mathcal{C}$ at $ v_C $ if and only if $t=1$. As projective transformations preserve the set of conics, this triple ratio test may be used to determine whether an element of $\mathcal{P}_3$ inscribes a conic. We will expand further upon this in \S\ref{subsec:classical-teich}.
	\begin{figure}[ht]
		\centering
		\includegraphics[width=12.5cm]{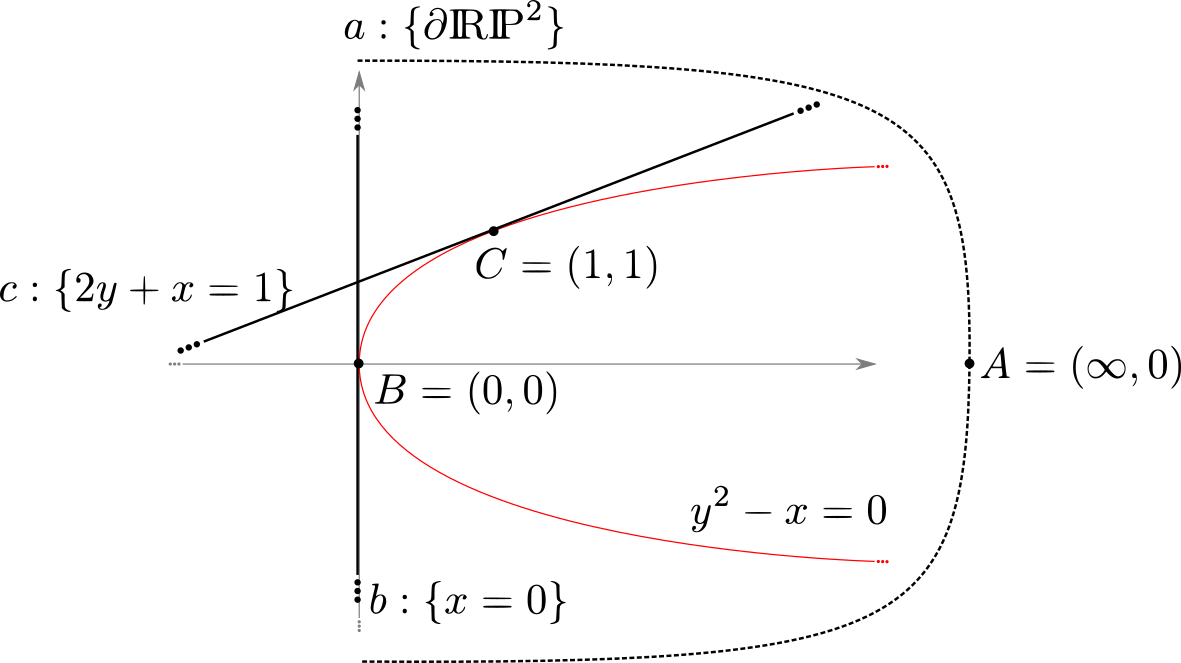}
		\caption{When $t=1$, $((\F_A, \F_B, \F_C))$ corresponds to an element of $\P_3$ and the line $\eta_{C} = [-1:2:-1]$ is a supporting projective line to $\mathcal{C}$ at the point $v_C$.}
		\label{example}
	\end{figure}

%%%%%%%%%%%%%%%%%%%%%%%%%%%%%%%%%%%%%

\subsection{Parametrisation of the spaces $\P_3$ and $\P^*_4$} \label{subsubsec:param_of_flags}

We give a parametrisation of $\P_3$ and $\P^*_4$ using triple ratios and quadruple ratios. This will used in the proof of Theorem \ref{thm:global-coord}. Define the map $ \cancel{3} \colon \P_3 \rightarrow \RR \PP^1 $ as follows. For $\chi \in \P_3$, choose a representative $\mathfrak{F}$ of $\xi_3(\chi)$. Then $\cancel{3}(\chi) := \cancel{3}(\xi_3(\chi)) = \cancel{3}(\mathfrak{F})$ is well-defined by Lemma \ref{lem:triple_ratio_proj_invariant}.

\begin{thm} \label{thm:param_P_3}
	$\cancel{3}$ establishes a bijection between $\P_3$ and $\RR_{>0}$.
\end{thm}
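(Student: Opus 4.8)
The plan is to establish the bijection by combining the calculation already carried out in the example of \S\ref{subsubsec:not injective} with the characterisation of when a triple of flags lies in the image of $\xi_3$. Recall that in that example a specific normal form was chosen for a triple of flags, depending on a single parameter $t$, and it was shown that the triple ratio equals $t$ while the triple lies in $\xi_3(\P_3)$ precisely when $t>0$. Since every element of $\P_3$ is, by definition, an equivalence class under $\SL(3,\RR)$, and since the triple ratio is projectively invariant by Lemma~\ref{lem:triple_ratio_proj_invariant}, the map $\cancel{3}\colon\P_3\to\RR\PP^1$ is well-defined; the content of the theorem is that its image is exactly $\RR_{>0}$ and that it is injective.

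First I would prove surjectivity onto $\RR_{>0}$. The example already provides, for each $t>0$, an explicit triple of flags $\mathfrak{F}$ in general position with $\cancel{3}(\mathfrak{F})=t$ that lies in $\xi_3(\P_3)$. One needs only to remark that the normal form used there (sending $v_A,v_B,v_C$ and the auxiliary intersection point to a projective basis) can be achieved for any configuration, so that the one-parameter family genuinely realises every positive value. Conversely, for injectivity the key is that $\SL(3,\RR)$ acts simply transitively on ordered $4$--tuples of points in general position, as recorded in the discussion of projective bases. Given any $\chi\in\P_3$, the four points $V_0,V_1,V_2,\eta_1\eta_2$ are in general position, so a unique element of $\SL(3,\RR)$ carries them to the standard basis points used in the example; this pins down the representative up to the one remaining degree of freedom, which is exactly the parameter $A$ appearing in $\eta_0=[A:1:0]$ in the proof of Theorem~\ref{thm:cross_triple}. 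That parameter is determined by the triple ratio, so two elements of $\P_3$ with the same triple ratio admit projectively equivalent representatives and hence coincide in $\P_3$.

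I expect the main obstacle to be bookkeeping the normalisation so that the example's parameter $t$ and the simply-transitive action genuinely account for \emph{all} of $\P_3$ rather than the slice sampled in the example. Concretely, one must check that after fixing $V_0,V_1,V_2$ and the point $\eta_1\eta_2$ by the group action, the only freedom left in the triple of flags lies in the position of the line $\eta_0$ through $V_0$, and that the strictly-inscribed (convexity) condition restricting to $\P_3$ is exactly the sign condition $t>0$ already identified. The positivity half of this was handled in the example by analysing which of the four triangular regions with vertices $v_A,v_B,v_C$ is disjoint from all three lines; I would invoke that analysis verbatim. The remaining verification, that no further projective freedom is available once the four points are normalised, follows from general position of the flags together with the incidence conditions $\eta_i(V_j)=\delta_{ij}$, which force $\eta_1$ and $\eta_2$ to be the specific lines computed in Theorem~\ref{thm:cross_triple} and leave $\eta_0$ as the sole parameter.
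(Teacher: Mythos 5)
Your proposal is correct and takes essentially the same route as the paper: the paper's proof likewise uses simple transitivity of $\SL(3,\RR)$ to normalise the four points $V_0,V_1,V_2,\eta_1\eta_2$, notes that $\eta_1,\eta_2$ are then forced and that $\eta_0=[t:1:0]$ is the sole remaining freedom with $t=\cancel{3}(\mathfrak{F})$ (whence injectivity), and identifies membership in $\P_3$ with the condition $t>0$. The only difference is which in-paper computation is cited for that last equivalence: the paper uses Theorem~\ref{thm:cross_triple} together with the cross-ratio sign convention of \S\ref{subsubsec:positivity_of_cross}, while you invoke the equivalent triangle-region analysis of the example in \S\ref{subsubsec:not injective} (also drawing surjectivity from that example), which is the same calculation carried out in a different normalisation.
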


\begin{proof}
	Let $\chi \in \P_3$ and fix a representative $\mathfrak{F} = ((\F_0,\F_1,\F_2))$ of $\xi_3(\chi)$, where $\F_i = ( V_i , \eta_i )$ for $ i= 0, 1, 2$. As in Theorem \ref{thm:cross_triple}, after a projective transformation, we can assume
	$$
	V_0 = \left[
	\begin{matrix}
	0 \\ 0 \\ 1
	\end{matrix} \right], \quad
	V_1 = \left[
	\begin{matrix}
	1 \\ 0 \\ 1
	\end{matrix} \right], \quad
	V_2 = \left[
	\begin{matrix}
	0 \\ 1 \\ 1
	\end{matrix} \right], \quad
	\eta_1 \eta_2  = \left[
	\begin{matrix}
	1 \\ 1 \\ 1
	\end{matrix} \right],
	$$
	$$
	 \eta_1  = \left[
	\begin{matrix}
	1 : 0 : -1
	\end{matrix} \right], \quad
	 \eta_2 = \left[
	\begin{matrix}
	0 : 1 : -1
	\end{matrix} \right], \qquad
	 \eta_0 = \left[
	\begin{matrix}
	t : 1 : 0
	\end{matrix} \right], \quad t \in \RR.
	$$
	Recall that $\cancel{3}(\mathfrak{F}) \in \RR \setminus \{0\}$ because $ \eta_0 $ is a line through $ V_0 $ which is disjoint from $ V_2 $ or $ V_1 $. In this setting, $\chi$ is uniquely determined by $\cancel{3}(\mathfrak{F})$ so injectivity is immediate. Theorem \ref{thm:cross_triple} and the discussion in \S\ref{subsubsec:positivity_of_cross} imply that $\cancel{3}(\mathfrak{F})$ is positive if and only if $ \eta_0 $ and $ V_0 (\eta_1 \eta_2 )$ lie in different regions of $\RP2 \setminus \{ V_0V_1 , V_0V_2 \}$. It follows that $\cancel{3}(\mathfrak{F})>0$ if and only if $\mathfrak{F}$ represents an element of $\P_3$, proving that $\cancel{3} (\P_3) = \RR_{>0}$.
\end{proof}

Now we are going to show something similar for $\P^*_4$ using both triple ratios and quadruple ratios. We define $g \colon P^*_4 \rightarrow (\RR \PP^1)^4 $ as follows. For $\chi \in \P^*_4$, choose a representative $\mathfrak{F} = (\F_0,\F_1,\F_2,\F_3)$ of $\xi^*_4(\chi)$, with $\F_i = ( V_i , \eta_i )$ for $ i = 0,1,2,3$. Then $g(\chi) := (t_{012},t_{023},e_{02},e_{20})$, where
\begin{align*}
t_{012} :=& \cancel{3}((\F_0,\F_1,\F_2)),\\
t_{023} :=& \cancel{3}((\F_0,\F_2,\F_3)),\\
e_{02} :=& \cancel{4}(\F_0, \F_1, \F_2, \F_3) = \cancel{3}(( ( V_0 , \eta_0 ),( V_3 , V_2V_3 ) , ( V_1 , V_1V_2 ) )), \\
e_{20} :=& \cancel{4}(\F_2, \F_3, \F_0, \F_1) = \cancel{3}(( ( V_2 , \eta_2 ),( V_1 , V_0V_1 ) , ( V_3 , V_3V_0 ) )).
\end{align*}
Once again, $g$ is well-defined due to the projective invariance of triple ratio. One may visualise $\chi$ as in Figure \ref{fig:edge_coordinate}, with an additional edge crossing from $V_0$ to $V_2$. This motivates why $\cancel{4}(\mathfrak{F})$ is also called the \emph{edge ratio} of the oriented edge $V_0 V_2$. 

\begin{figure}[ht]
	\centering
	\includegraphics[width=5cm]{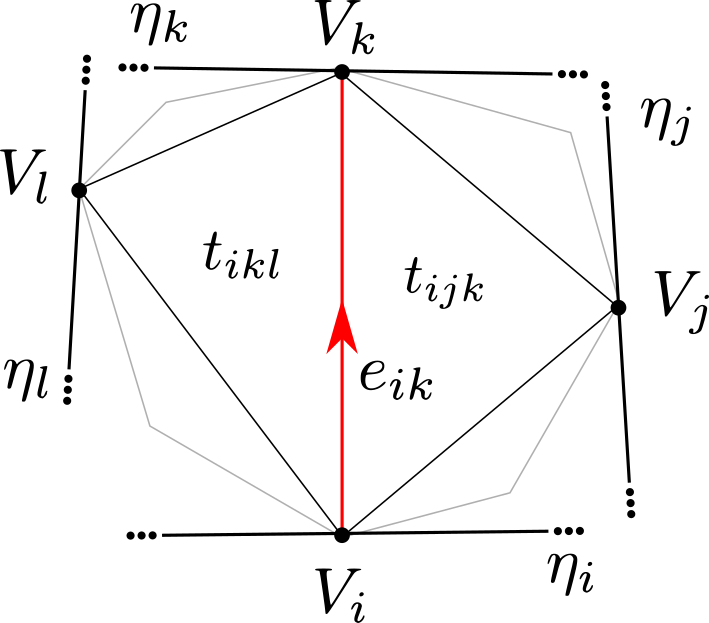}
	\caption{$e_{02}$ is the edge oriented from $V_0$ to $V_2$, adjacent to the two triangles $t_{012},t_{023}$.}
	\label{fig:edge_coordinate}
\end{figure}	

\begin{thm} \label{thm:param_P^*_4}
	$g$ establishes a bijection between $\P^*_4$ and $\RR^4_{>0}$.
\end{thm}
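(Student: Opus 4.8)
**Proof proposal for Theorem \ref{thm:param_P^*_4}**

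The plan is to show $g$ is a bijection onto $\RR^4_{>0}$ by establishing surjectivity and injectivity separately, exploiting the freedom of the $\SL(3,\RR)$ action to place four of the flag data into a normal form, just as in the proof of Theorem \ref{thm:param_P_3}. First I would fix a representative $\mathfrak{F}=(\F_0,\F_1,\F_2,\F_3)$ of $\xi^*_4(\chi)$ and use the simple transitivity of $\SL(3,\RR)$ on $4$--tuples of points in general position to normalise the four vertices $V_0,V_1,V_2,V_3$ (equivalently, to normalise $V_0,V_1,V_2$ together with one auxiliary point such as $\eta_1\eta_2$) to explicit coordinates. This kills the entire gauge freedom, so that $\chi$ is then determined by the remaining scalar parameters describing the lines $\eta_0,\eta_1,\eta_2,\eta_3$. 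The line $\eta_i$ passes through $V_i$ and, by general position, misses the other three vertices, so each $\eta_i$ is cut out by a single real parameter.

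Next I would express the four coordinates $t_{012},t_{023},e_{02},e_{20}$ as explicit functions of these line parameters. By the definitions, $t_{012}=\cancel3((\F_0,\F_1,\F_2))$ depends only on $\eta_0,\eta_1,\eta_2$ and $t_{023}=\cancel3((\F_0,\F_2,\F_3))$ depends only on $\eta_0,\eta_2,\eta_3$; using the alternative triple-ratio expressions for $e_{02}$ and $e_{20}$ supplied in the definition of $g$ (together with Theorem \ref{thm:cross_triple}), one checks that the $4\times 4$ assignment from line parameters to $(t_{012},t_{023},e_{02},e_{20})$ is invertible. The cleanest route is to observe that fixing $t_{012}$ pins down $\eta_0$ relative to $\eta_1,\eta_2$, after which $e_{02}$ pins down the remaining positional parameter for $V_3$ along its constraint, and then $t_{023}$ and $e_{20}$ determine $\eta_3$ and the last freedom; each step inverts a cross-ratio or triple-ratio relation monotonically, giving a one-to-one correspondence. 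This yields injectivity of $g$ on $\P^*_4$.

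For the positivity and surjectivity statement, I would invoke the characterisation already proved for triples: by Theorem \ref{thm:param_P_3}, a triple of flags lies in $\P_3$ precisely when its triple ratio is positive. Since a marked oriented pair of strictly inscribed $4$--gons restricts, on each of the two sub-triangles cut out by the diagonal $V_0V_2$, to a strictly inscribed pair of triangles, positivity of $t_{012}$ and $t_{023}$ is equivalent to the two triangular sub-configurations being genuine inscribed triangles. The edge coordinates $e_{02},e_{20}$ must then be shown positive exactly when the two inscribed triangles fit together convexly across the diagonal, i.e.\ when $V_1$ and $V_3$ lie on opposite sides of $V_0V_2$ in the configuration's orientation; by the positivity interpretation of the cross ratio in \S\ref{subsubsec:positivity_of_cross} (via $\cancel4=\cross(\eta_0,V_0V_3,V_0V_2,V_0V_1)$), this is precisely the condition $e_{02},e_{20}>0$. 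Conversely, given any $(t_{012},t_{023},e_{02},e_{20})\in\RR^4_{>0}$, the inversion from the previous paragraph produces flags with all four quantities positive, and the positivity dictionary guarantees the resulting configuration is a strictly inscribed, correctly oriented marked pair of quadrilaterals, establishing surjectivity onto $\RR^4_{>0}$.

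The main obstacle I expect is the convexity bookkeeping in the last paragraph: unlike the triangle case, where a single triple ratio controls everything, for the quadrilateral one must verify that positivity of the two triangle coordinates \emph{together with} the two edge coordinates is both necessary and sufficient for the global convexity of the inscribed $4$--gon and its circumscribed companion, and that no additional inequality is hidden. Carefully matching the sign conventions of $\cancel4$ as an edge ratio to the geometric side-of-diagonal condition, and confirming that the gluing of the two inscribed triangles along $V_0V_2$ cannot introduce a reflex vertex when all four coordinates are positive, is where the real care is required.
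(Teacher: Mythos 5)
Your proposal is correct and follows essentially the same route as the paper's proof: normalize a projective basis extracted from the configuration via simple transitivity of $\SL(3,\RR)$, then invert the four ratios one at a time (each being a cross ratio or triple ratio with all but one argument fixed, hence bijective), with positivity of $e_{02},e_{20}$ equivalent to convex placement of the remaining vertex and positivity of the triple ratios equivalent to the supporting lines missing the inscribed triangles --- exactly the paper's scheme, which fixes $V_0,V_2,V_3,\eta_0\eta_2$ and recovers $\eta_3$ from $t_{023}$, then $V_1$ from $e_{02},e_{20}$, then $\eta_1$ from $t_{012}$. The only slip is cosmetic: having normalized all four vertices there is no positional freedom left for $V_3$, so your stated inversion order only parses under your parenthetical alternative normalization (fixing $V_0,V_1,V_2$ and an auxiliary intersection point); the convexity bookkeeping you flag at the end is precisely where the paper's proof does its work, via Theorem \ref{thm:param_P_3} and the observation that $e_{02},e_{20}>0$ if and only if $V_1$ lies in the triangle $\langle V_0,V_2,\eta_0\eta_2\rangle$.
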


\begin{proof}	
	Let $\chi \in \P^*_4$ and choose a representative $\mathfrak{F} = (\F_0,\F_1,\F_2,\F_3)$ of $\xi^*_4(\chi)$, with $\F_i = ( V_i , \eta_i )$. As in Theorem \ref{thm:param_P_3}, we assume without loss of generality that $V_0$, $V_2$, $V_3$ and $\eta_2 \eta_0$ are fixed at an arbitrary generic quadruple of points in some affine patch $\A$ of $\RP2$.
	
	\begin{figure}[ht]
		\centering
		\includegraphics[width=.99\linewidth]{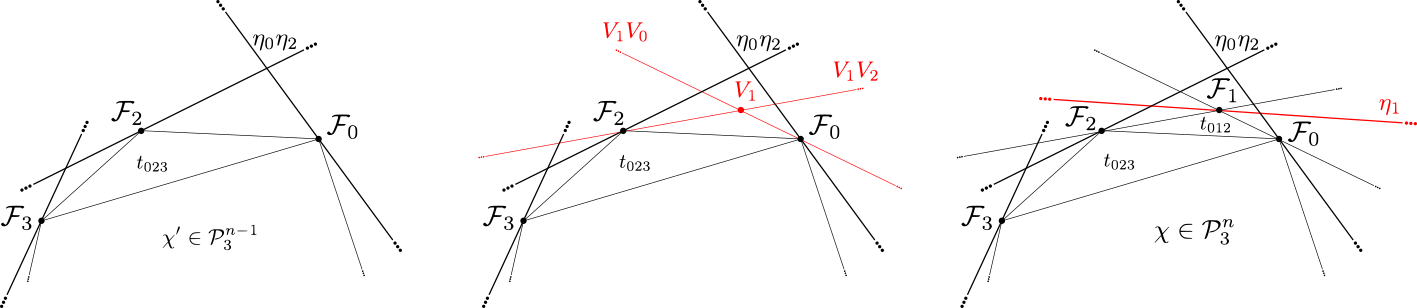}
		\caption{$V_1V_0$ and $V_1V_2$ are uniquely determined by $\cancel{4}(\F_0,\F_1,\F_2,\F_3)$ and $\cancel{4}(\F_2,\F_3,\F_0,\F_1)$, while $\eta_1$ is uniquely determined by $V_1$ and $\cancel{3}((\F_0,\F_1,\F_2))$.}
		\label{fig:triangulation_induction}
	\end{figure}

	Having fixed these points, the lines $\eta_2$ and $\eta_0$ are determined. By Theorem \ref{thm:param_P_3}, the line $\eta_3$ is uniquely determined by $t_{023}$, and any assignment of positive real number to $t_{023}$ gives rise to a unique, well-defined element of $\P_3$. This already ensures the injectivity of $g$.
	
	If necessary, we rearrange the affine patch $\A$ to contain this pair of triangles. In $\A$, a polygon may be referred to as the convex hull of its vertices without ambiguity.
	
	Recall from \S\ref{subsubsec:Quadruple ratio} that the quadruple ratios $e_{02}$ and $e_{20}$ can be expressed as triple ratios
	$$ 
	e_{02} = \cancel{3}( ( V_0 , \eta_0 ) , ( V_3 , V_2V_3 ) , ( V_1 , V_1V_2 ) ) \mbox{ and } e_{20} = \cancel{3}( ( V_2 , \eta_2 ) , ( V_1 , V_1V_0 ) , ( V_3 , V_3V_0 ) ).
	$$ 
	As in Theorem \ref{thm:param_P_3}, the lines $V_0V_1$ and $V_2V_1$ are uniquely determined by $e_{02}$ and $e_{20}$, so $V_1$ is uniquely determined. Furthermore, 
	$V_1$ belongs to the triangle $\langle V_0,V_2,\eta_0 \eta_2 \rangle$ if and only if $e_{02} >0$ and $e_{20}>0$. Equivalently, the quadrilateral $\langle V_0,V_1,V_2,V_3 \rangle$ in $\A$ is convex if and only if $e_{02}, e_{20} \in \RR_{>0}$. This construction is shown in Figure \ref{fig:triangulation_induction}.

	Given that $V_1$, $\F_0$ and $\F_2$ are now fixed, the line $\eta_1$ is uniquely determined by $t_{012}$, once again appealing to Theorem \ref{thm:param_P_3}. Since $\chi$ belongs to $\P^*_4$, $\eta_1$ does not intersect the triangle $\langle V_0,V_1, V_2 \rangle$. That happens if and only if $t_{012} >0$. Together with the previous discussion on $t_{012},t_{023}$ and $e_{02}$, this concludes the proof that $g$ surjects onto $\RR^4_{>0}$.
\end{proof}

The \emph{cyclic group of order four} $C_4 := \langle \alpha \ | \ \alpha^4 = 1 \rangle$, acts on $\P^*_4$ by removing the marking, namely $\P^*_4 / C_4 \cong \P_4$. The corresponding action of $C_4$ on $\RR^4_{>0}$ can be thought of as the change of coordinates:
$$
\alpha \cdot \left( t_{012},t_{023},e_{02},e_{20} \right) := \left(t_{123},t_{013},e_{13},e_{31} \right),
$$
where
\begin{align*}
t_{123}= \frac{t_{012} ( e_{02} t_{023}  e_{20}+ e_{02} t_{023}+ e_{02}+1)}{ e_{02} t_{012}  e_{20}+t_{012}  e_{20}+ e_{20}+1},\quad & \quad
t_{013}= \frac{t_{023} ( e_{02} t_{012}  e_{20}+t_{012}  e_{20}+ e_{20}+1)}{ e_{02} t_{023}  e_{20}+ e_{02} t_{023}+ e_{02}+1},\\
e_{13} = \frac{ e_{20}+1}{( e_{02}+1) t_{012}  e_{20}},\quad & \quad
e_{31} = \frac{ e_{02}+1}{ e_{02} t_{023} ( e_{20}+1)}.
\end{align*}
This change of coordinates will be analysed again in $\S\ref{subsec:change-of-coord}$.

\begin{figure}[h]
	\centering
	\includegraphics[width=.9\linewidth]{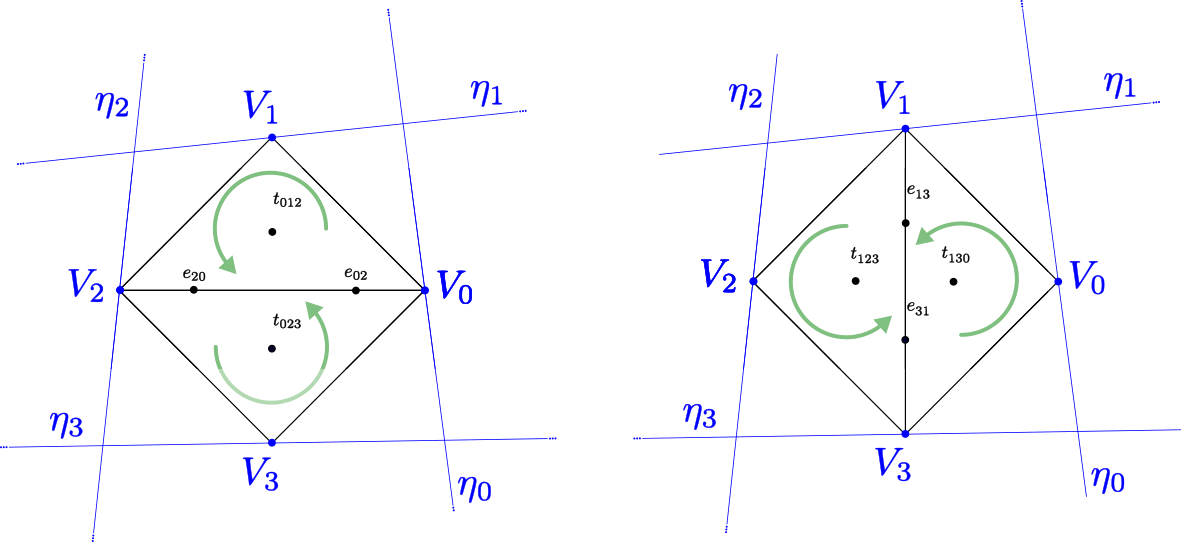}
	\caption{The change of coordinates for the action of $C_4$. The quadrilateral formed by $V_i$, $i=0, \dots, 3$ is depicted in an affine patch such that the triangles are oriented anticlockwise.}
	\label{fig:change_of_coord_C4}
\end{figure}

%%%%%%%%%%%%%%%%%%%%%%%%%%%%%%%%%%%%%
\newpage
\section{The parameterisation of the moduli space} \label{sec:Moduli-Spaces}

Ratios of flags can be viewed as the algebraic underpinning of Fock and Goncharov's parameterisation of the moduli spaces of interest in this paper. In addition, the parameterisation is based in geometry and topology through framings of the ends and ideal triangulations.
We will now describe these ingredients and then give a self-contained construction of the parameterisation of Fock-Goncharov. Our proofs in this section are different from those found in \cite{Fock-moduli-2007}.

%%%%%%%%%%%%%%%%%%%%%%%%%%%%%%%%%%%%%%%%%%%%%%%%%%%

\subsection{Framing the ends}
\label{subsec:classification_of_ends}

In the introduction, we described the spaces $\Ttp(S)$ and $\Ttpd(S)$ corresponding to framed structures where each end is either \emph{minimal} or \emph{maximal}, and the distinction between the spaces arises from the framings that are allowed for a hyperbolic end. In this section, we give a different definition of these spaces and show in the proofs of Theorems~\ref{thm:global-coord} and \ref{thm:global-coord-dual} that these definitions are in fact equivalent.

We recall the notation from the introduction: $S_g$ denotes a closed, orientable surface of finite genus $g$; $D_1, \ldots, D_n$ are open discs on $S_g$ with pairwise disjoint closures; and $p_k \in D_k.$ Then $S_{g,n} = S_g \setminus \{p_1, \ldots, p_n\}$ is a punctured surface and $E_k = \overline{D_k} \setminus \{p_k\}$ is an end of $S_{g,n}.$ A compact core of $S_{g,n}$ is $S_{g,n}^c = S_g \setminus \cup D_k.$ We also write $S_g = S_{g,0}.$ Furthermore, we fix a marked convex projective structure $(\Omega,\Gamma,f)$ on $S= S_g$, with $\Omega = \dev(\widetilde{S})$ and $\Gamma = \hol(\pi_1(S))$. For each end $E_k$, we identify $\pi_1(E_k)$ with its image in $\pi_1(S)$ and call it a \emph{peripheral subgroup} of $\pi_1(S).$ Similarly, $\Gamma_k := \hol(\pi_1(E_k))$ is called a \emph{peripheral subgroup} of $\Gamma$.

Let $\gamma$ be a generator of the peripheral subgroup $\pi_1(E)$ of an end $E$. Since $\hol(\gamma)$ preserves the convex domain $\Omega = \dev(\widetilde{S})$, $\hol(\gamma) \in \SL(3,\RR)$ has three positive real eigenvalues, counted with multiplicity. By a standard argument of convex projective geometry (see \cite[section $2$, pg. 193]{Cooper-convex-2015} ), $\hol(\gamma)$ is conjugate to one of the following three Jordan forms:
$$
\stackrel{
	J_1 = \left(
	\begin{array}{ccc}
	\lambda_1 & 0 & 0 \\
	0 & \lambda_2 & 0 \\
	0 & 0 & \frac{1}{ \lambda_1 \lambda_2} \\
	\end{array}
	\right)
\qquad
	J_2 = \left(
	\begin{array}{ccc}
	\lambda_1^2 & 0 & 0 \\
	0 & \frac{1}{\lambda_1 } & 1 \\
	0 & 0 & \frac{1}{\lambda_1} \\
	\end{array}
	\right)
\qquad
	J_3 = \left(
	\begin{array}{ccc}
	1 & 1 & 0 \\
	0 & 1 & 1 \\
	0 & 0 & 1 \\
	\end{array}
	\right)
}{\lambda_1 > \lambda_2 > \frac{1}{ \lambda_1 \lambda_2} \hspace{3cm} \lambda_1 \neq 1  \hspace{4cm}   }
$$

In the first case, $\hol(\gamma) \sim J_1$ fixes three pairwise distinct points of $\RP2$ and preserves the lines through them. We say that it is \emph{totally hyperbolic}. We refer to $E$ as a \emph{hyperbolic end}. In the second case, $\hol(\gamma) \sim J_2$ fixes only two distinct points of $\RP2$, preserves the line through them and acts as a unipotent transformation on a second line. In this case we will say that $\hol(\gamma)$ is \emph{quasi-hyperbolic} and $E$ is a \emph{special end}. In the last case, $\hol(\gamma) \sim J_3$ is \emph{parabolic}, it fixes a unique point and preserves a unique line through it. Hence $E$ is a \emph{cusp}.

\begin{figure}[h]
	\centering
	\includegraphics[width=0.9\textwidth]{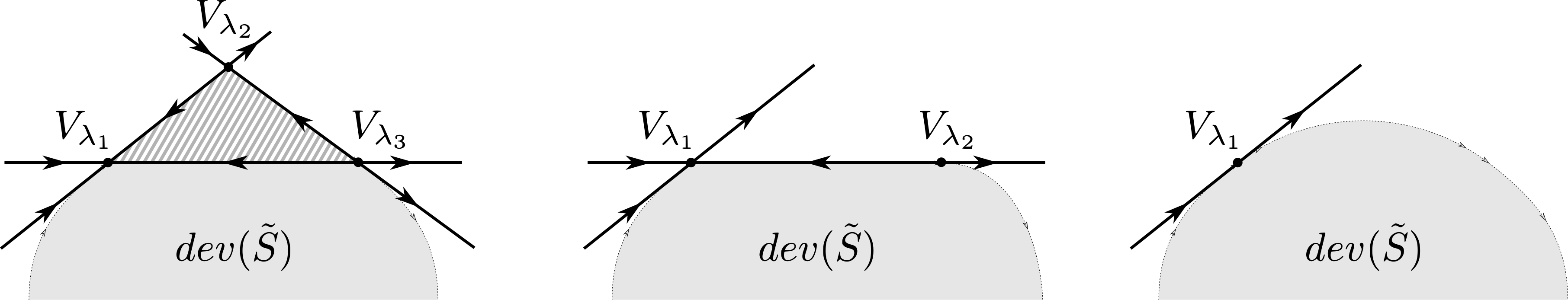}
	\caption{Actions of a totally hyperbolic, quasi hyperbolic and parabolic peripheral element, respectively.}
	\label{peripheral_action}
\end{figure}

The action of $\hol(\pi_1(E))$ preserves $\Omega$, so $\bOmega$ always contains the attracting and repelling fixed points of the peripheral element $\hol(\gamma)$ (whether or not they are distinct). In particular, when $E$ is a hyperbolic or special end, the interior of a segment between the the attracting and repelling fixed points is either contained in $\Omega$ or $\bOmega$. Fix an affine patch $\mathcal{A}$ containing $\cOmega$. A hyperbolic end is \emph{maximal} if $\Omega$ contains the interior (with respect to $\mathcal{A}$) of a triangle spanned by the three fixed points of $\hol(\gamma)$, and it is \emph{minimal} if $\Omega$ is contained in the complement of this triangle. The triangle in question is shaded in the left image of Figure \ref{peripheral_action}. Note that $\Omega$ cannot be equal to the interior of such a triangle since otherwise $S$ would itself be an end, hence an annulus, contradicting our hypothesis on the topology of $S$. We define $\Ttm(S) \subset \Tt(S)$ to be the subset of structures for which \emph{every hyperbolic end is either maximal or minimal}.

Each end may also be endowed with a \emph{framing}. There are two definitions which are related by projective duality.
A \emph{positive framing} of $(\Omega, \Gamma, f) \in \Ttm(S)$ is a choice, for each end $E_k$, of a pair consisting of:
\begin{itemize}
	\item if $E_k$ is a maximal hyperbolic end: the $\Gamma$--orbit of the saddle point $V$ of $\Gamma_k$ and the $\Gamma$--orbit of a supporting line $\eta$ through $V$ that is invariant under $\Gamma_k$;
	\item otherwise: the $\Gamma$--orbit of a fixed point $V$ of $\Gamma_k$ in the frontier of $\Omega$, and the $\Gamma$--orbit of a supporting line $\eta$ through $V$ that is invariant under $\Gamma_k$.
\end{itemize}
Under this definition, there are two possible positive framings for each maximal end, for each special end there are three, and for each minimal end there are four. Each cusp has a unique positive framing. Two positively framed structures are equivalent if and only if they represent the same element of $\Ttframe(S)$. We denote by $\Ttp(S)$ the set of equivalence classes of \emph{positively framed marked properly convex projective structures with maximal or minimal hyperbolic ends} on $S$. We call $\pi^+$ the natural projection of $\Ttp(S)$ onto $\Ttm(S)$ defined by forgetting the framing.

In terms of notation, we denote the structure $(\Omega, \Gamma, f)$ with a fixed positive framing by $(\Omega, \Gamma, f)^+.$

A \emph{negative framing} differs from a positive framing only at hyperbolic ends. In the former, the choice of the pair consisting of a non-saddle point and the line through it and the saddle point was only allowed for minimal ends. In a \emph{negative framing}, this choice is only allowed for maximal ends instead, while keeping everything else the same. This means that for each maximal end there are four possible choices, while for each minimal end there are now two. We denote the structure $(\Omega, \Gamma, f)$ with a fixed negative framing by $(\Omega, \Gamma, f)^-.$
The set $\Ttpd(S)$ is the set of equivalence classes of \emph{negatively framed marked properly convex projective structures with maximal or minimal hyperbolic ends} on $S$, and $\pi^- : \Ttpd(S) \rightarrow \Ttm(S)$ is the associated projection map. 

Projective duality induces natural \emph{duality maps} $\sigma^+ \co \Ttp(S) \to \Ttpd(S)$ and $\sigma^- \co \Ttpd(S) \to \Ttp(S)$ which are mutually inverse. This will be discussed in detail in \S\ref{subsec:dual-proj-st}.

It will be shown in \S\ref{subsec:fin_vol} that finite-volume structures on $S$ are those in which every end is a cusp. In particular, finite-volume structures have a unique framing so there are injective maps $\Ttfin(S) \hookrightarrow{} \Ttp(S)$ and $\Ttfin(S) \hookrightarrow{} \Ttpd(S)$.

%%%%%%%%%%%%%%%%%%%%%%%%%%%%%%%%%%%%%

\subsection{On cusps and boundary components}

It was already mentioned in the introduction that Fock and Goncharov offer an interpretation of cusps and geodesic boundary components depending on the flags used to decorate the ends; the different cases can be glimpsed from Figure~\ref{peripheral_action}.
In this section, we offer a different interpretation, using \emph{algebraic horospheres}, which results in a different classification for special ends.

Let us first recall some definitions (see \cite[Section $3$ pg. 16]{Cooper-convex-2015} for details). Let $\Omega \subset \RP2$ be a properly convex domain, $p \in \partial \overline{\Omega}$ and $H \subset \RP2$ a supporting hyperplane to $\Omega$ at $p$. Define $\SL(H,p)$ to be the subgroup of $\SL(3,\RR)$ which preserves both $H$ and $p$, and $\mathcal{G} = \mathcal{G}(H,p)$ to be the subgroup of $\SL(H,p)$ of those elements $A \in \SL(H,p)$ which satisfy:
\begin{itemize}
	\item $A$ acts as the identity on $H$,
	\item $A(\eta) = \eta$ for every line $\eta$ in $\RP2$ passing through $p$.
\end{itemize}
If $\eta$ is a line containing $p$ that is not contained in $H$, then $\mathcal{G}$ acts on $\eta$ as the group $\Par(\eta,p)$ of parabolic transformations of $\eta$ fixing $p$, thus there is an isomorphism $(\RR,+) \cong \Par(\eta,p) \cong \mathcal{G}$.

Let $\mathcal{S}_0 \subset \partial \overline{ \Omega}$ be the subset of $\partial \overline{ \Omega}$ obtained by deleting $p$ and all line segments in $\partial \overline{ \Omega}$ with one endpoint at $p$. A \emph{generalized horosphere centred on $(H,p)$} is the image $\mathcal{S}_A := A(\mathcal{S}_0)$ of $\mathcal{S}_0$ under $A \in \mathcal{G}(H,p)$. An \emph{algebraic horosphere}, or simply \emph{horosphere}, is a generalized horosphere contained in $\Omega$.

If $B \in \SL(H,p)$ preserves $\Omega$, it is well-known that $B$ acts on horospheres by
$$
B(\mathcal{S}_A) = B A (\mathcal{S}_0) = B A B^{-1}(B \mathcal{S}_0) = B A B^{-1} (\mathcal{S}_0) = \mathcal{S}_{BAB^{-1}}.
$$
Such action can be expressed in terms of the \emph{exponential horosphere displacement function} $\tau$, defined as follows. Given $B \in \SL(H,p)$, let $\lambda_+(B)$ be the eigenvalue for the eigenvector $p$. If $v \in \RP2 \setminus H$, then $Bv + H = \lambda_-v + H$ and $\lambda_- = \lambda_-(B)$ is another eigenvalue of $B$. This does not depend on the choice of $v$. The \emph{exponential horosphere displacement function} with respect to $(H,p)$ is the homomorphism $\tau : \SL(H,p) \rightarrow (\RR^*, \times)$ given by
$$
\tau(B) = \frac{\lambda_+(B)}{\lambda_-(B)}.
$$
Choose an isomorphism from $(\RR,+)$ to $\Par(\eta,p)$ given by $ t \mapsto A_t$ and denote
$$
\mathcal{S}_t = A_t (\mathcal{S}).
$$

\begin{thm}[\cite{Cooper-convex-2015}] \label{thm_CLT_horospheres}
	If $B \in \SL(H,p)$ preserves $\Omega$, then $B(\mathcal{S}_t) = \mathcal{S}_{\tau(B)t}$.
\end{thm}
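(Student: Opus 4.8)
The plan is to reduce the statement to the conjugation formula $B(\mathcal{S}_A)=\mathcal{S}_{BAB^{-1}}$ established just before the theorem, together with a single computation describing how conjugation by $B$ reparametrises the one-parameter group $\mathcal{G}$. Since $A_t\in\mathcal{G}$ and $\mathcal{S}_t=A_t(\mathcal{S}_0)=\mathcal{S}_{A_t}$, that formula gives $B(\mathcal{S}_t)=\mathcal{S}_{BA_tB^{-1}}$, so everything comes down to proving that $BA_tB^{-1}=A_{\tau(B)t}$.

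First I would observe that $\mathcal{G}$ is normal in $\SL(H,p)$, so that $BA_tB^{-1}$ is again an element of $\mathcal{G}$. This is immediate from the defining properties of $\mathcal{G}$: if $A$ acts as the identity on $H$ and preserves every line through $p$, then so does $BAB^{-1}$, because $B$ and $B^{-1}$ both preserve $H$ and $p$. Hence conjugation by $B$ is a continuous automorphism of $\mathcal{G}\cong(\RR,+)$, which must be multiplication by some nonzero scalar $c(B)$; that is, $BA_tB^{-1}=A_{c(B)t}$. It then remains only to identify $c(B)=\tau(B)$.

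The key step is this identification, which I would carry out on the Lie algebra. The group $\mathcal{G}$ is unipotent with one-dimensional Lie algebra spanned by a rank-one nilpotent $N$; writing $p$ for a vector representing the point $p$ and $H$ for a covector whose kernel is the plane corresponding to the line $H$, the conditions defining $\mathcal{G}$ force $\ker N\supseteq\ker H$ and $\operatorname{im}N=\RR p$, so that $N=p\otimes H$ (the map $v\mapsto H(v)\,p$). Since $t\mapsto A_t$ is a continuous isomorphism onto $\mathcal{G}$, we may write $A_t=\exp(ktN)$ for some constant $k\neq 0$, whence $BA_tB^{-1}=\exp\!\big(kt\,BNB^{-1}\big)$ and $c(B)$ is exactly the eigenvalue of $\operatorname{Ad}(B)$ on $N$. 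I would then compute
$$
BNB^{-1}=(Bp)\otimes(HB^{-1})=(\lambda_+ p)\otimes(\lambda_-^{-1}H)=\frac{\lambda_+}{\lambda_-}\,(p\otimes H)=\tau(B)\,N,
$$
where $Bp=\lambda_+ p$ is the definition of $\lambda_+$, and $HB=\lambda_- H$ (equivalently $HB^{-1}=\lambda_-^{-1}H$) follows from $Bv+H=\lambda_- v+H$ by applying $H$. Consequently $BA_tB^{-1}=\exp(kt\,\tau(B)N)=A_{\tau(B)t}$, independent of the constant $k$, and combining with the conjugation formula yields $B(\mathcal{S}_t)=\mathcal{S}_{\tau(B)t}$.

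The main obstacle is the clean identification of the generator of $\mathrm{Lie}(\mathcal{G})$ as the rank-one nilpotent $p\otimes H$, and relating the quantities $\lambda_\pm$ to the left/right eigen-relations $Bp=\lambda_+ p$ and $HB=\lambda_- H$; once these are pinned down, the adjoint computation is a one-line outer-product manipulation. A coordinate alternative — normalising $p=[0:0:1]^t$ and $H=[1:0:0]$, so that $\mathcal{G}=\{I+sE_{31}:s\in\RR\}$ and every $B\in\SL(H,p)$ is block lower-triangular with top-left entry $\lambda_-$ and bottom-right entry $\lambda_+$ — yields the same scalar $\tau(B)=\lambda_+/\lambda_-$ by direct matrix multiplication, and could be substituted if a more elementary presentation is preferred.
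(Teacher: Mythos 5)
Your proof is correct. Note that the paper does not actually prove this theorem --- it quotes it from Cooper--Long--Tillmann \cite{Cooper-convex-2015} --- but it does record the conjugation identity $B(\mathcal{S}_A)=\mathcal{S}_{BAB^{-1}}$ immediately beforehand (this is where the hypothesis that $B$ preserves $\Omega$ is used, via $B\mathcal{S}_0=\mathcal{S}_0$), and your argument is exactly the natural completion of that identity: normality of $\mathcal{G}$ in $\SL(H,p)$, the observation that conjugation by $B$ is therefore a continuous automorphism of $\mathcal{G}\cong(\RR,+)$, hence scaling by some $c(B)$, and the identification $c(B)=\tau(B)$ via the adjoint action on the generator $N=p\otimes H$. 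I verified the key computation: in your normalised coordinates $p=[0:0:1]^t$, $H=[1:0:0]$, one has $\mathcal{G}=\{I+sE_{31}\}$, every $B\in\SL(H,p)$ is lower triangular with $\lambda_+(B)=b_{33}$ and $\lambda_-(B)=b_{11}$, and $BE_{31}B^{-1}=(b_{33}/b_{11})E_{31}=\tau(B)E_{31}$; likewise the covector relation $HB=\lambda_- H$ does follow from $Bv+H=\lambda_- v+H$ together with $B$-invariance of the plane $\ker H$. Since the final answer is independent of the normalising constant $k$ in $A_t=\exp(ktN)$, the choice of isomorphism $(\RR,+)\cong\Par(\eta,p)\cong\mathcal{G}$ is immaterial, as you say. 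In short: no gap, and your write-up supplies precisely the details the paper delegates to the citation.
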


Let $x  \in \Ttp(S)$, and let $\gamma$ be a generator of the peripheral subgroup $\pi_1(E)$. Let $\Omega  := \dev (\widetilde{S})$. If $\hol (\gamma)$ is totally hyperbolic, it fixes three pairwise distinct points $\{V_{\lambda_1},V_{\lambda_2},V_{\lambda_3} \}$ of $\RP2$ and preserves the lines $\{H_{12},H_{13},H_{23}\}$ through them (Figure \ref{peripheral_action} on the left). Moreover, $\Omega$ may or may not contain the shaded triangle. Horospheres centred at $(H_{ij},V_{\lambda_i})$ look differently depending on the triple $(\Omega,V_{\lambda_i}, H_{ij})$, (the different horospheres are depicted in Figure \ref{horospheres_totally_hyperbolic}). In all cases $\tau(B) \not= 1$, therefore $\hol (\gamma)$ does not preserve the horospheres.

\begin{figure}[h]
	\centering
	\includegraphics[width=0.9\textwidth]{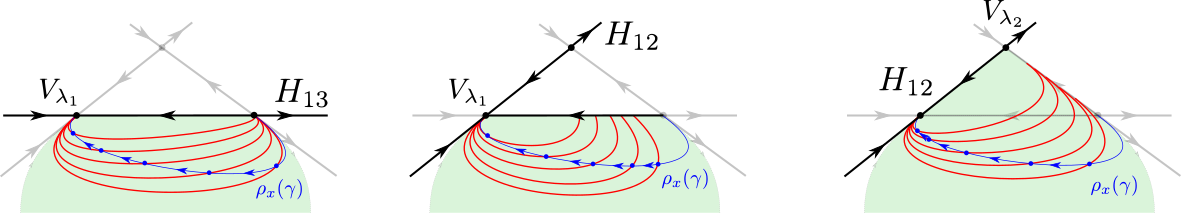}
	\caption{Horospheres are not preserved by a totally hyperbolic peripheral element}
	\label{horospheres_totally_hyperbolic}
\end{figure}	

When $\hol(\gamma)$ is parabolic, it fixes a unique point $V_{\lambda_1} \in \partial \overline{\Omega}$ and preserves a unique supporting hyperplane $H$ through $V_{\lambda_1}$. Since $\tau(B) = 1$, Theorem \ref{thm_CLT_horospheres} implies that $\hol(\gamma)$ preserves horospheres at $(H,p)$, as depicted in Figure \ref{horospheres_parabolic}. Hence horospheres are $\hol(\gamma)$--orbits in $\Omega$.

\begin{figure}[h!]
	\centering
	\includegraphics[width=4cm]{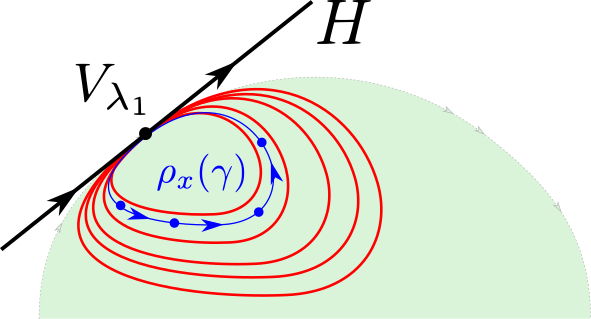}
	\caption{Horospheres are preserved at a cusp}
	\label{horospheres_parabolic}
\end{figure}

When $\hol(\gamma)$ is quasi-hyperbolic, we a have more subtle situation, in between the two previous ones. In this case $\hol(\gamma)$ fixes two points $\{V_{\lambda_1},V_{\lambda_2} \}$ on $\partial \overline{\Omega}$, preserves the line $H_{12}$ through them and acts as a unipotent transformation on a second line $H_1$ containing only $V_{\lambda_1}$. On one hand, the exponential horosphere displacements with respect to $(H_{12},V_{\lambda_2})$ and $(H_1,V_{\lambda_1})$ are non-trivial, behaving like a hyperbolic end. Whereas for $(H_{12},V_{\lambda_1})$, $\tau(\hol(\gamma)) = 1$ and algebraic horospheres are preserved, analogously to the parabolic case. We demonstrate this situation in Figure \ref{horospheres_quasi_hyperbolic}.

\begin{figure}[h!]
	\centering
	\includegraphics[width=0.9\textwidth]{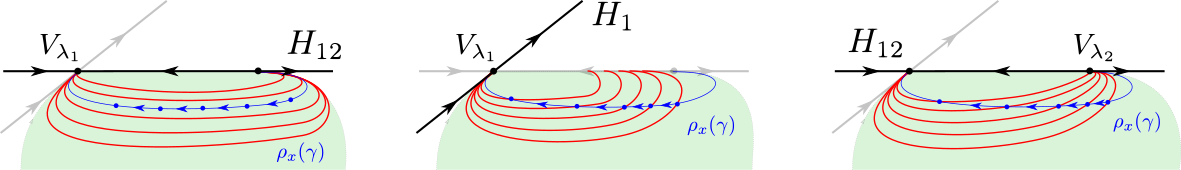}
	\caption{Special boundary components preserve some horospheres and permute others}
	\label{horospheres_quasi_hyperbolic}
\end{figure}

%%%%%%%%%%%%%%%%%%%%%%%%%%%%%%%%%%%%%

\subsection{Ideal triangulations of surfaces} \label{sec:Ideal_triangulations _of_surfaces}

Here we present some well-known results which we will use extensively. By construction, $S_{g,n} = S_g \setminus \mathcal{P},$ where $\mathcal{P} = \{p_1, \ldots, p_n\}.$ An \emph{essential arc} in $S_{g,n}$ is the intersection with $S_{g,n}$ of a simple arc embedded in $S_{g}$ that has endpoints in $\mathcal{P},$ interior disjoint from $\mathcal{P}$ and is not homotopic (relative to $\mathcal{P}$) to a point in $S_{g}.$ An \emph{ideal triangulation} $\tri$ of $S_{g,n}$ is a union of pairwise disjoint and non-homotopic essential arcs. The components of $S_{g,n}\setminus \tri$ are \emph{ideal triangles}, and we regard two ideal triangulations of $S_{g,n}$ as equivalent if they are isotopic via an isotopy of $S_{g}$ that fixes $\mathcal{P}$.

\begin{lem} \label{ideal_triangulation}
The surface $S_{g,n}$ admits an ideal triangulation. Moreover, every ideal triangulation has $-2 \chi(S_{g,n})$ ideal triangles.
\end{lem}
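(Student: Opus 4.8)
The plan is to establish existence by extracting a \emph{maximal} system of arcs, and then to obtain the triangle count from a purely combinatorial Euler characteristic computation that applies to \emph{any} ideal triangulation at once (thereby also proving the ``moreover'' clause for free).

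For existence I would first fix a collection $\tri$ of pairwise disjoint, pairwise non-homotopic essential arcs that is maximal with respect to inclusion. That such a maximal collection exists and is finite is standard: any family of pairwise disjoint, pairwise non-homotopic essential arcs in $S_{g,n}$ has cardinality bounded above in terms of $\chi(S_{g,n})$ (for instance by realising the arcs geodesically in a complete finite-area hyperbolic metric, which exists since $\chi(S_{g,n})<0$). The crux is then to show that maximality forces every component $R$ of $S_{g,n}\setminus\tri$ to be an ideal triangle. I would argue by contradiction: after cutting along $\tri$, each $R$ is an open surface whose frontier consists of copies of arcs meeting at copies of the punctures, and if $R$ were not a triangle one can always produce a new essential arc inside $R$, disjoint from $\tri$ and non-homotopic to every arc of $\tri$, contradicting maximality. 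This requires a short case analysis. A bigon component would exhibit two arcs of $\tri$ as homotopic, and a monogon bounding a puncture-free disk would render an arc inessential; both are excluded. If $R$ has positive genus, contains a puncture in its interior (which also covers the monogon-enclosing-a-puncture case), or is a disk with at least four sides, then respectively an arc cutting a handle, an arc running to the interior puncture, or a diagonal extends the system. The only remaining possibility is that $R$ is a disk bounded by exactly three arcs whose three ideal vertices lie at punctures, i.e.\ an ideal triangle. Maximality also guarantees that every puncture is an endpoint of some arc, so all of $\mathcal{P}$ occurs as vertices.

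For the count I would fill in the punctures and regard $\tri$ as a (possibly singular) triangulation of the closed surface $S_g$ whose vertex set is exactly $\mathcal{P}$. Write $V$, $E$, $F$ for the numbers of vertices, edges (arcs) and faces (ideal triangles), so that $V=n$. Each triangle contributes three edge-sides and each edge, lying in the interior of $S_g$, borders two triangle-sides, so counting incidences gives $3F=2E$ (this identity is robust to self-adjacencies of triangles and edges). Euler's formula on $S_g$ reads $V-E+F=\chi(S_g)=2-2g$. Substituting $E=\tfrac{3}{2}F$ and $V=n$ yields $n-\tfrac12 F=2-2g$, hence $F=2\bigl(n-(2-2g)\bigr)=2\bigl(n-\chi(S_g)\bigr)$. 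Since $\chi(S_{g,n})=\chi(S_g)-n=2-2g-n$, this is exactly $F=-2\chi(S_{g,n})$. This derivation used only that $\tri$ is an ideal triangulation with vertex set $\mathcal{P}$, so it applies verbatim to every ideal triangulation and simultaneously shows that the number of triangles is independent of the chosen triangulation.

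The routine part is the counting paragraph, which is just bookkeeping with $3F=2E$ and Euler's formula. The main obstacle is the topological step in the existence argument, namely verifying that a \emph{maximal} system of disjoint non-homotopic essential arcs leaves only triangular complementary regions; the delicate points there are ruling out monogon and bigon components using essentiality and non-homotopy, and confirming that in every non-triangular case one genuinely obtains a new arc that is both disjoint from $\tri$ and non-homotopic into $\tri$.
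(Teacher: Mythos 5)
The paper does not actually prove this lemma: it appears among the ``well-known results'' of the section on ideal triangulations and is stated without proof, so there is no argument of the authors to compare yours against. On its own terms, your strategy is the standard one (a maximal system of disjoint, pairwise non-homotopic essential arcs, plus an Euler-characteristic count), and the counting half is correct: $3F=2E$ is indeed insensitive to self-identifications, Euler's formula for the induced CW structure on $S_g$ gives $F=2\bigl(n-\chi(S_g)\bigr)=-2\chi(S_{g,n})$, and this applies to every ideal triangulation at once. One line you should add there: for an \emph{arbitrary} ideal triangulation the vertex set is exactly $\mathcal{P}$ because a puncture lying in the interior of a complementary region would make that region non-simply-connected, hence not an ideal triangle; this is what justifies $V=n$ in general.

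The existence half, however, has a concrete gap: your case analysis of a complementary region $R$ is not exhaustive, so the assertion that ``the only remaining possibility'' is a triangle does not follow from the cases you list. You treat monogons, bigons, disks with at least four sides, regions of positive genus, and regions with an interior puncture, but $R$ can also be planar, free of interior punctures, and have \emph{two or more} boundary circles. This genuinely occurs: in $S_{1,1}$ a single non-separating essential arc is a (non-maximal) system whose complement is an annulus, each boundary circle consisting of one copy of the arc and one vertex --- it falls under none of your cases. The repair is easy but must be stated: if $R$ has at least two boundary components, take an embedded arc $d$ in $R$ joining vertices on two distinct boundary circles. Then $d$ is essential (if it bounded a disk $D$ in $S_g$ with interior disjoint from $\mathcal{P}$, an innermost argument would either exhibit an arc of the system bounding such a disk, contradicting essentiality, or force $D$ into the closure of $R$, exhibiting $d$ as cutting a disk off $R$, impossible for an arc joining distinct boundary circles), and $d$ is not homotopic rel endpoints into the frontier of $R$ --- its endpoints lie on different frontier components --- hence, by the bigon criterion for disjoint simple arcs, not homotopic to any arc of the system. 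So $d$ extends the system, and with this case added your contradiction argument closes correctly.
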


An \emph{edge flip} on an ideal triangulation consists of picking two distinct ideal triangles sharing an edge, removing that edge and replacing it with the other diagonal of the square thus formed.

For instance, any ideal triangulation of $S_{1,1}$ comprises three essential arcs and divides the surface into two ideal triangles. All of these ideal triangulations are combinatorially equivalent. However, performing an edge flip results in a non-isotopic ideal triangulation. The space of isotopy classes of ideal triangulations of the once-punctured torus naturally inherits the structure of the infinite trivalent tree, where vertices correspond to isotopy classes of ideal triangulations, and there is an edge between two such classes if and only if they are related by an edge flip. A well-known geometric realisation of this was described by Floyd and Hatcher \cite{Floyd-incompressible-1982}. In general, we have:

\begin{lem}[Hatcher~\cite{Hatcher-triangulations-1991}, Mosher~\cite{Mosher-tiling-1988}]\label{lem:finite_sequence_of_elementary_moves}
Any two ideal triangulations of $S$ are related by a finite sequence of edge flips. 
\end{lem}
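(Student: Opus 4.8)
The plan is to argue by induction on a geometric complexity that measures how far apart two triangulations are. Fix ideal triangulations $\tri$ and $\tri'$ of $S = S_{g,n}$ and isotope them, rel $\mathcal{P}$, into minimal position: every edge of $\tri$ meets every edge of $\tri'$ transversally and in the fewest points allowed within its isotopy class, so that in particular no edge of $\tri$ cobounds a bigon with an edge of $\tri'$. I would set the complexity $N(\tri,\tri')$ to be the total number of transverse intersection points between the edges of the two triangulations. This number is invariant under isotopy of either triangulation, and a flip alters $\tri$ only by replacing a single edge; so it suffices to prove that $N=0$ forces $\tri=\tri'$, and that whenever $N>0$ some admissible flip on $\tri$ strictly decreases $N$.

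For the base case $N=0$, each edge of $\tri'$ lies in the complement $S\setminus\tri$, a disjoint union of open ideal triangles. An essential arc contained in a single ideal triangle and running between two of its ideal vertices is isotopic rel $\mathcal{P}$ to one of the three sides, so every edge of $\tri'$ is isotopic to an edge of $\tri$. Since both triangulations have the same number of pairwise non-homotopic essential arcs (this is forced by Lemma~\ref{ideal_triangulation}, as the edge count is determined by the triangle count via the usual double-count of edge--triangle incidences), I conclude $\tri=\tri'$ up to isotopy.

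For the inductive step I would take $N>0$ and pick an edge $e$ of $\tri$ whose interior meets $\tri'$. Looking at the intersection of $\tri'$ with the two triangles of $\tri$ adjacent to $e$, minimality rules out bigons, so along $e$ I may select a crossing point $x$ that is \emph{outermost}, i.e. closest to an endpoint $v$ of $e$. Tracing the arcs of $\tri'$ through $x$ into the two adjacent triangles, the outermost choice forces these arcs to run toward the corners at $v$ without recrossing $e$, so that $x$ together with $e$ isolates the vertex $v$. Flipping $e$ to the opposite diagonal $e^{\ast}$ of the quadrilateral spanned by the two adjacent triangles then removes at least the crossing $x$ and, by the outermost and bigon-free hypotheses, introduces no new crossings; hence $N(\tri^{\ast},\tri')<N(\tri,\tri')$ for the flipped triangulation $\tri^{\ast}$, and induction connects $\tri$ to $\tri'$ by finitely many flips.

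The main obstacle is exactly this inductive step. Two things must be checked: that a complexity-reducing flip always exists, and that it is \emph{admissible} in the paper's sense, which demands that the two triangles sharing $e$ be distinct. The first point rests on the outermost-arc analysis, whose whole force comes from minimal position; the second is a genuine subtlety, since an edge may border a single triangle on both sides and so cannot be flipped directly. In that case I would first perform flips on neighbouring edges to separate the two sides of $e$, or pass to the dual trivalent spine, where flips become Whitehead moves and the relevant configuration is automatically realisable. A conceptually cleaner but less elementary alternative sidesteps the combinatorics entirely: endow $S$ with a complete finite-area hyperbolic metric, decorate the cusps with horocycles, and use the Epstein--Penner canonical cell decomposition. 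Penner's decorated Teichm\"uller theory shows that every ideal triangulation is canonical for some decoration, that the decoration space is a connected cell, and that crossing the codimension-one walls of the resulting chamber structure realises precisely an edge flip; connectivity of the decoration space then gives the lemma.
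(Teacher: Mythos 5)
First, note that the paper offers no proof of this lemma at all: it is quoted as a known result and attributed to Hatcher and Mosher, and everything downstream uses it as a black box. So your proposal attempts something the paper never does, and it must be judged on its own terms; on those terms the central inductive step has a genuine gap. The claim that flipping the edge $e$ at an outermost crossing ``removes at least the crossing $x$ and \dots introduces no new crossings'' is false. Write $Q = t \cup t'$ for the quadrilateral formed by the two triangles of $\tri$ adjacent to $e$, with $e$ the diagonal joining corners $A,C$ and $e^{\ast}$ the flipped diagonal joining corners $B,D$. In minimal position $\tri' \cap Q$ is a disjoint union of chords, and any chord that enters and leaves $Q$ through the two sides adjacent to $B$ (or to $D$) is disjoint from $e$ but, after the flip, must cross $e^{\ast}$ exactly once. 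Outermost-ness of $x$ is a statement about the sub-segment of $e$ near one endpoint; it places no restriction on such corner chords at $B$ or $D$, which can come from other edges of $\tri'$ or from other portions of the same edge. Hence the total intersection number $N$ can strictly \emph{increase} under your flip, and the induction collapses. This is not a bookkeeping issue one can patch locally: the failure, in general, of a single flip to monotonically decrease geometric intersection number is precisely why flip-connectivity is a nontrivial theorem, and why Hatcher's proof (a surgery argument on arc systems, yielding contractibility of the arc complex) and Mosher's (a genuinely algorithmic combinatorial argument) proceed along different and more elaborate lines.

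Two further points. The admissibility problem you flag is real --- with the paper's definition a flip requires two \emph{distinct} triangles, so the edge inside a self-folded triangle cannot be flipped --- but your proposed fix (``first perform flips on neighbouring edges'') is asserted, not proved, and is a second place where the induction needs actual work. Your alternative route via Epstein--Penner convex hulls and Penner's decorated Teichm\"uller theory is indeed a correct and well-known proof strategy, but as written it consists of invoking three substantial theorems (every ideal triangulation is the canonical decomposition of some decorated structure; the decoration space is a connected cell; generic wall-crossings are exactly flips); that is a citation rather than a proof, and as such it sits on exactly the same footing as the paper's own citation of Hatcher and Mosher.
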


Now suppose $S$ has a strictly convex real projective structure of finite volume $(\Omega, \Gamma, f)$.  An ideal triangulation of $S$ is \emph{straight} if each ideal edge is the image of the intersection of $\Omega$ with a projective line. We remark that the word \emph{straight} has been chosen instead of \emph{geodesic} so that the terminology can be transferred to the more general case of properly convex projective structures, where geodesics (with respect to the Hilbert metric) generally do not develop into straight lines in the projective plane.

\begin{lem} \label{lem:geodesic_ideal_triangulation}
Every ideal triangulation of $S$ is isotopic to a straight ideal triangulation.
\end{lem}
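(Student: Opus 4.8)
The plan is to straighten the triangulation edge by edge in the universal cover and then descend. Recall that the developing map restricts to a diffeomorphism $\dev \colon \widetilde{S} \to \Omega$, so I may identify $\widetilde{S}$ with the (strictly) convex domain $\Omega \subset \RP2$, with $\Gamma = \hol(\pi_1(S))$ acting by the projective transformations preserving $\Omega$. First I would lift the given ideal triangulation $\tri$ to a $\Gamma$--invariant family $\widetilde{\tri}$ of properly embedded, pairwise non-crossing arcs in $\Omega$, each running between two \emph{ideal vertices}, i.e. lifts of punctures. Because the structure has finite volume, every end is a cusp and each peripheral generator is parabolic, fixing a single point of $\partial\Omega$; the development of a cusp neighbourhood is a horoball-type region pinched at that fixed point. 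Consequently each end of a lifted arc converges to a well-defined parabolic fixed point on $\partial\Omega$, distinct lifts of the same puncture share the same limit point, and distinct punctures give distinct limit points by discreteness of $\Gamma$.

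For each edge of $\widetilde{\tri}$ with ideal endpoints $x, y \in \partial\Omega$ I then define its \emph{straightening} to be the open chord $\ell_{xy}\cap\Omega$, where $\ell_{xy}$ is the projective line through $x$ and $y$. Strict convexity of $\Omega$ (no segment lies in $\partial\Omega$) guarantees that this open chord lies entirely in $\Omega$ and has exactly $x, y$ as its ideal endpoints, so it is the straight representative of the edge in the sense of the present section. The family of chords is manifestly $\Gamma$--invariant, since $\Gamma$ acts by projective transformations carrying lines to lines and permuting the ideal vertices exactly as it permutes the original arc-ends; hence it descends to a family $\tri^{st}$ of straight arcs on $S$.

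The two points that make $\tri^{st}$ an ideal triangulation isotopic to $\tri$ are disjointness and the isotopy. For disjointness I would use that $\partial\Omega$ is a topological circle, so its cyclic order defines a linking relation on unordered pairs of boundary points; two distinct chords of a strictly convex domain meet in the interior if and only if their endpoint pairs link. Since the arcs of $\widetilde{\tri}$ are embedded and pairwise disjoint, a standard argument shows their ideal endpoint pairs are unlinked (and chords sharing an endpoint never cross in the interior), whence the straightenings are pairwise non-crossing. Equivariance then makes $\tri^{st}$ a collection of disjoint straight arcs cutting $S$ into straight ideal triangles. Finally I would produce a $\Gamma$--equivariant isotopy from $\widetilde{\tri}$ to the family of chords --- for instance the geodesic homotopy of the Hilbert metric on $\Omega$, moving each point of an arc along the Hilbert geodesic onto the corresponding chord --- and check that near each cusp both an arc and its straightening exit to the common parabolic fixed point, so the homotopy is rel the cusps and descends to an ambient isotopy of $S_g$ fixing $\mathcal{P}$.

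The main obstacle I anticipate is the coordination of the cusp behaviour with the global combinatorics: rigorously establishing that each arc-end limits to a single parabolic fixed point, that the endpoint-pair of each edge is determined, and that unlinking is preserved under straightening, and then upgrading the straightening homotopy to a genuine ambient isotopy that fixes the punctures. The linking/non-crossing step and the properness of the isotopy near the cusps are, I expect, the technical heart of the argument; the convexity and strict-convexity hypotheses enter precisely to guarantee that chords stay inside $\Omega$ and that non-linked pairs yield disjoint chords.
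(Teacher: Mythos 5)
Your proposal is correct and follows the same basic route as the paper: lift $\tri$ to a $\Gamma$--equivariant topological ideal triangulation of $\Omega$, replace each edge by the chord of $\Omega$ joining its ideal endpoints (strict convexity keeps the open chord inside $\Omega$, and equivariance is automatic since $\Gamma$ acts projectively and permutes the ideal vertices), and then descend a $\Gamma$--equivariant isotopy to $S$. The genuine divergence is in how the isotopy is produced, which you rightly identify as the technical heart. The paper uses the Alexander trick: there is a homeomorphism of $\Omega$ fixing the frontier $\partial\overline{\Omega}$ and taking each topological edge to its chord, and since $\overline{\Omega}$ is a closed disc any such homeomorphism is isotopic to the identity; equivariance of the straight triangulation (the ideal endpoints are equivariant and any two of them determine a unique chord) then lets one choose the isotopy equivariantly and push it down. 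This neatly sidesteps the two difficulties you anticipate: a Hilbert-geodesic (straight-line) homotopy of the arcs need not keep them embedded and pairwise disjoint at intermediate times, so upgrading it to an ambient isotopy rel the cusps requires an extra appeal to the standard fact that componentwise-homotopic disjoint arc systems on a surface are ambient isotopic, whereas the disc trick never has to move the arcs individually. Conversely, your linking criterion for disjointness of the chords makes explicit something the paper leaves implicit, and it is a worthwhile addition; note only that you should also rule out two distinct lifted edges sharing both ideal endpoints (their straightenings would then coincide), which holds because such edges would cobound an ideal bigon in $\widetilde{S}$, projecting to a homotopy that contradicts the arcs of $\tri$ being essential and pairwise non-homotopic.
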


\begin{proof}
The ideal triangulation $\tri$ of $S$ can be lifted to a $\Gamma$--equivariant topological ideal triangulation $\widetilde{\tri}$ of $\Omega.$ There is a homeomorphism of $\Omega$ that fixes the frontier $\partial \overline{\Omega}$ and takes each topological ideal edge to a segment of a projective line. Since $\overline{\Omega}$ is a closed disc, this homeomorphism is isotopic to the identity. Whence the topological ideal triangulation $\widetilde{\tri}$ of $\Omega$ is isotopic to a straight ideal triangulation. Since the set of ideal endpoints of edges is $\Gamma$--equivariant and any two such endpoints determine a unique segment of a projective line in $\Omega,$ the straight ideal triangulation is $\Gamma$--equivariant. We may therefore choose a $\Gamma$--equivariant isotopy between the ideal triangulations of $\Omega$ and push this down to an isotopy of $S.$
\end{proof}

%%%%%%%%%%%%%%%%%%%%%%%%%%%%%%%%%%%%%

\subsection{The canonical bijection for positively framed structures}

For a fixed ideal triangulation $\tri$ of $S$, let $\triangle$ be the set of all ideal triangles and $\underline{E}$ be the set of all oriented edges. Given a point in $\Ttp(S),$ to each ideal vertex of the lift of $\tri$ to the universal cover of $S$ there is both an associated point in the projective plane and a line though that point, a flag. Whence to each ideal triangle there is an associated triple of flags, and hence (after choosing a cyclic order) a triple ratio. Similarly, to each oriented edge, there is an associated quadruple ratio of flags. This gives rise to a map $\Ttp(S) \rightarrow \RR^{\triangle \cup \underline{E}}_{>0}.$ The proof of the below theorem will make the association precise and turn it into a well-defined bijective map. The \emph{Fock--Goncharov moduli space} is then the set of all functions $\RR^{\triangle \cup \underline{E}}_{>0} := \{\triangle \cup \underline{E} \rightarrow \RR_{>0} \}.$ We note that $|\triangle \cup \underline{E}| = - 8 \chi(S).$

\begin{thm}[Fock-Goncharov 2007, \cite{Fock-moduli-2007}] \label{thm:global-coord}
Let $S = S_{g,n}$ be a surface of negative Euler characteristic with at least one puncture. For each ideal triangulation $\tri$ of $S$ and each orientation $\nu$ on $S$ there is a canonical bijection
	\[
	\phi^+_{\tri, \nu}: \Ttp(S) \rightarrow \RR^{\triangle \cup \underline{E}}_{>0}.
	\]
\end{thm}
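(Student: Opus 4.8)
The plan is to build the bijection $\phi^+_{\tri,\nu}$ by assigning coordinates to the local data of the triangulation and then showing this assignment is invertible. First I would fix the ideal triangulation $\tri$, its lift $\widetilde{\tri}$ to $\widetilde{S}$, and the orientation $\nu$. Given $(\Omega,\Gamma,f)^+ \in \Ttp(S)$, the developing map assigns to each ideal vertex of $\widetilde{\tri}$ a fixed point in $\partial\overline\Omega$ together with the chosen supporting line — that is, a flag — and this assignment is $\Gamma$--equivariant by construction of the framing. For each ideal triangle $T \in \triangle$, the three flags at its vertices, cyclically ordered using $\nu$, are in general position (the vertices are distinct frontier points and the supporting lines meet the required transversality), so Lemma~\ref{lem:triple_ratio_proj_invariant} gives a well-defined triple ratio $t_T := \cancel{3}$ of that triple. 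For each oriented edge $e \in \underline{E}$, the edge is shared by two triangles whose union lifts to a quadrilateral of four flags $(\F_0,\F_1,\F_2,\F_3)$ with $e$ corresponding to $V_0V_2$; I assign the quadruple (edge) ratio $e_e := \cancel{4}(\F_0,\F_1,\F_2,\F_3)$. Projective invariance again makes this independent of the lift chosen. This defines $\phi^+_{\tri,\nu}$; that its image lies in $\RR_{>0}^{\triangle\cup\underline E}$ follows from the positivity criteria established in Theorems~\ref{thm:param_P_3} and~\ref{thm:param_P^*_4}, since each triangle's flags come from a strictly inscribed configuration (the framed structure is properly convex) and hence represent elements of $\P_3$, while each adjacent pair represents an element of $\P^*_4$.

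For injectivity and surjectivity I would reconstruct the equivariant developing image from a given assignment $\RR_{>0}^{\triangle\cup\underline E}$. The strategy is an inductive ``tiling'' argument over the triangles of $\widetilde{\tri}$: start with a single triangle, placing its three flags in a fixed normal-form position (as in the proofs of Theorems~\ref{thm:cross_triple} and~\ref{thm:param_P_3}) using up the $\SL(3,\RR)$ freedom, which is legitimate since the moduli space is taken modulo this action. Theorem~\ref{thm:param_P^*_4} shows that the data $(t_T, t_{T'}, e_e, e_{\bar e})$ across a shared edge uniquely determines the fourth flag across that edge, and conversely every assignment of positive reals is realised by a strictly inscribed quadrilateral. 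Iterating this flag-extension over the dual tree of $\widetilde{\tri}$ produces a map from the vertex set of $\widetilde{\tri}$ to flags, uniquely determined by the coordinates. The crucial consistency check is that this local extension is coherent — that going around an interior vertex of $\widetilde{\tri}$ returns to the starting flag — and that the resulting map is equivariant under some representation $\rho\co\pi_1(S)\to\SL(3,\RR)$ whose image preserves the convex hull $\Omega$ of the developed vertices with the prescribed framing. Positivity of all coordinates guarantees at each stage that the newly added vertex lies in the correct region (convexity of each quadrilateral), so the union of triangles develops to a genuine properly convex domain and the construction lands in $\Ttp(S)$.

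The main obstacle I expect is \textbf{equivariance and well-definedness of the reconstruction}, not the local flag combinatorics. The flag-extension across a single edge is controlled cleanly by Theorem~\ref{thm:param_P^*_4}, but to descend from the universal-cover picture to $S$ one must verify that the developing map built on $\widetilde{\tri}$ intertwines the deck action of $\pi_1(S)$ with a well-defined holonomy $\rho$, i.e.\ that the freely chosen normalisation of the initial triangle propagates consistently around \emph{all} cycles, including loops around the punctures. Concretely, for each end $E_k$ one must check that the flags assigned to the infinitely many ideal vertices on the boundary of the cusp/end region are permuted by $\rho(\gamma_k)$ in a way compatible with the peripheral classification of \S\ref{subsec:classification_of_ends} (totally hyperbolic, quasi-hyperbolic, or parabolic) and with the prescribed framing; this is where the hypothesis that every hyperbolic end is maximal or minimal enters, and where one must rule out degenerations. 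I would handle this by encoding the monodromy around each interior vertex and around each puncture as an explicit product of the ``flag-transition'' matrices determined by the coordinates, and showing these products have the eigenvalue structure forcing them into one of the Jordan forms $J_1,J_2,J_3$; the positivity of the coordinates is exactly what pins down the correct branch. A secondary technical point is confirming that the convex hull of the developed vertices is properly convex (closure in an affine patch) and that the map $\phi^+_{\tri,\nu}$ is independent of the auxiliary choices (lift, representative), which follows from projective invariance of both ratios.
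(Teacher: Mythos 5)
Your definition of the map and your reconstruction scheme coincide with the paper's own proof: the coordinates are the triple and quadruple ratios of the flags at the developed ideal vertices, and bijectivity is proved by inductively rebuilding the developed image over the dual tree of $\widetilde{\tri}$ using Theorems~\ref{thm:param_P_3} and~\ref{thm:param_P^*_4}, with the holonomy recovered from uniqueness of the projective transformation matching flag data. One remark: the ``consistency around interior vertices'' that you single out as the crucial check is vacuous. An ideal triangulation has no interior vertices, and the dual graph of $\widetilde{\tri}$ is a tree (every ideal edge separates the disc $\widetilde{S}$, and the link of an ideal vertex is an infinite path, not a cycle), so the inductive flag-extension meets no cocycle condition at all; equivariance then follows immediately because the coordinate data is $\pi_1(S)$--invariant and the transformation matching the flag triples of any two lifts of a triangle is unique.

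The genuine gap is in your final step, where the constructed structure must be shown to lie in $\Ttp(S)$, i.e.\ every hyperbolic end of $\Omega^+$ must be shown to be maximal or minimal, with the flag an admissible positive framing. You propose to do this by computing the peripheral monodromy as a product of flag-transition matrices and reading off its eigenvalue structure and Jordan form, with positivity ``pinning down the correct branch''. This cannot work as stated: maximality and minimality are properties of the domain relative to the holonomy, not of the holonomy matrix --- the very same totally hyperbolic matrix $J_1$ arises as peripheral holonomy of structures with a maximal end and of structures with a minimal end (this ambiguity is precisely what the framing and the $\Sym(3)$--action encode), so no amount of eigenvalue analysis can force one case or the other. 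Note also that in the surjectivity direction ``maximal or minimal'' is not a hypothesis you may invoke; it is a conclusion to be proved about $\Omega^+$. What is needed is a geometric argument about the union of triangles, which is how the paper closes this step: the developed triangles at an end all share the framing vertex $V$; if $V$ is the saddle fixed point, then $V$ and the segment $s$ joining the attracting and repelling fixed points both lie in $\overline{\Omega^+}$, so convexity forces the triangle they span into $\Omega^+$ and the end is maximal; if $V$ is not the saddle, the $\langle\hol(\gamma)\rangle$--orbit of a developed triangle accumulates on $s$, so $s$ can neither meet the interior of any triangle nor be an edge of one (the action on $s$ is by translation), hence $s$ lies in the frontier and the end is minimal. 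The coordinate criteria of \S\ref{subsubsec:max-min-coordinates} are themselves derived from this geometric correspondence, so appealing to them here would be circular.
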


By fixing an orientation of $S$, we may suppress $\nu$ from the notation and simply write $\phi^+_{\tri, \nu} = \phi^+_{\tri}.$ In the introduction we suppressed both the triangulation and the orientation from the notation and used the inverse $\psi^+ = (\phi^+_{\tri, \nu})^{-1}.$

We will call $\phi^+_{\tri}(x)$ the \emph{Fock--Goncharov coordinate} of $x \in \Ttp(S)$.

\begin{proof}
Assume $S$ is oriented using $\nu$. We first define $\phi^+_{\tri, \nu} = \phi^+_{\tri}$.

Let $\widetilde{\tri}$ be the lift of $\tri$ to the universal cover $(\widetilde{S}, \pi)$. Let $x = (\dev_x, \hol_x) \in \Ttp(S)$. The idea is to choose a developing map in its isotopy class that is adapted to the framing and has straight ideal triangles, in order to define triple and quadruple ratios.

Fix an affine patch $\mathcal{A}$ in which $\Omega$ is strictly contained. We determine a developing map in its isotopy class by mapping ideal vertices of $\widetilde{\tri}$ to the points of the framing. This choice is allowed, as the ideal vertices are fixed points of subgroups which are conjugate to the peripheral subgroups, contained in the frontier of $\Omega$. This also determines the images of the edges in $\widetilde{\tri}$ as straight line segments between the vertices.
 
 Recall that each point in the framing is paired up with a supporting line. Hence to each ideal triangle we have an associated triple of flags. More specifically, let $t_i \in \triangle$ be an ideal triangle. Any lift $\widetilde{t_i}$ of $t_i$ to $\widetilde{S}$ is assigned a triple of flags $\mathfrak{F}_i := ((\F_{i,0}, \F_{i,1}, \F_{i,2}))$, cyclically ordered according to the orientation induced on $t_i$ by $\nu$. Since $\Omega$ is properly convex, $\mathfrak{F}_i$ represents an element of $\P_3$. Hence define 
\[
\phi^+_{\tri}(x)(t_i) := \cancel{3}(\mathfrak{F}_i) \in \RR_{>0}.
\]
There is a choice of $\widetilde{t_i}$ in this definition. Any two such choices differ by a deck transformation. Since $\dev_x$ is $\hol_x$--equivariant, any two resulting triples of flags $\mathfrak{F}_i$ and $\mathfrak{F}_i'$ differ by a projective transformation and therefore represent the same element of $\P_3$. Lemma \ref{lem:triple_ratio_proj_invariant} ensures that $\cancel{3}(\mathfrak{F}_i) = \cancel{3}(\mathfrak{F}_i')$.

Let $e_i \in \underline{E}$ be an oriented edge. Choose a lift $\widetilde{e_i}$ of $e_i$. $\widetilde{e_i}$ is shared by two triangles in $\widetilde{\tri}$, each of which is assigned a triple of flags. Thereby we can consider $\widetilde{e_i}$ to be uniquely associated to an ordered quadruple of flags $\mathfrak{F}_i := (\F_{i,0}, \F_{i,1}, \F_{i,2}, \F_{i,3})$, ordered according to the orientation of $\Omega$, where $\F_{i,0}$ and $\F_{i,2}$ represent the vertices at the tail and head of $\dev_x(\widetilde{e_i})$ respectively. As $\Omega$ is properly convex, $\mathfrak{F}_i$ is identified with an element of $\P^*_4$. Thereby we define
\[
\phi^+_{\tri}(x)(e_i) := \cancel{4}(\mathfrak{F}_i) \in \RR_{>0}.
\]
As above, the choice of $\widetilde{e_i}$ only changes $\mathfrak{F}_i$ by a projective transformation and does not change $\cancel{4}(\mathfrak{F}_i)$. The positivity of $\phi^+_{\tri}(x)(e_i)$ follows from Theorem \ref{thm:param_P^*_4}.

The above construction gives rise to a well-defined map
\[
\phi^+_{\tri} : \Ttp(S) \rightarrow \RR^{\triangle \cup \underline{E} }_{>0}.
\]
It remains to show that this map is a bijection.

Let $x,y \in \Ttp(S)$ and $\phi^+_{\tri}(x) = \phi^+_{\tri}(y)$. For any triangle $t \in \triangle$,
\[
\phi^+_{\tri}(x)(t_0) = \phi^+_{\tri}(y)(t_0).
\]
Fix an arbitrary lift $\widetilde{t}$ of $t$ to $\widetilde{S}$. Then $\widetilde{t}$ is assigned triples of flags $\mathfrak{F}^{\widetilde{t}}_x$ and $\mathfrak{F}^{\widetilde{t}}_y$ by the framings of $x$ and $y$ respectively. By Theorem \ref{thm:param_P_3}, there exists $A \in \SL(3, \RR)$ such that
\[
A \cdot \mathfrak{F}^{\widetilde{t}}_x = \mathfrak{F}^{\widetilde{t}}_y.
\]
Following Lemma \ref{lem:geodesic_ideal_triangulation}, we assume that edges of ideal triangles are segments of projective lines, which are thus uniquely determined by $\mathfrak{F}^{\widetilde{t}}_x$ and $ \mathfrak{F}^{\widetilde{t}}_y$. Therefore $A \cdot \dev_x(\widetilde{t}) = \dev_y(\widetilde{t})$. In particular, the restrictions of $A \cdot \dev_x$ and $\dev_y$ to $\widetilde{t}$ are the same up to an isotopy of their image that fixes the vertices..

We claim that $A \cdot \Omega_x = \Omega_y$ and furthermore that $A \cdot \dev_x$ is isotopic to $\dev_y$ over the whole domain. Let $t'$ be a triangle adjacent to $t$ and $\widetilde{t}'$ the lift of $t'$ which is adjacent to $\widetilde{t}$. Let $\widetilde{t}$ and $\widetilde{t}'$ share the edge $e$ and let $v'$ be the ideal vertex of $t'$ not contained in $e$.  By proper convexity of $\Omega_x$ and $\Omega_y$, the flags at the vertices of $\widetilde{t}$ and $\widetilde{t}'$ form two pairs of strictly inscribed convex quadrilaterals $\chi_x$ and $\chi_y$. Cyclically order them according to $\nu$, and endow them with the same marking by choosing an endpoint of $e$. Thus they are elements of $\P^*_4$ having same triple ratios and quadruple ratios. By Theorem \ref{thm:param_P^*_4}, $\chi_x$ and $\chi_y$ are projectively equivalent. Since $A \cdot \dev_x(\widetilde{t}) = \dev_y(\widetilde{t})$, it is also the case that $A \cdot \chi_x = \chi_y$. In particular  the projective lines $\eta'_x$ and $\eta'_y$ at $\dev_x(v')$ and $\dev_y(v')$ respectively, are projectively equivalent via $A$. Lemma \ref{lem:geodesic_ideal_triangulation} ensures once again that $ A \cdot \dev_x(\widetilde{t}') = \dev_y(\widetilde{t}')$, and $A \cdot \dev_x$ and $\dev_y$ are isotopic on $\widetilde{t} \cup \widetilde{t}'$.

Continuing in this manner we see that the image of $\widetilde{\tri}$ is uniquely determined by $\phi^+_{\tri}(x) = \phi^+_{\tri}(y)$. Moreover, since both developing maps are equivariant with respect to the action of their corresponding holonomy groups, we have
\[
\hol_x (\gamma) = A^{-1} \cdot \hol_y (\gamma) \cdot A, \quad \forall \; \gamma \in \pi_1(S).
\]
Therefore $A \cdot \dev_x$ and $\dev_y$ differ by a holonomy-equivariant isotopy, which may be pushed down to $S$. Since the framing is determined by the flags at vertices of the ideal triangles, we can conclude that $x$ and $y$ define equivalent framed convex projective structures on $S.$ This completes the proof that $\phi^+_{\tri}$ is injective.

It remains to show that $\phi^+_{\tri}$ is surjective. Suppose each triangle and each oriented edge in $\tri$ were assigned a positive real number. Lift these assignments to $\widetilde{\tri}$. We will explicitly construct a domain $\Omega^+ \subset \RP2$ and holonomy representation $\hol : \pi_1(S) \rightarrow \SL(3,\RR)$ such that $ \Omega^+ / \hol( \pi_1(S) ) $ is a convex projective surface homeomorphic to $S$. Finally we will show that the original assignment of numbers to $\tri$ determines a canonical choice of framing, so that the convex projective structure on $S$ thus defined is extended to an element of $\Ttp(S)$.

Let $t \in \triangle$. Fix a lift $\widetilde{t}$ of $t$ to $\widetilde{S}$ with orientation inherited from $S$. By Theorem \ref{thm:param_P_3}, the positive real number assigned to $\widetilde{t}$ uniquely determines an element of $\P_3$. Hence fix a cyclically ordered representative $\mathfrak{F}^{\widetilde{t}}$. This triple of flags defines three points of $ \partial \overline{ \Omega^+}$ and a triple of supporting lines to $\Omega^+$ at those points. The rest of the construction of $\Omega^+$ proceeds as in the proof of Theorem \ref{thm:param_P^*_4} where, given a triple of flags, a fourth flag of an adjacent triangle is uniquely determined by two edge ratios and a triple ratio. Iterating this procedure, $\Omega^+$ is defined to be the union of the inscribed triangles determined by all these elements of $\P_3$, and the flags will be the framing for the projective structure.

We describe $\Omega^+$ in more detail. Since the set of triangles of $\widetilde{\tri}$ is countable, we can index them as $\{ \widetilde{T}_i \}_{i \in \NN}$ so that $\cup_{i=0}^m \widetilde{T}_i$ is connected for all $m \in \NN$, and $\cup_{i=0}^\infty \widetilde{T}_i = \widetilde{\tri}$. In the above construction we assigned to each triangle $\widetilde{T}_i \in \widetilde{\tri}$ an element $\chi_i$ of $\P_3$. Let $\mathcal{T}_i$ be the inscribed triangle of $\chi_i$ with the vertices removed, thus define
\[
\Omega^+_m := \bigcup_{i = 0}^m \mathcal{T}_i\qquad \mbox{ and } \qquad \Omega^+ := \bigcup_{i = 0}^\infty \mathcal{T}_i.
\]
We are going to show that $\Omega^+$ is properly convex. Since $\Omega^+_0$ is clearly properly convex, we will proceed by induction on $m$. Suppose $\Omega^+_{m-1}$ is properly convex and fix an affine patch $\A$ strictly containing $\Omega^+_{m-1}$. Let $e$ be the common edge between $\Omega^+_{m-1}$ and $\mathcal{T}_m$ and let $V$ be the vertex of $\mathcal{T}_m$ disjoint from $e$. The edge $e$ is equipped with two edge ratios and a flag $(V_i, \eta_i)$ for each endpoint. We remark that $\eta_i$ is a supporting line for the domain $\Omega^+_{m-1}$. If necessary, we rearrange the patch $\A$ to contain the triangle $\Delta$ defined by $\eta_0,\eta_1$ and $e$, disjoint from $\Omega^+_m$. Since the edge ratios are strictly positive, Theorem \ref{thm:param_P^*_4} implies that $V$ lies strictly within the triangle $\Delta$. It follows that $\Omega^+_m$ is also convex and properly contained in the affine patch $\A$. To conclude the induction argument it is enough to observe that the positivity of the triple ratio associated to $\mathcal{T}_m$ implies that $\eta_V$, the line attached to $V$, is also a supporting line for $\Omega^+_m$. Hence $\Omega^+$ is properly convex.

The set $\Omega^+$ is defined as a union of ideal triangles, thus it has a natural combinatorial structure. By construction, we have a combinatorial isomorphism from the universal cover $\widetilde{S}$ to $\Omega^+$, and we now define an action of $\pi_1(S)$ by projective maps on $\Omega^+$ that has this combinatorial isomorphism as a developing map.

We now define the holonomy representation $\hol: \pi_1(S) \rightarrow \Gamma < \SL(3, \RR)$. Fix an arbitrary lift $\widetilde{t}$ of some $t \in \triangle$. Let $\mathfrak{F}^{\widetilde{t}}$ be the cyclically ordered triple of flags associated to $\widetilde{t}$. For each $\gamma \in \pi_1(S)$, $\widetilde{t}' := \gamma (\widetilde{t})$ is another lift of $t$, which is assigned another triple of flags $\mathfrak{F}^{\widetilde{t}'}$ with the same triple ratio as $\mathfrak{F}^{\widetilde{t}}$. Once again, Theorem \ref{thm:param_P_3} applies to provide a unique projective transformation $T_{\gamma} \in \SL(3, \RR)$ such that
\[
T_{\gamma} \cdot \mathfrak{F}^{\widetilde{t}} = \mathfrak{F}^{\widetilde{t}'}.
\]
Now $T_{\gamma}$ is an element of $\SL(\Omega^+)$, namely it preserves $\Omega^+$. Indeed, $\Omega^+$ is defined exclusively in terms of triangle parameters and edge parameters, which are invariant under projective transformations. More precisely, given two triangles $\widetilde{t},\widetilde{t}' \subset \widetilde{\tri}$,
\[ 
\widetilde{t} = \gamma (\widetilde{t}') \iff \dev(\widetilde{t}) = T_{\gamma} \cdot \dev(\widetilde{t}').
\]
Hence we have
\[
\hol(\gamma) := T_{\gamma}, \quad \forall \; \gamma \in \pi_1(S).
\]
The developing map is equivariant with respect to $\hol \colon \gamma \mapsto T_\gamma$, which is an isomorphism onto its image $\Gamma := \hol(\pi_1(S))$. In particular, $\dev$ induces an isomorphism $f$ of CW--complexes from $S$ to $\Omega^+ / \Gamma$. The triple $(\Omega^+, \Gamma, f)$ thereby defines an element of $\Tt(S)$.

The next step is to show that $(\Omega^+, \Gamma, f) \in \Ttm(S)$ and that it has a natural framing given by the flags at each ideal vertex. Before we analyse maximal and minimal ends, we characterise flags of $\Omega^+$ as fixed elements of conjugates of peripheral subgroups of $\Gamma$.

Recall that every vertex $V$ of $\Omega^+$ has a line $\eta$ assigned to it, forming the flag $(V,\eta)$. As $\dev$ is a combinatorial equivalence, $V$ corresponds to an ideal vertex $\widetilde{E}$ of $\widetilde{\tri}$, which is a lift of an end $E$ of $S$. Let $\gamma_0$ be a generator of $\pi_1(E)$, and $\gamma$ the element conjugate to $\gamma_0$ fixing $\widetilde{E}$. Then $V$ is fixed by $\hol(\gamma)$. Furthermore, if $\widetilde{t}$ is a triangle in $\Omega^+$ with vertex $V$, then $\hol(\gamma)(\widetilde{t})$ is, by construction, another triangle of $\Omega^+$ sharing the same flag $(V,\eta)$. Therefore $\eta$ is also preserved by $\hol(\gamma)$. Summarising, every flag of $\Omega^+$ is a pair consisting of a fixed point and an invariant line of some (conjugate of a) peripheral subgroup of $\Gamma$. Thus when $E$ is a special end or a cusp, the flag $(V,\eta)$ is an admissible choice of framing at $E$. 

Now suppose that $E$ is hyperbolic, namely $\hol(\gamma)$ fixes three distinct points of $\RP2$, one attracting $V_+$, one repelling $V_-$, and a saddle $V_0$. By the previous discussion there are six possibilities for $(V,\eta)$, given that $V \in \{V_+,V_-,V_0\}$, $\eta \in \{ V_+V_-, V_+V_0, V_-V_0\}$ and $V \in \eta$. For convenience, we fix an affine patch $\A$ strictly containing $\Omega^+$. Recall from \S \ref{subsec:classification_of_ends} that the interior of the segment between $V_+$ and $V_-$ in $\A$, say $s$, is either contained in $\Omega^+$ or in the frontier of $\Omega^+$, and $\Omega^+$ can not be equal to the interior of a triangle bounded by $s$ and $V_0$.

If $V = V_0$, then both $V_0$ and $s$ are subsets of $\overline{ \Omega^+}$. By proper convexity, $\Omega^+$ must contain the interior of the triangle spanned by $V_+,V_-,V_0$, and therefore $E$ must be maximal.

Suppose $V \not= V_0$. Let $t$ be an ideal triangle of $\Omega^+$ with vertex $V$. If $s$ was contained in $\Omega^+$, then by the construction $s$ would be covered by some triangles of $\Omega^+$. However, the orbit of $t$ under $\langle \hol(\gamma) \rangle$ accumulates at $s$, so $s$ can not intersect the interior of any triangle. At the same time, $s$ can not be the edge of a triangle, as $\hol(\gamma)$ acts on $s$ by translation. It follows that $s$ must be in the frontier of $\Omega^+$, and $E$ is minimal.

In conclusion, every hyperbolic end $E$ is either maximal or minimal, and $(V,\eta)$ is an admissible choice of positive framing also at $E$. Framing each end in this way gives $(\Omega^+, \Gamma, f)^+ \in \Ttp(S)$.
\end{proof}

%%%%%%%%%%%%%%%%%%%%%%%%%%%%%%%%%%%%%

\subsection{A natural action of the symmetric group} \label{subsubsec:sym_gp}

The \emph{symmetric group} on three letters, $\Sym(3)$, acts by permuting the set of framings of a given end. This induces a $\Sym(3)^n$--action on $\Ttp(S)$. 

\begin{thm} \label{sym_group_action}
	Let $S = S_{g,n}$ be a surface of negative Euler characteristic with $n>0$ punctures. There is a faithful but not free action $\Sym(3)^n \curvearrowright \Ttp(S)$, whose orbits are framed convex projective structures having conjugate holonomy representations. In particular, the map $\mu^+\co\Ttp(S) \to \X^\times(S)$ is an isomorphism and the quotient space $\Ttp(S) / \Sym(3)^n$ is naturally identified with $\X(S)$.
\end{thm}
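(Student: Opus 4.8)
The plan is to establish three distinct assertions about the action $\Sym(3)^n \curvearrowright \Ttp(S)$: that it is faithful, that it is not free, and that its orbits consist precisely of framings sharing a common holonomy representation. I would begin by making the action explicit. Recall from \S\ref{subsec:classification_of_ends} that a positive framing records, for each end $E_k$, a flag $(V,\eta)$ built from fixed points and invariant lines of the peripheral subgroup $\Gamma_k$. For a hyperbolic end the holonomy $\hol(\gamma)$ fixes three points $\{V_+, V_-, V_0\}$ and preserves the three lines joining them; the admissible positive framings at a maximal end number two and at a minimal end number four, while special ends admit three framings and cusps only one. The symmetric group $\Sym(3)$ acts on the flag data at a single end by permuting the three fixed points (equivalently the three invariant lines) of $\Gamma_k$, and since the framing at each end is chosen independently this yields the $\Sym(3)^n$--action on $\Ttp(S)$. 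The crucial point, which I would verify first, is that changing the framing does not change the underlying structure $(\Omega,\Gamma,f) \in \Ttm(S)$: all the admissible flags are intrinsic to the single holonomy representation, so a permutation of framings leaves $\Omega$, $\Gamma$ and $f$ untouched and only relabels the decorations. This immediately gives the orbit description: two elements of $\Ttp(S)$ lie in the same $\Sym(3)^n$--orbit if and only if they have the same (hence, up to conjugacy, the same) holonomy, so orbits are exactly the fibres of $\mu^+$ over a fixed conjugacy class.

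Next I would treat \emph{faithfulness}. A nontrivial element of $\Sym(3)^n$ acts nontrivially at some end $E_k$, meaning it permutes the fixed points $\{V_+, V_-, V_0\}$ of $\Gamma_k$ in a nontrivial way. Because these three points are pairwise distinct when $E_k$ is a hyperbolic end, and because the framing flag genuinely records which point is selected, the resulting framed structure is distinct from the original: no projective transformation realising the equivalence in $\Ttframe(S)$ can both fix the structure and interchange distinct fixed points of the peripheral element while respecting the marking. Hence the action is faithful. The subtlety here, and the step I expect to demand the most care, is that \emph{not every} permutation in $\Sym(3)$ yields an admissible framing at a given end: at a maximal hyperbolic end only the saddle point $V_0$ is allowed (giving two framings, from the two supporting lines), while at a minimal end $V_0$ is excluded (giving the four framings). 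So the full group $\Sym(3)$ does not act by permuting \emph{admissible} framings within a fixed type of end. The resolution is that the $\Sym(3)$--action is defined on the level of \emph{all} flags of the peripheral subgroup and genuinely moves between the different sub-loci of $\Ttp(S)$; I would need to confirm that the orbit of any framed structure under $\Sym(3)^n$ stays inside $\Ttp(S)$, using that $\Ttp(S)$ was defined precisely to collect all admissible positive framings of structures in $\Ttm(S)$, so the six framings at a hyperbolic end (two of maximal type and four of minimal type, distributed according to whether the end is maximal or minimal) are all present and permuted transitively.

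For \emph{non-freeness} I would exhibit a stabiliser. At a cusp the framing is unique, so the factor of $\Sym(3)$ corresponding to a cusped end fixes every element of $\Ttp(S)$; thus whenever $S$ has a cusped structure the action has nontrivial point stabilisers. More robustly, at any end where the permutation does not move the selected flag — for instance the transposition fixing the chosen point and interchanging the two unselected fixed points, in a case where the accompanying line is not thereby altered — one obtains a stabiliser element. Since $\Ttm(S)$ contains structures with cusps (indeed the finite-volume structures all have cusps, by the remark in \S\ref{subsec:classification_of_ends}), non-freeness follows at once.

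Finally, the isomorphism statement for $\mu^+$ follows by combining the above with Theorem~\ref{thm:global-coord}. The orbit analysis shows that $\mu^+\co \Ttp(S) \to \X^\times(S)$ has fibres equal to $\Sym(3)^n$--orbits, and I would verify that $\mu^+$ is a bijection by checking that each framed holonomy in $\X^\times(S)$ arises from exactly one element of $\Ttp(S)$: surjectivity uses the construction of $(\Omega^+,\Gamma,f)^+$ in the proof of Theorem~\ref{thm:global-coord}, which produces a framed convex projective structure realising any prescribed framed holonomy, while injectivity uses that the framing together with the holonomy determines the structure up to equivalence. The passage to $\X(S) = \X^\times(S)/\!\sim$ by forgetting the framing then identifies $\Ttp(S)/\Sym(3)^n$ with $\X(S)$, since quotienting by the $\Sym(3)^n$--action is exactly the operation of forgetting which admissible framing was chosen at each end.
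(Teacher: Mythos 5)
Your proposal has a genuine gap at its foundation: the claim in your first paragraph that changing the framing ``leaves $\Omega$, $\Gamma$ and $f$ untouched and only relabels the decorations'' is false, and the paper's proof is built precisely around the fact that it fails. In the definition of a positive framing (\S\ref{subsec:classification_of_ends}), the admissible flag at a hyperbolic end is tied to the geometry of the domain: a maximal end \emph{must} be framed by the saddle point (two choices of line), while a minimal end \emph{must} be framed by a non-saddle fixed point (four choices). Consequently, a permutation that moves the selected point between the saddle and a non-saddle fixed point cannot be realised by relabelling decorations on a fixed $\Omega$: the image would not be an admissible positive framing of that structure, so the action as you first describe it does not even map $\Ttp(S)$ to itself. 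You notice this tension in your second paragraph (``the action genuinely moves between the different sub-loci''), but you never supply the mechanism. The paper's resolution is that $\alpha\cdot x$ is defined by \emph{retracting or expanding} the domain at the relevant end: the new flag $(V_{\alpha(1)},V_{\alpha(1)}V_{\alpha(2)})$ dictates whether the end is to be maximal or minimal, one defines $\Omega^y$ accordingly (it is still preserved by $\Gamma^x$, and $\Omega^x/\Gamma^x$, $\Omega^y/\Gamma^x$ differ by at most an annulus), and one sets $f^y := f^x\circ g$ for a homeomorphism $g$ equal to the identity outside a small neighbourhood of the end. Without this construction the action is not well defined.

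The same omission breaks your orbit description, which you claim follows ``immediately.'' Under a relabelling-only action, two framed structures with identical holonomy, one maximal and one minimal at some end (these are genuinely distinct points of $\Ttm(S)$), could never lie in the same orbit, so orbits would be strictly smaller than the fibres of the framed-holonomy map and the theorem would be false. The converse inclusion therefore requires real work: given $x,y$ with conjugate holonomies, the paper normalises to equal holonomy, matches the ordered triples of fixed points end by end with some $\alpha_k\in\Sym(3)$, observes that the domains can then differ only at hyperbolic ends (maximal versus minimal), and uses the straightening procedure from the proof of Theorem~\ref{thm:global-coord} to produce the marking-compatible homeomorphism exhibiting $y=\alpha\cdot x$. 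Your faithfulness and non-freeness arguments (trivial stabiliser for structures with all ends hyperbolic; all-cusp structures as global fixed points) do agree with the paper's, but both presuppose the corrected definition of the action, and faithfulness additionally needs the existence of structures with all ends hyperbolic, which the paper notes comes from a standard hyperbolic-geometry construction.
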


\begin{proof}
	First we construct a $\Sym(3)$--action on the moduli space of a once punctured surface $S=S_{g,1}$. Denote the end of $S$ by $E$ and let $\gamma \in \pi_1(S)$ be a generator of the peripheral subgroup. Every point $x = (\Omega^x,\Gamma^x,f^x)^+ \in \Ttp(S)$ comes with a holonomy representation $\hol_x$ and a $\Gamma^x$--orbit of flags constituting the framing of $E$. Let $(V, \eta)$ be the unique element in that orbit such that $V$ is fixed by $\hol_x(\gamma)$. As a projective transformation, $\hol_x(\gamma)$ fixes three points (counted with multiplicity) $V_1,V_2$ and $V_3$ in $\RP2$. We order them so that:
	\begin{enumerate}
		\item $V_1 = V$;
		\item $V_2 \in \eta$,
		\item if $E$ is special, $V_1 = V_2$ if and only if $V_3 \notin \eta$.
	\end{enumerate}
	Thereby, $x$ is assigned the $\Gamma^x$--orbit of an ordered triple $(V_1, V_2, V_3)$. For $\alpha \in \Sym(3)$, we define $y := \alpha \cdot x$ to be that structure whose assigned $\Gamma^x$--orbit of ordered triple is $(V_{\alpha(1)}, V_{\alpha(2)}, V_{\alpha(3)} )$. If $E$ is not hyperbolic, $y$ has equivalent Hilbert geometry to $x= (\Omega^x,\Gamma^x,f^x)$, but it is framed according to the conditions $1) - 3)$ above, applied to the triple $(V_{\alpha(1)}, V_{\alpha(2)}, V_{\alpha(3)} )$. If $E$ is hyperbolic, one may need to retract or expand $\Omega^x$ to match the correct framing. In this case, the flag $(V_{\alpha(1)}, V_{\alpha(1)}V_{\alpha(2)} )$ uniquely determines whether $E$ is maximal or minimal, hence define $\Omega^y$ accordingly. Then $\Omega^x / \Gamma^x$ differs from $\Omega^y / \Gamma^x$ by at most an annulus, so there is a natural homeomorphism $g : \Omega^x / \Gamma^x \rightarrow \Omega^y / \Gamma^x$ which is the identity outside a sufficiently small regular neighbourhood of $E$. We define $f^y := f^x \circ g$ and $y$ is the structure $(\Omega^y,\Gamma^x,f^y)$, framed as per $(V_{\alpha(1)}, V_{\alpha(2)}, V_{\alpha(3)} )$. We remark that $x$ and $y$ have the same holonomy representations.
	
	The above construction extends to the case $n \geq 1$, by acting on a different boundary component with each distinct copy of $\Sym(3)$. The stabilizer of a structure with all ends hyperbolic is trivial, therefore the action is faithful. 	
	Structures with only cuspidal ends are global fixed points, hence the action is not free. The existence of both types of structures follows by a standard construction in hyperbolic geometry.
	
	It remains to show that the $\Sym(3)^n$--orbit of a point $x$ is precisely the set of those structures in $\Ttp(S)$ with holonomy conjugate to $x$. One inclusion is trivial as the $\Sym(3)$--action preserves the holonomy. For the other, suppose $x = (\Omega^x,\Gamma^x,f^x)$ and $y = (\Omega^y,\Gamma^y,f^y)$ have conjugate holonomies. By acting with the appropriate element of $\SL(3,\RR)$, we can fix a representative of $y$ which has the same holonomy as $x$. It follows that $\Gamma^x = \Gamma^y$ and they share the same peripheral subgroups, hence have the same fixed points in $\RP2$. In particular, for each end $E_k$ of $S$ there is an element $\alpha_k \in \Sym(3)$ mapping the ordered triple associated to the framing of $x$ at $E_k$ to the ordered triple associated to the framing of $y$ at $E_k$. This is enough to prove that $\Omega^x$ and $\Omega^y$ may only differ at the hyperbolic ends, which can be maximal or minimal. 

In particular, analogous to the construction of the developing map in the proof of Theorem~\ref{thm:global-coord}, the straightening procedure of ideal triangulation can now be applied to convex cores of the structures $x$ and $y$ respectively, to give a homeomorphism $g : \Omega^x / \Gamma^x \rightarrow \Omega^y / \Gamma^y$ which is the identity outside a small neighbourhood of $\cup_{k} E_k$, such that $f^y := f^x \circ g$. This concludes the proof that $y$ is indeed $\alpha \cdot x$, for some $\alpha = (\alpha_1,\dots,\alpha_n) \in \Sym(3)^n$.

Associating flags to the holonomies gives the claim regarding the map to $\X^\times(S).$
\end{proof}

Global fixed points of the action are those structures whose ends are all cusps. In \S \ref{subsec:fin_vol} we show that this set is in fact $\Ttfin(S)$. In \S \ref{subsubsec:max-min-coordinates} we give a parametrisation of a fundamental domain for the action of $\Sym(3)^n$.

%%%%%%%%%%%%%%%%%%%%%%%%%%%%%%%%%%%%%

\subsection{Duality} \label{subsec:dual-proj-st}

Every convex projective structure has a dual structure, constructed using the self-duality of the projective plane. This induces an isomorphism $\sigma^+ \co \Ttp(S) \rightarrow \Ttpd(S)$. In this section we recall the main points of this construction, and refer the reader to \cite{Goldman-notes-1988, Vinberg-theory-1963, Cooper-marked-2010} for details and proofs. 

Let $\Omega$ be a properly convex domain of $\RP2$. Its \emph{dual domain} is defined to be the set
$$
\Omega^* := \{ P \in \RP2 \ | \ P^\perp \cap \overline{\Omega} = \emptyset \}.
$$
Then $\Omega^*$ is also a properly convex domain and $\Omega = (\Omega^*)^*$. Supporting lines to $\Omega$ correspond to points on the frontier of $\Omega^*$, and vice-versa. Furthermore, there is a diffeomorphism $\Phi: \Omega \rightarrow \Omega^*$, called the \emph{dual map}. The map $\Phi$ has an elementary interpretation as follows. For $P \in \Omega$, $P^\perp$ is a line disjoint from $\overline{ \Omega^*}$, thus $\A_P = \RP2 \setminus P^\perp$ is an affine patch strictly containing $\Omega^*$. Then $\Phi(P)$ is the centre of mass of $\Omega^*$ in $\A_P$.

In general, $\Phi$ is not a projective transformation, unless $\Omega$ is a conic. However, for $A \in \PGL(\Omega)$ and $P \in \Omega$, $\Phi(A(P)) = (A^t)^{-1} (\Phi(P)).$ Hence we define the \emph{dual group} of a subgroup $\Gamma \leq \PGL(3,\RR)$ to be the group
$$
\Gamma^* := \{ (A^t)^{-1} \ | \ A \in \Gamma \}.
$$
Clearly $\Gamma$ is isomorphic to  $\Gamma^*$. The upshot of this summary of duality is the following result.
\begin{thm} \label{thm:involution}
	Let $(\Omega,\Gamma,f) \in \Tt(S)$. Then $\Phi$ induces a diffeomorphism $\overline{\Phi} : \Omega / \Gamma \rightarrow \Omega^* / \Gamma^*$. In particular, $(\Omega^*,\Gamma^*,f^*)$ is an element of $\Tt(S)$, where $f^* := \overline{ \Phi} \circ f$.
\end{thm}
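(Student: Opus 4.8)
The plan is to treat the statement as a descent-to-the-quotient argument, deducing everything from the equivariance relation $\Phi(A(P)) = (A^t)^{-1}(\Phi(P))$ recorded just before the statement, together with the already-cited facts that $\Omega^*$ is properly convex and that $\Phi \co \Omega \to \Omega^*$ is a diffeomorphism. First I would introduce the contragredient map $\theta \co \PGL(3,\RR) \to \PGL(3,\RR)$, $A \mapsto (A^t)^{-1}$, and note that it is a continuous group automorphism and an involution (so $\theta^{-1} = \theta$). Being a homeomorphism that preserves orders of elements, $\theta$ carries $\Gamma$ isomorphically onto $\Gamma^*$ and hence $\Gamma^*$ is again discrete and torsion-free. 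The equivariance relation reads $\Phi \circ A = \theta(A) \circ \Phi$ for all $A \in \Gamma$, i.e.\ $\Phi$ intertwines the $\Gamma$--action on $\Omega$ with the $\Gamma^*$--action on $\Omega^*$ through $\theta$.

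Next I would verify that $\Gamma^*$ genuinely preserves $\Omega^*$ and acts freely and properly discontinuously. For invariance, any $Q \in \Omega^*$ equals $\Phi(P)$ for some $P \in \Omega$ since $\Phi$ is onto, and for $B = \theta(A) \in \Gamma^*$ one has $B(Q) = \theta(A)(\Phi(P)) = \Phi(A(P)) \in \Phi(\Omega) = \Omega^*$ because $A(P) \in \Omega$. Since the $\Gamma^*$--action is conjugate to the $\Gamma$--action via the diffeomorphism $\Phi$, it inherits free proper discontinuity from the $\Gamma$--action, which is free and properly discontinuous as $\Omega \to \Omega/\Gamma$ is the universal covering of a surface homeomorphic to $S$. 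Thus $\Omega^* \to \Omega^*/\Gamma^*$ is also a covering of a surface.

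With this in place the descent is formal. If $P' = A(P)$ with $A \in \Gamma$, then $\Phi(P') = \theta(A)(\Phi(P))$ lies in the same $\Gamma^*$--orbit as $\Phi(P)$, so $\Phi$ induces a well-defined map $\overline{\Phi} \co \Omega/\Gamma \to \Omega^*/\Gamma^*$. Rewriting the intertwining relation as $\Phi^{-1} \circ \theta(A) = A \circ \Phi^{-1}$ shows that $\Phi^{-1}$ is likewise equivariant (now through $\theta^{-1} = \theta$), hence descends to a map $\overline{\Phi^{-1}}$ that is a two-sided inverse of $\overline{\Phi}$. As $\Phi$ and $\Phi^{-1}$ are smooth and the quotient projections are local diffeomorphisms, both $\overline{\Phi}$ and its inverse are smooth, so $\overline{\Phi}$ is a diffeomorphism.

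Finally I would assemble the three defining conditions of $\Tt(S)$ for the triple $(\Omega^*, \Gamma^*, f^*)$: the domain $\Omega^*$ is open and properly convex (cited at the start of the subsection), the group $\Gamma^*$ is a discrete torsion-free subgroup of $\PGL(3,\RR)$ preserving $\Omega^*$ (established above), and $f^* = \overline{\Phi} \circ f \co S \to \Omega^*/\Gamma^*$ is a homeomorphism, being the composite of the homeomorphism $f$ with the diffeomorphism $\overline{\Phi}$. The only step requiring genuine care is confirming that $\Gamma^*$ retains discreteness, torsion-freeness, and free proper discontinuity; everything else is a routine transport of structure along the $\Gamma$--equivariant diffeomorphism $\Phi$, whose existence and equivariance are the real geometric inputs and are quoted from the cited references.
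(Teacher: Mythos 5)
Your proposal is correct and follows exactly the route the paper intends: the paper states Theorem~\ref{thm:involution} as the ``upshot'' of the recalled duality facts (proper convexity of $\Omega^*$, the diffeomorphism $\Phi$, the equivariance $\Phi(A(P)) = (A^t)^{-1}(\Phi(P))$, and the dual group $\Gamma^*$), deferring details to the cited references, and your argument is precisely the descent-to-the-quotient verification those facts are meant to yield. The points you single out for care --- discreteness, torsion-freeness, and proper discontinuity of $\Gamma^*$, all transported through the contragredient automorphism and the conjugating diffeomorphism $\Phi$ --- are exactly the details the paper leaves implicit, and you handle them correctly.
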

Theorem \ref{thm:involution} defines an involution $\sigma \co \Tt(S) \rightarrow \Tt(S)$, taking a structure to its dual structure. Duality inverts the eigenvalues of the holonomy, therefore $\overline{ \Phi}$ preserves cusps and maps hyperbolic (resp. quasi-hyperbolic) ends to hyperbolic (resp. quasi-hyperbolic) ends. However it reverses inclusions,
$$
\Omega_1 \subset \Omega_2 \iff \Omega_1^* \supset \Omega_2^*.
$$
Hence a hyperbolic end $E$ is maximal for a structure $x$ if and only if it is minimal for $\sigma(x)$. It follows that $\sigma$ restricts to $\Ttm(S)$. When $x \in \Ttm(S)$ is enriched with a positive framing, its dual structure $\sigma(x)$ has a natural negative framing. Specifically, suppose $(V,\eta)$ is a flag in the $\Gamma$--orbit of the framing of an end $E$, fixed by an element $\hol(\gamma)$. Then $(\eta^\perp,V^\perp)$ is fixed by $(\hol(\gamma)^t)^{-1}$ and its $\Gamma^*$--orbit is an admissible dual framing of the same end $E$. In particular, if $V_0,V_+,V_-$ and $W_0,W_+,W_-$ are the fixed points of $\hol(\gamma)$ and $(\hol(\gamma)^t)^{-1}$, the dual relation between them is
\begin{align*}
V_+^\perp = W_-W_0, \qquad &V_-^\perp = W_+W_0, \qquad V_0^\perp = W_+W_-,\\
(V_+V_-)^\perp = W_0, \qquad &(V_-V_0)^\perp = W_+, \qquad (V_+V_0)^\perp = W_-.
\end{align*}
Therefore $\sigma$ lifts to an isomorphism $\sigma^+\co \Ttp(S) \rightarrow \Ttpd(S)$, that we call the \emph{duality map}. Moreover, the above argument gives a natural isomorphism $\sigma^-\co \Ttpd(S) \rightarrow \Ttp(S)$ which is inverse to $\sigma^+.$

It follows from this discussion, that the composition 
$\phi^+_{\tri, \nu}\circ\sigma^- \co \Ttpd(S) \to \RR^{\triangle \cup \underline{E}}_{>0}$ gives a natural bijection. However, the associated conjugacy classes of framed holonomies for the same point in 
$\RR^{\triangle \cup \underline{E}}_{>0}$ will be projectively dual to each other. This can be remedied by \emph{defining} the map $\phi^-_{\tri, \nu}$ as
\[\phi^+_{\tri, \nu} \circ (\mu^+)^{-1} \circ \sigma  \circ \mu^+ \circ \sigma^{-1},\] 
where $\sigma \co \X^\times(S) \to \X^\times(S)$ takes the character of the representation $\rho$ to the character of the inverse transpose of the representation, and each flag to the dual flag. 

Instead, we \emph{construct} the map $\phi^-_{\tri, \nu}$ in the next section in such a way that the identity 
\[ \phi^-_{\tri, \nu} = \phi^+_{\tri, \nu} \circ(\mu^+)^{-1} \circ \sigma  \circ\mu^+ \circ \sigma^{-1} \]
holds.

%%%%%%%%%%%%%%%%%%%%%%%%%%%%%%%%%%%%%

\subsection{The canonical bijection for negatively framed structures}

In \S\ref{subsec:classification_of_ends} we defined the negative framing of a convex projective structure and the moduli space $\Ttpd(S)$. The proof of the following result shows how to parametrise $\Ttpd(S)$ in a manner analogous to that of $\Ttp(S).$

\begin{thm} \label{thm:global-coord-dual}
	Let $S = S_{g,n}$ be a surface of negative Euler characteristic with at least one puncture. For each ideal triangulation $\tri$ of $S$ and each orientation $\nu$ on $S$ there is a canonical isomorphism
	\[
	\phi_{\tri, \nu}^*: \Ttpd(S) \rightarrow \RR^{\triangle \cup \underline{E}}_{>0}.
	\]
\end{thm}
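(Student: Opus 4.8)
The result to prove is Theorem~\ref{thm:global-coord-dual}, the existence of a canonical bijection $\phi^*_{\tri,\nu}\co\Ttpd(S)\to\RR^{\triangle\cup\underline E}_{>0}$ for negatively framed structures. There are two routes. The clean route is to simply \emph{define} $\phi^*_{\tri,\nu}$ by transport of structure through the duality isomorphism $\sigma^-\co\Ttpd(S)\to\Ttp(S)$ established in \S\ref{subsec:dual-proj-st}, setting $\phi^*_{\tri,\nu}:=\phi^+_{\tri,\nu}\circ\sigma^-$; since $\phi^+_{\tri,\nu}$ is a bijection by Theorem~\ref{thm:global-coord} and $\sigma^-$ is an isomorphism with inverse $\sigma^+$, the composite is immediately a bijection. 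However, the paragraph preceding the theorem explicitly announces that we instead want to \emph{construct} $\phi^-_{\tri,\nu}$ intrinsically so that it coincides with the corrected composite $\phi^+_{\tri,\nu}\circ(\mu^+)^{-1}\circ\sigma\circ\mu^+\circ\sigma^{-1}$. So the real content is to build the map directly from the negatively framed data and then verify this identity.

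First I would mirror the construction in the proof of Theorem~\ref{thm:global-coord} verbatim, but using the negative framing. Given $x\in\Ttpd(S)$, fix the lifted triangulation $\widetilde\tri$ and choose a developing map adapted to the framing, sending each ideal vertex to the \emph{point} of its negative flag with the associated \emph{supporting line}. Exactly as before, each ideal triangle yields a cyclically ordered triple of flags representing an element of $\P_3$ (whose positivity is guaranteed by proper convexity via Theorem~\ref{thm:param_P_3}), and each oriented edge yields an ordered quadruple in $\P^*_4$ (Theorem~\ref{thm:param_P^*_4}). Setting $\phi^*_{\tri,\nu}(x)(t_i):=\cancel3(\mathfrak F_i)$ and $\phi^*_{\tri,\nu}(x)(e_i):=\cancel4(\mathfrak F_i)$ gives a well-defined map into $\RR^{\triangle\cup\underline E}_{>0}$, with well-definedness following from $\hol_x$-equivariance and Lemma~\ref{lem:triple_ratio_proj_invariant} precisely as in Theorem~\ref{thm:global-coord}. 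The injectivity and surjectivity arguments then transfer with essentially no change: injectivity propagates projective equivalence triangle-by-triangle across shared edges using Theorems~\ref{thm:param_P_3} and \ref{thm:param_P^*_4}, and surjectivity reconstructs a properly convex $\Omega^-$ by the same inductive gluing of inscribed triangles. The only place requiring care is the end-analysis at the finale of the surjectivity argument: one must check that the reconstructed flags constitute an admissible \emph{negative} framing, i.e.\ that the saddle-point-and-line choice is forced exactly at \emph{maximal} hyperbolic ends rather than minimal ones. This is the mirror of the maximal/minimal dichotomy proved for $\Ttp(S)$, and it follows by the same accumulation argument applied to the segment $s$ between the attracting and repelling fixed points.

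Second, I would establish the compatibility identity $\phi^*_{\tri,\nu}=\phi^+_{\tri,\nu}\circ(\mu^+)^{-1}\circ\sigma\circ\mu^+\circ\sigma^{-1}$. The clean way is to show that the intrinsically constructed map agrees with $\phi^+_{\tri,\nu}\circ\sigma^-$ up to the relabelling of flags by the character-level duality $\sigma\co\X^\times(S)\to\X^\times(S)$. Concretely, the dual relations recorded in \S\ref{subsec:dual-proj-st}, namely $V_+^\perp=W_-W_0$, $(V_+V_-)^\perp=W_0$ and their companions, let one compute how triple and quadruple ratios transform under $\perp$; since $\cancel3$ and $\cancel4$ are built from the incidence pairings $\overline\eta(\overline V)$, and $\perp$ interchanges the roles of points and lines while preserving the general-position configuration of an inscribed triangle pair, the ratios are carried to ratios of the dual configuration in a controlled way. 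The main obstacle I anticipate is bookkeeping in this last step: correctly tracking the cyclic orderings and edge orientations under duality so that the triangle and edge coordinates match on the nose, rather than up to a permutation of the symmetric-group action from \S\ref{subsubsec:sym_gp}. Getting the orientation conventions to line up so that $\sigma$, not merely $\sigma^-$, appears is precisely what forces the stated corrected composite rather than the naive $\phi^+_{\tri,\nu}\circ\sigma^-$.
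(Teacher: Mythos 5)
Your first and third steps are fine: the definition of the map via triple and quadruple ratios of the negatively framed flags, and the injectivity argument, do carry over verbatim from Theorem~\ref{thm:global-coord}, and this is exactly what the paper does. The genuine gap is in your surjectivity claim. You assert that one "reconstructs a properly convex $\Omega^-$ by the same inductive gluing of inscribed triangles" and that the only remaining care is to check, "by the same accumulation argument," that the saddle-point-and-line choice is forced at maximal ends. This is backwards. The accumulation argument in the proof of Theorem~\ref{thm:global-coord} shows that in the union of inscribed triangles the segment $s$ between the attracting and repelling fixed points always lies in the \emph{frontier} of the domain whenever the vertex flag uses a non-saddle fixed point; hence the union construction can only ever produce a \emph{minimal} end at such a framing. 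But the negative-framing convention permits the pair (non-saddle point, line through the saddle point) only at \emph{maximal} ends. So for precisely those coordinates the union of inscribed triangles yields an element of $\Ttp(S)$, not of $\Ttpd(S)$: the domain is too small, and no rereading of the accumulation argument will make it maximal. Surjectivity therefore fails along your route.

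The paper's fix is to change the domain, not the end-analysis: it defines $\Omega^-$ as the \emph{intersection} of the circumscribed triangles $\mathcal{T}_i$, which is manifestly properly convex and contains the triangle spanned by the three fixed points at the problematic ends, making them maximal as required. This creates a second problem your proposal does not anticipate: $\Omega^-$ carries no triangulation, so the combinatorial isomorphism used in Theorem~\ref{thm:global-coord} to produce the marking homeomorphism $f\colon S \to \Omega^-/\Gamma$ is unavailable. The paper resolves this by duality: $(\Omega^-)^*$ is the union of the dual triangles $\mathcal{T}_i^*$, hence combinatorially isomorphic to $\widetilde{\tri}$ with $\Gamma^*$ acting simplicially, so $(\Omega^-)^*/\Gamma^* \cong S$, and Theorem~\ref{thm:involution} then transports this to the required homeomorphism $S \to \Omega^-/\Gamma$. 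Your final compatibility step (matching $\phi^-_{\tri,\nu}$ with $\phi^+_{\tri,\nu}\circ(\mu^+)^{-1}\circ\sigma\circ\mu^+\circ\sigma^{-1}$) is plausible in outline, and in the paper it comes essentially for free, since the positively and negatively framed structures with the same coordinates are built from the same inscribed/circumscribed triangle data and so have equivalent framed holonomies; but as written your proof does not get that far, because the object it constructs for surjectivity is not in $\Ttpd(S)$.
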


Once again, we fix an orientation of $S$, and suppress $\nu$ from the notation to simply write $\phi_{\tri, \nu}^* = \phi_{\tri}^*.$ The map in the introduction is $\psi^- = 
(\phi_{\tri, \nu}^*)^{-1}.$

\begin{proof}
One can follow the proof of Theorem \ref{thm:global-coord} verbatim until the end of the proof of injectivity. We recall that $\phi^+_{\tri}$ was defined by mapping the lift to $\widetilde{S}$ of the ideal triangulation of $S$ into $\Omega$ with vertices at the peripheral fixed points in the frontier of $\Omega,$ and then taking triple ratios and quadruple ratios of the appropriate flags associated to edges and vertices in the triangulation. In the proof of the injectivity, it turned out that $\Omega = \Omega^+,$ the domain obtained as the image of the ideal triangulation of $\widetilde{S}.$
	
The same construction is now applied to define $\phi^-_{\tri}$ and hence produce a point in $\RR^{\triangle \cup \underline{E}}_{>0}$ associated to an element of $\Ttpd(S).$ However, in certain cases, the negatively framed domain $\Omega$ strictly contains the image $\Omega^+$ of $\widetilde{S}.$ This happens precisely when we frame a maximal hyperbolic end with one of the two non-saddle points, and the line through the saddle point. Then $\Omega\setminus \Omega^+$ consists of the orbits of maximal cusps framed in this way. One can now produce an element of $\Ttpd(S)$ by thickening the structure at these ends. Alternatively, one may appeal to duality as follows, which in particular proves the claim that the new domain is the intersection of the circumscribing triangles.
	
Referring to the setting and the notation in the proof of Theorem~\ref{thm:global-coord}, we define $\mathcal{T}_i$ to be the circumscribed triangle in $\chi_i$ (containing the inscribed triangle), with the vertices removed. Then
	\[
	\Omega^- := \bigcap_{i = 0}^\infty \mathcal{T}_i.
	\]
	In this case, $\Omega^-$ is clearly properly convex as it is the countable intersection of properly convex sets. We have the action of $\Gamma$ on $\Omega^-,$ and are required to describe a homeomorphism $f\co S \to \Omega^-/\Gamma.$ In the proof of Theorem~\ref{thm:global-coord} this followed from the combinatorial map between the universal cover of $S$ with the induced triangulation to the triangulated domain. However, here we do not have such a triangulation of $\Omega^-.$
	
By duality, $(\Omega^-)^*$ is a properly convex domain, union of the triangles $\mathcal{T}_i^*$. Following the discussion in the proof of Theorem \ref{thm:global-coord}, $(\Omega^-)^*$ is combinatorially isomorphic to $\widetilde{\tri}$, and there is the dual group $\Gamma^*$ of $\PGL((\Omega^-)^*)$ acting as a simplicial isomorphism on $(\Omega^-)^*$. In particular, $(\Omega^-)^* / \Gamma^* \cong S$  and Theorem~\ref{thm:involution} therefore gives a homeomorphism $f\co S \to (\Omega^-)^* / \Gamma^* \to \Omega / \Gamma.$ Whence $(\Omega,\Gamma,f)$ with the given framing is an element of $\Ttpd(S)$.
\end{proof}

The same Fock-Goncharov coordinate (for a fixed triangulation and orientation of $S$) gives structures in both $\Ttp(S)$ and $\Ttpd(S).$ The respective domains $\Omega^+$ and $\Omega^-$ for these structures only differ (up to projective equivalence) at those hyperbolic ends whose flags comprise one of the two non-saddle points and the line through the saddle point. In fact, in these cases, $\Omega^-$ contains the triangle spanned by the three peripheral fixed points, hence the end is maximal, while $\Omega^+$ does not.\footnote{We note that this distinction does not arise in \cite{Fock-moduli-2007}. Indeed, the proof of Theorem 2.5 in \cite{Fock-moduli-2007} claims that
$\Omega^+$ and $\Omega^-$ are always equal domains.}
However, the proofs of Theorems~\ref{thm:global-coord} and \ref{thm:global-coord-dual} show that the structures have equivalent framed holonomies, as the developing maps and framed domains are constructed from equivalent sets of inscribed and circumscribed triangles. We record this in the next result.

\begin{cor}
The following diagram commutes:
\begin{center}
\begin{tikzpicture}
  \matrix (m) [matrix of math nodes,row sep=3em,column sep=4em,minimum width=2em] {
  & \Ttp(S) &    \\
    \RR_{>0}^{\triangle \cup \underline{E}}  & & \X^\times(S)  \\
    & \Ttpd(S) &   \\};
  \path[-stealth]
    (m-2-1) edge node [above] {$(\phi^+_{\tri, \nu})^{-1} \hspace{1cm} $} (m-1-2)
    (m-2-1) edge node [below] {$(\phi_{\tri, \nu}^-)^{-1} \hspace{1cm} $} (m-3-2)
    (m-1-2) edge node [above]{$\mu^+$} (m-2-3)
    (m-3-2) edge node [below]{$\mu^-$} (m-2-3);
\end{tikzpicture}
\end{center}
\end{cor}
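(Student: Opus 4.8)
The plan is to unwind both composite maps and observe that they produce literally the same framed holonomy, so that commutativity reduces to an identity rather than to a nontrivial computation. First I would fix a coordinate $c \in \RR_{>0}^{\triangle \cup \underline{E}}$ and trace through the surjectivity arguments in the proofs of Theorems~\ref{thm:global-coord} and~\ref{thm:global-coord-dual}. The key point, already flagged in the discussion preceding the statement, is that both proofs begin from the \emph{same} input: via Theorems~\ref{thm:param_P_3} and~\ref{thm:param_P^*_4}, and using the common fixed triangulation $\tri$ and orientation $\nu$, the coordinate $c$ determines a single $\Gamma$--equivariant configuration of flags at the ideal vertices of $\widetilde{\tri}$. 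The two proofs diverge only when assembling a domain from these flags — $\Omega^+$ as a union of inscribed triangles and $\Omega^-$ as an intersection of circumscribed triangles — but this divergence is invisible to the flags themselves.

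Next I would record that the holonomy representation is constructed identically in both cases. In each proof $\hol(\gamma) = T_\gamma$ is defined as the unique element of $\SL(3,\RR)$ carrying the triple of flags $\mathfrak{F}^{\widetilde{t}}$ to $\mathfrak{F}^{\gamma(\widetilde{t})}$ (using Theorem~\ref{thm:param_P_3} and Lemma~\ref{lem:triple_ratio_proj_invariant}); since the underlying flags and the base triangle $\widetilde{t}$ agree, the two representations $\pi_1(S) \to \SL(3,\RR)$ are literally equal, not merely conjugate. Likewise, the framings recorded by $\mu^+$ and $\mu^-$ are the $\Gamma$--orbits of these same ideal-vertex flags. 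Hence the framed holonomy — that is, the conjugacy class of the representation together with the invariant flag at each end — is the same element of $\X^\times(S)$ for both $(\phi^+_{\tri,\nu})^{-1}(c)$ and $(\phi^-_{\tri,\nu})^{-1}(c)$, which is exactly the asserted commutativity $\mu^+\circ(\phi^+_{\tri,\nu})^{-1} = \mu^-\circ(\phi^-_{\tri,\nu})^{-1}$.

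The one place demanding care — and the step I expect to be the main obstacle — is reconciling the positive/negative framing bookkeeping at the \emph{ambiguous} hyperbolic ends, namely those framed by a non-saddle fixed point together with the line through the saddle point. There $\Omega^+$ realises the end as minimal while $\Omega^-$ realises it as maximal, so a priori one must check that the positive framing of a minimal end and the negative framing of a maximal end select the identical flag. I would verify this directly against the definitions in \S\ref{subsec:classification_of_ends}: in both framing conventions the admissible choice at such an end is precisely the non-saddle fixed point paired with the line joining it to the saddle point, so the selected flags coincide. Once this is confirmed, no flag is ever altered in passing between the two constructions, and the equality of framed holonomies — hence the commutativity of the diagram — follows immediately.
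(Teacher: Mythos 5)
Your proposal is correct and follows essentially the same route as the paper: the corollary is recorded there as an immediate consequence of the observation that, for a fixed coordinate, the surjectivity constructions in Theorems~\ref{thm:global-coord} and~\ref{thm:global-coord-dual} start from the identical $\Gamma$--equivariant configuration of flags, so the holonomies and framings agree even though $\Omega^+$ (union of inscribed triangles) and $\Omega^-$ (intersection of circumscribed triangles) differ at those hyperbolic ends framed by a non-saddle fixed point and the line through the saddle point. Your final check that the positive framing of a minimal end and the negative framing of a maximal end select the same flag at such ends is exactly the bookkeeping carried out in the paper's discussion preceding the corollary.
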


%%%%%%%%%%%%%%%%%%%%%%%%%%%%%%%%%%%%%
%%%%%%%%%%%%%%%%%%%%%%%%%%%%%%%%%%%%%

\section{Properties of the parameterisation}
\label{sec:Properties of the parameterisation}

%%%%%%%%%%%%%%%%%%%%%%%%%%%%%%%%%%%%%
%%%%%%%%%%%%%%%%%%%%%%%%%%%%%%%%%%%%%

\subsection{Change of coordinates} \label{subsec:change-of-coord}

Let $S$ be a surface as in Theorem \ref{thm:global-coord}. Having fixed a triangulation $\tri$ and orientation $\nu$, $\Ttp(S)$ can be canonically parametrised by positive real numbers. A different choice of $\nu$ or $\tri$ may be interpreted as a change of coordinates. Similarly, the duality map $\sigma : \Ttp(S) \rightarrow \Ttpd(S)$, defined by taking a structure to its dual, induces an involution on Fock-Goncharov moduli space. We explicitly construct these transition maps.

We remark that also the action of $\Sym(3)^n$ on $\Ttp(S)$ descends to a change of coordinates. However, even in the simplest cases, the transition functions are quite convoluted and we do not have a local description of them.

\subsubsection{Transition maps for a different orientation} \label{subsubsec:change-of-orientation}
%\label{subsubsec:change-of-coord-orientation-triangulation}

The transition map associated to a switch in the orientation of $S$ is simple to describe. Denote by $-\nu$ the opposite orientation of $\nu$. Then for all $q \in \triangle \cup \underline{E}$,
$$
\phi_{\tri, -\nu}(x)(q) = \frac{1}{\phi_{\tri, \nu}(x)(q)}.
$$
Indeed triple ratios are computed with respect to flags with the opposite cyclical order, and edge ratios are computed after permuting the second and final arguments.

%%%%%%%%%%%%%%%%%%%%%%%%%%%%%%%%%%%%%

\subsubsection{Duality map in Fock-Goncharov coordinates}\label{subsubsec:change-of-coord-involution}

Fix a triangulation $\tri$ and an orientation of $S$.  Two dual structures $x \in \Ttp(S)$ and $y \in \Ttpd(S)$, are generally represented by different points of $\RR^{\triangle \cup \underline{E}}_{>0}$. That is, the composition map
$$
\phi^-_{\tri} \circ \sigma^+ \circ (\phi^+_{\tri})^{-1} = \phi^+_{\tri} \circ \sigma^- \circ (\phi^-_{\tri})^{-1} 
 \colon \RR^{\triangle \cup \underline{E}}_{>0} \rightarrow \RR^{\triangle \cup \underline{E}}_{>0}
$$
is not trivial. Recall from \S\ref{subsec:dual-proj-st} that duality maps a flag $(V,\eta)$ to its dual $(\eta^\perp,V^\perp)$. Referring to the left hand side of Figure~\ref{fig:ARS_flip}, a straightforward calculation shows that the change of coordinates is locally:
$$
e_{20} \mapsto \frac{ e_{02} t_{012} ( t_{023}+1 ) } { t_{012}+1 },
\qquad 
e_{02} \mapsto \frac{ e_{20} t_{023} ( t_{012}+1 ) } { t_{023}+1 }
\qquad 
\text{and}
\qquad 
t_{012} \mapsto \frac{1}{t_{012}}.
$$
This transformation manifestly has order two. Its set of fixed points is the algebraic variety $\mathcal{V}$ defined by the polynomials 
\[\{e_{ij} - e_{ji}, \ t_{ijk} - 1 \ | \ e_{ij} \in \underline{E}, \ t_{ijk} \in \triangle \}.\]
The duality of framings (see also the classification in \S\ref{subsubsec:max-min-coordinates}) 
implies that structures in $(\phi^+_{\tri})^{-1}(\mathcal{V}) \subset \Ttp(S)$ may only have cusps or minimal hyperbolic ends, where the line in the framing of each minimal hyperbolic end must pass through the saddle point. In particular, $(\phi^+_{\tri})^{-1}(\mathcal{V})$ strictly contains the classical Teichm\"uller space, namely the set of finite-area hyperbolic structures (see Lemma~ \ref{lem:coordinates-of-conic} in \S\ref{subsec:classical-teich}). On the other hand, one can think of the rest of the structures in $(\phi^+_{\tri})^{-1}(\mathcal{V})$ as infinite-area hyperbolic structures with a preferred framing (and dual framing).

There is a rational re-parametrisation of the edge ratios due to Parreau~\cite{Parreau-invariant-2015}, which reduces the change of coordinates $\phi^-_{\tri} \circ \sigma^+ \circ (\phi^+_{\tri})^{-1}$ into an even simpler form. If $e_{ik}$ is the coordinate of the edge oriented from $V_i$ to $V_k$, and $V_j$ is the third vertex of the triangle with ordered triple of vertices $(V_i,V_j,V_k)$, then the new edge parameter is
$$
s_{ik} := \frac{e_{ik}t_{ijk}}{1 + t_{ijk}}.
$$
The duality map on the edge ratios is then reduced to
$$
s_{ik} \mapsto s_{ki}.
$$
We will not make use of this reparametrisation of the edge ratios since this will results in a more complicated monodromy map, which is discusses in \S\ref{subsect_monodromy_operator}.

%%%%%%%%%%%%%%%%%%%

\subsubsection{Transition maps for a different triangulation} \label{subsubsec:change-of-coord}
%\label{subsubsec:change-of-coord-orientation-triangulation}

The transition map induced by a change of triangulation is slightly more complicated. Henceforth we fix an orientation $\nu$ on $S$ and simplify the notation to $\phi_{\tri} = \phi_{\tri, \nu}$. Recall that any two ideal triangulations $\tri$ and $\tri'$ of $S$ differ by a finite sequence of edge flips (cf. Lemma \ref{lem:finite_sequence_of_elementary_moves}), where a flip along an edge $e$ is the removal of $e$ and insertion of the other diagonal into the arising quadrilateral (see Figure \ref{fig:ARS_flip}). In particular, $\tri$ and $\tri'$ have the same number of vertices, edges and triangles. Let $\triangle \cup \underline{E}$ and $\triangle' \cup \underline{E}'$ denote triangles and oriented edges of the triangulations $\tri$ and $\tri'$ respectively. Let
$$
\Phi_e \colon \RR^{\triangle \cup \underline{E}}_{>0} \rightarrow \RR^{\triangle' \cup \underline{E}'}_{>0}
$$
be the coordinate change induced by a flip along $e$. If $(e_0,\dots,e_k)$ is the sequence of edges along which we flip in order to get from $\tri$ to $\tri'$, then the coordinate change between $\tri$ and $\tri'$ is the composition map $\Phi_{e_k} \circ \dots \circ \Phi_{e_0}$. We explicitly give $\Phi_e$ below according to Figure \ref{fig:ARS_flip}. To simplify the notation, we denote $\phi_\tri(x)(q),\phi_{\tri'}(x)(q)$ by $q,q'$, for all $q \in \triangle \cup \underline{E}$ and $q' \in \triangle' \cup \underline{E}'$. Hence:

\begin{align*}
e_{01}' = \frac{e_{01}  e_{02}}{ e_{02}+1}, \quad & \quad
e_{10}' = \frac{e_{10} ( e_{02}+1) t_{012}  e_{20}}{ e_{02} t_{012}  e_{20}+t_{012}  e_{20}+ e_{20}+1},\\
e_{12}' = \frac{e_{12} ( e_{02} t_{012}  e_{20}+t_{012}  e_{20}+ e_{20}+1)}{ e_{20}+1},\quad & \quad
e_{21}' = e_{21} ( e_{20}+1),\\
e_{23}' = \frac{e_{23}  e_{20}}{ e_{20}+1},\quad & \quad
e_{32}' = \frac{e_{32}  e_{02} t_{023} ( e_{20}+1)}{ e_{02} t_{023}  e_{20}+ e_{02} t_{023}+ e_{02}+1},\\
e_{30}' = \frac{e_{30} ( e_{02} t_{023}  e_{20}+ e_{02} t_{023}+ e_{02}+1)}{ e_{02}+1},\quad & \quad
e_{03}' = e_{03} ( e_{02}+1),\\
e_{13}' = \frac{ e_{20}+1}{( e_{02}+1) t_{012}  e_{20}},\quad & \quad
e_{31}' = \frac{ e_{02}+1}{ e_{02} t_{023} ( e_{20}+1)},\\
t_{130}'= \frac{t_{023} ( e_{02} t_{012}  e_{20}+t_{012}  e_{20}+ e_{20}+1)}{ e_{02} t_{023}  e_{20}+ e_{02} t_{023}+ e_{02}+1},\quad & \quad
t_{123}'= \frac{t_{012} ( e_{02} t_{023}  e_{20}+ e_{02} t_{023}+ e_{02}+1)}{ e_{02} t_{012}  e_{20}+t_{012}  e_{20}+ e_{20}+1}.
\end{align*}

\begin{figure}[ht]
	\centering
	\includegraphics[width=14cm]{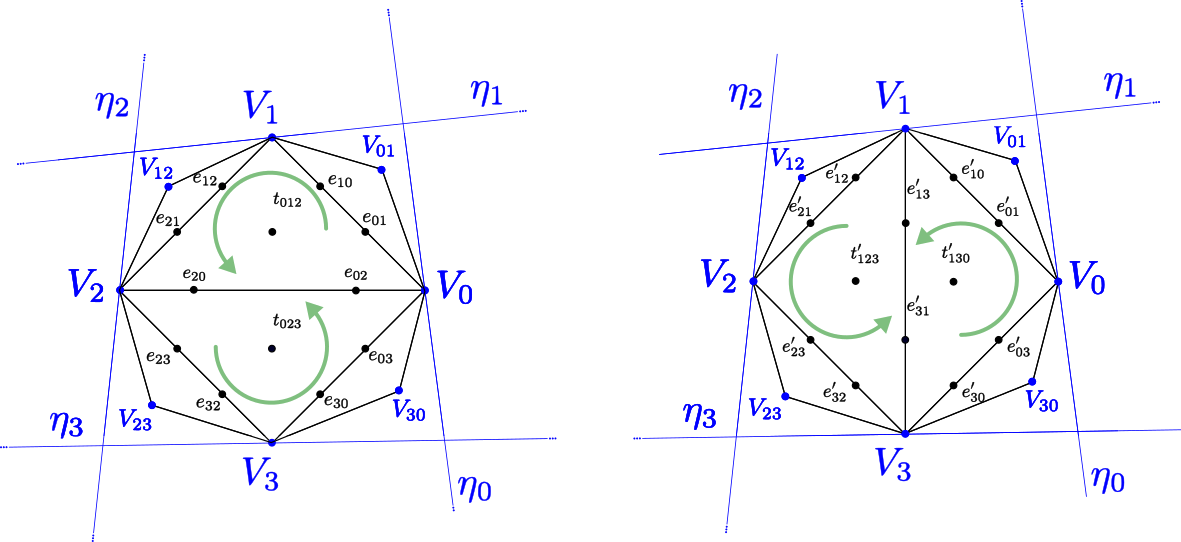}
	\caption{The change of coordinates for a flip. The quadrilateral formed by $V_i$, $i=0, \dots, 3$ is assumed to be in an affine patch such that the triangles are oriented anticlockwise.}
	\label{fig:ARS_flip}
\end{figure}

%%%%%%%%%%%%%%%%%%%%%%%%%%%%%%%%%%%%%

\subsection{Monodromy map} \label{subsect_monodromy_operator}

	In this paragraph we develop an efficient way to compute the monodromy map $\mu: \Ttp(S) \rightarrow \X_3(S)$. This has an immediate application in \S\ref{subsubsec:max-min-coordinates} and plays a key role in the proofs of \S\ref{subsec:fin_vol} and \S\ref{subsec:compatibility_brackets}.

	Henceforth, $S$ is assumed to be as in Theorem~\ref{thm:global-coord}, endowed with a framed convex projective structure $x \in \Ttp(S)$. We fix an orientation $\nu$ and an ideal triangulation $\tri$, with triangles and oriented edges $\triangle$ and $\underline{E}$. The canonical isomorphism of Theorem \ref{thm:global-coord} is given by $\phi = \phi_{\tri, \nu}$. As earlier, we simplify the notation by reducing $\phi(x)(q), \phi(x)(q)$ to $q, q'$, for all $q \in \triangle \cup \underline{E}$ and $q' \in \triangle' \cup \underline{E}'$. It will always be clear from context whether $q$ stands for a triangle, edge or coordinate.
	
%%%%%%%%%%%%%%%%%%%%%%%%%%%%%%%%%%%%%	
	
\subsubsection{The monodromy graph}	\label{subsubsect:monodromy-graph}	
	
	We construct a sub-triangulation $\tri'$ of $\tri$, by connecting the vertices of every triangle with its barycentre. The \emph{monodromy graph} $\graph$ is the dual spine of $\tri'$. Let $\widetilde{\graph}$ and $\widetilde{\tri}$ be lifts of $\graph$ and $\tri$ to $\Omega$, via the developing map of $x$ (Figure \ref{monodromy_graph}).

	Let $ \underline{E}(\widetilde{\graph})$ be the set of oriented edges of $\widetilde{\graph}$. We now define the map
\[
\Mon \co  \underline{E}(\widetilde{\graph}) \rightarrow \SL(3,\RR).
\]
	Using the construction of Theorem \ref{thm:global-coord}, each triangle $t$ of $\widetilde{\tri}$ corresponds to a cyclically ordered triple of flags $\F^t := (\F^t_i, \F^t_j, \F^t_k)$ in $\RP2$.
\begin{itemize}
	\item[$(i)$] Suppose $e \in \underline{E}(\widetilde{\graph})$ is strictly contained in $t$. Then $e$ forms part of the boundary of a triangle in $\widetilde{\graph}$, which itself is contained in $t$. It makes sense to compare the orientation of $e$ with that of $t$.
	$\Mon(e)$ is defined as follows.
\begin{itemize}
	\item
	 If the orientation of $e$ agrees with that of $t$ then
	\[
		\Mon(e)(\F_i,\F_j,\F_k) := (\F_j,\F_k,\F_i),
	\]
	\item
	If the orientation of $e$ does not agree with that of $t$ then
	\[
		\Mon(e)(\F_i,\F_j,\F_k) := (\F_k,\F_i,\F_j).
	\]
\end{itemize}
	Such an edge is referred to as a \emph{$\triangle$--edge} and is depicted as $e_0$ in Figure \ref{example_monodromy}.

	\item[$(ii)$] Otherwise, $e \in \underline{E}(\widetilde{\graph})$ intersects an edge of $\widetilde{\graph}$, say between $V_j$ and $V_k$. Let $t'$ be the triangle associated with the triple $\F^{t'} := (\F^{t'}_k, \F^{t'}_j, \F^{t'}_l)$, and suppose $e$ is oriented from $t$ to $t'$. Then $\Mon(e)$ is the unique projective transformation defined by
\[
	\Mon(e)(V_i,\F_j,\F_k) = (V_l,\F_k,\F_j).
\]
	In this case $e$ is called an \emph{$\underline{E}$--edge}. Such an edge is depicted as $e_1$ in Figure \ref{example_monodromy}.
\end{itemize}

\begin{minipage}[t]{0.45\textwidth}
	\centering
	\includegraphics[height=5cm]{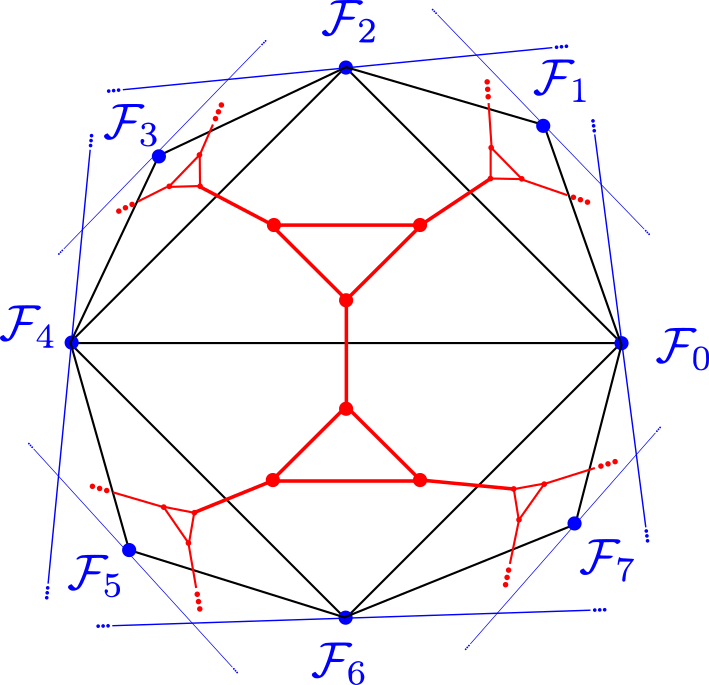}
	\captionof{figure}{The development of the monodromy graph is the dual spine of the development of the triangulation $\tri'$.}
	\label{monodromy_graph}
\end{minipage}
\begin{minipage}[t]{0.55\textwidth}
	\centering
	\includegraphics[height=5.5cm]{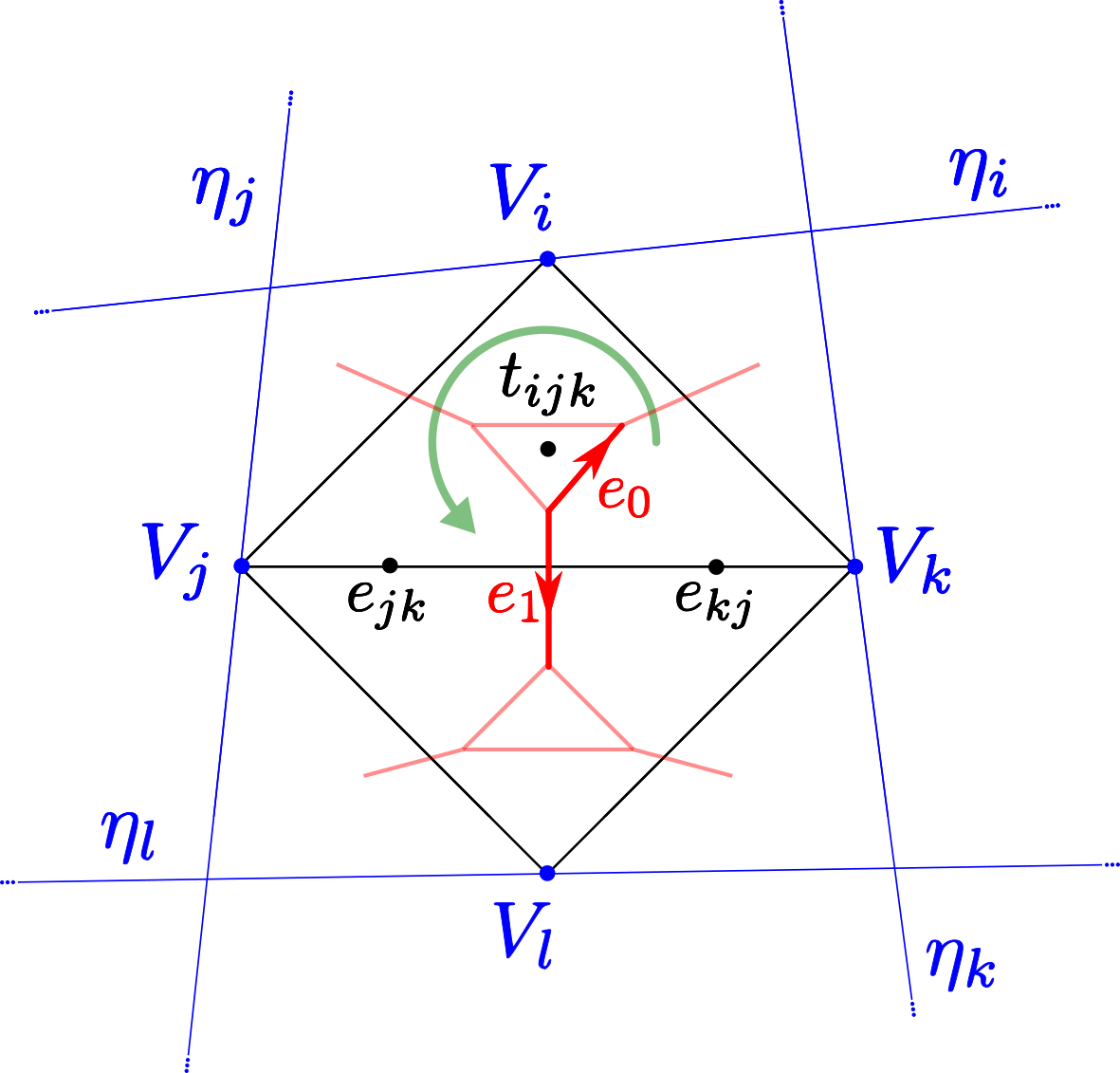}
	\captionof{figure}{Assuming the upper triangle is oriented anticlockwise with respect to the chosen affine patch, then $\Mon(e_0)(\F_i,\F_j,\F_k)=(\F_j,\F_k,\F_i)$ and $\Mon(e_1)(V_i,V_j,V_k,\eta_j\eta_k)=(V_l,V_k,V_j,\eta_j\eta_k)$.}
	\label{example_monodromy}
\end{minipage}

	We claim that $\Mon$ is well-defined. In case $(i)$, such a projective transformation always exists by Theorem~ \ref{thm:param_P_3} and the fact that $\cancel{3}(\F_i,\F_j,\F_k) = \cancel{3}(\F_j,\F_k,\F_i)$. Meanwhile, the operator defined in case $(ii)$ can be restated as the unique projective transformation such that
	\[
	\Mon(e)(V_i,V_j,V_k,\eta_j\eta_k) = (V_l,V_k,V_j,\eta_j\eta_k),
	\]
	from which existence and uniqueness are clear by the fact that both quadruples form a projective basis.

\begin{lem} \label{lem_compute_monodromy}
	Let $\gamma \in \pi_1(S)$. There exists a finite sequence $(e_0 ,  \dots , e_m)$ of oriented edges $e_i \in \underline{E}(\widetilde{\graph})$ such that the path $e_0 \cdot e_{1} \cdots e_m \subset \widetilde{\graph}$ is a lift of a loop $\widehat{\gamma} \subset \graph$, freely homotopic to $\gamma$. Furthermore,
	$$
	\mu(x)(\gamma) =  \left[ \prod_{i=0}^m \Mon(e_{m-i}) \right].
	$$
\end{lem}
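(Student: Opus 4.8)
The plan is to prove the two assertions separately: first the existence of the edge path, which is purely topological, and then the product formula, which I would establish by a telescoping argument after equipping each vertex of $\widetilde{\graph}$ with an ordered frame of flags.

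For existence, I would use that $\graph$, being the dual spine of the subdivision $\tri'$, is a deformation retract of the punctured surface $S$: one retracts $S$ away from the punctures onto $\graph$, so the inclusion $\graph \hookrightarrow S$ is a homotopy equivalence and $\pi_1(\graph) \cong \pi_1(S)$. Hence every free homotopy class contains a loop $\widehat{\gamma} \subset \graph$, which may be written as an edge path $e_0 \cdot e_1 \cdots e_m$. Since $\graph \hookrightarrow S$ is a homotopy equivalence, its preimage $\widetilde{\graph}$ in $\widetilde{S}$ is connected and is the universal cover of $\graph$, hence a tree, and the chosen edge loop lifts to a path $e_0 \cdot e_1 \cdots e_m$ in $\widetilde{\graph}$ with the required properties.

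For the product formula, I would attach to each vertex $v$ of $\widetilde{\graph}$ the ordered triple of flags $\F_v$ of the triangle $t_v \in \widetilde{\tri}$ containing $v$, cyclically ordered by $\nu$ and started at the corner singled out by the small triangle of $\tri'$ in which $v$ lies. The crux is then the local identity, valid for every oriented edge $e\co v\to w$ of $\widetilde{\graph}$:
$$
\Mon(e)\cdot \F_v = \F_w.
$$
For a $\triangle$--edge this is exactly the cyclic relabelling of case $(i)$, which exists and is unique by Theorem~\ref{thm:param_P_3} together with the cyclic invariance $\cancel{3}(\F_i,\F_j,\F_k)=\cancel{3}(\F_j,\F_k,\F_i)$. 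For an $\underline{E}$--edge one checks that the prescription of case $(ii)$, namely $(V_i,V_j,V_k,\eta_j\eta_k)\mapsto (V_l,V_k,V_j,\eta_j\eta_k)$, carries the ordered frame $\F_v$ of $t$ to a cyclic permutation of the ordered frame of the adjacent triangle $t'$, which equals $\F_w$ under our convention. Granting this, writing $e_i\co v_i \to v_{i+1}$ and telescoping yields
$$
\Big(\prod_{i=0}^m \Mon(e_{m-i})\Big)\cdot \F_{v_0} = \F_{v_{m+1}},
$$
which already explains why the product is read off in reverse order.

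Finally I would identify $\F_{v_{m+1}}$ with $\hol_x(\gamma)\cdot \F_{v_0}$. Choosing the loop based at the projection of $v_0$, the terminal triangle $t_{v_{m+1}}$ is the image of $t_{v_0}$ under the deck transformation $\gamma$; the $\hol_x$--equivariance of the developing map, used to define $\hol_x(\gamma)=T_\gamma$ in the proof of Theorem~\ref{thm:global-coord}, then gives $\F_{v_{m+1}} = \hol_x(\gamma)\cdot \F_{v_0}$, while a different choice of based representative changes the product only by conjugation, consistent with the class brackets. Both $\prod_i \Mon(e_{m-i})$ and $\hol_x(\gamma)$ are projective transformations agreeing on the projective basis $\F_{v_0}$, so they coincide by Theorem~\ref{thm:param_P_3}, and passing to conjugacy classes in $\X(S)$ gives the stated formula. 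The main obstacle is the bookkeeping in the local identity for the $\underline{E}$--edges: one must track how the cyclic order induced by $\nu$ on a triangle relates to that on the triangle across a shared edge (the reversal $V_j\leftrightarrow V_k$ and the fixing of $\eta_j\eta_k$), and confirm that the intrinsic definition of $\Mon$ in case $(ii)$ is exactly the transition map between the globally chosen ordered frames.
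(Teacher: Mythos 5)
Your existence argument overclaims the topology of $\graph$. Since $\graph$ is the dual spine of the subdivision $\tri'$ (not of $\tri$), it contains a small triangular loop around the barycentre of each triangle, and these loops are null-homotopic in $S$. Hence $\graph \hookrightarrow S$ is \emph{not} a homotopy equivalence, $\pi_1(\graph)\to\pi_1(S)$ is surjective but has nontrivial kernel, and $\widetilde{\graph}$ is \emph{not} a tree (it contains lifts of those trivial loops). This error is harmless for what you need — surjectivity alone produces a representing loop, and lifting a path to $\widetilde{S}$ never requires injectivity — and surjectivity is exactly what the paper asserts; but the justification as written is wrong.

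The serious gap is your ``crux'' identity $\Mon(e)\cdot \F_v=\F_w$ for $\underline{E}$--edges: it is false in general, and not for bookkeeping reasons. The triple ratio is a projective invariant of a flag triple (inverted under reversal of the cyclic order), and the triple ratios of two adjacent triangles are \emph{independent} Fock--Goncharov coordinates; so at a generic point of $\Ttp(S)$ there is no projective transformation whatsoever carrying the three flags of $t$ onto the three flags of the adjacent triangle $t'$. What the definition of $\Mon$ in case $(ii)$ guarantees is only that the two flags on the shared edge and the two opposite \emph{vertices} are matched. Concretely, with the matrices of \S\ref{subsubsec:computing-monodromy}, $\Mon(e_1)=E(e_{20},e_{02})$ depends only on the edge ratios, whereas $\eta_3$ is determined by $t_{023}$; one checks that $\Mon(e_1)\cdot\F_1$ is a flag at $V_3$ whose line equals $\eta_3$ if and only if $t_{012}=t_{023}$. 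Consequently the telescoping $\prod_i\Mon(e_{m-i})\cdot\F_{v_0}=\F_{v_{m+1}}$ cannot be proved edge-by-edge as you propose. The repair is the paper's route: propagate along the path only the weaker data consisting of the three vertices and the \emph{two} flags on the traversed edges (this does survive, because the $\triangle$--edge rotation, with the orientation convention of case $(i)$, carries the entering-edge flag pair onto the exiting-edge flag pair), and upgrade to all three flags only \emph{once, at the end}, where the initial and terminal triangles are lifts of the same triangle of $\tri$ and therefore genuinely have equal triple ratios; Theorem~\ref{thm:param_P_3} then determines the third line. With that modification, your concluding step — uniqueness of a projective transformation on a flag triple in general position, agreement with $\hol_x(\gamma)$ by equivariance, and conjugation-independence of the choice of lift — is correct and coincides with the paper's.
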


\begin{proof}
	The graph $\graph$ contracts in $S$ to the dual spine of $\tri$ by collapsing all trivial loops. This implies that the natural homomorphism $\pi_1(\graph) \rightarrow \pi_1(S)$ is surjective.
	
	Let $\gamma \in \pi_1(S)$ and choose a loop in $\graph$ which is freely homotopic to $\gamma$. Lift this loop to $\widetilde{\graph}$ and let $w := e_0 \cdot e_{1} \cdots  e_m$ be the sequence of edges defined by the chosen lift. Denote by $(\F_0,\F_1,\F_2)$ the flags at the vertices of the triangle $t_{012}$ of $\widetilde{\tri}$ from which $w$ starts, and $(\F_3,\F_4,\F_5)$ the flags at the vertices of the triangle $t_{345}$ where $w$ ends. Note that $t_{012}$ and $t_{345}$ are lifts of the same triangle in $T$ as $w$ is a lift of a closed loop. 

	It is clear from the definition of $\Mon$ that $\prod_{i=0}^m \Mon(e_{m-i})$ is a projective transformation which maps the vertices and two flags of $t_{012}$ to the vertices and two flags of $t_{345}$. However $\cancel{3}(\F_0,\F_1,\F_2) = \cancel{3}(\F_3,\F_4,\F_5)$ so invoking Theorem~\ref{thm:param_P_3},
	\[
	\prod_{i=0}^m \Mon(e_{m-i}) \cdot (\F_0,\F_1,\F_2) = (\F_3,\F_4,\F_5).
	\]
	It follows that  $\prod_{i=0}^m \Mon(e_{m-i})$ agrees on $(\F_0,\F_1,\F_2)$ with a representative in $\mu(x)(\gamma)$. The claim follows from the fact that $\mu(x)(\gamma)$ is (the conjugacy class of) a projective transformation mapping $(\F_0, \F_1, \F_2)$ to $(\F_3, \F_4, \F_5)$ and that such a projective transformation, hence its conjugacy class, is unique.
\end{proof}

We remark that there was a choice of lift in the above construction. If another lift $w'$ is chosen, giving rise to a product  $\prod_{i=0}^m \Mon(e'_{m-i})$, and $A$ is the projective transformation taking the starting triangle of $w'$ to the starting triangle of $w$, then
\[
\prod_{i=0}^m \Mon(e'_{m-i}) = A^{-1} \cdot  \prod_{i=0}^m \Mon(e_{m-i}) \cdot A.
\]
Hence the product $\prod_{i=0}^m \Mon(e_{m-i})$ is defined up to conjugation in $\SL(3, \RR)$. However, this is the best we can hope for as the monodromy is defined only up to conjugation. Moreover, the triangulation $\tri$ is only defined up to a projective transformation.

\begin{rem} \label{rem_sequence_edges}
	The sequence in Lemma \ref{lem_compute_monodromy} can be chosen to alternate between $\triangle$--edges and $\underline{E}$--edges. More precisely, we can choose the $e_i$'s so that $(e_0 \cdot  \dots \cdot e_m)$ is a sequence of pairs of the form ($\underline{E}$--edge, $\triangle$--edge).
\end{rem}

%%%%%%%%%%%%%%%%%%%%%%%%%%%%%%%%%%%%%

\subsubsection{How to compute the monodromy} \label{subsubsec:computing-monodromy}

Consider the following special case.\\
Let $\{ \F_0,\F_1,\F_2,\F_3 \}$ be the following generic quadruple of flags:
\begin{align*}
	\F_0 = (V_0,\eta_0) = \left( \left[ \begin{array}{c} 0 \\ 0 \\ 1 \end{array} \right] , \left[ 1:0:0 \right] \right), \quad & \quad \F_1 = (V_1,\eta_1) = \left( \left[ \begin{array}{c} 1 \\ -1 \\ 1 \end{array} \right] , \left[t_{012}:t_{012}+1:1 \right] \right),\\
	\F_2 = (V_2,\eta_2) = \left( \left[ \begin{array}{c} 1 \\ 0 \\ 0 \end{array} \right] , \left[ 0:0:1 \right] \right),  \quad & \quad \F_3 = (V_3,\eta_3) = \left( \left[ \begin{array}{c} e_{02} e_{20} \\ e_{20} \\ 1  \end{array} \right] , \eta_3 \right).
\end{align*}
Then $\{V_0,V_1,V_2,\eta_0 \eta_2\}$ is a projective basis of $\RP2$. Moreover, $\eta_1$ and $V_3$ are uniquely determined by the parameters:
$$
\cancel{3}((\F_0,\F_1,\F_2)) = t_{012},
$$
$$
\cancel{4}(\F_0,\F_1,\F_2,\F_3) = e_{02} \qquad \mbox{ and } \qquad \cancel{4}(\F_2,\F_3,\F_0,\F_1) = e_{20}.
$$
Suppose $\{\F_0,\F_1,\F_2,\F_3\}$ are flags assigned to the vertices of some triangulation $\tri$, with monodromy graph $\graph$. Let $e_0$ be a $\triangle$--edges in $t_{012}$ and $e_1$ the $\underline{E}$--edge crossing the interval from $V_0$ to $V_2$. Assume $e_0$ is oriented according to the orientation of $t_{012}$, and $e_1$ is oriented towards $t_{023}$. See Figure \ref{example_monodromy} by setting $(k,i,j,l)=(0,1,2,3)$. Then one easily computes:
$$
\Mon(e_0) = \frac{1}{\sqrt[3]{t_{012}}}
\left(
\begin{array}{ccc}
0 & 0 & 1 \\
0 & -1 & -1\\
t_{012} & t_{012}+1 & 1
\end{array}
\right), \qquad \qquad
\Mon(e_1) = \sqrt[3]{\frac{e_{20}}{e_{02}}}
\left(
\begin{array}{ccc}
0 & 0 & e_{02}  \\
0 & -1 & 0\\
\frac{1}{e_{20}} & 0 & 0
\end{array}
\right).
$$

If we denote by $e_0^{-1},e_1^{-1}$ the same edges with opposite orientation, we have that $\Mon(e_0^{-1}) = \Mon(e_0)^{-1}$ and $\Mon(e_1^{-1}) = \Mon(e_1)^{-1}$.

We remark that the appearance of cubic roots can be avoided by replacing triangle invariants and edge invariants by their cubes; this was done in \cite{Haraway-tessellating-2018}.

\begin{lem} \label{lem_example_monodromy}
	Let $e$ be an oriented edge of $\widetilde{\graph}$ with tail in some triangle $t_{ijk}$ of $\widetilde{\tri}$, having flags $\F_i,\F_j$ and $\F_k$ assigned to its vertices. Let $P \in \SL(3,\RR)$ be the projective transformation mapping $(V_i,V_j,V_k,\eta_k\eta_j)$ to the projective basis $(V_1,V_2,V_0,\eta_0\eta_2)$ as above, and let
	$$
	T(z) := \frac{1}{\sqrt[3]{z}}
	\left( 
	\begin{array}{ccc}
	0 & 0 & 1 \\
	0 & -1 & -1\\
	z & z+1 & 1
	\end{array}
	\right), \qquad \qquad
	E(x,y) := \sqrt[3]{\frac{x}{y}}
	\left(
	\begin{array}{ccc}
	0 & 0 & y  \\
	0 & -1 & 0\\
	\frac{1}{x} & 0 & 0
	\end{array}
	\right).
	$$
	\begin{itemize}
		\item[$i)$] If $e$ is a $\triangle$--edge oriented as $t_{ijk}$, and $\cancel{3}((\F_i,\F_j,\F_k)) = t_{ijk}$, then
		$$
		\Mon(e) = P^{-1} \cdot T(t_{ijk}) \cdot P.
		$$
		\item[$ii)$] If $e$ is an $\underline{E}$--edge crossing the segment of endpoints $(\F_i,\F_k)$ oriented away from $t_{ijk}$, and $e_{ki},e_{ik}$ are the corresponding edge ratios, then
		$$
		\Mon(e) = P^{-1} \cdot E(e_{ki},e_{ik}) \cdot P.
		$$
	\end{itemize}
\end{lem}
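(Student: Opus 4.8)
The plan is to prove both formulas by the same device: each operator $\Mon(e)$ is characterised as the \emph{unique} element of $\SL(3,\RR)$ effecting a prescribed permutation of a projective basis of flags, so it suffices to exhibit one transformation that does the job. The transformation $P^{-1}T(t_{ijk})P$ (resp.\ $P^{-1}E(e_{ki},e_{ik})P$) will be shown to effect that permutation, and uniqueness (Theorem~\ref{thm:param_P_3}, resp.\ Theorem~\ref{thm:param_P^*_4}) then forces it to equal $\Mon(e)$. The geometric content is that $P$ carries the whole local configuration into the standard position of \S\ref{subsubsec:computing-monodromy}, where the monodromy was computed explicitly.

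First I would check that $P$ standardises the flags, not merely the four points defining it. By construction $P$ sends the projective basis $(V_i,V_j,V_k,\eta_k\eta_j)$ to $(V_1,V_2,V_0,\eta_0\eta_2)$. Since $\eta_j$ and $\eta_k$ are the joins of $V_j,V_k$ with the intersection point $\eta_k\eta_j$, their images are forced to be the standard lines $\eta_2$ and $\eta_0$; hence $P\F_j=\F_2$ and $P\F_k=\F_0$. The remaining line $\eta_i$ is pinned down by the triple ratio, which is a projective invariant (Lemma~\ref{lem:triple_ratio_proj_invariant}): as the standard flags of \S\ref{subsubsec:computing-monodromy} are set up to realise $\cancel{3}=t_{ijk}$, Theorem~\ref{thm:param_P_3} gives $P\eta_i=\eta_1$, so $P$ maps the whole triple $(\F_i,\F_j,\F_k)$ to $(\F_1,\F_2,\F_0)$. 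For part $ii)$ I would invoke Theorem~\ref{thm:param_P^*_4} in the same spirit: once the three flags are standardised, the fourth vertex $V_l$ of the adjacent triangle is determined by the edge ratios $e_{ik},e_{ki}$, which are projective invariants, so $P$ additionally carries $V_l$ to the standard vertex $V_3$.

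With the configuration standardised, I would verify the two formulas. In \S\ref{subsubsec:computing-monodromy} one computes that, on the standard flags, $T(t_{ijk})$ realises the cyclic shift $V_0\mapsto V_1\mapsto V_2\mapsto V_0$ of flags, and $E(e_{ki},e_{ik})$ realises the quadruple map defining the $\underline E$--edge monodromy. Therefore, in case $i)$, the composite $P^{-1}T(t_{ijk})P$ sends $\F_i\mapsto\F_1\mapsto\F_2\mapsto\F_j$, $\F_j\mapsto\F_2\mapsto\F_0\mapsto\F_k$ and $\F_k\mapsto\F_0\mapsto\F_1\mapsto\F_i$, i.e.\ it performs exactly the permutation $(\F_i,\F_j,\F_k)\mapsto(\F_j,\F_k,\F_i)$ that defines $\Mon(e)$ when $e$ is oriented as $t_{ijk}$; uniqueness then yields $\Mon(e)=P^{-1}T(t_{ijk})P$. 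Case $ii)$ is entirely analogous: $P^{-1}E(e_{ki},e_{ik})P$ maps the relevant projective basis of $t_{ijk}$ to that of the adjacent triangle exactly as prescribed in the definition of $\Mon$ on an $\underline E$--edge, and uniqueness closes the argument.

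The main obstacle will be the bookkeeping of orientation and cyclic order rather than any hard computation. One must ensure that the cyclic triple $(\F_i,\F_j,\F_k)$ maps to a \emph{rotation} (not a reflection) of the standard triple, so that $t_{ijk}$ rather than its inverse appears; and that the hypotheses ``$e$ oriented as $t_{ijk}$'' in part $i)$ and ``$e$ oriented away from $t_{ijk}$ across $(\F_i,\F_k)$'' in part $ii)$ are matched precisely with the orientation and labelling conventions under which $T$ and $E$ were derived, so that in part $ii)$ the edge parameters enter in the order $(e_{ki},e_{ik})$ and not swapped. Pinning down this correspondence with \S\ref{subsubsec:computing-monodromy} is exactly where the stated hypotheses on the orientation of $e$ and on the crossed segment are used.
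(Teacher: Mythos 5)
Your proposal is correct and is essentially the paper's own argument in expanded form: the paper's entire proof consists of the single observation that, in both cases, $\Mon(e)$ and the conjugated matrix agree on the four points $\{V_i,V_j,V_k,\eta_k\eta_j\}$ of a projective basis, hence coincide by simple transitivity of $\SL(3,\RR)$. What you spell out explicitly --- that $P$ carries the whole flag configuration (not merely the four defining points) to the standard one via incidence together with projective invariance of the triple and quadruple ratios, and that $T$ and $E$ realise the required permutations in standard position --- is precisely the content the paper leaves implicit in that one sentence.
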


\begin{proof}
	In both cases, right hand side and left hand side agree on the four points $\{V_i,V_j,V_k,\eta_k\eta_j\}$, so the statement follows.
\end{proof}

We may now compute $\mu(x)$ using Theorem~\ref{thm:global-coord}.\\
Let $\gamma$ be an oriented closed curve on $S$, in general position with respect to $\tri$ (and itself). Let $(t_0,\dots,t_m)$ be the sequence of triangles of $\triangle$ crossed by $\gamma$, cyclically ordered with respect to the orientation of $\gamma$ (possibly with repetitions). For each $i \in \{1,\dots, m\}$, suppose $t_i$ has the orientation induced by $S$ and that $\gamma$ enters $t_i$ through the edge $e_i' \in \underline{E}$ and leaves $t_i$ through $e_i \in \underline{E}$, where $e_i, e'_i$ are oriented according to $t_i$. A diagram of this situation is shown in Figure~\ref{epsilon_rightleft_turn}. We define the following quantities:
\[
E_i := E\left( \phi(x)(e_i) , \phi(x)(e_i^{-1} )\right) , \quad T_i := T\left( \phi(x)(t_i)   \right),
\]
\[
\epsilon_i :=
\begin{cases}
+1 & \text{ if } e_i \cap e_i' \text{ is the head of } e_i', \\
-1 & \text{ if } e_i \cap e_i' \text{ is the tail of } e_i'.
\end{cases}
\]

\begin{figure}[h]
	\centering
	\includegraphics[height=4cm]{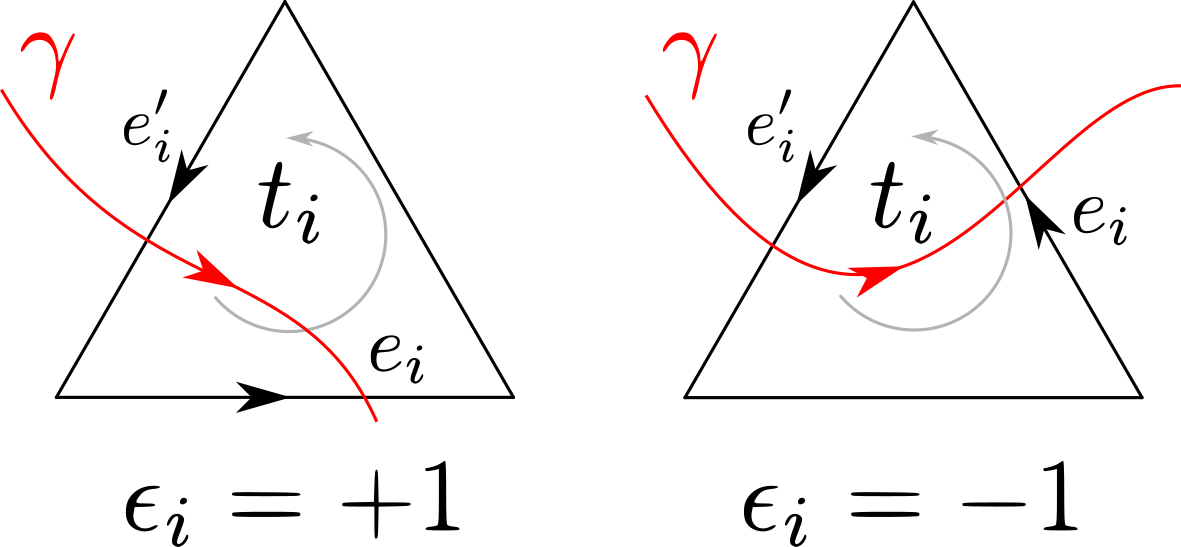}
	\captionof{figure}{	If we fix an affine patch in which $t_i$ is oriented anticlockwise, $\epsilon_i = +1$ or $-1$ respectively when $\gamma$ ``turns right" or ``turns left".}
	\label{epsilon_rightleft_turn}
\end{figure}

\begin{thm}\label{thm_monodromy}\label{thm:monodromy}
	$\mu(x)(\gamma)$ is the conjugacy class $[T_0^{\epsilon_0}\cdot E_0 \cdots T_m^{\epsilon_m}\cdot E_m]$. Furthermore, if $\gamma$ represents a peripheral element of $\pi_1(S)$ and $\epsilon_0 = 1$ (resp. $-1$), then $T_0^{\epsilon_0}\cdot E_0 \cdots T_m^{\epsilon_m}\cdot E_m$ is lower triangular (resp. upper triangular).
\end{thm}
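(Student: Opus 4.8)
The plan is to combine Lemma \ref{lem_compute_monodromy}, which expresses $\mu(x)(\gamma)$ as the conjugacy class of a product of $\Mon$-matrices along a lift of $\gamma$ to $\widetilde{\graph}$, with Lemma \ref{lem_example_monodromy}, which writes each such matrix as a conjugate of one of the standard matrices $T(\cdot)$ or $E(\cdot,\cdot)$. First I would organise the lifted path according to the triangles $t_0,\dots,t_m$ crossed by $\gamma$. Within each triangle $t_i$ the monodromy graph is a $3$--cycle of $\triangle$--edges joined to its neighbours by $\underline{E}$--edges, so entering $t_i$ through $e_i'$ and leaving through the adjacent edge $e_i$ forces the path to traverse exactly one $\triangle$--edge followed by one $\underline{E}$--edge. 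By Remark \ref{rem_sequence_edges} the lift may be taken to alternate in precisely this pattern, so the product decomposes into $m+1$ blocks, one per triangle crossed.

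Next I would run a transfer--matrix/telescoping argument. Writing $R_i$ for the projective transformation placing the relevant ordered vertices of $t_i$ in the standard projective basis of Lemma \ref{lem_example_monodromy}, that lemma gives $\Mon(\triangle_i)=R_i^{-1}T(\phi(x)(t_i))^{\pm1}R_i$ and $\Mon(\underline{E}_i)=R_i^{-1}E(\cdot,\cdot)R_i$, where the $\underline{E}$--edge matrix is designed so that its standard form realises the change of basis from $t_i$ to $t_{i+1}$, that is, $R_{i+1}$ differs from $R_i$ by right multiplication with this standard update. Consequently the conjugating factors cancel between consecutive blocks and the whole product collapses to $R_0^{-1}\big(T_0^{\epsilon_0}E_0\cdots T_m^{\epsilon_m}E_m\big)R_0$, whose conjugacy class is $[T_0^{\epsilon_0}E_0\cdots T_m^{\epsilon_m}E_m]$. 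It remains to identify the exponent of $T_i$: I would check against Figure \ref{epsilon_rightleft_turn} that the single $\triangle$--edge is traversed in agreement with the orientation of $t_i$ exactly when $e_i\cap e_i'$ is the head of $e_i'$, i.e.\ exactly when $\epsilon_i=+1$, so by the definition of $\Mon$ on $\triangle$--edges the exponent is $\epsilon_i$. This establishes the first assertion.

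For the triangularity statement I would first note that a peripheral $\gamma$ is freely homotopic to a simple loop encircling a single puncture $p$; realised transversally to $\tri$ it crosses exactly the triangles incident to $p$ in cyclic order and turns the same way at $p$ at every step. Hence all the $\epsilon_i$ are equal, and equal to $\epsilon_0$. The claim then reduces to a property of a single block: a direct computation gives
\[
T(z)\,E(x,y)=\frac{1}{\sqrt[3]{z}}\sqrt[3]{\frac{x}{y}}\begin{pmatrix}1/x&0&0\\-1/x&1&0\\1/x&-(z+1)&zy\end{pmatrix},
\]
which is lower triangular, and a similar computation shows $T(z)^{-1}E(x,y)$ is upper triangular. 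Thus if $\epsilon_0=+1$ every block $T_i^{\epsilon_i}E_i=T_iE_i$ is lower triangular, so the product $T_0E_0\cdots T_mE_m$ is lower triangular; if $\epsilon_0=-1$ every block is upper triangular, so the product is upper triangular.

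The main obstacle I anticipate is the bookkeeping in the telescoping step: one must verify that the vertex orderings used to normalise consecutive triangles are compatible, so that the standard matrix $E(\cdot,\cdot)$ genuinely implements the transition from $R_i$ to $R_{i+1}$ and the conjugating factors cancel cleanly rather than leaving residual change--of--basis terms. Closely tied to this is the orientation check matching the $\triangle$--edge direction to the sign of $\epsilon_i$. Once these two combinatorial verifications are in place, together with the observation that peripheral loops yield a constant $\epsilon$, both assertions follow from the block computations above.
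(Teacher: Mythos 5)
Your proposal follows essentially the same route as the paper's proof: it combines Lemma~\ref{lem_compute_monodromy}, Remark~\ref{rem_sequence_edges} and Lemma~\ref{lem_example_monodromy}, telescopes the normalizing matrices using the fact that consecutive normalizers differ by the standard matrices (the paper verifies exactly this as $P_j \cdot P_{j-1}^{-1} = T_j^{-\epsilon_j}\cdot E_{j-1}^{-1}$ by checking agreement on a projective basis), and settles the peripheral case by the same two observations, namely that circuiting an ideal vertex forces all $\epsilon_i$ to be equal and that $T(z)\cdot E(x,y)$ and $T(z)^{-1}\cdot E(x,y)$ are lower and upper triangular respectively. The only slip is that the update relation multiplies the normalizers by standard matrices on the left rather than the right (and the telescoped product appears as a cyclic permutation of $T_0^{\epsilon_0}E_0\cdots T_m^{\epsilon_m}E_m$, which is harmless up to conjugacy), but this is precisely the bookkeeping you flagged and it does not affect the argument.
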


\begin{proof}
	It follows from Lemma~\ref{lem_compute_monodromy}, Remark~\ref{rem_sequence_edges} and Lemma~ \ref{lem_example_monodromy} that $\mu(x)(\gamma)$ is the conjugacy class of
\[
Q := \prod_{i=0}^m P_{m-i}^{-1} \cdot E_{m-i} \cdot T_{m-i}^{\epsilon_{m-i}} \cdot P_{m-i},
\]
where $P_i \in \SL(3,\RR)$ is the projective transformation corresponding to the triangle $t_i$, as defined in Lemma~ \ref{lem_example_monodromy}.\\
	Observe that $P_j \cdot P_{j-1}^{-1} = T_j^{-\epsilon_j} \cdot E_{j-1}^{-1}$ for all $j \in \{1,\dots,m\}$, as both transformations agree on the four points $\{V_0,V_2,E_{j-1}(V_1),\eta_0 \eta_2\}$. Hence
	\begin{align*}
		Q &= \prod_{i=0}^m P_{m-i}^{-1} \cdot E_{m-i} \cdot T_{m-i}^{\epsilon_{m-i}} \cdot P_{m-i}\\
		&= \left( \prod_{i=0}^{m-1} P_{m-i}^{-1} \cdot E_{m-i} \cdot T_{m-i}^{\epsilon_{m-i}} \cdot T_{m-i}^{-\epsilon_{m-i}}\cdot E_{m-(i+1)}^{-1} \cdot P_{m-(i+1)} \right) \cdot P_{0}^{-1} \cdot E_{0} \cdot T_{0}^{\epsilon_{0}} \cdot P_{0}\\
		&= P_{m}^{-1} \cdot E_{m} \cdot T_{0}^{\epsilon_{0}} \cdot P_{0}.
	\end{align*}
	A simple inductive argument shows that
	$P_{m}^{-1} = P_{0}^{-1} \cdot \left( \prod_{i=0}^{m-1} E_{i} \cdot T_{i+1}^{\epsilon_{i+1}} \right)$. It follows that
	$$
	Q = P_{0}^{-1} \cdot E_{0} \cdot \left( \prod_{i=1}^{m} T_{i}^{\epsilon_{i}} \cdot E_{i} \right) \cdot T_{0}^{\epsilon_{0}} \cdot P_{0}.
	$$
	With regard to the second part of the statement, we remark that $\gamma$ represents a peripheral element of $\pi_1(S)$ if and only if it circuits some ideal vertex of $\tri$. In this case $\epsilon_i = \epsilon_j$ for all $i,j \in \{0,\dots,m\}$. Since $T_i \cdot E_i$ and $T_i^{-1} \cdot E_i$ are lower- and upper-triangular respectively, the product $T_0^{\epsilon_0}\cdot E_0 \dots T_m^{\epsilon_m}\cdot E_m$ is triangular as claimed.
\end{proof}

%%%%%%%%%%%%%%%%%%%%%%%%%%%%%%%%%%%%%

\subsection{Maximal and minimal ends in coordinates} 
\label{subsubsec:max-min-coordinates}

In this paragraph, we show a straightforward application of Theorem~\ref{thm_monodromy}. We provide a criterion to determine the geometry of an end using only the Fock-Goncharov coordinates of the structure. More specifically, not only may one distinguish among hyperbolic ends, special ends and cusps, but also between maximality and minimality as well as the different framings.

Let $E_i$ be an end of $S$. Let $V_i$ be a vertex of $\widetilde{\tri}$ developing $E_i$, and $\gamma_i \in \pi_1(S)$ a peripheral element whose holonomy fixes $V_i$. Let $\F_i = (V_i,\eta_i)$ be the framing at $V_i$. Denote by $e_{ik}$ for $k=1, \dots, m$ the edges of $\tri$ oriented away from $E_i$, and $e_{ki}$ the edges oriented toward $E_i$. Let $t_k^i$ be the triangle in $\tri$ appearing after the edge $e_{ik}$ according to the orientation of $\gamma_i$.

In Theorem~\ref{thm_monodromy} we show that the conjugacy class of $\hol(\gamma_i)$ has a representative of the form
\[
\prod_{k=1}^m T^{-1}(t_k^i) E(e_{ik}, e_{ki}) = \prod_{k=1}^m \sqrt[3]{ \frac{t_k^i e_{ik}}{e_{ki}}}
\begin{pmatrix} \frac{1}{t_k^i e_{ik}} & \frac{t_k^i+1}{t_k^i} & e_{ki} \\ 0 & 1 & -e_{ki} \\ 0 & 0 & e_{ki} \end{pmatrix}.
\]
The eigenvalues are
$$
\lambda_1 = \prod_{k=1}^m(e_{ki} (t_k^i)^2 e_{ik}^2)^{-\frac{1}{3}}, \qquad \lambda_2 = \prod_{k=1}^m(e_{ki}^{-1} t_k^i e_{ik})^{\frac{1}{3}}, \qquad \lambda_3 = \prod_{k=1}^m(e_{ki}^{2} t_k^i e_{ik})^{\frac{1}{3}}.
$$
As an artefact of the construction in Lemma~\ref{lem_example_monodromy}, $\lambda_1$ is always the eigenvalue corresponding to the eigenvector $V_i$, and $\lambda_2$ corresponds to the other eigenvector contained in $\eta$. One easily deduces the following classification:
\begin{enumerate}
	\item $E_i$ is a hyperbolic end:
	\begin{enumerate}
		\item $E_i$ is maximal, i.e. $V_i$ is the saddle eigenvector:
		\begin{itemize}
			\item $\eta_i$ passes through the attracting eigenvector: 
				\[\prod_{k=1}^m t_k^i e_{ik}  > \; 1 \ \text{and} \ \prod_{k=1}^m t_k^i e_{ik}  e_{ki} < \; 1 \]
			\item $\eta_i$ passes through the repelling eigenvector: 
				\[\prod_{k=1}^m  t_k^i e_{ik} < \; 1 \ \text{and} \ \prod_{k=1}^m  t_k^i e_{ik} e_{ki} > \; 1\]
		\end{itemize}
		\item $E_i$ is minimal, i.e. $V_i$ is not the saddle eigenvector:
		\begin{itemize}
			\item $V_i$ is the attracting eigenvector and $\eta_i$ passes through the saddle eigenvector:
				\[\prod_{k=1}^m t_k^i e_{ik}  < \; 1 \ \text{and}\ \prod_{k=1}^m e_{ki} < \; 1\]
			\item $V_i$ is the attracting eigenvector and $\eta_i$ passes through the repelling eigenvector: 
				\[\prod_{k=1}^m t_k^i e_{ik}  e_{ki} < \; 1 \ \text{and} \ \prod_{k=1}^m e_{ki} > \; 1 \]
			\item $V_i$ is the repelling eigenvector and $\eta_i$ passes through the saddle eigenvector: 
				\[\prod_{k=1}^m  t_k^i e_{ik}  > \; 1 \ \text{and}\ \prod_{k=1}^m  e_{ki} > \; 1\]
			\item $V_i$ is the repelling eigenvector and $\eta_i$ passes through the attracting eigenvector: 
				\[\prod_{k=1}^m t_k^i e_{ik} e_{ki} > \; 1 \ \text{and} \ \prod_{k=1}^m  e_{ki} < \; 1 \]
		\end{itemize}
	\end{enumerate}
	\item $E_i$ is a special end:
	\begin{enumerate}
		\item $V_i$ is the attracting eigenvector of multiplicity two:
		\begin{itemize}
			\item $\eta_i$ does not pass through the repelling eigenvector: 
				$$\prod_{k=1}^m t_k^i e_{ik}  = \; 1 \ \text{and} \ \prod_{k=1}^m e_{ki} < \; 1 $$
			\item $\eta_i$ passes through the repelling eigenvector: 
			$$\prod_{k=1}^m  t_k^i e_{ik} e_{ki} = \; 1 \ \text{and} \ \prod_{k=1}^m e_{ki} > \; 1$$
		\end{itemize}
		\item $V_i$ is the repelling eigenvector of multiplicity two:
		\begin{itemize}
			\item $\eta_i$ does not pass through the attracting eigenvector: 
			$$\prod_{k=1}^m t_k^i e_{ik}  = \; 1 \ \text{and} \ \prod_{k=1}^m e_{ki} > \; 1 $$
			\item $\eta_i$ passes through the attracting eigenvector: 
			$$\prod_{k=1}^m  t_k^i e_{ik} e_{ki} = \; 1 \ \text{and} \ \prod_{k=1}^m e_{ki} < \; 1$$
		\end{itemize}
		\item $V_i$ is the attracting eigenvector of multiplicity one: 
		$$\prod_{k=1}^m t_k^i e_{ik}  < \; 1 \ \text{and}\ \prod_{k=1}^m e_{ki} = \; 1$$
		\item $V_i$ is the repelling eigenvector of multiplicity one: 
		$$\prod_{k=1}^m t_k^i e_{ik}  > \; 1 \ \text{and}\ \prod_{k=1}^m e_{ki} = \; 1$$
	\end{enumerate}
	\item $E_i$ is a cusp:
	$$\prod_{k=1}^m t_k^i e_{ik}  = \; 1 \ \text{and}\ \prod_{k=1}^m e_{ki} = \; 1$$
\end{enumerate}

%%%%%%%%%%%%%%%%%%%%%%%%%%%%%%%%%%%%%

%%%%%%%%%%%%%%%%%%%%%%%%%%%%%%%%%%%%%
%%%%%%%%%%%%%%%%%%%%%%%%%%%%%%%%%%%%%

\section{Applications of the parameterisation}
\label{sec:Applications of the parameterisation}

%%%%%%%%%%%%%%%%%%%%%%%%%%%%%%%%%%%%%

\subsection{Structures of finite area}\label{subsec:fin_vol}

	We seek now to determine necessary and sufficient conditions for a framed convex projective structure to have finite area, given its representative in $\RR^{\triangle \cup \underline{E}}_{>0}$. This leads to a new proof of a result from Marquis~ \cite{Marquis-surface-2012}.

	Throughout this section, $S = S_{g,n}$ will be a surface of negative Euler characteristic, with at least one puncture and orientation $\nu$. Fix a framed convex projective structure on $S$ with holonomy $\hol$ and developing map $\dev$. Furthermore, fix an ideal triangulation $\tri$ of $S$, let $\Omega := \dev(\widetilde{S}) \subset \RP2$ and denote by $\widetilde{\tri}$ the lift of $\tri$ to $\Omega$. The map $\phi = \phi_{\tri, \nu}$ is the canonical isomorphism defined in Theorem~\ref{thm:global-coord}. We identify $e_{ij}, t_{ijk} \in \tri$ with $\phi(x)(e_{ij})$ and $\phi(x)(t_{ijk})$ respectively, so as to simplify notation. 

A proof of the following lemma was given by Marquis~\cite{Marquis-surface-2012}. In order to keep this part of the notes self-contained, we give an adaptation of that proof here.

\begin{lem}\label{lem:para-fin}
The area of $S$ is finite if and only if every end of $S$ is a cusp.
\end{lem}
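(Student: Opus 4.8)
The plan is to localise the area computation at the ends. By Lemma~\ref{ideal_triangulation} the triangulation $\tri$ cuts $S$ into the finite set $\triangle$ of ideal triangles, and since the edges and vertices are Hausdorff--null the total area is the finite sum over $T\in\triangle$ of the Hilbert areas of the individual triangles (the Hilbert metric is $\Gamma$--invariant and hence descends to $S$). Each $T$ develops to a straight triangle $\langle V_0,V_1,V_2\rangle\subset\Omega$ whose closure meets $\bOmega$ only in its three vertices, so after removing a small neighbourhood of each vertex the remaining truncated triangle has compact closure in $\Omega$ and therefore finite area. Thus $S$ has finite area if and only if, for every triangle and every vertex, the corner of $T$ at that vertex has finite area. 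As every end has an incident triangle, the lemma reduces to the local claim: \emph{the corner of an ideal triangle at a vertex $V$ developing an end $E$ has finite area if and only if $E$ is a cusp.}

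To analyse a corner I fix the peripheral generator $\gamma$ with $\hol(\gamma)$ fixing $V\in\bOmega$, put $\hol(\gamma)$ in one of the Jordan forms $J_1,J_2,J_3$ of \S\ref{subsec:classification_of_ends}, and work in an affine chart $\A\supset\cOmega$ centred at $V=V_0$. Because $\hol(\gamma)$ preserves $\Omega$, the boundary arc of $\bOmega$ through $V$ is the closure of a $\langle\hol(\gamma)\rangle$--orbit and can be written down explicitly as an invariant curve of the corresponding affine map; the corner is the wedge cut out of $\Omega$ by the two edges $V_0V_1$ and $V_0V_2$ of $T$. Writing the Hilbert area element near $V$ as $dA\asymp \frac{dx\,dy}{d_+\,d_-}$, where $d_\pm$ are the affine distances from a point to $\bOmega$ along two fixed transverse directions, reduces the claim to an estimate of this integral over the wedge as one approaches $V$.

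For the cusp case I use the horosphere machinery set up before the lemma. If $E$ is a cusp then $\hol(\gamma)\sim J_3$ is unipotent, so $\tau(\hol(\gamma))=1$ and, by Theorem~\ref{thm_CLT_horospheres}, the horospheres $\mathcal S_t$ centred at $(\eta,V)$ are $\hol(\gamma)$--invariant and foliate a neighbourhood of $V$. Integrating the wedge against this foliation, the arcs $\mathcal S_t\cap T$ have uniformly bounded Hilbert length while the leaves separate at a definite Hilbert rate as $t\to\infty$; this forces an exponential decay of the area measure towards $V$, so the integral converges. Equivalently, $\bOmega$ is strictly convex and $C^1$ at the parabolic fixed point, so comparison with the osculating conic (whose Hilbert metric is the hyperbolic metric, under which ideal triangles have finite area) bounds the corner area from above.

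For the converse I must show that a non-cusp end yields an infinite-area corner, and this is the main obstacle. When $E$ is hyperbolic ($J_1$) or special ($J_2$), $\hol(\gamma)$ has at least two distinct eigenvalues, so horospheres are genuinely displaced (as in the totally hyperbolic and quasi-hyperbolic cases discussed before the lemma). Geometrically the invariant boundary arc then fails to be strictly convex at $V$: for a minimal hyperbolic end $\bOmega$ actually contains the projective segment between two peripheral fixed points with $V$ as an endpoint, and for special or maximal hyperbolic ends the developed orbit accumulates tangentially along an eigendirection, so that $\bOmega$ contains, or is asymptotic to, a segment at $V$. In every case I will bound the corner area from below by that of a model triangle corner, whose Hilbert metric is a normed plane and hence assigns infinite area to any neighbourhood of a vertex. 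Making this comparison uniform across the three Jordan types and the (up to six) admissible framings enumerated in \S\ref{subsubsec:max-min-coordinates} --- in particular verifying the requisite flatness of $\bOmega$ at a saddle vertex of a maximal hyperbolic end --- is the delicate step; I expect to carry it out by substituting the explicit developed boundary orbit together with the eigenvalue formulas of that section, thereby reducing the divergence to an elementary one-variable integral comparison.
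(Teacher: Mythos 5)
Your skeleton is the same as the paper's: reduce to the corners of finitely many straight ideal triangles, prove finiteness at a cusp by comparison with an inner domain, and infiniteness at a non-cusp by comparison with an outer domain. The main gap is exactly the step you defer: the special end whose vertex $V$ carries the eigenvalue of multiplicity \emph{one}. At such a point $\bOmega$ is $C^1$, so there is a \emph{unique} supporting line at $V$; consequently no circumscribed triangle with a vertex at $V$ exists (both of its edges at $V$ would have to be supporting lines), and a circumscribed triangle whose vertex lies elsewhere is useless, because a wedge terminating at a smooth, non-vertex point of the boundary of a triangle has \emph{finite} Hilbert area (in the quadrant model the area element is comparable to $\frac{dx\,dy}{xy}$, and $\int_0^\epsilon\!\int_{1-y}^{1+y}\frac{dx\,dy}{xy}<\infty$). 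So ``a model triangle corner in every case'' cannot work here. Moreover, your qualitative dichotomy does not even detect this case: ``asymptotic to a segment at $V$'' is equally true at a cusp, where $\bOmega$ is also tangent to its supporting line; what separates the two is the \emph{rate} of tangency. The paper's proof resolves this with a non-triangular outer model: the $\hol(\gamma)$--invariant curve $\left[t : t\log(t) : 1\right]^t$ shows $\Omega$ is contained in $\{\,x>0,\ y>x\log x\,\}$, and the wedge has infinite area there because its width decays, but slowly enough. (Incidentally, the case you flag as delicate -- the saddle vertex of a maximal hyperbolic end -- is actually unproblematic: the boundary has a genuine corner there, with the two lines through the other fixed points as distinct supporting lines, so the triangle comparison applies.)

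Two further points on the cusp direction. Your horosphere argument is inverted as stated: uniformly bounded Hilbert lengths of the arcs $\mathcal{S}_t\cap T$ together with a definite separation rate of the leaves would produce an infinite half-strip, hence \emph{infinite} area; finiteness requires the quotient arc lengths to decay (exponentially in the distance parameter), exactly as for horocyclic arcs in hyperbolic geometry. Your fallback -- the inner osculating conic -- is the paper's actual argument (after Colbois--Vernicos--Verovic), but it needs more than ``$\bOmega$ strictly convex and $C^1$ at $V$'': an inner conic tangent at $V$ exists only when $\bOmega$ is pinched quadratically there (the $C^2$ property the paper invokes). A boundary behaving like $|x|^{3/2}$ is strictly convex and $C^1$, yet admits no inner tangent conic; this is the same quantitative information whose absence creates the gap in your converse direction, since it is precisely what distinguishes the cusp (quadratic contact) from the special end ($t\log t$ contact).
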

\begin{proof}
	The surface $S$ has finite area if and only if each ideal triangle in $\widetilde{\tri}$ has finite area. Vertices of those ideal triangles correspond to ends of $S$, therefore it is enough to show that an ideal triangle has finite area if and only if its vertices correspond to cusps.
	
	Fix an ideal triangle $t$ of $\widetilde{\tri}$ and denote its vertices by $V_0, V_1$ and $V_2$. Let $E$ be the end corresponding to $V_0$ and let $\gamma \in \pi_1(S)$ be a generator of the subgroup conjugate to $\pi_1(E)$ such that $\hol(\gamma)$ fixes $V_0$. Firstly, we show that if $E$ is not a cusp, the area of $t$ is unbounded. There are two cases, depending on the regularity of $\partial \overline{ \Omega}$.
	
	Suppose $E$ is either hyperbolic, or special where the eigenvalue of $V_0$ has multiplicity two. These are the cases where $\partial \overline{ \Omega}$ is $C^0$ but not $C^1$ at $V_0$. Let $\eta_1$ and $\eta_2$ be distinct supporting lines to $\Omega$ at $V_0$. One may construct a triangle $\Delta$ with vertex $V_0$, strictly contained in a common affine patch with $\Omega$, having two edges on $\eta_1$ and $\eta_2$, and strictly containing $\Omega$. Observe that $\Delta$ and $t$ share the vertex $V_0$. We are going to show that $\mu_{\Delta}(t) = \infty$, from which it follows that $\mu_{\Omega}(t) = \infty$. There is a projective transformation of $\RP2$ such that
	$$
	V_0 = \left[
	\begin{array}{c}
	0 \\ 0 \\ 1
	\end{array}
	\right]
	\qquad
	\text{ and }
	\qquad
	\Delta = \left\{ \left[
	\begin{array}{c}
	x \\ y \\ 1
	\end{array}
	\right] \ | \ x, y > 0 \right\}.
	$$
	In the affine patch $\A : \{ z = 1 \}$, $V$ is the origin and $\Delta$ is the positive quadrant. Let $\epsilon>0$ and $0 < a < b$ such that
	$$
	t_0 = \{ (x,y) \in \A \ | \ 0<x<\epsilon \text{ and } ax < y < bx \} \subset t.
	$$
	Let $l_a : \{y=ax\}$ and $l_b : \{y=bx\}$.	A direct calculation shows that
	$$
	d_\Delta(l_a,P) = \text{constant} \qquad \text{and} \qquad d_\Delta(V_0,P) = \infty, \quad \forall \; P \in l_b \cap \{0<x< \epsilon\}, 
	$$
	therefore $\mu_{\Delta}(t_0) = \infty$ and $\mu_{\Delta}(t) = \infty$.
	
	Now suppose $E$ is special, but $V_0$ has eigenvalue of multiplicity one. In this case $\partial \overline{\Omega}$ is $C^1$ at $V_0$, but not $C^2$, therefore there is a unique supporting line $\eta$ to $\partial \overline{\Omega}$ at $V_0$. We are going to apply the same principle as in the previous case, but with respect to a different set $\Delta$. Let $\lambda$ be the eigenvalue of $\hol(\gamma)$ with multiplicity two. Then there is a projective transformation of $\RP2$ such that
	$$
	\hol(\gamma) =
	\begin{pmatrix} 
	\lambda & 0 & 0 \\
	\lambda & \lambda & 0 \\
	0 & 0 & 1/\lambda^2
	\end{pmatrix}.
	$$
	In this situation, $V_0 = \left[0 : 0 : 1\right]^t$. For all points $P = \left[ x_0 : y_0 : 1 \right]^t \in \RP2$ with $x_0>0$, there is a curve $\C_P \subset \RP2$ starting at $V_0$ passing through $P$ and preserved by $\hol(\gamma)$. Namely
	$$
	\C_P(t) = \left\{ \left.
	\left[
	\begin{array}{c}
	 t \\ t ( \frac{y_0}{x_0}+ \frac{\log(t) - \log(x_0) }{3 \log(\lambda) } ) \\ 1
	 \end{array}
	 \right]
	 \  \right|  \ t > 0 \right\}.
	$$
	If $P \notin \overline{\Omega}$, then $\C_P$ is disjoint $\Omega$. Hence choose such a $P$ and fix $\C_P$. After the affine transformation
	$$
	\left(x,y\right) \mapsto \left( \frac{ e^{\frac{y_0}{x_0}} \lambda^3 }{x_0} x , \frac{ \ln(\lambda^3) e^{\frac{y_0}{x_0}} \lambda^3 }{x_0}y \right),
	$$
	$\C_P$ is mapped to $ \C'_P(t) := \left\{ \left. \left[ t : t \log(t) : 1 \right]^t	\  \right|  \ t >0 \right\}$. Thus $\Omega$ is strictly contained in the domain
	$$
	\Delta :=  \left\{ \left[
	\begin{array}{c}
	x \\ y \\ 1
	\end{array}
	\right] \ | \ 0 < x, \ x \log(x) < y \right\}.
	$$
	Once again, $\Delta$ and $t$ share the vertex $V_0$. By working in the affine patch $\A : \{ z = 1 \}$, one can apply a similar argument as in the previous case to show that $\mu_{\Delta}(t) = \infty$. The salient difference is that the ``width" of $t$ in $\Delta$ is not constant, but decreases sufficiently slowly.
	
	It remains to show that if each ideal vertex of $t$ corresponds to a cusp, then $t$ has finite area. This is the case where $\partial \Omega$ is smooth at the vertices. Making use of an elegant proof from Colbois, Vernicos and Verovic \cite{Colbois-aire-2004}, the fact that $\partial \Omega$ is $C^2$ at $V_i$, ensures the existence of an open subset $B_i$, strictly contained in $\Omega$, whose frontier is a conic sharing the supporting line at $V_i$ with $\Omega$. The conic $\partial \overline{B_i}$ intersects $\partial \overline{T}$ in three points, one of which is $V_i$, and the others will be denoted $t_{i,1}$ and $t_{i,2}$. Let $t_i$ be the triangle in the interior of $t$ with vertices $V_i, \; t_{i,1}$ and $t_{i,2}$. Then 
	\[
	4 \pi = \mu_{B_i} (t_i) \;  > \; \mu_{\Omega}(t_i).
	\]
	The set $C := \overline{t \; \backslash \; ( t_0 \cup t_1 \cup t_2)} \subset \Omega $ is compact so $\mu_{\Omega}(C) < \infty$. It follows that $\mu_{\Omega}(C \cup t_0 \cup t_1 \cup t_2) <  \infty$ and therefore $\mu_{\Omega }(t) <  \infty$.
\end{proof}

\begin{thm}[Marquis 2010, \cite{Marquis-espace-2010}]\label{thm:Marquis-main}
The space of finite-area structures $\Ttfin(S)$ is an open cell of dimension $16g-16+6n = -8 \chi(S) - 2n$.
\end{thm}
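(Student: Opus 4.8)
The plan is to realise $\Ttfin(S)$ inside the Fock--Goncharov moduli space and to show that, after taking logarithms, it is cut out by $2n$ linearly independent linear equations. Since a finite-area structure has only cuspidal ends (Lemma~\ref{lem:para-fin}), each of which carries a unique framing, it determines a well-defined point of $\Ttp(S)$; this gives the injection $\Ttfin(S)\hookrightarrow \Ttp(S)$, and by Lemma~\ref{lem:para-fin} its image consists of exactly those points of $\Ttp(S)$ all of whose ends are cusps. Transporting along the bijection $\phi^+_{\tri}$ of Theorem~\ref{thm:global-coord} and invoking the coordinate classification of \S\ref{subsubsec:max-min-coordinates} (case~$3$), this image is the cusp locus
\[
\Big\{\, P_i := \prod_{k} t^i_k\, e_{ik} = 1 \ \text{ and }\ Q_i := \prod_k e_{ki} = 1, \quad i = 1,\dots, n \,\Big\}\subset \RR^{\triangle\cup\underline{E}}_{>0}.
\]

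Next I would linearise. The map $\log\co \RR^{\triangle\cup\underline{E}}_{>0}\to \RR^{\triangle\cup\underline{E}}$ is a diffeomorphism, and under it the $2n$ cusp conditions become the homogeneous linear equations $\log P_i = 0$ and $\log Q_i = 0$. Their common solution set is a linear subspace $W$, so $\log^{-1}(W)\cong W\cong \RR^{\dim W}$ is an open cell, and $\Ttfin(S)$ is identified with it. Since $|\triangle\cup\underline{E}| = -8\chi(S) = 16g-16+8n$, the theorem reduces to showing that these $2n$ linear functionals are independent, for then $\dim W = -8\chi(S)-2n = 16g-16+6n$.

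This independence is the heart of the argument, and is where I expect the main difficulty. Suppose $\sum_i a_i\log P_i + \sum_i b_i\log Q_i = 0$. Comparing coefficients of each edge coordinate $\log e$, for $e$ oriented from $E_p$ to $E_q$, yields $a_p + b_q = 0$; comparing coefficients of each triangle coordinate $\log t$, whose three corners lie at ends $E_x,E_y,E_z$, yields $a_x+a_y+a_z = 0$ (the $Q_i$ contribute no triangle terms). Applying the edge relation to both orientations of an edge between adjacent ends $p,q$ gives $a_p+b_q = a_q+b_p = 0$, so with $c_i := a_i+b_i$ and $d_i := a_i - b_i$ we obtain $c_p = -c_q$ and $d_p = d_q$ along every edge. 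Connectivity of the $1$--skeleton of $\tri$ makes $d$ constant; and since the three edges of any ideal triangle form a closed walk of odd length, propagating the relation $c_p = -c_q$ around it forces $c\equiv 0$. Thus $a_i = -b_i$ and all the $a_i$ coincide, whence the triangle relation reads $3a_i = 0$ and every coefficient vanishes.

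With independence established, $W$ has dimension $-8\chi(S) - 2n = 16g-16+6n$, and $\Ttfin(S)\cong \log^{-1}(W)$ is an open cell of this dimension. The crux is exactly this rank computation: the geometric input that makes it succeed is that every ideal triangle closes up an odd cycle among the ends, which rules out the bipartite degeneracy that would otherwise leave a spurious relation and lower the rank below $2n$.
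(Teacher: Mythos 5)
Your proposal is correct, and its skeleton is the same as the paper's: both identify $\Ttfin(S)$ with the cusp locus inside $\Ttp(S)$ via Lemma~\ref{lem:para-fin} and the classification of \S\ref{subsubsec:max-min-coordinates}, transport it through the bijection of Theorem~\ref{thm:global-coord}, and reduce the theorem to the independence of the same $2n$ monomial conditions (your $P_i, Q_i$ are the paper's $\mathcal{Y}_i, \mathcal{X}_i$). The differences are in execution, and at the two places where you diverge your version is tighter. For independence, the paper chases the exponents of a hypothetical relation $\mathcal{X}_1^{c_1}\mathcal{Y}_1^{d_1}\cdots\mathcal{X}_n^{c_n}\mathcal{Y}_n^{d_n}=1$ edge by edge and triangle by triangle; your decomposition into $c_i=a_i+b_i$ (anti-propagating along edges) and $d_i=a_i-b_i$ (propagating along edges), combined with connectivity of the $1$--skeleton and the odd length of a triangle's boundary walk, is the same computation but organised so that the two geometric inputs the paper uses implicitly are made explicit. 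More substantively, the paper's justification that $\Ttfin(S)$ is a cell --- that imposing the parabolicity conditions ``amounts to retracting $\Ttp(S)$ onto an algebraic subvariety'' --- is not by itself an argument, since an algebraic subvariety of a cell need not be a cell. Your log-linearisation supplies exactly the missing point: the cusp conditions are monomial, hence linear after the diffeomorphism $\log\co \RR^{\triangle\cup\underline{E}}_{>0}\to\RR^{\triangle\cup\underline{E}}$, so the locus is the preimage of a linear subspace of codimension $2n$ and is therefore an open cell of dimension $-8\chi(S)-2n$. In short, you take the paper's route but close the one genuine gap in its write-up.
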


\begin{proof}
	We have seen in Theorem~\ref{thm:global-coord} that $\Ttp(S)$ is an open cell of dimension $- 8 \chi(S) = 16g-16+8n.$ Moreover, it is shown in Lemma~\ref{lem:para-fin} that the finite-area condition is equivalent to the requirement that every end of $S$ is cuspidal.
	
	Fix an ideal vertex $V_i$ of $\tri$ and let $\gamma_i \in \pi_1(S)$ be a peripheral element around $V_i$ representing a simple loop. Denote by $e_{ik}$ for $k \in \{1, \dots, m_i\}$ the edges of $\tri$ oriented away from $V_i$ and $e_{ki}$ the edges oriented toward $V_i$. Let $t_k^i$ be the triangle in $\tri$ appearing after the edge $e_{ik}$ according to the orientation of $\gamma_i$. From the discussion in \S\ref{subsubsec:max-min-coordinates}, $\hol(\gamma)$ is parabolic if and only if
\[
	\prod_{k=1}^{m_i} e_{ki} =: \mathcal{X}_i = 1 \quad \text{ and} \quad \prod_{k=1}^{m_i} e_{ik} t_k^i =: \mathcal{Y}_i = 1.
\]
	As conjugation preserves eigenvalues, the parabolicity conditions reduce to two such equations for each of the $n$ vertices in $\tri$. It remains to show that the $2n$ equations thus obtained are independent. Suppose the converse, then there is some equation of the form
\begin{equation}
\mathcal{X}_1^{c_1} \mathcal{Y}_1^{d_1} \dots \mathcal{X}_n^{c_n} \mathcal{Y}_n^{d_n } =1 \label{eq:Vi} 
\end{equation}
which is non-trivial in the sense that it holds when $c_k$ and $d_j$ are non-zero for some values of $k$ and $j$.

	Each oriented edge $e_{ij}$ has endpoints in only two vertices. Therefore this edge appears in $\mathcal{X}_i$ for exactly one value of $i$ and in $\mathcal{Y}_j$ for exactly one value of $j$. Suppose the edge $e_{ij}$ is oriented from a vertex $V_i$ to a vertex $V_j$. It follows that for (\ref{eq:Vi}) to hold, we must have $c_i = - d_j $. 

	Now consider the triangle in $\tri$ with vertices $V_1$, $V_2$ and $V_3$. Using the above consideration, it follows that
\[
c_1 = - d_2 = c_3 =  - d_1 = c_2 = - d_3.
\]
	Completing this for all triangles in $\tri$, it follows that $c_i = \pm d_j \quad \forall \;  i ,j \in \{ 1, \ldots n\}$.  Now consider the degree of each triangle parameter term in (\ref{eq:Vi}). Each such term appears exactly $3$ times in (\ref{eq:Vi}). The degree of the term $t_i$ in the left hand side of (\ref{eq:Vi}) must be in the set $\{\pm c_1, \pm 3 c_1 \}$. Hence $c_i = d_j =0 \quad \forall \; i,j$ and equation (\ref{eq:Vi}) may only hold trivially.

	It follows that the $2n$ equations generated by the parabolicity conditions are independent and
\[
\text{dim}_{\RR}(\Ttfin(S)) = \text{dim}_{\RR}(\Ttp(S)) -2n = 16g-16+6n.
\]
	To show that $\Ttfin(S)$ is a topological cell, it suffices to note that $\Ttp(S)$ is a cell and that imposing the parabolicity conditions amounts to retracting $\Ttp(S)$ onto an algebraic subvariety.
\end{proof}

%%%%%%%%%%%%%%%%%%%%%%%%%%%%%%%%%%%%%%

\subsection{The classical Teichm\"uller space} \label{subsec:classical-teich}

%\subsection{Hyperbolic structures and classical Teichm\"uller space}

Having obtained a subvariety of $\Ttp(S)$ comprising structures of finite area there is another obvious subvariety that we wish to interrogate -- the set of points which are invariant under the projective duality $\phi^-_{\tri} \circ \sigma^+ \circ (\phi^+_{\tri})^{-1}$ defined in  \S\ref{subsubsec:change-of-coord-involution}. This is of interest because, for instance, results dating back to Kay~\cite{Kay-ptolemaic-1967} indicate that these structures should be identified with hyperbolic structures on $S$. To that end we use this section to characterise the Fock-Goncharov coordinates of these structures and in so doing reproduce a classical result of Fricke and Klein~\cite{Fricke-vorlesungen-1965} regarding the toplogy of the Teichm\"uller space $\Teich(S)$ of $S$.

As usual $S = S_{g,n}$ is a surface of negative Euler characteristic, with at least one puncture and orientation $\nu$. We fix an ideal triangulation $\tri$ of $S$, and let $\phi = \phi_{\tri, \nu}$ be the canonical isomorphism constructed in Theorem~\ref{thm:global-coord}. 

A quick calculation shows that the Fock-Goncharov coordinates which are invariant under projective duality are
\begin{align}
\begin{split}
		\phi(x)(e) &= \phi(x)(e^{-1}), \quad \forall \; e \in \underline{E},\label{equ:FG duality} \\
		\phi(x)(t) &= 1, \hspace{2cm} \forall \; t \in \triangle,  
\end{split}
\end{align}

One can easily deduce from the classification in \S\ref{subsubsec:max-min-coordinates} that this can only happen when each end is either a cusp or a minimal hyperbolic end, framed by the line through the saddle point. If we restrict our attention to the former, the following result elucidates how we identify the convex projective structure in question with a hyperbolic structure.

\begin{lem} \label{lem:coordinates-of-conic}
	Let $x = (\Omega, \Gamma, f) \in \Ttp(S)$. For every vertex $V_i$ of $\tri$, let $e_{ik} \in \underline{E}, \ k = 1,\dots,m_i$, be the oriented edges starting from $V_i$. Then $\partial \overline{ \Omega}$ is a conic of $\RP2$ if and only if
	\begin{align} 
	\begin{split}
		\phi(x)(e) &= \phi(x)(e^{-1}), \quad \forall \; e \in \underline{E}, \label{eq:finite-area-hyperbolic-equations}\\
		\phi(x)(t) &= 1, \hspace{2cm} \forall \; t \in \triangle,  \\
		\prod_{k=1}^{m_i} \phi(e_{ik}) &= 1, \hspace{2cm} \forall \; i =1,\dots,n.
		\end{split}
	\end{align}
\end{lem}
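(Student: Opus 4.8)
The plan is to prove both implications by working with the flags attached to the vertices of $\widetilde{\tri}$ and exploiting the example of \S\ref{rem:example}, where $\cancel{3}(\mathfrak{F})=1$ is shown to be exactly the condition that the three framing lines are the tangent lines to the unique conic through the three vertices. First I would record a reduction: given $\phi(x)(t)=1$ for all $t\in\triangle$ and $\phi(x)(e)=\phi(x)(e^{-1})$ for all $e\in\underline{E}$, the third family of equations $\prod_{k}\phi(e_{ik})=1$ is, by the classification of \S\ref{subsubsec:max-min-coordinates} specialised to all triangle parameters equal to $1$, precisely the statement that every peripheral holonomy is parabolic. Thus the three displayed conditions together are equivalent to: all triple ratios equal $1$, all edges are symmetric, and every end is a cusp.

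For the forward implication, suppose $\partial\overline{\Omega}=\C$ is a conic. Then $\Gamma$ preserves $\C$, so $\Gamma\le\POrth(\C)\cong\POrth(2,1)$, every vertex $V_i$ lies on $\C$, and, since $\C$ is strictly convex and smooth, the unique supporting line $\eta_i$ at $V_i$ is the tangent to $\C$. By the example of \S\ref{rem:example} each cyclically ordered triple of such tangent flags has triple ratio $1$, giving $\phi(x)(t)=1$. A conic is self-dual under projective polarity, so $x$ is fixed by the duality involution of \S\ref{subsec:dual-proj-st}; feeding $t_{012}=t_{023}=1$ into the explicit duality change of coordinates of \S\ref{subsubsec:change-of-coord-involution} shows it merely swaps $e_{02}\leftrightarrow e_{20}$, whence invariance forces $\phi(x)(e)=\phi(x)(e^{-1})$. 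Finally, a peripheral element of $\POrth(2,1)$ fixing a point of $\C$ is either parabolic or hyperbolic; in the hyperbolic case the saddle point lies outside $\C$ so the end cannot be maximal, while a minimal hyperbolic end would force the chord between the two fixed points into $\partial\overline{\Omega}$, contradicting smoothness of $\C$, and a quasi-hyperbolic Jordan form cannot preserve a conic at all. Hence every end is a cusp and the third family of equations holds.

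For the converse, assume the three conditions, so every triple ratio is $1$, edges are symmetric, and (by the reduction above together with Lemma~\ref{lem:para-fin}) every end is a cusp and $\Omega$ has finite area. Fix a triangle $t_0$ of $\widetilde{\tri}$; its three flags have $\cancel{3}=1$, so by \S\ref{rem:example} they determine a unique conic $\C$ to which the three framing lines are tangent. I would then propagate $\C$ across the triangulation: for a triangle adjacent to $t_0$ along an edge carrying flags $\F_0,\F_2$, the triple ratio is again $1$, so its flags inscribe some conic $\C'$, and both $\C,\C'$ are tangent to $\eta_0$ at $V_0$ and to $\eta_2$ at $V_2$. Since each tangency contributes intersection multiplicity two, $\C$ and $\C'$ already meet with total multiplicity four at $V_0,V_2$, so by B\'ezout they coincide as soon as they share a fifth point, i.e.\ as soon as the new vertex $V_3$ lies on $\C$. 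Iterating across the triangulation, all vertices of $\widetilde{\tri}$ lie on the single conic $\C$ and all framing lines are tangent to it; because every end is a cusp the vertex set is dense in $\partial\overline{\Omega}$ and produces no boundary segments, so $\partial\overline{\Omega}\subseteq\C$, and a Jordan curve bounding a convex body inside a strictly convex conic must be all of $\C$.

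The main obstacle is the gluing step, i.e.\ verifying that adjacent triangles inscribe the \emph{same} conic. By the B\'ezout count this reduces to the claim that, when $t_{012}=t_{023}=1$, one has $V_3\in\C$ if and only if $e_{02}=e_{20}$, which is exactly the edge symmetry hypothesis. I would verify this using the normalisation of Theorem~\ref{thm:param_P^*_4}: place $V_0,V_1,V_2$ and $\eta_0\eta_2$ at the standard projective basis, use $t_{012}=1$ to write $\C$ as the conic $[x^2:xz:z^2]$ of \S\ref{rem:example}, reconstruct $V_3$ and $\eta_3$ from $(t_{023},e_{02},e_{20})$ exactly as in the proof of Theorem~\ref{thm:param_P^*_4}, and check by direct calculation that $V_3$ satisfies the defining equation of $\C$ precisely when $e_{02}=e_{20}$. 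This is the one genuinely computational point; everything else is a formal consequence of projective invariance of the ratios, self-duality of conics, and the cusp classification of \S\ref{subsubsec:max-min-coordinates}.
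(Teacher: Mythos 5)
Your proof is correct, and its skeleton coincides with the paper's: both arguments rest on the conic criterion of \S\ref{rem:example} (triple ratio $1$ iff the three framing lines are tangent to the conic through the three vertices), on the companion computation that---when the triple ratios are $1$---the fourth vertex lies on that same conic iff $e_{02}=e_{20}$, on the identification of the third family of equations with the cusp condition of \S\ref{subsubsec:max-min-coordinates}, and on the density of the ideal vertices in $\partial\overline{\Omega}$ supplied by Lemma~\ref{lem:para-fin} once all ends are cusps. You diverge in two places, both defensibly. First, for edge symmetry in the forward direction the paper argues directly: every vertex of $\widetilde{\tri}$ lies on the conic, so the fourth-vertex criterion yields $\phi(x)(e)=\phi(x)(e^{-1})$ immediately; you instead invoke self-duality of the conic together with the duality formulas of \S\ref{subsubsec:change-of-coord-involution}. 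This works (indeed fixedness under the involution already forces $t_{012}=1/t_{012}$, so it re-derives the triple-ratio condition too), but it is a detour, and it carries a proof obligation you should discharge explicitly: that the framed, marked structure is genuinely a fixed point of the involution, i.e.\ that the polarity with respect to $\C$ is $\Gamma$-equivariant, carries each tangent flag to itself, and respects the marking. Second, you make explicit two steps the paper elides: the B\'ezout gluing argument showing that adjacent triangles inscribe the \emph{same} conic (the paper compresses this into the phrase ``one can construct a conic through all the vertices \dots if and only if \dots''), and the verification that a conic boundary is incompatible with hyperbolic or quasi-hyperbolic ends for a structure in $\Ttm(S)$ (maximality fails since the saddle point lies outside the disc, minimality fails since it would put a segment in the strictly convex frontier, and a quasi-hyperbolic Jordan form cannot preserve a conic). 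These additions are genuine improvements in rigor over the paper's terse treatment, and the single computational point you defer---that $V_3\in\C$ iff $e_{02}=e_{20}$ under the normalisation of Theorem~\ref{thm:param_P^*_4}---is exactly the ``straightforward computation'' the paper also takes for granted.
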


\begin{proof}
	We begin with a preliminary observation. Suppose $\{\F_0,\F_1,\F_2,\F_3\}$ is a generic quadruple of flags of $\RP2$, where $\F_i = (V_i,\eta_i)$. Let $\mathcal{C}$ be the unique conic passing through $V_0,V_1,V_2$ and tangent to $\eta_0$ and $\eta_1$. A straightforward computation shows that $\mathcal{C}$ is tangent to $\eta_2$ if and only if $\cancel{3}((\F_0,\F_1,\F_2)) = 1$ (see Remark~\ref{rem:example}). Furthermore, $V_4\in \mathcal{C}$ if and only if $\cancel{4}(\F_0,\F_1,\F_2,\F_3) = \cancel{4}(\F_2,\F_3,\F_0,\F_1)$.
	
	Let $\widetilde{\tri}$ be the lift of $\tri$ to $\Omega$. By Lemma~\ref{lem:para-fin}, the projective structure $x$ has finite volume if and only if all ends are cusps. In that case, the set of vertices of $\widetilde{\tri}$ is dense in $\partial \overline{\Omega}$, therefore $\partial \overline{\Omega}$ is a conic if and only if there is a conic passing through all the vertices of $\widetilde{\tri}$. We can conclude that $\partial \overline{ \Omega}$ is a conic if and only if
	\begin{itemize}
		\item[$a)$] for each vertex $V_i$,
				\begin{equation}
				\prod_{k=1}^{m_i} t_k^i e_{ik}  = \; 1 \qquad \text{and} \qquad \prod_{k=1}^{m_i} e_{ki}^{-1} = \; 1, \label{eq:first-equation}
				\end{equation}
				where $\{t_k^i\}_{k = 1,\dots,m_i}$ is the set of triangles in $\tri$ with a vertex at $V_i$. These conditions force the ends to be cusps, as underlined in the classification in \S\ref{subsubsec:max-min-coordinates}.
		\item[$b)$] for all edges $e \in \underline{E}$ and triangles $t \in \triangle$,
				\begin{equation}
				\phi(x)(e) = \phi(x)(e^{-1}) \qquad \text{and} \qquad \phi(x)(t) = 1. \label{eq:second-equation}
				\end{equation}
				This follows from the preliminary discussion. In fact, assuming all ends are cusps, one can construct a conic through all the vertices of $\tri$, tangent to the supporting lines defined by the flags, if and only if every triple ratio is equal to $1$ and edge ratios of opposite oriented edges are equal.
	\end{itemize}
	Combining equations (\ref{eq:first-equation}) and (\ref{eq:second-equation}) gives precisely the conditions of the lemma.
\end{proof}

The Klein disc embeds conformally in $\RP2$ as the interior of a conic and the group of orientation-preserving hyperbolic isometries, $\PSL(2, \RR)$, is isomorphic to $PO^+(1,2) \leq \SL(3, \RR)$. Hence every hyperbolic structure on $S$ is naturally a convex projective structure on $S$. Conversely if $\Omega$ is a conic then, up to conjugation, the corresponding holonomy group is a subgroup of $ \text{Aut}(\Omega) \cong PO^+(1,2)$. Moreover the Hilbert metric on a conic is twice the natural hyperbolic metric in the Klein model. Two hyperbolic structures are equivalent as hyperbolic structures if and only if they are equivalent as projective structures.

We have seen that finite-area structures admit a unique natural framing (resp. dual framing) so one can canonically identify $\Teich(S)$ with a subvariety of $\Ttp(S)$ despite the fact that $\Teich(S)$ contains no information regarding a framing. Thus we deduce the following result.

\begin{thm}[Fricke-Klein 1926, \cite{Fricke-vorlesungen-1965}]
The Teichm\"uller space of $S=S_{g,n}$ is homeomorphic to $\RR^{6g-6 + 2n}$.
\end{thm}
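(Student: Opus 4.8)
The plan is to identify $\Teich(S)$ with the subvariety of $\Ttp(S)$ consisting of those structures whose domain has conic frontier, and then to read off its homeomorphism type directly from Lemma~\ref{lem:coordinates-of-conic}. As explained in the discussion preceding the theorem, a finite-area hyperbolic structure on $S$ is precisely a convex projective structure for which $\partial\overline{\Omega}$ is a conic, and every such structure carries a canonical framing; this yields a well-defined injection $\Teich(S)\hookrightarrow\Ttp(S)$ whose image is exactly the locus cut out, in the Fock--Goncharov coordinates $\phi=\phi_{\tri,\nu}$, by the three families of equations of Lemma~\ref{lem:coordinates-of-conic}. It therefore suffices to compute the homeomorphism type of that solution set inside $\RR^{\triangle\cup\underline{E}}_{>0}$.

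First I would pass to logarithmic coordinates, setting $u_\tau=\log\phi(x)(\tau)$ for $\tau\in\triangle$ and $w_e=\log\phi(x)(e)$ for $e\in\underline{E}$, which identifies $\RR^{\triangle\cup\underline{E}}_{>0}$ with $\RR^{\triangle\cup\underline{E}}$. The first two families of equations become $u_\tau=0$ for every triangle and $w_e=w_{e^{-1}}$ for every edge: the former eliminates all triangle coordinates, while the latter identifies the two orientations of each edge, leaving one real variable per \emph{unoriented} edge. By Lemma~\ref{ideal_triangulation} there are $-2\chi(S)$ triangles, hence $\tfrac{3}{2}\cdot(-2\chi(S))=-3\chi(S)=6g-6+3n$ unoriented edges, so after imposing these two families we are left with a copy of $\RR^{6g-6+3n}$, coordinatised by the unoriented edge variables.

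It remains to impose the $n$ vertex equations $\prod_{k=1}^{m_i}\phi(e_{ik})=1$, which in these coordinates read $\sum_{k}w_{[e_{ik}]}=0$ for $i=1,\dots,n$. These are linear, with coefficient matrix the unsigned vertex--edge incidence matrix $B$ of the $1$--skeleton of $\tri$, where $B_{i,[e]}$ counts the endpoints of the edge $[e]$ at the puncture $V_i$ (a loop contributing $2$). The crux is to show $\rank B=n$. A vector $y$ in the left kernel satisfies $y_u+y_v=0$ across every edge joining $V_u$ to $V_v$; since the $1$--skeleton of an ideal triangulation of the connected surface $S$ is connected, such a $y$ is determined up to a global sign, and a nonzero solution would force the graph to be bipartite. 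But the boundary of any ideal triangle is a closed edge-walk of length three, so the $1$--skeleton contains an odd cycle and is non-bipartite; hence $y=0$ and $\rank B=n$. The $n$ equations are thus independent and cut the dimension from $6g-6+3n$ down to $6g-6+2n$. The solution set is then a linear subspace of that dimension, so $\Teich(S)\cong\RR^{6g-6+2n}$. The main obstacle is precisely this rank computation: non-bipartiteness of the $1$--skeleton is what accounts for the full drop of $n$ dimensions and distinguishes the hyperbolic locus from the larger finite-area stratum of Theorem~\ref{thm:Marquis-main}.
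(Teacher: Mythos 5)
Your proof is correct, and it follows the same overall route as the paper: both identify $\Teich(S)$ with the conic locus of Lemma~\ref{lem:coordinates-of-conic} inside $\Ttp(S)$ and then compute the dimension of the solution set of equations (\ref{eq:finite-area-hyperbolic-equations}) in Fock--Goncharov coordinates. Where you genuinely diverge is in how the decisive step is justified. The paper simply asserts that the $(-5\chi(S)+n)$ equations are independent and then appeals to ``a natural retraction of $\Ttp(S)$ onto $\Teich(S)$'' to conclude that the locus is an open cell; neither claim is argued there. You instead pass to logarithmic coordinates, which makes all three families of equations linear, so the locus is literally a linear subspace and the cell structure comes for free; the only remaining point is that the $n$ puncture equations are independent after the triangle and edge-symmetry equations have been imposed, which you reduce to the statement that the unsigned vertex--edge incidence matrix of the $1$--skeleton of $\tri$ has rank $n$, proved via connectedness plus non-bipartiteness (the boundary of any ideal triangle is a closed walk of odd length). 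This is the analogue, for this theorem, of the exponent-vector independence argument the paper does carry out in its proof of Theorem~\ref{thm:Marquis-main} but omits here, and your convention that a loop at a puncture contributes $2$ to the incidence count correctly handles edges with both endpoints at the same puncture. In short: same strategy and same key lemma, but your linearization and rank computation close the two gaps the paper leaves open, namely the independence of the equations and the reason the solution set is a topological cell.
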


\begin{proof}
The parameter space $\Teich(S)$ is the set of those structures $(\Omega, \Gamma, f) \in \Ttp(S)$ such that $\partial \overline{ \Omega}$ is a conic of $\RP2$. Recall from Theorem \ref{thm:global-coord} that $\Ttp(S)$ is an open cell of dimension $- 8 \chi(S) = 16g-16+8n$. By Lemma~\ref{lem:coordinates-of-conic}, $\Teich(S)$ is a subvariety of $\Ttp(S)$ defined by the $(-5 \chi(S) + n)$ equations in (\ref{eq:finite-area-hyperbolic-equations}). This set of equations is independent, thus
$$
\dim_{\RR}(\Teich(S)) = -3 \chi(S) - n = 6g-6+3n - n =  6g-6+2n.
$$
Furthermore, there is a natural retraction of $\Ttp(S)$ onto $\Teich(S)$, therefore $\Teich(S)$ is also an open cell, and the claim follows.
\end{proof}

%%%%%%%%%%%%%%%%%%%%%%%%%%%%%%%%%%%%%

\subsection{Closed surfaces}\label{subsec:closed_surfaces}

A result of Goldman~\cite{Goldman-convex-1990} determines that if $S$ is an orientable, closed surface then $\Tt(S)$ is an open cell of dimension $- 8 \chi(S)$. We are going to give a different proof of the same result, using Fock-Goncharov coordinates. This requires a notion of how to glue two ends of a convex projective surface so as to induce a convex projective structure on the resulting glued surface. The more technical details of this gluing are omitted, for which we refer the reader to the original result.

Bonahon and Dreyer~\cite{Bonahon-parameterizing-2014} also use Fock and Goncharov's coordinates to prove this result employing geodesic laminations in place of ideal triangulations. We present this version here to motivate the usefulness of having a simple expression for the monodromy map.

\begin{thm}[Goldman 1990, \cite{Goldman-convex-1990}]
	If $S$ is a closed, orientable surface of negative Euler characteristic, then $\Tt(S)$ is an open cell of dimension $- 8 \chi(S)$.
\end{thm}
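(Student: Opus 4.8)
The plan is to reduce the closed case to the punctured case already parameterised in Theorem~\ref{thm:global-coord}, by cutting and regluing along a system of simple closed curves. Fix $g \geq 2$ (so that $\chi(S)<0$) and choose $g$ disjoint, non-separating simple closed curves $c_1,\dots,c_g$ on $S = S_g$ whose complement $S' := S \setminus (c_1 \cup \dots \cup c_g)$ is connected; then $S'$ is homeomorphic to a sphere with $2g$ punctures and $\chi(S') = \chi(S) = 2-2g$, so Theorem~\ref{thm:global-coord} furnishes a canonical bijection $\Ttp(S') \cong \RR^{\triangle \cup \underline{E}}_{>0}$, an open cell of dimension $-8\chi(S) = 16g-16$. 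The $2g$ ends of $S'$ come in $g$ pairs $(E_i^+,E_i^-)$, the two sides of the cut along $c_i$.

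First I would show that cutting carries $\Tt(S)$ into the subset of $\Ttp(S')$ where the two ends of each pair have matching framed holonomy. Every nontrivial element of the holonomy of a closed properly convex projective surface is positive hyperbolic with three distinct positive eigenvalues, so each $c_i$ is freely homotopic to a unique straight curve; cutting along it produces two hyperbolic ends of $S'$ whose peripheral holonomies are mutually inverse up to conjugacy. Endowing each such end with the framing coming from the attracting and repelling data of the peripheral element places the cut structure in $\Ttm(S')$, and, after the canonical framing choices of \S\ref{subsec:classification_of_ends}, in $\Ttp(S')$. Conversely, given a point of $\Ttp(S')$ whose paired ends have matching holonomy, the gluing construction of Goldman~\cite{Goldman-convex-1990} assembles the developing images across each $c_i$ into a single properly convex domain $\Omega \subset \RP2$ preserved by a representation of $\pi_1(S)$, giving a point of $\Tt(S)$. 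The only nontrivial geometric input here, whose details I would import from \cite{Goldman-convex-1990}, is the proper convexity of the reassembled domain.

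Next I would make the matching conditions explicit using the monodromy formula. By Theorem~\ref{thm:monodromy} and the eigenvalue computation of \S\ref{subsubsec:max-min-coordinates}, the three eigenvalues of the peripheral holonomy at each end $E_i^{\pm}$ are Laurent monomials in the Fock--Goncharov coordinates. Requiring $\hol(\gamma_i^+)$ to be conjugate to $\hol(\gamma_i^-)^{-1}$ amounts to equating the corresponding ordered eigenvalues, that is, two binomial (monomial equals monomial) equations per curve, so $2g$ in all. Passing to logarithms of the coordinates converts these into linear equations on $\RR^{16g-16}$, whence their common solution set is the intersection of the positive orthant with an affine subspace, an open cell. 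I expect an argument parallel to the independence computation in the proof of Theorem~\ref{thm:Marquis-main} — tracking how each edge and triangle coordinate enters the two eigenvalue products around each $c_i$ — to show these $2g$ equations are independent, so the matching locus is a cell of dimension $16g-16-2g = 14g-16$.

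Finally I would account for the regluing freedom. For each $c_i$ the admissible gluing maps form a coset of the centraliser of the regular hyperbolic peripheral holonomy, a two--dimensional torus; modulo isotopy this contributes two real twist--bulge parameters per curve, a factor $\RR^{2g}$, independent of the matching coordinates. Assembling, $\Tt(S)$ is realised as a family over the matching cell with fibre $\RR^{2g}$, hence itself an open cell of dimension $(14g-16)+2g = 16g-16 = -8\chi(S)$. The main obstacle is the gluing step: verifying that matched framed data reglue to a \emph{properly convex} domain, and that cutting and regluing are mutually inverse homeomorphisms between $\Tt(S)$ and this parameter space. This is precisely the content I take from Goldman~\cite{Goldman-convex-1990}, while the role of Theorem~\ref{thm:monodromy} is to render the matching conditions as log-linear equations and thereby make the cell structure transparent.
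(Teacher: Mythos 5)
Your proposal is correct in outline and is, at its core, the same argument the paper gives in \S\ref{subsec:closed_surfaces}: cut the closed surface along disjoint simple closed geodesics, apply Theorem~\ref{thm:global-coord} to the cut surface, express the regluing constraint as conjugacy of peripheral holonomies --- two eigenvalue-matching equations per curve, read off via Theorem~\ref{thm:monodromy} and the classification in \S\ref{subsubsec:max-min-coordinates} --- and then restore two twist--bulge parameters per curve from the two-dimensional centraliser of a regular hyperbolic element. The one genuine difference is the cut system. The paper cuts along a \emph{single} non-separating curve, so the cut surface is $S_{g-1,2}$, the matching locus is defined by just two equations, and only one gluing is performed; you cut along $g$ curves to get the planar surface $S_{0,2g}$, which costs $2g$ matching equations and $g$ simultaneous gluings. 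Your version buys a planar cut surface, but the price is exactly the two items you flag: (i) independence of the $2g$ binomial equations --- note that the computation in Theorem~\ref{thm:Marquis-main} treats equations of the form ``monomial $=1$'' concentrated at a single end, whereas yours equate monomials attached to two \emph{different} ends, so that bookkeeping must be redone rather than quoted; and (ii) the simultaneous convex gluing. On (ii), be aware that the paper does not merely cite \cite{Goldman-convex-1990}: it sketches the convexity argument directly (the translate $A \cdot \Omega_0$ of the cut domain lies inside the triangle cut off by the two shared supporting lines at the endpoints of the invariant segment $s$, so the union is convex, and one iterates over all lifts), and that argument applies verbatim at each of your $g$ curves. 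With (i) and (ii) supplied, your count $(16g-16) - 2g + 2g = -8\chi(S)$ matches the paper's $\bigl(-8\chi(F) - 2\bigr) + 2$, and both proofs conclude by exhibiting $\Tt(S)$ as a trivial cell bundle over the matching cell.
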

\begin{proof}
	Suppose $S$ is of genus $g$, endowed with a convex projective structure $(\Omega, \Gamma, f)$. That is, $S$ is the quotient of a properly convex domain $\Omega \subset \RP2$ by the action of a discrete subgroup $\Gamma < \SL(3, \RR)$ marked by $f$. Denote the holonomy representation of $S$ by $\hol$ and developing map by $\dev$. 
	
	Choose an essential, simple, closed, non-separating geodesic on $S$ and denote it by $\gamma$. Let $F$ be the interior of the surface obtained from $S$ by cutting along $\gamma$, and let $f_0$ be the restriction of $f$ to $F$. Then $F$ is a topologically finite surface, of the same Euler characteristic as $S$, with two ends, say $E_+$ and $E_-$. Let $\gamma_\pm$ be the generator of the peripheral subgroup $\pi_1(E_\pm)$, homotopic to $\gamma$ in $S$. The restriction of $\hol$ to curves in $F$ endows $F$ with a holonomy representation which we denote by $\hol_0$. Moreover, $F$ inherits a developing map from $S$, as the universal cover $\widetilde{S}$ of $S$ is tiled by copies of the universal cover of $F$. Choose one such copy of the universal cover of $F$ and name it $\widetilde{F}$. Let $\dev_0$ be the developing map obtained by restricting $\dev$ to $\widetilde{F}$. It is clear that $\Omega_0 := \dev_0(\widetilde{F})   \subset \RP2$ is a convex domain preserved by the action of $\Gamma_0 := \hol_0(\pi_1(F))$, thus $(\Omega_0,\Gamma_0,f_0)$ is a convex projective structure on $F$. 

	The surface $F$ has minimal hyperbolic ends at both $E_+$ and $E_-$ as the peripheral elements at those ends are inherited from the geometry of $S$. We frame $E_+$ and $E_-$ by choosing the $\Gamma_0$--orbit of the attracting fixed point and the line through the saddle point. This choice is arbitrary, but it gives a well-defined map
	\[
	\Psi : \Tt(S) \rightarrow \Ttp(F).
	\]
	By construction, $\Psi(\Tt(S))$ is a subset of the set of structures $\mathcal{V} \subset \Ttp(F)$, wherein:
	\begin{itemize}
		\item[$(a)$] All ends are minimal hyperbolic ends, framed by the attracting fixed points and the lines through the saddle points,
		\item[$(b)$] The transformation $\hol_0(\gamma_+)$ is conjugate to $\hol_0(\gamma_-)$.
	\end{itemize}
	From the classification in \S\ref{subsubsec:max-min-coordinates}, we know that the set of structures satisfying $(a)$ is an open cell of $\Ttp(F)$, hence of full dimension. On the other hand, condition $(b)$ amounts to requiring that the eigenvalues of $\hol_0(\gamma_+)$ and $\hol_0(\gamma_-)$ are equal. This amounts to two equations easily deducible from the standard form of a peripheral element, as shown in \S\ref{subsubsec:max-min-coordinates}. Therefore $\mathcal{V}$ is an open cell of dimension $-8 \chi(F) - 2$.

	We are going to show that $\Psi(\Tt(S)) = \mathcal{V}$. Let $(\Omega_0, \Gamma_0, f_0) \in \mathcal{V}$, with holonomy $\hol_0$ and developing map $\dev_0$. Let $\gamma_\pm$ be the generator of $\pi(E_\pm)$ such that the attracting fixed point of $\hol_0(\gamma_\pm)$ is one of the vertices of the framing of $E_\pm$. By definition, $\hol_0(\gamma_+)$ and $\hol_0(\gamma_-)$ are conjugate, so let $A \in \SL(3,\RR)$ be a conjugating matrix
	$$
	\hol_0(\gamma_+) = A^{-1} \cdot \hol_0(\gamma_-) \cdot A
	$$
with the property that $A$ maps the attracting (resp. repelling, saddle) fixed point of $\hol_0(\gamma_+)$ to the attracting (resp. repelling, saddle) fixed point of $\hol_0(\gamma_-).$ This is done with respect to a lift of $\Omega$ to $\Sph^2.$
	
	Define the group $\Gamma := \langle \Gamma_0, A \rangle < \SL(3,\RR)$. Let $s$ be the open segment between the attracting and repelling fixed points of $\hol_0(\gamma_-)$. Hence define $\Omega$ to be the orbit of $\Omega_0 \cup s$ under $\Gamma$. 
	
	We claim that $\Omega$ is properly convex, tesselated by copies of $\Omega_0,$ and that $\Omega / \Gamma \cong S$. By definition, $\Omega_0$ and $A \cdot \Omega_0$ share supporting projective lines at the endpoints of $s$. This ensures that 
	$A \cdot \Omega_0$ is contained in the interior of the closed triangle $\delta$ defined by the fixed points of $\hol_0(\gamma_-)$ which contains the interval $s$ and which does not contain $\Omega_0$.  Both $\Omega_0$ and $A \cdot \Omega_0$ are convex, and so 
$\Omega_0 \cup A \cdot \Omega_0$ is convex. Iterating this argument at all lifts of $E_+$ and $E_-$ shows that $\Omega$ is convex and tesselated by copies of $\Omega_0.$ This also implies that $\Omega / \Gamma \cong S.$
	
	Let $f$ be the unique homeomorphism extending $f_0$ to $S$. Then $(\Omega, \Gamma, f)$ is a convex projective structure of $S$, which maps onto $(\Omega_0, \Gamma_0, f_0)$ via $\Psi$. It follows that $\Psi$ surjects onto $\mathcal{V}$.
	
	To conclude the proof, it is sufficient to show that the fibers of $\Psi$ are $2$--dimensional discs. This follows from the fact that the centraliser of $ \hol_0(\gamma_\pm)$ in $\SL(3, \RR)$ is a $2$--dimensional. Since $A$ is conjugate to a diagonal matrix with positive eigenvalues, this implies there is a  $2$--disc worth of choices for $A$.
\end{proof}

In fact, more is known about the moduli space for a closed surface.
Denote $\X_3(S) = \Hom(\pi_1(S), \SL(3, \RR))\sslash \SL(3, \RR)$ the \emph{character variety} of $S.$ We have a well-defined map $\Tt(S) \to \X_3(S)$ and denote its image $\X(S).$ In the case of a closed surface, this is known to give a Zariski component:

\begin{thm}[Choi-Goldman 1997, \cite{Choi-classification-1997}]\label{thm:Goldman-Choi-main}
If $S$ is a closed, orientable surface of negative Euler characteristic, then $\Tt(S)$ is homeomorphic to a connected component of $\X_3(S).$
\end{thm}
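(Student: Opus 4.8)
The plan is to exhibit $\Tt(S)$ as a connected component of $\X_3(S)$ by analysing the holonomy map
\[
\hol\co \Tt(S) \longrightarrow \X_3(S), \qquad \hol(\Tt(S)) = \X(S),
\]
and showing that it is a homeomorphism onto a subset of $\X_3(S)$ that is simultaneously open, closed, connected and nonempty. I would organise the argument into four steps. The first is \emph{injectivity}: if two structures $(\Omega_0,\Gamma_0,f_0)$ and $(\Omega_1,\Gamma_1,f_1)$ have conjugate holonomy, then after conjugating we may assume $\Gamma_0=\Gamma_1=\Gamma$, and for a \emph{closed} surface the dividing group $\Gamma$ determines its invariant properly convex domain uniquely (a cocompactly dividing group admits a unique invariant properly convex open set; see Benoist~\cite{Benoist-survey-2008} and Cooper--Long--Tillmann~\cite{Cooper-convex-2015}). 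Hence $\Omega_0=\Omega_1$ and the two structures are equivalent, so $\hol$ is injective.

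The second step is \emph{openness and the local homeomorphism property}. By the Ehresmann--Thurston principle the holonomy map on the deformation space of $\RP2$-structures on the closed surface $S$ is a local homeomorphism onto the character variety $\X_3(S)$, and by Koszul's theorem proper convexity is an open condition on the holonomy. Therefore $\X(S)$ is open in $\X_3(S)$, and together with injectivity this upgrades $\hol$ to a homeomorphism of $\Tt(S)$ onto the open subset $\X(S)$. A dimension count confirms consistency: by Theorem~\ref{thm:Goldman-main} the space $\Tt(S)$ is an open cell of dimension $-8\chi(S)=(2g-2)\dim\SL(3,\RR)$, which is exactly the dimension of $\X_3(S)$ at an irreducible representation, so $\X(S)$ is open of full dimension and lies in the smooth locus of $\X_3(S)$.

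The third step, \emph{closedness of $\X(S)$}, is where I expect the main difficulty to lie. Given a sequence $[\rho_m]\in\X(S)$ converging to $[\rho_\infty]\in\X_3(S)$, with corresponding dividing domains $\Omega_m$, one must show that $\rho_\infty$ again divides a properly convex domain. The idea is to pass to a Hausdorff-convergent subsequence of suitably normalised $\overline{\Omega_m}$, with limit $\overline{\Omega_\infty}$ invariant under $\rho_\infty(\pi_1(S))$, and then verify that $\Omega_\infty$ is properly convex and that the action remains discrete, faithful and cocompact. The genuine obstacle is ruling out degeneration of the limit: one must exclude that $\Omega_\infty$ collapses to a segment or opens up to a triangle or otherwise fails proper convexity, and that $\rho_\infty$ stays discrete and faithful. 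This can be controlled either via Benoist's structure theory of divisible convex sets, which constrains the possible limits, or equivalently by observing that the relevant Anosov (positive proximality) property of the holonomies is both open and closed under limits; eigenvalue and trace estimates forced by convexity keep the representations away from the reducible and non-discrete strata.

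The final step assembles these. By Theorem~\ref{thm:Goldman-main}, $\Tt(S)$ is a cell and in particular connected, so its homeomorphic image $\X(S)$ is a nonempty connected subset of $\X_3(S)$. Being simultaneously open (step two) and closed (step three) in $\X_3(S)$, the connected set $\X(S)$ is a connected component. Combined with the homeomorphism $\Tt(S)\xrightarrow{\ \cong\ }\X(S)$ from the first two steps, this proves that $\Tt(S)$ is homeomorphic to a connected component of $\X_3(S)$, as claimed.
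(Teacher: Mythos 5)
The first thing to say is that the paper does not prove this statement: Theorem~\ref{thm:Goldman-Choi-main} is quoted from Choi--Goldman \cite{Choi-classification-1997} without proof, immediately after the gluing argument in \S\ref{subsec:closed_surfaces}, so there is no ``paper proof'' to compare against; your proposal can only be measured against the literature it is meant to reconstruct. Your skeleton is in fact the standard one, and the right one: the holonomy map is injective, its image $\X(S)$ is open in $\X_3(S)$ by Ehresmann--Thurston together with Koszul's openness theorem, it is nonempty and connected by Theorem~\ref{thm:Goldman-main}, and the whole statement then reduces to showing that $\X(S)$ is \emph{closed} in $\X_3(S)$. Steps one, two and four are essentially sound (injectivity also needs the remarks that a strongly irreducible dividing group has a unique invariant properly convex domain -- recoverable as the convex hull of the closure of the attracting fixed points of proximal elements -- and that for aspherical surfaces equality of holonomy representations forces the markings to be isotopic; both are standard).

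The genuine gap is your step three. Closedness is not a technical verification to be dispatched in a sentence: it is the entire mathematical content of the Choi--Goldman theorem (it is literally the title of \cite{Choi-convex-1993}), and neither of the shortcuts you offer constitutes a proof. The claim that ``the relevant Anosov (positive proximality) property of the holonomies is both open and closed under limits'' is false as a general principle -- Anosov representations form an open and in general \emph{not} closed subset of a character variety (quasi-Fuchsian space inside the $\PSL(2,\CC)$--character variety of a closed surface group is the standard counterexample) -- and in the present setting the closedness of this locus is exactly equivalent to the statement being proven, so invoking it is circular. The alternative, ``Benoist's structure theory of divisible convex sets,'' names a theorem that is strictly stronger than Choi--Goldman (Benoist's open-and-closedness of dividing representations in all dimensions, proved a decade later), so deferring to it replaces the proof by a citation of a bigger result rather than supplying an argument. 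An actual proof of closedness must do real work: show that a limit $\rho_\infty$ of the discrete faithful holonomies $\rho_m$ is still discrete and faithful (a Chuckrow/Zassenhaus-type argument), extract a Hausdorff limit of suitably normalised domains $\overline{\Omega_m}$, and -- the crux -- rule out degeneration of that limit (collapse onto a point, a segment, or a set with empty interior) using uniform positivity properties forced by cocompact convex actions, e.g.\ that every nontrivial element must be positive hyperbolic and must remain so in the limit. Until that step is carried out, the open-closed-connected argument has nothing to stand on.
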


\subsection{A Poisson Structure on the Moduli Space} \label{sec:poisson-structure}

This section is devoted to the construction and analysis of a Poisson structure on the moduli space $\Ttp(S_{g,n})$. We start with a brief summary of basic notions on symplectic structures and Poisson structures. In \S~\ref{subsec:goldman-symplectic-structure} we remind the reader about Goldman's results on character varieties, with particular emphasis on Goldman's formula (Theorem~ \ref{thm:Gol-formula}). Next, we shift our attention to the moduli space $\Ttp(S_{g,n})$, and explicitly construct a Poisson bracket via Theorem~\ref{thm:global-coord} and Fock-Goncharov moduli space. Finally, we concentrate on the relation between the Poisson structures on $\Ttp(S_{g,n})$ and on $\X_3(S_{g,n})$. The material in \S~\ref{subsec:compatibility_brackets} is devoted to developing the necessary tools to show that the structures are homomorphic.

\subsubsection{Symplectic structures and Poisson structures} \label{subsec:symplectic_bascis}

Over Riemannian manifolds, a smooth scalar function $f$ differentiates to a vector field $\nabla f$ via the metric. Symplectic manifolds are similarly rich in structure, they relate scalar functions to Hamiltonian vector fields.\\

A symplectic form $\omega$ on a smooth manifold $M$ is a closed differential $2$--form which is not degenerate in that the kernel of the operation of contracting vector fields is trivial. The pair $(M,\omega)$ is a \emph{symplectic manifold}. Because $\omega$ is non-degenerate, each smooth scalar function $f$ corresponds to a unique \emph{Hamiltonian vector field} $X_f$, the vector field satisfying
$$
\omega(X_f(x),v_x) = -D_xf(v_x) \qquad \mbox{for all } x \in M \mbox{ and } v_x \in T_xM.
$$
A \emph{Poisson manifold} is a smooth manifold $M$ endowed with a \emph{Poisson bracket}, $i.e.$ a Lie bracket 
$$
\{-,-\} : C^{\infty}(M) \times C^{\infty}(M)  \rightarrow C^{\infty}(M) 
$$
on the algebra $C^{\infty}(M)$ of smooth functions, satisfying the Leibniz Rule.

\begin{lem}
	Let $(M,\omega)$ be a symplectic manifold. For all $f,g \in C^{\infty}(M)$, the bracket $\{f,g\}_\omega := \omega( X_f, X_g)$ is a Poisson bracket.
\end{lem}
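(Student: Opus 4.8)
The plan is to verify directly that $\{-,-\}_\omega$ satisfies the four defining properties of a Poisson bracket: bilinearity, antisymmetry, the Leibniz rule, and the Jacobi identity. The first three are formal consequences of the definitions, and the entire weight of the argument — together with the only use of the closedness of $\omega$ — rests in the Jacobi identity.

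First I would reformulate the Hamiltonian vector field as $\iota_{X_f}\omega = -df$, where $df$ denotes $D f$. Non-degeneracy of $\omega$ guarantees that $f \mapsto X_f$ is well-defined and $\RR$-linear, and contracting a second time yields the two identities
\[
\{f,g\}_\omega = \omega(X_f,X_g) = X_f(g) = -X_g(f),
\]
which I will use throughout. Bilinearity of $\{-,-\}_\omega$ is then immediate from bilinearity of $\omega$ and linearity of $f \mapsto X_f$; antisymmetry follows from antisymmetry of $\omega$; and the Leibniz rule $\{f,gh\}_\omega = \{f,g\}_\omega\, h + g\,\{f,h\}_\omega$ is just the statement that the vector field $X_f$ is a derivation, applied to $\{f,gh\}_\omega = X_f(gh)$.

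The heart of the proof is the Jacobi identity, and this is where $d\omega = 0$ enters. Cartan's formula gives $\mathcal{L}_{X_f}\omega = d\,\iota_{X_f}\omega + \iota_{X_f}\,d\omega = d(-df) + 0 = 0$, so each Hamiltonian flow preserves $\omega$. Feeding this into the identity $\mathcal{L}_{X_f}(\iota_{X_g}\omega) = \iota_{[X_f,X_g]}\omega + \iota_{X_g}(\mathcal{L}_{X_f}\omega)$ and using $\iota_{X_g}\omega = -dg$, I obtain
\[
\iota_{[X_f,X_g]}\omega = \mathcal{L}_{X_f}(-dg) = -d\big(X_f(g)\big) = -d\{f,g\}_\omega = \iota_{X_{\{f,g\}_\omega}}\omega,
\]
and non-degeneracy of $\omega$ then yields the key relation $[X_f,X_g] = X_{\{f,g\}_\omega}$.

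Finally, applying $\{a,b\}_\omega = X_a(b)$ repeatedly,
\[
\{f,\{g,h\}_\omega\}_\omega - \{g,\{f,h\}_\omega\}_\omega = X_f X_g(h) - X_g X_f(h) = [X_f,X_g](h) = X_{\{f,g\}_\omega}(h) = \{\{f,g\}_\omega,h\}_\omega,
\]
which is exactly the cyclic Jacobi identity after rewriting the three summands using antisymmetry. The main obstacle is assembling the chain $\mathcal{L}_{X_f}\omega = 0 \Rightarrow [X_f,X_g] = X_{\{f,g\}_\omega}$ with the correct signs and Lie-derivative identities — everything else is formal, and it is precisely this step that fails without closedness of $\omega$.
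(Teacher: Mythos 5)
Your proof is correct. There is, however, nothing in the paper to compare it against: the lemma is stated in \S\ref{subsec:symplectic_bascis} as standard background on symplectic and Poisson structures and is given no proof there. What you have written is the standard argument, and it is carried out consistently with the paper's sign convention $\omega(X_f(x),v_x) = -D_xf(v_x)$, i.e.\ $\iota_{X_f}\omega = -df$: the identities $\{f,g\}_\omega = X_f(g) = -X_g(f)$ both check out, bilinearity, antisymmetry and Leibniz are indeed formal, and the chain $d\omega = 0 \Rightarrow \mathcal{L}_{X_f}\omega = 0 \Rightarrow \iota_{[X_f,X_g]}\omega = -d\{f,g\}_\omega = \iota_{X_{\{f,g\}_\omega}}\omega \Rightarrow [X_f,X_g] = X_{\{f,g\}_\omega}$ has all signs right. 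Your final identity $\{f,\{g,h\}_\omega\}_\omega - \{g,\{f,h\}_\omega\}_\omega = \{\{f,g\}_\omega,h\}_\omega$ is equivalent, via antisymmetry and bilinearity, to the cyclic form of the Jacobi identity, so the verification is complete. Your closing remark is also accurate: non-degeneracy is what makes $f\mapsto X_f$ well defined, and closedness of $\omega$ enters only (and essentially) in the Jacobi identity.
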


Symplectic manifolds are Poisson manifolds, but the converse is not true. A classical example of a Poisson manifold which may not give rise to a compatible symplectic structure is the linear Poisson structure on the dual of a Lie algebra. The fact that a symplectic form must be non-degenerate ensures that a symplectic manifold must have even dimension which clearly does not have to be the case for a Lie algebra.

If the Poisson bracket of a Poisson manifold $M$ restricts to a submanifold $N \subset M$, then $N$ is said to be a \emph{Poisson sub-manifold}. If moreover the restricted Poisson bracket on $N$ can be defined by a symplectic form, then $N$ is a \emph{symplectic submanifold}. Any Poisson manifold admits a \emph{symplectic foliation}, a possibly singular foliation whose leaves are the maximal symplectic submanifolds.

%%%%%%%%%%%%%%%%%%%%%%%%%%%%%%%%%%%%%

\subsubsection{Goldman's Poisson structure and formula} \label{subsec:goldman-symplectic-structure}

Let $S_{g,n}$ be a compact oriented surface with fundamental group $\pi_1$. Let $G$ be a Lie group preserving a non-degenerate bilinear form on its Lie algebra and $\X_G$ the  $G$--character variety $ \Hom(\pi_1,G) \sslash G$. We begin by recalling the following:

\begin{thm}[Goldman 1984, \cite{Goldman-symplectic-1984}] \label{thm:Goldman-symplectic}
	If $S_{g,n}$ is closed $(n=0)$, then $\X_G$ has a natural symplectic structure.
\end{thm}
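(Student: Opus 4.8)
The plan is to realise the form on $\X_G$ through the group cohomology of the surface group, following the strategy of Goldman and of Atiyah and Bott. Write $\pi_1 = \pi_1(S_{g,0})$ and $\mathfrak{g}$ for the Lie algebra of $G$, equipped with the $\operatorname{Ad}$--invariant nondegenerate symmetric bilinear form $B$. First I would restrict attention to the smooth locus of $\X_G$, namely the classes of representations $\rho$ whose centraliser is as small as possible (the good, or irreducible, representations); the construction produces a symplectic form there, and the singular strata are handled separately. At such a $\rho$ the Zariski tangent space to $\Hom(\pi_1, G)$ is the space of $\operatorname{Ad}\rho$--twisted cocycles $u \colon \pi_1 \to \mathfrak{g}$, the conjugation action of $G$ cuts out the coboundaries, and hence
\[
T_{[\rho]} \X_G \cong H^1(\pi_1, \mathfrak{g}),
\]
the first cohomology of $\pi_1$ with coefficients in $\mathfrak{g}$ made into a $\pi_1$--module via $\operatorname{Ad}\circ\rho$.

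With the tangent space identified, the form is defined by combining three ingredients: the cup product $H^1(\pi_1,\mathfrak{g}) \times H^1(\pi_1,\mathfrak{g}) \to H^2(\pi_1, \mathfrak{g}\otimes\mathfrak{g})$, the coefficient pairing induced by $B \colon \mathfrak{g}\otimes\mathfrak{g} \to \RR$, and the evaluation $H^2(\pi_1,\RR) \cong H^2(S,\RR) \to \RR$ against the fundamental class determined by the orientation (valid because the closed surface is a $K(\pi_1,1)$). I would set $\omega_{[\rho]}([u],[v]) := \langle B_*(u \cup v),[S]\rangle$ and then verify the two algebraic properties. Skew--symmetry is formal: the cup product of one--dimensional classes anticommutes up to interchanging the two $\mathfrak{g}$ factors, and the symmetry of $B$ absorbs that interchange, so $\omega_{[\rho]}$ is antisymmetric. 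Non--degeneracy is exactly Poincaré duality for the closed oriented surface: the cup product $H^1 \times H^1 \to H^2 \cong \RR$ is a perfect pairing, and non--degeneracy of $B$ promotes this to a perfect pairing of the twisted cohomology, so $\omega_{[\rho]}$ is a nondegenerate alternating form on each tangent space. This already produces a well--defined almost--symplectic structure on the smooth locus.

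The hard part will be proving that $\omega$ is closed, $d\omega = 0$; the cohomological recipe above is visibly skew and nondegenerate at each point but says nothing a priori about how $\omega$ varies over $\X_G$. I see two routes. The cleanest conceptual argument is gauge--theoretic: identify $\X_G$ with the moduli space of flat connections and obtain $\omega$ by symplectic reduction, in the manner of Atiyah and Bott. On the affine space $\mathcal{A}$ of connections one has the constant--coefficient two--form $(a,b) \mapsto \int_S B(a \wedge b)$, which is manifestly closed and translation invariant; the gauge group acts preserving it with curvature as moment map, and Marsden--Weinstein reduction endows $\mu^{-1}(0)/\mathcal{G}$, which is precisely the space of flat connections modulo gauge, with a closed form that matches $\omega$ under the cohomological identification. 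The drawback is that this demands careful setup of the infinite--dimensional symplectic geometry. The alternative, closer to Goldman's own treatment and to the combinatorial spirit of these notes, is to use the standard genus--$g$ presentation $\langle a_1,b_1,\dots,a_g,b_g \mid \prod_i [a_i,b_i]\rangle$, view $\Hom(\pi_1,G)$ as the fibre over the identity of the relator map $G^{2g} \to G$, and compute $\omega$ and $d\omega$ explicitly from this description, exploiting the naturality of the construction under maps induced by subsurfaces to reduce closedness to a local, checkable identity. Either way, once closedness is established the nondegenerate closed two--form $\omega$ is the required symplectic structure, and I would finish by recording that it is independent of the chosen presentation and natural with respect to $B$ and the orientation.
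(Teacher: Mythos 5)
You should first be aware that these notes do not prove this statement at all: Theorem \ref{thm:Goldman-symplectic} is recalled purely as background, with the proof deferred to the cited 1984 paper of Goldman. So the comparison here is with Goldman's original argument (and the Atiyah--Bott picture), not with anything internal to these notes.

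Measured against that, your outline is faithful to the standard proof: the identification $T_{[\rho]}\X_G \cong H^1(\pi_1,\mathfrak{g})$ at representations with minimal centraliser, the definition of $\omega$ as cup product followed by the coefficient pairing $B$ and evaluation against the fundamental class, skew-symmetry from the degree-one sign absorbed by the symmetry of $B$, and non-degeneracy from Poincar\'e duality are exactly Goldman's construction, and the gauge-theoretic alternative you mention is Atiyah--Bott. However, as a proof your text has a genuine gap, one you have in fact located yourself: closedness of $\omega$ is never established. Naming two possible routes --- Marsden--Weinstein reduction of the Atiyah--Bott two-form with curvature as moment map, or an explicit computation from the genus-$g$ presentation --- is not the same as carrying either out. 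The reduction route requires setting up the infinite-dimensional symplectic geometry of the space of connections, verifying the moment-map identity, and checking that the reduced form really agrees with the cohomologically defined $\omega$ under the holonomy identification of flat connections with representations; the combinatorial route requires the nontrivial identity that occupies the core of Goldman's paper. Since $d\omega = 0$ is precisely the substantive content of the theorem (the pointwise algebra is, as you concede, formal), the proposal stands as a correct and well-organised plan rather than a complete proof. The remark that singular strata are ``handled separately'' is similarly a placeholder; for the statement as given it would be cleaner to say explicitly that the symplectic structure is constructed on the smooth locus and that this is what ``natural symplectic structure'' on $\X_G$ is taken to mean.
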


Theorem \ref{thm:Goldman-symplectic} is valid for any reductive group $G$, in particular $\SL(m,\RR)$. Goldman's result was generalised to compact oriented surfaces with non-empty boundary in the following sense.

\begin{thm}[Guruprasad-Huebschmann-Jeffrey-Weinstein 1997, \cite{Guruprasad-group-1997}] \label{thm:GHJW}
	If $S_{g,n}$ has $n>0$ boundary components, then $\X_G$ has a natural Poisson structure.
\end{thm}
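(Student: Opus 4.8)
The plan is to produce the Poisson bivector directly on cohomology and then to reduce the one nontrivial axiom, the Jacobi identity, to Goldman's closed-surface theorem. First I would set up the relevant linear algebra. At a sufficiently generic class $[\rho]\in\X_G$ the tangent space is the twisted group cohomology $T_{[\rho]}\X_G\cong\Homo^1(\pi_1(S_{g,n});\mathfrak{g})$, where $\mathfrak{g}$ carries the $\mathrm{Ad}_\rho$-action. Using the invariant form $B$ to self-dualise the coefficients, Poincaré–Lefschetz duality for the surface-with-boundary yields a perfect pairing
$$\Homo^1(S_{g,n};\mathfrak{g})\times\Homo^1(S_{g,n},\partial S_{g,n};\mathfrak{g})\longrightarrow\Homo^2(S_{g,n},\partial S_{g,n};\RR)\cong\RR,$$
built from the cup product, $B$, and the relative fundamental class; this identifies $T^*_{[\rho]}\X_G\cong\Homo^1(S_{g,n},\partial S_{g,n};\mathfrak{g})$.

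Next I would define the candidate bivector $\pi^\sharp\co T^*_{[\rho]}\X_G\to T_{[\rho]}\X_G$ to be the natural comparison map $j\co\Homo^1(S,\partial S;\mathfrak{g})\to\Homo^1(S;\mathfrak{g})$ coming from the long exact sequence of the pair $(S,\partial S)$. Skew-symmetry of the associated bracket is formal: the cup product of two degree-one classes is graded-anticommutative and $B$ is symmetric, so $(\xi,\eta)\mapsto\langle j\xi,\eta\rangle$ is antisymmetric, and the Leibniz rule is automatic once $\pi^\sharp$ is realised as a bivector field. Moreover the kernel of $j$ is the image of $\Homo^0(\partial S;\mathfrak{g})\to\Homo^1(S,\partial S;\mathfrak{g})$, so the degeneracy of $\pi^\sharp$ is controlled entirely by the boundary: the conjugation-invariant functions of the boundary holonomies are exactly the Casimirs, and the symplectic leaves are the fibres of the boundary-holonomy map onto the product of conjugacy classes, on which $\pi^\sharp$ is invertible and recovers a relative analogue of Goldman's symplectic form.

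The main obstacle is the Jacobi identity, equivalently the vanishing $[\pi,\pi]=0$ of the Schouten bracket of the induced bivector field; everything above is pointwise linear algebra and supplies skew-symmetry and Leibniz for free, but integrability is genuine content. I would deduce it from the closed case. Capping off, or more symmetrically doubling, the surface embeds $S=S_{g,n}$ into a closed surface $\hat S$, for which Theorem~\ref{thm:Goldman-symplectic} supplies a symplectic, hence Poisson, structure on $\X_G(\hat S)$. The restriction-of-representations map $\X_G(\hat S)\to\X_G(S)$, together with the additivity of the cup-product pairing under gluing (a Mayer–Vietoris computation relating the relative fundamental classes across the cut), shows that the bracket on $\X_G(S)$ is compatible with the genuine bracket on the double, so that the Jacobi identity descends from $\hat S$ to $S$.

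Alternatively, and more in keeping with the ideal-triangulation methods used throughout these notes, I would invoke the construction of Fock and Rosly: choosing a trivalent fat graph dual to an ideal triangulation of $S$, one presents $\X_G(S)$ as a quotient of a product $G^E$ of copies of $G$ indexed by edges, and equips $G^E$ with an explicit Poisson structure determined by a classical $r$-matrix $r\in\mathfrak{g}\otimes\mathfrak{g}$ whose symmetric part is the Casimir dual to $B$. This bracket descends to $\X_G(S)$, is independent of the graph, and agrees with the $\pi^\sharp$ above; its Jacobi identity is precisely the classical Yang–Baxter equation for $r$, which holds because $B$ is $\mathrm{Ad}$-invariant. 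Either route establishes that $\X_G$ is a Poisson manifold, as asserted in \cite{Guruprasad-group-1997}.
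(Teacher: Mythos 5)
Your proposal addresses a statement that the paper does not actually prove: Theorem~\ref{thm:GHJW} is imported wholesale from \cite{Guruprasad-group-1997}, and the paper's only accompanying material is a description of the resulting symplectic foliation --- the boundary map $\mathbf{b}\co\X_G\to(G\sslash G)^{\times n}$ is a submersion on a Zariski-open set, its fibres are the symplectic leaves carrying Goldman's form (citing Lawton), and the structure extends continuously over the singular locus (citing Goldman). Your route is genuinely different and essentially self-contained: you build the bivector directly as the comparison map $j\co\Homo^1(S,\partial S;\mathfrak{g})\to\Homo^1(S;\mathfrak{g})$ under the Poincar\'e--Lefschetz identification of $T^*_{[\rho]}\X_G$, obtain skew-symmetry and Leibniz formally, read the Casimirs off the long exact sequence of the pair (thereby recovering exactly the foliation by fibres of $\mathbf{b}$ that the paper describes), and reduce the Jacobi identity to Goldman's closed-surface Theorem~\ref{thm:Goldman-symplectic} by exhibiting the restriction map $\X_G(\hat S)\to\X_G(S)$ from the double as a surjective Poisson submersion. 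That last step is sound: the fold map $\hat S\to S$ provides a section, so restriction is surjective; naturality of duality plus excision give $d\Phi\circ\pi^\sharp_{\hat S}\circ(d\Phi)^{*}=j$; and the Jacobiator of pulled-back functions vanishes upstairs, hence downstairs since $\Phi^{*}$ is injective on functions. What your approach buys is an actual proof, at the price of working on the smooth locus; what the cited approach buys is a structure on the whole character variety, singularities included, which is why the paper speaks of a Poisson \emph{variety} rather than merely a Poisson manifold.

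Two caveats, neither fatal to the main route. First, ``capping off'' does not work: filling the punctures with discs makes $\X_G(\hat S)\to\X_G(S)$ the inclusion of the locus of representations trivial on the boundary, not a surjection, so the Jacobi identity cannot descend along it; only the doubling version of your argument is valid, and you should strike the capping option. Second, in the Fock--Rosly alternative, the Jacobi identity requires an actual solution $r$ of the classical Yang--Baxter equation whose symmetric part is the Casimir dual to $B$; $\mathrm{Ad}$-invariance of $B$ alone does not produce one. Such an $r$ exists for $\SL(m,\RR)$ and the other groups relevant to this paper, but not obviously for an arbitrary $G$ preserving a non-degenerate invariant form, which is the generality in which Theorem~\ref{thm:GHJW} is stated; so as written, that alternative proves less than the theorem claims.
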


On each leaf of its symplectic foliation, the symplectic form is that defined by Goldman. For the purpose of describing this foliation, we fix a presentation of $\pi_1(S_{g,n})$ based at $p \in S_{g,n}$:
$$
\pi_1 := \pi_1(S_{g,n},p) = \langle x_1,y_1,\dots,x_g,y_g,b_1,\dots,b_n \ | \ \prod_{i=1}^g [x_i,y_i] \prod_{j=1}^n b_j =1  \rangle.
$$

For $n>0$, $\pi_1$ is a free group of rank $r = 2g + n - 1 = 1 - \chi(S_{g,n})$. The \emph{$i^{th}$ boundary map} $\mathbf{b_i} : [\rho] \mapsto [\rho_{| b_i}]$ sends the extended class of a representation to the class corresponding to the restriction of $\rho$ to the boundary loop $b_i$. The \emph{boundary map} 
$$
\mathbf{b} = (\mathbf{b_1},\dots,\mathbf{b_s}) : \X_G \rightarrow (G \sslash G)^{\times n}
$$
is a submersion on a Zariski-open subset $\X_G^{\mathbf{b}} \subset \X_G$ (cf. Hartshorne \cite{Hartshorne-algebraic-1977}, page 271), and $\X_G^{\mathbf{b}}$ is foliated by non-singular submanifolds $\mathbf{F}_y : = \X^{\mathbf{b}} \cap \mathbf{b}^{-1}(y)$. The leaves $\mathbf{F}_y$ are symplectic submanifolds (cf. Lawton \cite{Lawton-poisson-2009}), which extend to make the complement of the singular locus in $\X_G$ a Poisson manifold. Furthermore, the Poisson structure extends continuously over singularities in $\X_G$ (cf. Goldman \cite{Goldman-symplectic-1984}), making it into a \emph{Poisson variety}, an affine variety whose coordinate ring is a Poisson algebra.

As further motivation for investigating structures on the $G$--character variety, we recall an application of Theorem~\ref{thm:GHJW} to the special case $G = \SL(m,\RR)$, where the Poisson structure of $\X_{\SL(m,\RR)}$ distinguishes the topology of the surface, whereas the algebraic structure alone does not.

\begin{thm}[Lawton 2009, \cite{Lawton-poisson-2009}] \label{thm:poisson_distinguish_varieties}
	Let $S$ and $S'$ be compact connected orientable surfaces with boundary, and let $\X_n(S)$ and $\X_n(S')$ be their respective $\SL(m,\RR)$--character varieties. Then:
	\begin{itemize}
		\item[$(1)$]  $\X(S)$ and $\X(S')$ are isomorphic as varieties if and only if $\chi(S) = \chi(S')$;
		\item[$(2)$] $\X(S)$ and $\X(S')$ are isomorphic as Poisson varieties if and only if $ S \cong S'$.
	\end{itemize}
\end{thm}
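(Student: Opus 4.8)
The plan is to isolate two numerical invariants that are read off, respectively, from the underlying variety and from the Poisson bivector, and to show that together they recover the pair $(g,n)$. Throughout write $G = \SL(m,\RR)$, so that $\dim G = m^2-1$ and $\rank G = m-1$ (the dimension of a maximal torus), and recall that a surface with boundary $S = S_{g,n}$ with $n>0$ has free fundamental group of rank $r = 2g+n-1 = 1-\chi(S)$.

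First I would dispose of $(1)$. Since $\pi_1(S) \cong F_r$ with $r = 1-\chi(S)$, the representation scheme $\Hom(\pi_1(S),G)$ depends, as a $G$--variety, only on $r$, and hence so does the GIT quotient $\X(S) = \Hom(\pi_1(S),G)\sslash G$. Thus $\chi(S)=\chi(S')$ already forces $\X(S)\cong\X(S')$ as varieties. For the converse I would compute the dimension: a generic representation has $0$--dimensional stabiliser modulo the centre, so $\dim\X(S) = (r-1)\dim G = -\chi(S)\,\dim G$. An isomorphism of varieties preserves dimension, so $\X(S)\cong\X(S')$ gives $-\chi(S)(m^2-1) = -\chi(S')(m^2-1)$, i.e.\ $\chi(S)=\chi(S')$. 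This settles $(1)$.

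For $(2)$, the ``if'' direction is the naturality of Goldman's construction: a homeomorphism $S\cong S'$ induces an isomorphism of fundamental groups compatible with the boundary data and the intersection pairing, hence a Poisson isomorphism. The substance is the ``only if'' direction, where the key is that the \emph{maximal rank of the Poisson bivector} is a Poisson--isomorphism invariant, equal to the dimension of a generic symplectic leaf. By Theorem~\ref{thm:GHJW} and the discussion preceding it, the symplectic leaves are the fibres $\mathbf{F}_y$ of the boundary map $\mathbf{b}\co \X(S)\to (G\sslash G)^{n}$, which is a submersion on a dense open set; since $\dim(G\sslash G)=\rank G$, a generic leaf has dimension
\[
\dim\X(S) - n\,\rank G \;=\; -\chi(S)\,(m^2-1) \;-\; n\,(m-1).
\]
(The same number arises directly as the dimension of the relative character variety with all boundary holonomies fixed.) Now suppose $\X(S)\cong\X(S')$ as Poisson varieties. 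Being in particular a variety isomorphism, part $(1)$ gives $\chi(S)=\chi(S')$; preserving the maximal Poisson rank gives equality of the generic leaf dimensions, whence $n(m-1)=n'(m-1)$ and so $n=n'$ (as $m\ge 2$). Finally $2g+n = 2-\chi(S) = 2-\chi(S') = 2g'+n'$ together with $n=n'$ yields $g=g'$, and since a compact orientable surface with boundary is determined up to homeomorphism by $(g,n)$, we conclude $S\cong S'$.

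The main obstacle is the geometric input underlying the leaf--dimension computation: one must justify that the symplectic foliation is exactly the foliation by level sets of the boundary map, of codimension $n\,\rank G$, and that the maximal (generic) rank of the Poisson structure is attained on the dense locus where $\mathbf{b}$ is a submersion. Granting this, which is the content of Theorem~\ref{thm:GHJW} and the surrounding discussion, the remaining steps are the two elementary dimension counts above together with the observation that a Poisson isomorphism preserves the pointwise rank of the bivector, hence its maximum over the variety. A secondary point to verify is that these arguments survive passage through the singular locus of $\X(S)$; but since both invariants are detected on dense smooth open subsets that any Poisson isomorphism matches, this causes no difficulty. The decisive example to keep in mind is $S_{1,1}$ versus $S_{0,3}$, which share $\chi=-1$ (so isomorphic varieties) yet have generic leaf dimensions $m(m-1)$ and $(m-1)(m-2)$, confirming that the Poisson structure is precisely the finer invariant needed.
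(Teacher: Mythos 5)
The paper itself offers no proof of this theorem: it is quoted from Lawton \cite{Lawton-poisson-2009} purely as motivation for studying Poisson structures on character varieties, so there is no internal argument to compare yours against. That said, your proposed route is essentially the standard one and is sound in outline. The two invariants you isolate --- the dimension of the variety, which recovers $\chi(S)$, and the generic rank of the Poisson bivector, i.e.\ the dimension $\dim\X(S) - n\,\rank G$ of a generic fibre of the boundary map $\mathbf{b}$, which by Theorem~\ref{thm:GHJW} and the surrounding discussion is a symplectic leaf --- together recover $(g,n)$, and both are preserved by a Poisson isomorphism after restriction to the (intrinsically defined) smooth locus. This is also exactly the mechanism the paper demonstrates concretely in \S\ref{sec:Examples}: there, for $m=3$, the Casimirs $\mathcal{X}_i,\mathcal{Y}_i$ cut out maximal symplectic leaves of dimension $6$ for $S_{1,1}$ and dimension $2$ for $S_{0,3}$, matching your values $m(m-1)$ and $(m-1)(m-2)$ --- although the paper performs that computation on the Fock--Goncharov space $\Ttp(S)$ with the bracket $\{-,-\}_{FG}$ rather than on $\X(S)$ itself, using Theorem~\ref{thm:compatibility-poisson-bracket} to transport the conclusion.

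One inaccuracy to repair: your formulas $\dim\X(S) = -\chi(S)(m^2-1)$ and, for the leaves, $\dim\X(S) - n(m-1)$, as well as the claim that a generic representation has zero-dimensional stabiliser modulo the centre, fail for the disk ($r=0$) and the annulus ($r=1$), both of which are admitted by the hypotheses: there $\dim \X(S)$ equals $0$ and $m-1$ respectively, and for the annulus the Poisson structure is identically zero. This does not sink the argument, since $0$, $m-1$, and $k(m^2-1)$ for $k\ge 1$ are pairwise distinct (so dimension still determines $\chi$), and a compact surface with boundary and $\chi\ge 0$ is already determined up to homeomorphism by $\chi$ alone; but the converse of $(1)$ and the leaf count in $(2)$ should be stated with these low-complexity cases separated out, the general formulas being valid only when $\chi(S)\le -1$, equivalently $r\ge 2$.
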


When $G$ is a classic matrix group (for example $\GL$, $\SL$ or $\SU$), the Poisson structure of $\X_G$ has a deep geometric characterisation (see Goldman \cite{Goldman-symplectic-1984} and Lawton \cite{Lawton-poisson-2009}). Henceforth we restrict our attention to the case $G = \SL(m,\RR)$ which is of main interest here ($m = 3$), and abbreviate the notation of the $\SL(m,\RR)$--character variety to $\X_m$.

Let $\tr : \SL(m,\RR) \rightarrow \RR$ be the trace function. For all $\alpha \in \pi_1$, define 
\begin{align*}
\tr_\alpha \colon \X_m \rightarrow \RR \\
[\rho] \mapsto \tr(\rho(\alpha))
\end{align*}
The subalgebra of $C^\infty(\X_m)$ generated by functions of this form is the \emph{trace algebra} $\Tr(\X_m)$.

Let $\alpha$ and $\beta$ be immersed curves in $S$ representing elements in $\pi_1$ (in a general position). Denote the set of (transverse) double point intersections of $\alpha$ and $\beta$ by $\alpha \cap \beta$, and the oriented intersection number of $\alpha$ and $\beta$ at $p \in \alpha \cap \beta$ by $\epsilon(p,\alpha,\beta) \in \{-1,1\}$. Additionally, let $\alpha_p \in (\pi_1,p)$ represent the curve $\alpha$ based at $p$.

\begin{thm}[Goldman 1986 ~\cite{Goldman-invariant-1986}, Lawton 2009 ~\cite{Lawton-poisson-2009}] \label{thm:Gol-formula}
	Let $\{-,-\}_{\Gol}$ be the Poisson structure on $\X_m$. Then
	\begin{equation} \label{eq:Gol-formula}
	\{\tr_\alpha, \tr_\beta\}_{Gol} = \sum_{p \in \alpha \cap \beta} \epsilon(p,\alpha,\beta) (\tr_{\alpha_p \beta_p} - \frac{1}{n} \tr_\alpha \tr_\beta).	
	\end{equation}
	defines a Poisson bracket on $\Tr(\X_m)$.
\end{thm}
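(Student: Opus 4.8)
The plan is to read (\ref{eq:Gol-formula}) not as the \emph{definition} of a bracket whose axioms must be verified from scratch, but as an explicit evaluation of the Poisson structure $\{-,-\}_{\Gol}$ that already exists on $\X_m$ by Theorems~\ref{thm:Goldman-symplectic} and~\ref{thm:GHJW} (together with the foliation/continuity discussion in \S\ref{subsec:goldman-symplectic-structure}). Once the right-hand side of (\ref{eq:Gol-formula}) is identified with this pre-existing bracket applied to $\tr_\alpha,\tr_\beta$, bilinearity, antisymmetry, the Leibniz rule and the Jacobi identity are inherited for free. The only genuinely residual point is that $\Tr(\X_m)$ is closed under the bracket, and this is manifest from the formula itself: the output is an $\RR$--linear combination of trace functions $\tr_{\alpha_p\beta_p}$ and of products $\tr_\alpha\tr_\beta$, all of which lie in the subalgebra $\Tr(\X_m)$ by definition. (Here the $\tfrac1n$ in the statement is the matrix size, i.e.\ $n=m$.)

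First I would recall the cohomological model for the form: at a class $[\rho]$ the Zariski tangent space is the twisted cohomology $\Homo^1(\pi_1;\mathfrak{sl}_m)$, with $\mathfrak{sl}_m$ acted on through $\mathrm{Ad}\circ\rho$, and on a closed surface $\omega$ is the cup-product pairing $\Homo^1\times\Homo^1\to\Homo^2\cong\RR$ induced by the invariant form $B(X,Y)=\tr(XY)$; in the bounded case this is the leafwise form on the symplectic foliation of $\X_m$. The key technical input is Goldman's variation formalism: to an $\mathrm{Ad}$--invariant $f\co\SL(m,\RR)\to\RR$ one attaches its \emph{variation function} $F\co\SL(m,\RR)\to\mathfrak{sl}_m$, characterised by $B(F(A),X)=\frac{d}{dt}\big|_{t=0}f(A\exp(tX))$, and one shows that the Hamiltonian vector field of $f_\alpha\co[\rho]\mapsto f(\rho(\alpha))$ is realised as the infinitesimal deformation of $\rho$ obtained by inserting $F(\rho(\alpha))$ along the loop $\alpha$.

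The heart of the argument — and the step I expect to be the main obstacle — is to evaluate $\omega(X_{f_\alpha},X_{g_\beta})$ by twisted Poincar\'e duality. The cup product of the two gradient classes is represented by a cocycle supported on the transverse intersection $\alpha\cap\beta$, so that at each $p$ the local contribution is the signed pairing $\epsilon(p,\alpha,\beta)\,B\big(F(\rho(\alpha_p)),G(\rho(\beta_p))\big)$, with both loops based at $p$. Summing yields the general bracket formula
\begin{equation*}
\{f_\alpha,g_\beta\}_{\Gol}
=\sum_{p\in\alpha\cap\beta}\epsilon(p,\alpha,\beta)\,
B\big(F(\rho(\alpha_p)),\,G(\rho(\beta_p))\big).
\end{equation*}
Identifying the Hamiltonian flow and turning the cup product into this signed intersection count is exactly where the work lies; everything before and after is formal.

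It then remains only to specialise to $f=g=\tr$. A direct computation gives the variation function $F(A)=A-\tfrac{\tr A}{m}I$, the projection of $A$ onto the traceless part, and substituting into the local contribution yields
\begin{align*}
B\big(F(\rho(\alpha_p)),F(\rho(\beta_p))\big)
&=\tr\!\big[\big(\rho(\alpha_p)-\tfrac{\tr\rho(\alpha_p)}{m}I\big)\big(\rho(\beta_p)-\tfrac{\tr\rho(\beta_p)}{m}I\big)\big]\\
&=\tr_{\alpha_p\beta_p}-\tfrac{1}{m}\tr_\alpha\tr_\beta ,
\end{align*}
where the three correction terms collapse using $\tr I=m$. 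This is precisely the summand in (\ref{eq:Gol-formula}), completing the identification of $\{\tr_\alpha,\tr_\beta\}_{\Gol}$ with the stated formula and hence, by the first paragraph, the proof.
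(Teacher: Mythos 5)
You cannot be checked against an in-paper argument here, because the paper offers none: Theorem~\ref{thm:Gol-formula} is stated as a quoted result of Goldman~\cite{Goldman-invariant-1986} and Lawton~\cite{Lawton-poisson-2009}, used as background for \S\ref{subsec:compatibility_brackets}; the paper only remarks afterwards on rewriting \eqref{eq:Gol-formula} via the algebraic intersection number, and on the trace algebra generating the coordinate ring in the complex case. So the question is whether your reconstruction of the literature proof is sound, and in outline it is: reading \eqref{eq:Gol-formula} as an evaluation of the pre-existing bracket of Theorems~\ref{thm:Goldman-symplectic} and~\ref{thm:GHJW}, so that only closure of $\Tr(\X_m)$ under the bracket needs checking; Goldman's variation-function formalism for the invariant form $B(X,Y)=\tr(XY)$; the general product formula localized at $\alpha\cap\beta$; and the specialization to the trace, where your computation $F(A)=A-\frac{\tr A}{m}I$ and $B\big(F(A),F(B)\big)=\tr(AB)-\frac{1}{m}\tr A\,\tr B$ is correct, as is your reading of the paper's $\frac{1}{n}$ as $\frac{1}{m}$ (the $n$ in \eqref{eq:Gol-formula} is a notational slip, since $n$ counts punctures elsewhere in the paper).

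Two caveats keep your proposal from being a complete proof. First, the two steps you yourself flag as ``where the work lies'' --- that the Hamiltonian vector field $X_{f_\alpha}$ is realised by inserting $F(\rho(\alpha))$ along $\alpha$, and that the cup product of the two gradient classes is computed by the signed sum over $\alpha\cap\beta$ --- are not routine verifications; they constitute essentially the whole of Goldman's paper, and your write-up defers them rather than proves them. Second, and more substantively: the case this paper actually needs is $n>0$, where $\Homo^2$ of the surface vanishes, $\X_m$ is not symplectic, and the closed-surface cup-product/Poincar\'e-duality argument you describe does not apply as stated. One must work leafwise on the symplectic foliation of \cite{Guruprasad-group-1997} (equivalently, with relative or parabolic group cohomology and boundary-fixing deformations), and verifying that Goldman's intersection formula survives this extension to the leaves $\mathbf{F}_y$ is precisely Lawton's contribution in \cite{Lawton-poisson-2009}. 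Your single clause ``in the bounded case this is the leafwise form on the symplectic foliation'' acknowledges the issue but does not address it; a complete argument would either carry out the localization leafwise, checking it is insensitive to the boundary conditions, or cite Lawton for exactly this step.
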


It is worth noticing that one may rewrite (\ref{eq:Gol-formula}) as
\begin{equation}
\{\tr_\alpha, \tr_\beta\}_{Gol} = \left(\sum_{p \in \alpha \cap \beta} \epsilon(p,\alpha,\beta) \tr_{\alpha_p \beta_p} \right) - \frac{\iota(\alpha,\beta)}{n} \tr_\alpha \tr_\beta,
\end{equation}
for the algebraic intersection number $\iota(\alpha,\beta)$. However we can not simplify it any further as $\tr_{\alpha_p \beta_p}$ heavily depends on $p$, as shown in Figure \ref{fig:curves_example}.
	
\begin{figure}[h]
	\centering
	\includegraphics[width=\textwidth]{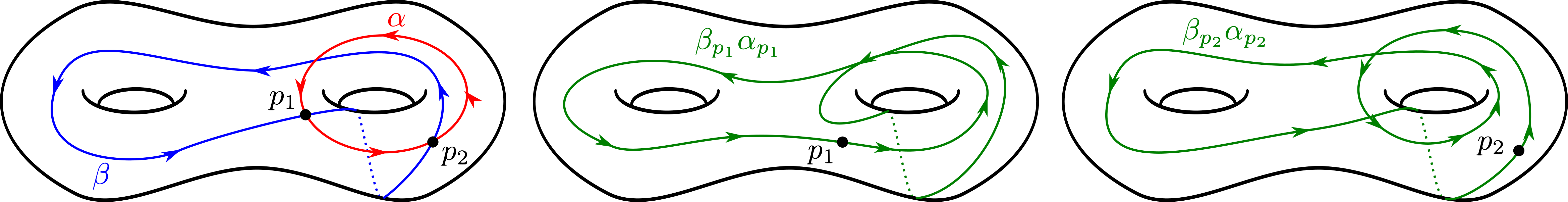}
	\captionof{figure}{$\alpha_{p_1} \beta_{p_1}$ and $\alpha_{p_2} \beta_{p_2}$ are very different curves, in particular they are not homotopic.}
	\label{fig:curves_example}
\end{figure}

It is known for $\SL(m,\CC)$ that the coordinate ring $\CC[\X_{\SL(m,\CC)}]$ of the $\SL(m,\CC)$--character variety is generated by trace functions, hence Theorem \ref{thm:Gol-formula} is true for $\Tr(\X_m) = \CC[\X_{\SL(m,\CC)}]$ (cf. Procesi~\cite{Procesi-invariant-1976}). However, it is still an open problem for $\SL(m,\RR)$.

%%%%%%%%%%%%%%%%%%%%%%%%%%%%%%%%%%%%%%%%%
%%%%%%%%%%%%%%%%%%%%%%%%%%%%%%%%%%%%%%%%%
%%%%%%%%%%%%%%%%%%%%%%%%%%%%%%%%%%%%%%%%%

\subsubsection{Fock and Goncharov's Poisson structure} \label{subsec:FG-poisson-structure}

We recall the setting and notation of \S~\ref{sec:Moduli-Spaces}. As usual, $\tri$ denotes a fixed ideal triangulation of a surface $S = S_{g,n}$ ($2g + n > 2$) and $\triangle,\underline{E}$ the sets of its triangles and oriented edges respectively. To frame our subsequent definition, we will make the following assumption:
\begin{itemize}
	\item[(I)] \emph{every edge in $\tri$ is contained in two distinct triangles, or equivalently every triangle has three distinct edges}
\end{itemize}
Such a triangulation always exists, and can be constructed algorithmically from any given triangulation. When $t$ is a triangle with two edges identified, we perform an edge flip on the third edge of $t$. We underline that this assumption is not necessary, but its inclusion avoids a lot of technicalities.

The Fock--Goncharov moduli space is the set of functions $\RR^{\triangle \cup \underline{E}}_{>0} := \{ \triangle \cup \underline{E} \rightarrow \RR_{>0}\}$, and it is isomorphic to $\Ttp(S)$ by Theorem \ref{thm:global-coord}.

For $q \in \triangle \cup \underline{E}$, the \emph{coordinate function of $q$} is the map
\begin{align*}
q^* : \RR^{ \triangle \cup \underline{E}}_{>0} &\rightarrow \RR_{>0}\\
v &\mapsto v(q)
\end{align*}
and the \emph{$q$--coordinate} is the element $\overline{q} \in \RR^{ \triangle \cup \underline{E}}_{>0}$, such that
$$
\overline{q}(q') = 
\begin{cases}
1 &\mbox{ if } q = q',\\
0 & \mbox{ otherwise }.
\end{cases}
$$

It is clear from the definitions that $\frac{\partial q_i^*}{\partial \overline{q_j} } = \delta_{ij}$. For all $f,g \in C^{\infty}(\RR^{\triangle \cup \underline{E}}_{>0})$, we define
\begin{equation} \label{eq:triangulation-bracket}
\{ f,g \}_{\tri} := \sum_{q_1,q_2 \in \triangle \cup \underline{E} } 2 \epsilon(q_1,q_2) q_1^* q_2^* \frac{\partial f}{\partial \overline{q}_1} \frac{\partial g}{\partial \overline{q}_2},
\end{equation}
where $\epsilon : \triangle \cup \underline{E} \times \triangle \cup \underline{E} \rightarrow \{-1,0,1\}$ is the skew-symmetric integral valued function defined as follows. Given $e \in \underline{E}$, we will say that
\begin{itemize}
	\item $e$ and $t \in \triangle$ are \emph{in special position} if $e \subset t$ and $e,t$ are oriented coherently (where $t$ has orientation induced from $S$).
	\item $e$ and $e' \in \underline{E}$ are \emph{in special position with respect to $e$} if
	\begin{itemize}
		\item There is a unique $t \in \triangle$ such that $e,e' \subset t$ and $e,e'$ share the same starting endpoint;
		\item $e$ and $t$ are in special position.
	\end{itemize}
\end{itemize}
Hence we define:
\begin{align*}
\epsilon(e_1,e_2) &= 
\begin{cases}
1 & \mbox{ if } e_1,e_2 \mbox{ are in special position with respect to } e_1;\\
-1 & \mbox{ if } e_1,e_2 \mbox{ are in special position with respect to } e_2;\\
0 & \mbox{ otherwise }.
\end{cases}\\
\epsilon(e,t) &= 
\begin{cases}
1 & \mbox{ if } e^{-1},t \mbox{ are in special position};\\
-1 & \mbox{ if } e,t \mbox{ are in special position};\\
0 & \mbox{ otherwise }.
\end{cases}\\
\epsilon(t,e) &= - \epsilon(e,t) \qquad \mbox{ and } \qquad  \epsilon(t_1,t_2) = 0,
\end{align*}
for all $e,e_1,e_2 \in \underline{E}$ and $t,t_1,t_2 \in \triangle$. We remark that this is the only place where we make use of assumption $(I)$. As mentioned before, one does not need such condition, but disposing of it makes the above definitions much more involved.

\begin{rem}
	The function $\epsilon$ can also be understood graphically by considering the diagram shown in Figure~\ref{epsilon_diagram}. Here we symbolically represent a triangle by a point in its interior and an oriented edge by a point on the edge, closer to the starting endpoint. Then $\epsilon(q_1,q_2)$ is the difference between the number of arrows from $q_1$ to $q_2$ and the number of arrows from $q_2$ to $q_1$.
\end{rem}

\begin{figure}[ht]
	\centering
	\includegraphics[height=5cm]{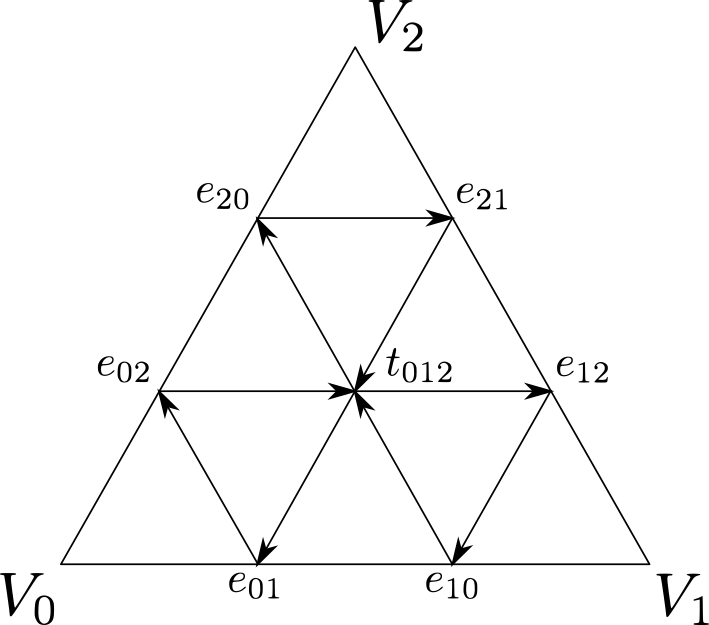}
	\caption{$\epsilon$ can be read off of a graph. Here we have shown edges connecting marked points belonging to one triangle only. Points of other triangles are connected by arrows in the same ways. Points connected by edges without arrows are not taken into account.}
	\label{epsilon_diagram}
\end{figure}

\begin{pro}
	$\{ -,- \}_{\tri}$ is a Poisson bracket.
\end{pro}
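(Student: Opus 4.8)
The plan is to verify the four defining properties of a Poisson bracket in turn: $\RR$--bilinearity, antisymmetry, the Leibniz rule, and the Jacobi identity. The first three are immediate from the definition, and essentially all the content lies in the Jacobi identity, which I would reduce to a one-line computation on the coordinate functions.

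First I would record the bracket on coordinate functions. Since $\frac{\partial q_i^*}{\partial \overline{q}_j} = \delta_{ij}$, substituting $f = q_a^*$ and $g = q_b^*$ into the defining sum collapses it to
$$\{q_a^*, q_b^*\}_{\tri} = 2\epsilon(a,b)\, q_a^* q_b^*.$$
Bilinearity is clear, as the right-hand side of the definition is linear in $f$ and in $g$ separately (the partial derivatives are linear and the remaining factors are fixed). Antisymmetry follows by swapping the summation labels $q_1 \leftrightarrow q_2$ and invoking $\epsilon(q_2,q_1) = -\epsilon(q_1,q_2)$. The Leibniz rule $\{f, gh\}_{\tri} = g\,\{f,h\}_{\tri} + h\,\{f,g\}_{\tri}$ is inherited from the product rule $\frac{\partial (gh)}{\partial\overline q_2} = g\,\frac{\partial h}{\partial\overline q_2} + h\,\frac{\partial g}{\partial\overline q_2}$ applied in the second slot.

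For the Jacobi identity, the key structural observation is that the Jacobiator
$$J(f,g,h) := \{f,\{g,h\}_{\tri}\}_{\tri} + \{g,\{h,f\}_{\tri}\}_{\tri} + \{h,\{f,g\}_{\tri}\}_{\tri}$$
is a derivation in each of its three arguments. This is a formal consequence of the antisymmetry and Leibniz rule just established: expanding $J(f_1 f_2, g, h)$ by Leibniz produces the expected $f_1 J(f_2,g,h) + f_2 J(f_1,g,h)$ together with cross-terms that cancel in pairs because of antisymmetry. Hence $J$ is a skew-symmetric triderivation, i.e. a trivector field, and it vanishes identically as soon as it vanishes on a family of functions whose differentials span each cotangent space. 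The coordinate functions $\{q^* : q \in \triangle\cup\underline{E}\}$ form exactly such a global coordinate system on $\RR^{\triangle\cup\underline{E}}_{>0}$, so it suffices to check $J(q_a^*, q_b^*, q_c^*) = 0$. Using the formula above, $\{q_a^*, \{q_b^*, q_c^*\}_{\tri}\}_{\tri} = 4\,\epsilon(b,c)\big(\epsilon(a,b)+\epsilon(a,c)\big)\,q_a^* q_b^* q_c^*$, and summing the three cyclic contributions the total coefficient of $q_a^* q_b^* q_c^*$ is a sum of six products of $\epsilon$'s that cancels in three pairs purely by skew-symmetry of $\epsilon$. Equivalently, and most cleanly, I would pass to the logarithmic coordinates $u_q := \log q^*$, which give a global diffeomorphism $\RR^{\triangle\cup\underline{E}}_{>0} \to \RR^{\triangle\cup\underline{E}}$; in these coordinates the bracket reads $\{u_a, u_b\}_{\tri} = 2\epsilon(a,b)$, a constant. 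A bivector with constant structure coefficients automatically satisfies the Jacobi identity, since the Schouten self-bracket involves only derivatives of those coefficients.

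The only genuine subtlety — the main obstacle — is justifying the reduction of the Jacobi identity to coordinate functions, namely the claim that $J$ is tensorial; I would spell out the triderivation property and the pairwise cancellation of cross-terms explicitly, since everything downstream is then either a transparent constant-coefficient argument or a three-term cancellation. I would also remark that assumption $(I)$ plays no role in this proposition: it is needed only to make $\epsilon$ well defined in the preceding combinatorial setup, whereas the present proof uses nothing about $\epsilon$ beyond its integrality and skew-symmetry.
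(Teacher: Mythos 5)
Your proof is correct, but it takes a genuinely different route from the paper's. The paper proves the Jacobi identity by brute force in the given coordinates: writing $\omega_{ij} = 2\epsilon_{ij}q_i^*q_j^*$, expanding the three nested brackets, cancelling all terms involving second derivatives of $\Bf,\Bg,\Bh$ using only $\omega_{ij}=-\omega_{ji}$, and then explicitly computing the remaining coefficient $\sum_l\left(\omega_{kl}\partial_l\omega_{ij}+\omega_{il}\partial_l\omega_{jk}+\omega_{jl}\partial_l\omega_{ki}\right)$ via $\partial_l\omega_{ij}=2\epsilon_{ij}\left(\delta_{il}q_j^*+\delta_{jl}q_i^*\right)$. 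You instead invoke the general fact that the Jacobiator of any bilinear, skew-symmetric bracket satisfying Leibniz is a triderivation (equivalently, a trivector field), which reduces Jacobi to the coordinate functions $q^*$, where it is a three-pair cancellation by skew-symmetry of $\epsilon$ alone; your logarithmic-coordinate variant, in which the bracket becomes $\{u_a,u_b\}_\tri = 2\epsilon(a,b)$ with constant coefficients, is the cleanest formulation and is the standard ``log-canonical'' perspective on Fock--Goncharov brackets. What each approach buys: the paper's computation is self-contained and needs no auxiliary lemma, at the cost of being opaque; yours isolates exactly what the result depends on (integrality is irrelevant, only skew-symmetry of $\epsilon$ and the multiplicative form of the coefficients matter) and generalizes verbatim to any bracket $\sum_{i,j} c_{ij}\,q_i^*q_j^*\,\partial_i f\,\partial_j g$ with constant skew $c_{ij}$. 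The one obligation your route carries, as you acknowledge, is a careful proof of the tensoriality of the Jacobiator --- note that the paper's cancellation of the second-derivative terms in its equations for the mixed coefficients is precisely the coordinate incarnation of that lemma, so the two proofs share this core step; the difference is that you then finish conceptually while the paper finishes by direct calculation. Your closing remark that assumption (I) plays no role beyond making $\epsilon$ well defined is also accurate and consistent with the paper's own comment.
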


\begin{proof}
	$\RR$--bilinearity, skew-symmetry and Leibniz's product rule are simple to check. We only show that $\{ -,- \}_{\tri}$ satisfies the Jacobi identity, that is
	$$
	\{f,\{g,h\}_{\tri} \}_{\tri} + \{g,\{h,f\}_{\tri} \}_{\tri} + \{h,\{f,g\}_{\tri} \}_{\tri} = 0,
	$$
	 for all $f,g,h \in C^{\infty}(\RR^{\triangle \cup \underline{E}}_{>0})$. To make the computations more readable, henceforth we write $\Bf,\Bg,\Bh$ and simplify the notation as follows
\begin{align*}
\{ \Bf,\Bg \}_{\tri} :=& \sum_{q_1,q_2 \in \triangle \cup \underline{E} } 2 \epsilon(q_1,q_2) q_1^* q_2^* \frac{\partial \Bf}{\partial \overline{q}_1} \frac{\partial \Bg}{\partial \overline{q}_2} \\
=& \sum_{q_i,q_j \in \triangle \cup \underline{E} } \left[2 \epsilon_{ij} q_i^* q_j^*\right] \frac{\partial \Bf}{\partial \overline{q}_i} \frac{\partial \Bg}{\partial \overline{q}_j} \\
=& \;\sum_{i,j} \omega_{ij} \partial_i \Bf \partial_j \Bg.
\end{align*}
Observe that $\omega_{ij} = - \omega_{ji}$. It follows that
\begin{align*}
\{\Bf,\{\Bg,\Bh\}_{\tri} \}_{\tri} &= \sum_{i,j,k,l} \omega_{kl} \partial_k \Bf \left( \partial_l \omega_{ij} \partial_i \Bg \partial_j \Bh + \omega_{ij} \partial_l \partial_i \Bg \partial_j \Bh + \omega_{ij} \partial_i \Bg \partial_l \partial_j \Bh \right),\\
\{\Bg,\{\Bh,\Bf\}_{\tri} \}_{\tri} &= \sum_{i,j,k,l} \omega_{kl} \partial_k \Bg \left( \partial_l \omega_{ij} \partial_i \Bh \partial_j \Bf + \omega_{ij} \partial_l \partial_i \Bh \partial_j \Bf + \omega_{ij} \partial_i \Bh \partial_l \partial_j \Bf \right),\\
\{\Bh,\{\Bf,\Bg\}_{\tri} \}_{\tri} &= \sum_{i,j,k,l} \omega_{kl} \partial_k \Bh \left( \partial_l \omega_{ij} \partial_i \Bf \partial_j \Bg + \omega_{ij} \partial_l \partial_i \Bf \partial_j \Bg + \omega_{ij} \partial_i \Bf \partial_l \partial_j \Bg \right).
\end{align*}
We factorise in terms of the partial derivatives of $\Bf,\Bg,\Bh$ to get the coefficients
\begin{align}
\partial_k \Bf \partial_i \Bg \partial_j \Bh &: \qquad \sum_l \left(\omega_{kl} \partial_l \omega_{ij} + \omega_{il} \partial_l \omega_{jk} + \omega_{jl} \partial_l \omega_{ki} \right), \hspace{2.5cm} \forall i,j,k, \label{eq:Jacobi-id-1} \\
\partial_k \Bf \partial_l \partial_i \Bg \partial_j \Bh &: \qquad \omega_{kl} \omega_{ij} + \omega_{jl} \omega_{ki}, \hspace{6cm} \forall i,j,k,l,\label{eq:Jacobi-id-2} \\
\partial_k \Bf \partial_i \Bg \partial_l \partial_j \Bh &: \qquad \omega_{kl} \omega_{ij} + \omega_{il} \omega_{jk}, \hspace{6cm} \forall i,j,k,l,\label{eq:Jacobi-id-3} \\
\partial_k \Bg \partial_i \Bh \partial_l \partial_j \Bf &: \qquad \omega_{kl} \omega_{ij} + \omega_{il} \omega_{jk}, \hspace{6cm} \forall i,j,k,l \label{eq:Jacobi-id-4}.
\end{align}
When $i = l$, the coefficient in $(\ref{eq:Jacobi-id-2})$ is $\omega_{ki} \omega_{ij} + \omega_{ji} \omega_{ki} = 0$.\\
When $i \not= l$, $\partial_k \Bf \partial_i \partial_l \Bg \partial_j \Bh = \partial_k \Bf \partial_l \partial_i \Bg \partial_j \Bh$ and we can sum their coefficients to get
$$
\omega_{kl} \omega_{ij} + \omega_{jl} \omega_{ki} + \omega_{ki} \omega_{lj} + \omega_{ji} \omega_{kl} = \omega_{kl}(\omega_{ij} + \omega_{ji}) + \omega_{ki}(\omega_{jl} + \omega_{lj}) = 0.
$$
A similar argument applies to $(\ref{eq:Jacobi-id-3})$ and $(\ref{eq:Jacobi-id-4})$. Our claim follows from showing that the coefficients in $(\ref{eq:Jacobi-id-1})$ are also zero. Since $\partial_l \omega_{ij} = 2 \epsilon(q_i,q_j)\partial_l(q_i^* q_j^*) = 2 \epsilon_{ij} \left( \delta_{il} q_j^* + \delta_{jl} q_i^* \right)$, by substituting in $(\ref{eq:Jacobi-id-1})$, we conclude that
\begin{align*}
&\sum_l \left(\omega_{kl} \partial_l \omega_{ij} + \omega_{il} \partial_l \omega_{jk} + \omega_{jl} \partial_l \omega_{ki} \right)\\
=&\sum_l 2 q_l^* \left[ \delta_{jl} q_i^*q_k^* \left( \epsilon_{kl}\epsilon_{ij} + \epsilon_{il} \epsilon_{jk} \right) + \delta_{il} q_j^*q_k^* \left( \epsilon_{kl}\epsilon_{ij} + \epsilon_{jl} \epsilon_{ki} \right) + \delta_{kl} q_i^*q_j^* \left( \epsilon_{il}\epsilon_{jk} + \epsilon_{jl} \epsilon_{ki} \right) \right]\\
=&\sum_l 2 q_l^* \left[ q_i^*q_k^* \left( \epsilon_{kj}\epsilon_{ij} + \epsilon_{ij} \epsilon_{jk} \right) + q_j^*q_k^* \left( \epsilon_{ki}\epsilon_{ij} + \epsilon_{ji} \epsilon_{ki} \right) + q_i^*q_j^* \left( \epsilon_{ik}\epsilon_{jk} + \epsilon_{jk} \epsilon_{ki} \right) \right]= 0.
\end{align*}
\end{proof}

The next result shows that $\{ -,- \}_{\tri}$ does not depend on $\tri$.

\begin{thm}\label{thm:bracket-tri-independent}
	If $\tri$ and $\tri'$ are ideal triangulations of $S$, then $(\RR^{\triangle \cup \underline{E}}_{>0},\{-,-\}_\tri)$ and $(\RR^{\triangle' \cup \underline{E}'}_{>0},\{-,-\}_{\tri'})$ are naturally isomorphic Poisson manifolds.
\end{thm}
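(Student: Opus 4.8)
The isomorphism asserted is the transition map $\Phi := \phi_{\tri'}\circ\phi_\tri^{-1}\colon \RR^{\triangle\cup\underline E}_{>0}\to\RR^{\triangle'\cup\underline E'}_{>0}$, which is a bijection by Theorem~\ref{thm:global-coord} and, by the explicit formulas of \S\ref{subsubsec:change-of-coord}, a subtraction-free rational diffeomorphism: every denominator appearing is of the form $e+1$ with $e>0$, so $\Phi$ and its inverse are smooth on the positive orthant. Since a composite of Poisson isomorphisms is again a Poisson isomorphism, and any two ideal triangulations are joined by a finite sequence of edge flips by Lemma~\ref{lem:finite_sequence_of_elementary_moves}, I may assume that $\tri'$ arises from $\tri$ by a single flip along an edge $e$, so that $\Phi=\Phi_e$ is exactly the map recorded in \S\ref{subsubsec:change-of-coord}.

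The plan is to verify the Poisson condition on coordinate functions. Writing $\epsilon$ and $\epsilon'$ for the skew-symmetric functions of \S\ref{subsec:FG-poisson-structure} computed in $\tri$ and $\tri'$ respectively, the identity $\partial q_i^*/\partial\overline{q}_j=\delta_{ij}$ gives the log-canonical relations $\{q_i^*,q_j^*\}_\tri=2\,\epsilon(q_i,q_j)\,q_i^*q_j^*$, and likewise for $\tri'$. Both brackets are biderivations and the differentials $\{dq^*\}$ form a basis of every cotangent space of the positive orthant, so $\Phi_e$ is Poisson if and only if
$$
\{\,\Phi_e^*(a^*),\,\Phi_e^*(b^*)\,\}_\tri \;=\; 2\,\epsilon'(a,b)\,\Phi_e^*(a^*)\,\Phi_e^*(b^*)\qquad\text{for all }a,b\in\triangle'\cup\underline E',
$$
where $\Phi_e^*(a^*)$ is precisely the flip formula expressing the $\tri'$--coordinate $a$ in terms of the $\tri$--coordinates.

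The key simplification is that this check is \emph{local} to the flipped quadrilateral. Only the quadrilateral coordinates -- the diagonal pair $e_{02},e_{20}$, the triangle pair $t_{012},t_{023}$, the eight oriented boundary edges, and the new diagonal/triangle coordinates -- occur in the flip formulas, and by the definition of $\epsilon$ each of the correction factors $e_{02},e_{20},t_{012},t_{023}$ Poisson-commutes under $\{-,-\}_\tri$ with every coordinate of a triangle or edge lying strictly outside the quadrilateral, since $\epsilon$ vanishes unless its two arguments share a common triangle. Consequently, if $a$ is exterior then $\Phi_e^*(a^*)=a^*$, and by the Leibniz rule $\{a^*,\Phi_e^*(b^*)\}_\tri$ collapses to $2\,\epsilon(a,b)\,a^*\,\Phi_e^*(b^*)$ for a boundary edge $b$, and to $0$ for the new diagonal or triangle coordinates (which are built from interior data alone). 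One then observes $\epsilon'(a,b)=\epsilon(a,b)$ in every such case, because the only triangle that a boundary edge shares with an exterior object is the exterior triangle adjacent to it, which the flip leaves untouched, while the new diagonal and new triangles share no triangle with any exterior object. Thus all identities involving an exterior coordinate hold automatically.

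It remains to confirm the finitely many identities among the quadrilateral coordinates themselves, using the formulas of \S\ref{subsubsec:change-of-coord}, the chain rule, and the log-canonical relations above. Conceptually this is the statement that a flip acts as a cluster $\mathcal{X}$--mutation and that $\epsilon'$ is the matrix mutation of $\epsilon$, so that the associated log-canonical Poisson structure is preserved. I expect this final verification to be the only genuine obstacle, and it is purely organisational: the work lies in recording the signs $\epsilon,\epsilon'$ inside the quadrilateral before and after the flip and then grinding through the resulting monomial identities. Once these are checked, $\Phi_e$ is a Poisson isomorphism, and the general case follows by composing over a sequence of flips.
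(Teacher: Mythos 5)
Your proposal takes essentially the same route as the paper's proof: reduce to a single edge flip using Lemma~\ref{lem:finite_sequence_of_elementary_moves}, then verify that the flip map intertwines the two brackets by checking the log-canonical identity on coordinate functions via the chain rule, which is precisely the family of monomial identities (components of $\phi_e$ against $\epsilon$ and $\epsilon'$) that the paper also reduces to and leaves as a direct computation. Your locality reduction to the flipped quadrilateral and the cluster-mutation framing are organisational refinements of that same final check, not a different argument.
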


\begin{proof}
	Let $\phi_e : \RR^{\triangle \cup \underline{E}}_{>0} \rightarrow \RR^{\triangle' \cup \underline{E}'}_{>0}$ be the change of coordinates induced by flipping along an edge $e$ of $\tri$ (see \S~\ref{subsec:change-of-coord} for details). Since $\phi_e$ is a bijection and any two ideal triangulations of $S$ differ by a finite sequence of edge flips, it is enough to prove that for all $f,g \in C^{\infty}(\RR^{\triangle' \cup \underline{E}'}_{>0})$,
	$$
	\{f,g\}_{\tri'} \circ \phi_e = \{ f \circ \phi_e, g \circ \phi_e \}_{\tri},
	$$
	or equivalently
	\begin{equation} \label{eq:bracket_tri_indep}
	\left( \sum_{q_1',q_2' \in \triangle' \cup \underline{E}' } \epsilon(q_1',q_2') q_1'^* q_2'^* \frac{\partial f}{\partial \overline{q}_1'} \frac{\partial g}{\partial \overline{q}_2'} \right) \circ \phi_e = \sum_{q_1,q_2 \in \triangle \cup \underline{E} } \epsilon(q_1,q_2) q_1^* q_2^* \frac{\partial f \circ \phi_e}{\partial \overline{q}_1} \frac{\partial g \circ \phi_e}{\partial \overline{q}_2}.
	\end{equation}
	Denote by $(\phi_e)_i := q_i'^* \circ \phi_e$ the $q_i'^*$ component of $\phi_e$. Then
	$$
	\text{LHS} = \sum_{q_1',q_2' \in \triangle' \cup \underline{E}' } \epsilon(q_1',q_2') (\phi_e)_1 (\phi_e)_2 \left(\frac{\partial f}{\partial \overline{q}_1'}\circ \phi_e\right) \left( \frac{\partial g}{\partial \overline{q}_2'} \circ \phi_e\right),
	$$
	\begin{align*}
	\text{RHS} &= \sum_{q_1,q_2 \in \triangle \cup \underline{E} } \epsilon(q_1,q_2) q_1^* q_2^* \left[  \sum_{q_1',q_2' \in \triangle' \cup \underline{E}' } \left(\frac{\partial f}{\partial \overline{q}_1'} \circ \phi_e \frac{ \partial (\phi_e)_1}{\partial \overline{q}_1}\right)\left(\frac{\partial g}{\partial \overline{q}_2'} \circ \phi_e \frac{ \partial(\phi_e)_2}{ \partial \overline{q}_2}\right) \right] \\
	&= \sum_{q_1',q_2' \in \triangle' \cup \underline{E}' } \sum_{q_1,q_2 \in \triangle \cup \underline{E} } \epsilon(q_1,q_2) q_1^* q_2^* \left(\frac{\partial f}{\partial \overline{q}_1'} \circ \phi_e \frac{ \partial (\phi_e)_1}{\partial \overline{q}_1}\right)\left(\frac{\partial g}{\partial \overline{q}_2'} \circ \phi_e \frac{ \partial (\phi_e)_2}{\partial \overline{q}_2}\right).
	\end{align*}
	Hence we rewrite equation (\ref{eq:bracket_tri_indep}) as
	$$
	\sum_{q_1',q_2' \in \triangle' \cup \underline{E}' } \left(\frac{\partial f}{\partial \overline{q}_1'} \circ \phi_e \right)\left(\frac{\partial g}{\partial \overline{q}_2'} \circ \phi_e\right) \left[ \sum_{q_1,q_2 \in \triangle \cup \underline{E} } \epsilon(q_1,q_2) q_1^* q_2^* \frac{ \partial (\phi_e)_1}{ \partial \overline{q}_1} \frac{ \partial (\phi_e)_2}{ \partial \overline{q}_2} - \epsilon(q_1',q_2') (\phi_e)_1 (\phi_e)_2 \right] = 0
	$$
	By differentiating the functions in section \ref{subsec:change-of-coord}, one can easily check that for all $i,j$
	$$
	\sum_{q_1,q_2 \in \triangle \cup \underline{E} } \epsilon(q_1,q_2) q_1^* q_2^* \frac{ \partial (\phi_e)_1}{ \partial \overline{q}_1} \frac{ \partial (\phi_e)_2}{ \partial \overline{q}_2} - \epsilon(q_i',q_j') (\phi_e)_i (\phi_e)_j = 0.
	$$
\end{proof}

Let $\eta$ be the change of coordinates induced by the sequence of edge flips between $\tri$ and $\tri'$. The following diagram commutes
\begin{center}
\begin{tikzpicture}
  \matrix (m) [matrix of math nodes,row sep=3em,column sep=4em,minimum width=2em] { 
\Ttp(S)  &    \RR^{\triangle \cup \underline{E}}_{>0} \\
 	 &  \RR^{\triangle' \cup \underline{E}'}_{>0} \\ };
  \path[-stealth]
    (m-1-1) edge node [above] {$\phi_\tri$} (m-1-2)
    (m-1-2) edge node [right] {$\eta$} (m-2-2)
    (m-1-1) edge node [below]{$\phi_{\tri'}$} (m-2-2);
\end{tikzpicture}
\end{center}
and the pull-back of $\{-,-\}_\tri$ via $\phi_\tri$ does not depend on the coordinate system. Therefore for all $f,g \in C^{\infty}(\Ttp(S) )$ and any ideal triangulation $\tri$ of $S$, we define
$$
\{f,g\}_{FG} : = \{f \circ \phi^{-1}_\tri, g \circ \phi^{-1}_\tri \}_{\tri},
$$
which makes $(\Ttp(S),\{-,-\}_{FG})$ a Poisson manifold.

%%%%%%%%%%%%%%%%%%%%%%%%%%%%%%%%%%%%%%
%%%%%%%%%%%%%%%%%%%%%%%%%%%%%%%%%%%%%%
%%%%%%%%%%%%%%%%%%%%%%%%%%%%%%%%%%%%%%

\subsubsection{Compatibility of Poisson brackets} \label{subsec:compatibility_brackets}

Let $\Tr_3 := \Tr(\X_3(S)) \subset C^{\infty}(\X_3(S))$ be the trace algebra with respect to $\SL(3,\RR)$ and let 
\begin{align*}
\mu^* \colon C^{\infty}(\X_3(S)) &\rightarrow C^{\infty}(\Ttp(S))\\
f &\mapsto f \circ \mu
\end{align*}
be the injective homomorphism induced by the monodromy operator $\mu \colon \Ttp(S) \rightarrow \X_3(S)$. This section is devoted to the proof of the following result.

\begin{thm} \label{thm:compatibility-poisson-bracket}
	The restriction of $\mu^*$ to $\Tr_3$ is a Poisson homomorphism, $i.e.$ for all $f,g \in \Tr_3$,
	\begin{equation} \label{eq:compatibility-poisson-bracket}
		2 \ \mu^*  \circ \{f,g \}_{Gol} = \{\mu^* \circ f,\mu^* \circ g \}_{FG}.	
	\end{equation}
\end{thm}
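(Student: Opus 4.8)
The plan is to reduce the identity to a local computation at the intersection points of two curves, comparing Goldman's formula (Theorem~\ref{thm:Gol-formula}) on one side with the explicit monodromy formula (Theorem~\ref{thm:monodromy}) together with the definition of the Fock--Goncharov bracket on the other. The first step is a standard reduction to generators: since $\mu^*$ is an algebra homomorphism and both $\{-,-\}_{Gol}$ and $\{-,-\}_{FG}$ satisfy the Leibniz rule, it suffices to verify \eqref{eq:compatibility-poisson-bracket} on a generating set of $\Tr_3$. As this algebra is generated by the functions $\tr_\alpha$, I would fix two immersed closed curves $\alpha,\beta$ in general position and establish
\[
2\,\mu^*\{\tr_\alpha,\tr_\beta\}_{Gol}=\{\mu^*\tr_\alpha,\mu^*\tr_\beta\}_{FG}.
\]
By Goldman's formula the left-hand side is $\sum_{p\in\alpha\cap\beta}2\,\epsilon(p,\alpha,\beta)\,\mu^*\bigl(\tr_{\alpha_p\beta_p}-\tfrac13\tr_\alpha\tr_\beta\bigr)$, a sum indexed by the transverse intersection points of $\alpha$ and $\beta$.

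The second step is to localise the right-hand side to the same intersection points. By Theorem~\ref{thm:monodromy}, $\mu^*\tr_\alpha=\tr\!\big(\prod_i T_i^{\epsilon_i}E_i\big)$ is a function of only those triangle and edge coordinates $q$ that $\alpha$ crosses, so $\partial(\mu^*\tr_\alpha)/\partial\overline q$ vanishes unless $\alpha$ passes through $q$. Since $\epsilon(q_1,q_2)\neq 0$ only when $q_1,q_2$ share a triangle, the double sum defining $\{\mu^*\tr_\alpha,\mu^*\tr_\beta\}_{FG}$ collapses to the pairs $(q_1,q_2)$ lying in a common triangle with $\alpha$ through $q_1$ and $\beta$ through $q_2$. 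After isotoping $\alpha$ and $\beta$ into minimal position relative to $\tri$, these pairs are in bijection with the points $p\in\alpha\cap\beta$, so the Fock--Goncharov bracket likewise decomposes as a sum of local contributions over intersection points.

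The heart of the argument is then a model computation in a single triangle. The operator $q^*\,\partial/\partial\overline q$ acts as the logarithmic derivative $q\,\partial_q$, and differentiating the monodromy matrices $T(z)$ and $E(x,y)$ of Lemma~\ref{lem_example_monodromy} amounts to inserting a fixed element $\xi_q\in\mathfrak{sl}(3,\RR)$ into the monodromy word at the location of the coordinate $q$. Consequently the contribution of a crossing $p$ to the Fock--Goncharov bracket is a sum of terms $\epsilon(q_1,q_2)\,\tr(\cdots\xi_{q_1}\cdots)\,\tr(\cdots\xi_{q_2}\cdots)$, where $q_1,q_2$ range over the coordinates of the triangle containing $p$. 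I would compare this with the classical description of the Goldman term: resolving a transverse crossing corresponds, at the level of representations, to inserting the Casimir element $\sum_a X_a\otimes X^a\in\mathfrak{sl}(3,\RR)\otimes\mathfrak{sl}(3,\RR)$ of trace-dual bases, and the correction $-\tfrac13\tr_\alpha\tr_\beta$ is exactly the difference between the identity tensor of $\mathfrak{gl}_3$ and the trace-free Casimir of $\mathfrak{sl}_3$. The claim then follows from the triangle-local tensor identity
\[
\sum_{q_1,q_2}\epsilon(q_1,q_2)\,q_1^*q_2^*\,\xi_{q_1}\otimes\xi_{q_2}=\epsilon(p,\alpha,\beta)\,\Big(\textstyle\sum_a X_a\otimes X^a\Big),
\]
valid in the projective frame fixed by the triangle; inserting both sides into the two words and tracing turns this into the matching of local contributions, with the factor $2$ of \eqref{eq:triangulation-bracket} accounting for the factor $2$ on the Goldman side.

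The main obstacle is precisely this last identity. It demands a careful case analysis of how $\alpha$ and $\beta$ traverse the triangle at $p$ --- which edges they enter and exit through, and the left/right turning data governing the signs $\epsilon_i=\pm1$ of Theorem~\ref{thm:monodromy} --- together with an explicit evaluation of the logarithmic derivatives of $T(z)$ and $E(x,y)$ in the basis of Lemma~\ref{lem_example_monodromy}. Organising these cases so that the weighted sum of insertion tensors $\xi_q$ collapses to the Casimir (with the correct intersection sign), and checking that the constraint $\det=1$ produces the $-\tfrac13\tr_\alpha\tr_\beta$ term, is where essentially all of the computation resides; the global statement then follows by summing the matching local contributions over $p\in\alpha\cap\beta$ and extending from the trace generators to all of $\Tr_3$ by the Leibniz rule.
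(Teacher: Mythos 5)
Your reduction to trace generators and your identification of the local derivative structure (logarithmic derivatives of $T(z)$ and $E(x,y)$ acting as Lie-algebra insertions into the monodromy word) both match the paper's setup, and the Casimir-type matrix you predict does appear at the end of the paper's computation. However, there is a genuine gap at the localisation step, and it is exactly where the bulk of the paper's work lies. You claim that after isotoping $\alpha$ and $\beta$ into minimal position, the pairs $(q_1,q_2)$ lying in a common triangle with $\alpha$ through $q_1$ and $\beta$ through $q_2$ are in bijection with the intersection points $p\in\alpha\cap\beta$. This is false: two curves in minimal position can traverse many common triangles without intersecting at all (for instance, two disjoint freely homotopic copies of the same curve cross exactly the same triangles), and even when they do intersect, the crossing sits inside a possibly long chain of shared triangles, each of which contributes nonzero terms to the double sum defining $\{\mu^*\tr_\alpha,\mu^*\tr_\beta\}_{FG}$. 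The Fock--Goncharov bracket therefore does \emph{not} decompose triangle-by-triangle into contributions indexed by crossings; the individual shared-triangle contributions are nonzero and only cancel after being summed along an entire maximal chain of shared triangles. This is precisely what the paper's machinery of tunnels and galleries (\S\ref{subsubsec:proof-part2}--\S\ref{subsubsec:proof-part3}) is built to handle: Lemma~\ref{lem:sum-of-contributions} shows how consecutive gallery contributions combine, and Lemma~\ref{lem:tunnel-contribution} shows the total telescopes, yielding zero for tunnels with no crossing and the Goldman term $\epsilon(p,\alpha,\beta)\bigl(\tr_{\alpha_p\beta_p}\circ\mu-\tfrac13(\tr_\alpha\circ\mu)(\tr_\beta\circ\mu)\bigr)$ for tunnels containing a crossing.

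Two further consequences of the same oversight: first, your proposed single-triangle tensor identity cannot hold as stated, since the paper's computation (Lemma~\ref{lem:two-galleries-contribution}) shows that even the minimal configuration producing the Goldman term is a tunnel of length two, with a starting and an ending triangle, not a single triangle in isolation; second, your argument says nothing about the case $[\alpha]=[\beta^m]$, where the curves share \emph{closed} chains of triangles and the cancellation requires the separate argument of Lemma~\ref{lem_closed_tunnel_contribution}. Any repair of your approach must replace pointwise localisation at crossings by a cancellation mechanism along maximal shared chains, which is in substance the paper's proof.
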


The strategy we adopt consists of the following $5$ steps. First we apply Theorem~\ref{thm:global-coord} to reword the problem in terms of $\{-,-\}_{\tri}$ for a fixed triangulation $\tri$ of $S$. That allows us to compare the definition of $\{-,-\}_{\tri}$ in (\ref{eq:triangulation-bracket}) with the formulation of $\{-,-\}_{Gol}$ in (\ref{eq:Gol-formula}). Next, we break up $\alpha$ and $\beta$ into pieces and focus our attention on \emph{tunnels and galleries}, where curves travel next to each other (see \S~\ref{subsubsec:proof-part2} for details). We then show in \S~\ref{subsubsec:proof-part3} how the global quantity $\{ \tr_\alpha \circ \mu, \tr_\beta  \circ \mu\}_{\tri}$ translates into a local behaviour $\{ \tr_\alpha, \tr_\beta \}_{Gol}$ at the intersection points of $\alpha$ and $\beta$. A special case when $\alpha$ and $\beta$ are \emph{parallel} is treated in \S~\ref{subsubsec:proof-part4}. All of the pieces are put together to conclude the proof in \S~\ref{subsubsec:proof-part5}.

\subsubsection{Proof part 1: set up} \label{subsubsec:proof-part1}

Fix an ideal triangulation $\tri$ of $S$ and the corresponding isomorphism $\phi_{\tri} : \RR^{\triangle \cup \underline{E}}_{>0} \rightarrow \Ttp(S)$. Let $(\mu \circ \phi_{\tri})^* \colon C^{\infty}(\X_3(S)) \rightarrow C^{\infty}(\RR^{\triangle \cup \underline{E}}_{>0} )$ be the map $(\mu \circ \phi_{\tri})^*(f) = f \circ \mu \circ \phi_{\tri}$, then $\mu^*$ is a Poisson homomorphism on $\Tr$ if and only if $(\mu \circ \phi_{\tri})^*$ is a Poisson homomorphism on $\Tr$. Henceforth we identify $\Ttp(S)$ with $\RR^{\triangle \cup \underline{E}}_{>0}$ and denote by $\mu,\mu^*$ the maps $\mu \circ \phi_{\tri}, (\mu \circ \phi_{\tri})^*$.

Recall that $\Tr_3$ is the sub-algebra of $C^\infty(\X_3(S))$ generated by elements of the form $\tr_\alpha$ for $\alpha \in \pi_1(S)$. By Theorem~\ref{thm:Gol-formula} and the defining properties of Poisson brackets, it is enough to show that for all representatives $\alpha,\beta \subset S$ of immersed oriented closed curves in general position,
\begin{equation} \label{eq:poisson-equality}
\sum_{p \in \alpha \cap \beta} \epsilon(p,\alpha,\beta) \left( \tr_{\alpha_p \beta_p}\circ \mu - \frac{1}{3} (\tr_\alpha \circ \mu )(\tr_\beta \circ \mu ) \right) = \sum_{q_1,q_2 \in \triangle \cup \underline{E} } \epsilon(q_1,q_2) q_1^* q_2^* \frac{\partial (\tr_\alpha \circ \mu)}{\partial \overline{q}_1} \frac{\partial (\tr_\beta \circ \mu)}{\partial \overline{q}_2}.
\end{equation}

Here $\alpha$ and $\beta$ are assumed to be in general position both with respect to each other and with respect to $\tri$.

We start by recalling some notation from \S~\ref{subsect_monodromy_operator}. For $\gamma \in \pi_1(S)$, we showed in Lemma~\ref{lem_compute_monodromy} that there exists a finite sequence $(e^\gamma_0 \cdot  \dots \cdot e^\gamma_n)$ of $n+1$ oriented edges $e^\gamma_i \in \underline{E}^\graph$ such that $e^\gamma_0 \cdot e^\gamma_{1} \cdot \dots \cdot e^\gamma_n \subset \graph$ is the development of a loop in $\graph_0$ homotopic equivalent to $\gamma$. Let $(s^\gamma_0 \cdot  \dots \cdot s^\gamma_n)$ be such a loop, where $s^\gamma_i \in \underline{E}^{\graph_0}$ corresponds to $e^\gamma_i \in \underline{E}^\graph$. Then we showed in Theorem~\ref{thm:monodromy} that $\mu(x)(\gamma) $ is the conjugacy class of a product $S^\gamma_0\cdot S^\gamma_1 \cdots S^\gamma_n$, where $S^\gamma_i$'s are matrices of the type $T,T^{-1}$ or $E$. More precisely, $S^\gamma_i$ is of type $T,T^{-1}$ (respectively type $E$) if and only if $s^\gamma_i$ is a $\triangle$--edge (resp. $\underline{E}$--edge), as described at the beginning of \S~\ref{subsect_monodromy_operator}. Furthermore, we can assume that $s^\gamma_{i}$ and $s^\gamma_{i+1}$ are of different type for all $i \in \ZZ_{n+1}$.

For the remainder of this chapter we will work with fixed sequences $e_i^\alpha,e_i^\beta$, loops $s_i^\alpha,s_i^\beta$ and matrices $S_i^\alpha,S_i^\beta \in \SL(3,\RR)$ for $\alpha$ and $\beta$.

For convenience, we provide here a simple result which will be used later on.

\begin{lem} \label{lemma_basic_derivatives}
	Let $T(z),E(x,y) \in \SL(3,\RR)$ be the matrices defined in Lemma~\ref{lem_example_monodromy}, and let
	$$
	M_1 := \left(
	\begin{array}{ccc}
	-2 & 0 & 0\\
	0 & 1 & 0\\
	0 & 0 & 1\\
	\end{array} \right),
	M_2 := \left(
	\begin{array}{ccc}
	-1 & 0 & 0\\
	0 & -1 & 0\\
	0 & 0 & 2\\
	\end{array} \right),
	M_3 := \left(
	\begin{array}{ccc}
	2 & 3 & 0\\
	0 & -1 & 0\\
	0 & 0 & -1\\
	\end{array} \right),
	M_4 := \left(
	\begin{array}{ccc}
	1 & 0 & 0\\
	0 & 1 & 0\\
	0 & -3 & -2\\
	\end{array} \right).
	$$
	Then
	$$
	\frac{\partial E(x,y)}{\partial x} = \frac{1}{3x}E(x,y) \cdot M_1, \qquad \qquad
	\frac{\partial E(x,y)}{\partial y} = \frac{1}{3y}E(x,y) \cdot M_2, 
	$$
	$$
	\frac{\partial T(z)}{\partial z} = \frac{1}{3z}T(z) \cdot M_3, \qquad \qquad
	\frac{\partial T(z)^{-1}}{\partial z} = \frac{1}{3z}T(z)^{-1} \cdot M_4.
	$$
\end{lem}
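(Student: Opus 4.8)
The plan is to treat Lemma \ref{lemma_basic_derivatives} as a direct verification, organized around a single observation: each of the four asserted identities has the form $\partial_v A = \tfrac{1}{3v}\, A\,M$ for the relevant matrix $A \in \{E, T, T^{-1}\}$, variable $v \in \{x,y,z\}$, and constant integer matrix $M \in \{M_1,\dots,M_4\}$. Since each $A$ is invertible with $\det A = 1$, this is equivalent to saying that the right logarithmic derivative $A^{-1}\partial_v A$ equals the \emph{constant} matrix $\tfrac{1}{3v}M$; equivalently, in the variable $u=\log v$ the matrix solves the autonomous linear equation $\partial_u A = \tfrac13 A M$. This reformulation explains both why the answer takes this shape and why a uniform factor $\tfrac13$ appears: every entry of $E$, $T$ and $T^{-1}$ is, up to the scalar cube-root prefactor, a Laurent monomial (or a sum of two Laurent monomials) in the variables, and differentiating $v^{a}$ produces $\tfrac{a}{v}v^{a}$, with the exponents being multiples of $\tfrac13$.

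First I would dispatch the two $E$-identities, which are the cleanest because $M_1$ and $M_2$ are diagonal, so right-multiplication by them merely rescales the columns of $E(x,y)$. Each column of $E$ has a single nonzero entry of the form $c\,x^{a}y^{b}$: reading off the $x$-exponents $(-\tfrac23,\tfrac13,\tfrac13)$ and the $y$-exponents $(-\tfrac13,-\tfrac13,\tfrac23)$ across the three columns, differentiating in $x$ (resp.\ $y$) multiplies column $j$ by $a_j/x$ (resp.\ $b_j/y$). Multiplying these exponent vectors by $3$ reproduces exactly $M_1=\mathrm{diag}(-2,1,1)$ and $M_2=\mathrm{diag}(-1,-1,2)$, giving $\partial_x E = \tfrac{1}{3x}E\,M_1$ and $\partial_y E = \tfrac{1}{3y}E\,M_2$ with essentially no computation. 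For $T(z)$ one cannot read off column scalings, because the entry in position $(3,2)$ equals $z^{2/3}+z^{-1/3}$, a genuine sum of monomials; correspondingly $M_3$ is not diagonal. Here I would simply differentiate $T(z)$ entrywise and check against the single $3\times 3$ product $\tfrac1{3z}T(z)M_3$, where the off-diagonal $3$ in $M_3$ is precisely what accounts for the mixed $(3,2)$-entry.

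For $T(z)^{-1}$ I would first record the inverse explicitly: writing $T(z)=z^{-1/3}\widetilde T(z)$ with $\widetilde T(z)=\bigl(\begin{smallmatrix}0&0&1\\0&-1&-1\\z&z+1&1\end{smallmatrix}\bigr)$ and noting $\det\widetilde T = z$, one gets $T(z)^{-1}=z^{1/3}\,z^{-1}\,\mathrm{adj}(\widetilde T)$, whose entries are again monomials and one sum $z^{1/3}+z^{-2/3}$ in position $(1,2)$; then the verification $\partial_z T^{-1}=\tfrac1{3z}T^{-1}M_4$ proceeds exactly as for $T$. Alternatively one can differentiate $T T^{-1}=I$ to obtain $\partial_z T^{-1} = -T^{-1}(\partial_z T)T^{-1} = -\tfrac1{3z}M_3 T^{-1}$ using the already-proven $T$-identity, and then reconcile this left-multiplication form with the stated right-multiplication form. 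I expect no genuine obstacle anywhere: the whole lemma is routine bookkeeping, and the only thing demanding care is tracking the cube-root prefactors, which are what collapse into the uniform $\tfrac13$ and force each $M_i$ to have integer entries. As already noted after Lemma \ref{lem_example_monodromy}, passing to the cubes of the triangle and edge invariants would remove these roots altogether and make the computation purely polynomial.
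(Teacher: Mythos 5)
Your verification is correct: the column-scaling argument for the two $E$-identities (exponent vectors $(-\tfrac23,\tfrac13,\tfrac13)$ and $(-\tfrac13,-\tfrac13,\tfrac23)$ tripled give $M_1$ and $M_2$), the entrywise check for $T$, and the explicit inverse $T(z)^{-1}=z^{1/3}\widetilde{T}(z)^{-1}$ with the sum $z^{1/3}+z^{-2/3}$ in position $(1,2)$ all check out, as does the alternative route $\partial_z T^{-1}=-\tfrac{1}{3z}M_3T^{-1}$, which indeed coincides with $\tfrac{1}{3z}T^{-1}M_4$ via the identity $M_4T=-TM_3$. The paper states this lemma without proof, treating it as exactly the kind of routine differentiation you carried out, so your argument is essentially the paper's (implicit) one.
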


\subsubsection{Proof part 2: tunnels and galleries} \label{subsubsec:proof-part2}

The function $\{f,g\}_{FG}$ does not depend on the chosen triangulation $\tri$ so we may assume, without loss of generality, that:
\begin{itemize}
	\item[(I)] \emph{every edge in $\tri$ is contained in two distinct triangles, or equivalently, every triangle has three distinct edges.}
\end{itemize}
As before, we underline that this assumption is not necessary and we will specify where we use it.

Denote by $\partial_i f := \frac{\partial f }{\partial \overline{q}_i}$, then by the product rule
$$
\partial_i (\tr_\gamma \circ \mu) = \partial_i (\tr \mu(\gamma)) =  \tr (\partial_i \mu(\gamma)) = \sum_j \tr \left(S^\gamma_0 \cdots \partial_i S^\gamma_j \cdots S^\gamma_n\right).
$$
Denote $\partial_i s^\gamma_j := S^\gamma_0 \cdots \partial_i S^\gamma_j \cdots S^\gamma_n$ for all $i,j$. We remind the reader that $s^\gamma_j$ is a segment of a loop homotopic to $\gamma$, so the notation $\partial_i s^\gamma_j$ is chosen to emphasize that the partial derivative is ``taken at $s^\gamma_j$". 

It follows that
$$
\partial_1 (\tr_\alpha \circ \mu) \cdot \partial_2 (\tr_\beta \circ \mu) = \sum_{i,j} \tr (\partial_1 s^\alpha_i) \cdot \tr (\partial_2 s^\beta_j),
$$
and the right hand side of (\ref{eq:poisson-equality}) can be reformulated as
$$
\sum_{i,j} \sum_{q_1,q_2 \in \triangle \cup \underline{E} } \epsilon(q_1,q_2) q_1^* q_2^* \left(\tr (\partial_1 s^\alpha_i) \cdot \tr (\partial_2 s^\beta_j)\right).
$$

\begin{rem} \label{rem_segment_derivative}
	Lemma \ref{lemma_basic_derivatives} implies that $\partial_i s^\gamma_j = \frac{1}{3 q_i^* } \left(S^\gamma_0 \cdots S^\gamma_j \cdot X \cdots S^\gamma_n \right)$, where $X$ is:
	\begin{itemize}
		\item[$i)$] $M_1$ or $M_2$, if $s^\gamma_j$ is an $\underline{E}$--edge crossing an oriented edge of $\tri$ with edge ratio $q_i$;
		\item[$ii)$] $M_3$ or $M_4$, if $s^\gamma_j$ is a $\triangle$--edge contained in a triangle of $\tri$ with triple ratio $q_i$;
		\item[$iii)$] the zero matrix otherwise.
	\end{itemize}
	
\end{rem}

We are now going to investigate the set of pairs $(s^\alpha_i,s^\beta_j)$ and their contribution
$$
C(s_i^{\alpha},s_j^{\beta}) := \sum_{q_1,q_2 \in \triangle \cup \underline{E} } \epsilon(q_1,q_2) q_1^* q_2^* \left(\tr (\partial_1 s^\alpha_i) \cdot \tr (\partial_2 s^\beta_j)\right)
$$
in the above sum.\\

%Firstly, notice that if $\alpha$ is isotopic to $\beta$ then $\alpha \cap \beta = \emptyset$ and $\tr_\alpha = \tr_\beta$, hence by anti--symmetry of $\epsilon(q_1,q_2)$ both right hand side and left hand side in (\ref{eq:poisson-equality}) are $0$ and there is nothing to prove.\\
%
%==================\\
%Actually need to prove that is $0$ when they are isotopic but oriented differently. Say that $\alpha$ is parallel to $\beta$ if $\alpha$ is isotopic to $\beta^{\pm1}$.\\
%==================\\

Clearly $C(s_i^{\alpha},s_j^{\beta}) = 0$ when $s^\alpha_i $ and $ s^\beta_j$ do not intersect a common triangle. If $\alpha$ and $\beta$ never cross a common triangle, then they must be disjoint, hence both right hand side and left hand side in (\ref{eq:poisson-equality}) are $0$.\\

For that reason, we assume from now on that
\begin{itemize}
	\item[(II)] \emph{there exist segments $s^\alpha_i $ and $ s^\beta_j$ intersecting a common triangle $\hat{T}$.}
\end{itemize}

When $t$ is a common triangle to two segments $s^\alpha_i $ and $ s^\beta_j$ it follows from assumption (I) that there is a unique (oriented) connected component $t(s^\alpha_i)$ (resp. $t(s^\beta_j)$) of $t \cap \alpha$ (resp. $t \cap \beta$) intersecting $s^\alpha_i$ (resp. $s^\beta_j$). We will say that $t$ has the \emph{configuration type} of:
\begin{itemize}
	\item a \emph{starting triangle} (resp. \emph{ending} triangle) with respect to $s^\alpha_i $ and $ s^\beta_j$, if $t(s^\alpha_i)$ and $t(s^\beta_j)$ cross a unique common edge of $t$ and $t(s^\alpha_i)$ leaves (resp. enters) $t$ from such edge;
	\item a \emph{transition triangle} with respect to $s^\alpha_i $ and $ s^\beta_j$, if $t(s^\alpha_i)$ and $t(s^\beta_j)$ cross the same two edges of $t$.
\end{itemize}

Up to interchanging $\alpha$ and $\beta$, there are only $7$ possible configurations. They are listed in Figure~\ref{triangle_config_types} and call $jS$ the configuration of type $j$ after interchanging.

\begin{figure}[h]
	\centering
	\includegraphics[height=5cm]{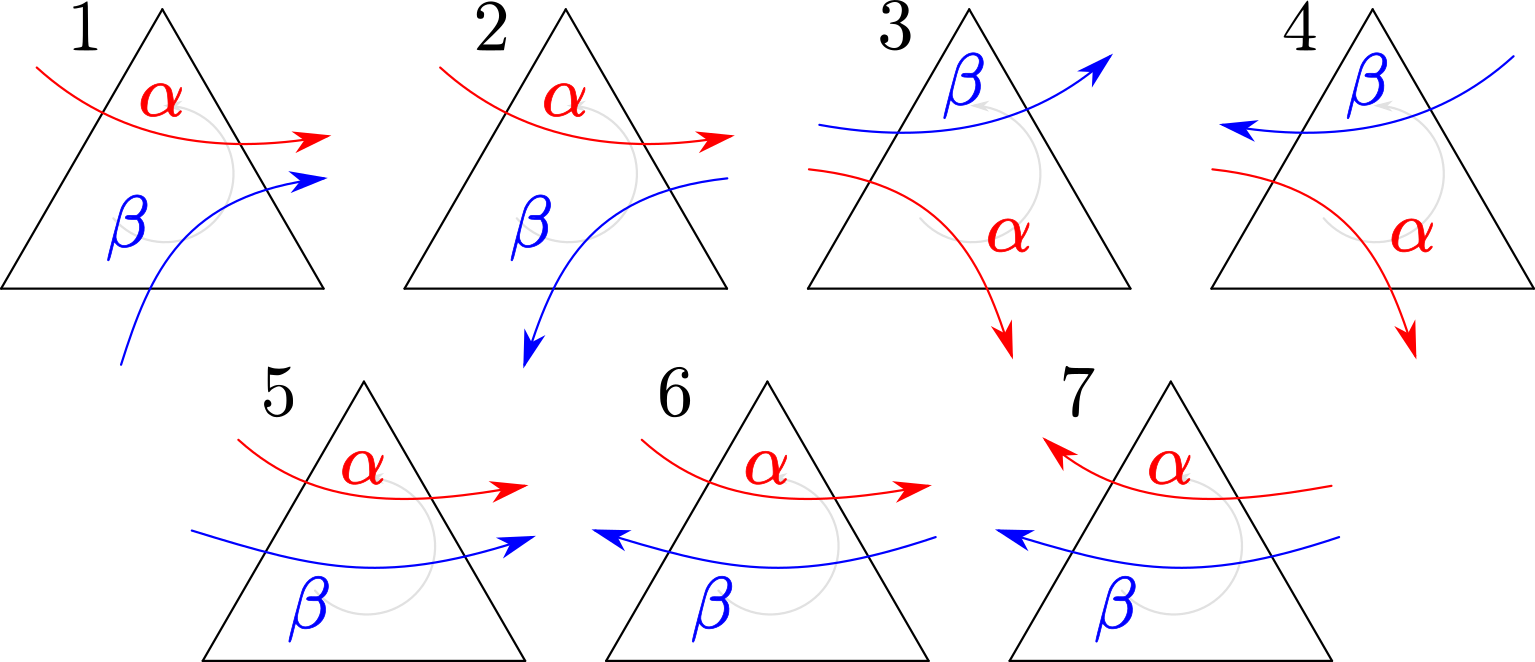}
	\caption{Triangle configuration types.}
	\label{triangle_config_types}
\end{figure}

Furthermore, we will say that $w_\gamma = (w^\gamma_0  \cdots w^\gamma_k)$ is a \emph{walk along $(s^\gamma_0 \cdots s^\gamma_n)$} of length $k+1 \in \NN$ based at $s^\gamma_{j}$ if either 
$$
w^\gamma_i = s^\gamma_{[j+i]_{n+1}} \qquad \mbox{ or } \qquad w^\gamma_i = s^\gamma_{[j-i]_{n+1}},
$$
where $[m]_{n+1} := m \mod (n+1)$. When $\gamma$ is represented by a loop $(s^\gamma_0 \cdots s^\gamma_n)$, we will abbreviate the notation by saying that $w_\gamma$ is a \emph{walk along $\gamma$}.

\begin{rem}
	Broadly speaking $w_\gamma$ is a walk along $(s^\gamma_0 \cdots s^\gamma_n)$ starting at $s^\gamma_{j}$ which might wrap around the curve multiple times, possibly backwards with respect to its orientation. In particular, $(s^\gamma_0 \cdot s^\gamma_1 \cdots s^\gamma_n)$ and $(s^\gamma_0 \cdot s^\gamma_n \cdots s^\gamma_1)$ are walks of length $n+1$ based at $s^\gamma_{0}$ in different directions.
\end{rem}

For the moment, we will make the extra assumption that
\begin{itemize}
	\item[$(\star)$] \emph{$\alpha$ and $\beta$ are \emph{non-parallel}}
\end{itemize}
that is $[\alpha] \not= [\beta^m]$ for all $m \in \ZZ$. We come back to this particular case later in \S~\ref{subsubsec:proof-part4}.\\

Since $\alpha$ and $\beta$ are closed non-parallel curves, if $t$ is a common triangle to two segments $s^\alpha_i $ and $ s^\beta_j$ then there exist:
\begin{itemize}
	\item a sequence of adjacent (not necessarily distinct) triangles $(t_0,\dots,t_q) \subset \tri$ containing $t$;
	\item a walk $w_{\alpha} := (w^\alpha_0 \cdot  \dots \cdot w^\alpha_{2q+2})$ along $\alpha$ containing $s^\alpha_i $;
	\item  a walk $w_{\beta} := (w^\beta_0 \cdot  \dots \cdot w^\beta_{2q+2})$ along $\beta$ containing $s^\beta_j $;
\end{itemize}
such that $t_0$ is a starting triangle with respect to the pairs $(w^\alpha_i,w^\beta_i)_{0\leq i \leq 2}$, $t_q$ is an ending triangle with respect to the pairs $(w^\alpha_i,w^\beta_i)_{2q \leq i \leq 2q+2}$, and $t_j$ is a transition triangle with respect to the pairs $(w^\alpha_i,w^\beta_i)_{2j \leq i \leq 2j+2}$ for all $1\leq j \leq q-1$.\\

In this case we will say that $(w_{\alpha}, w_{\beta})$ is a \emph{tunnel through} $(t_0,\dots,t_q)$ \emph{containing} $(s^\alpha_i ,s^\beta_j)$. Furthermore, for all $k \in \{0 \dots q \}$ there is a preferred triple of pairs $(w^\alpha_i,w^\beta_i)_{2k \leq i \leq 2k+2}$ and a triangle $t_k$ associated to it, which will be called the \emph{$k^{th}$ gallery} of the tunnel. By definition every $j^{th}$ and $(j+1)^{th}$ gallery share a common pair $(w^\alpha_{2j+2},w^\beta_{2j+2})$.

\begin{rem}
	A good way to visualise a tunnel is to pass to a universal cover of the surface. If $(w_{\alpha}, w_{\beta})$ is a tunnel through $(t_0,\dots,t_q)$ and we lift $\alpha$ and $\beta$ to the universal cover $\widetilde{S}$ based at a some point $p$ in any of the triangles $t_i$, then the two curves in $\widetilde{S}$ travel through common triangles only along the tunnel while  apart from each other everywhere else.
\end{rem}

Henceforth, when we talk about a tunnel containing the pair $(s^\alpha_i, s^\beta_j)$, we will further assume that $s^\alpha_i$ and $ s^\beta_j$ are contained in the same gallery. In this way, for every pair intersecting a common triangle there is a unique tunnel containing them. We stress again that the assumption $(\star)$ is necessary for the existence of at least one tunnel, as the reader can be easily convinced by taking simple closed curves $\alpha = \beta$.

\subsubsection{Proof part 3: encoding the contribution along a tunnel} \label{subsubsec:proof-part3}

Let $(w_{\alpha}, w_{\beta})$ be a tunnel $\tau$ through $(t_0,\dots,t_q)$. We now develop a handy way to encode the \emph{contribution $C_k(\tau)$ of the $k^{th}$ gallery of $\tau$}, that is the quantity

\begin{eqnarray} \label{eq_gallery_contrib}
C_k(\tau) := \sum_{2k \leq i,j \leq 2k+2} \left( \sum_{q_1,q_2 \subset t_k } \epsilon(q_1,q_2) q_1^* q_2^* \left(\tr (\partial_1 w^\alpha_i) \cdot \tr (\partial_2 w^\beta_j)\right)\right).
\end{eqnarray}

%\begin{lem}
%	For all tunnels $\tau$,
%	$$
%	\sum_{(s^\alpha_i, s^\beta_j) \in \tau} C(s^\alpha_i, s^\beta_j) = 	\sum_{k} C_k(\tau).
%	$$
%\end{lem}

Following the notation of the previous paragraph, let $W^\gamma_{i}$ be the matrix in $S^\gamma_0\cdot S^\gamma_1 \cdots S^\gamma_n$ corresponding to the segment $w_{i}^\gamma$ in $(s^\gamma_0 \cdot  \dots \cdot s^\gamma_n)$. Then for all $k \in \{0,\dots,q\}$, $m \in \{0,1,2\}$ and $\gamma \in \{\alpha,\beta \}$,

\begin{align*}
q_i^* \tr( \partial_i w_{2k+m}^\gamma ) & = q_i^* \tr( S^\gamma_0 \cdots \partial_i W^\gamma_{2k+m} \cdots S^\gamma_n  )\\
& = \frac{1}{3} \tr( S^\gamma_0 \cdots W^\gamma_{2k+m} \cdot \widetilde{X} \cdots S^\gamma_n  ) \qquad \mbox{ see Remark~\ref{rem_segment_derivative}, }\\
& = \frac{1}{3} \tr( W^\gamma_{2k+m} \cdot \widetilde{X} \cdot \cdots S^\gamma_n \cdot S^\gamma_0 \cdots ) \qquad \mbox{ by conjugation invariance of } \tr,\\
& = \frac{1}{3} \tr( X \cdot W^\gamma_{2k+1} \cdot \cdots S^\gamma_n \cdot S^\gamma_0 \cdots ),\\
\end{align*}
where $\widetilde{X} = \widetilde{X}(\gamma,q_i,k,m)$ is a matrix dependent on $(\gamma,q_i,k,m)$ and
\begin{itemize}
	\item for $\gamma = \alpha$,
	$$
	X = \left(\prod_{i = 1}^{m} W^\alpha_{2k+i}  \right)\cdot \widetilde{X} \cdot  \left(\prod_{i = 1}^{m} W^\alpha_{2k+i}  \right)^{-1};
	$$
	\item for $\gamma = \beta$, if $\beta$ is oriented as $\alpha$ along the tunnel,
	$$X = \left(\prod_{i = 1}^{m} W^\beta_{2k+i}  \right)\cdot \widetilde{X} \cdot  \left(\prod_{i = 1}^{m} W^\beta_{2k+i}  \right)^{-1};
	$$
	otherwise
	$$
	X = \left(\prod_{i = 0}^{1-m} W^\beta_{2k+1-i}  \right)\cdot \widetilde{X} \cdot  \left(\prod_{i = 0}^{1-m} W^\beta_{2k+1-i}  \right)^{-1}.
	$$
\end{itemize}

In short we can write
$$
q_i^* \tr( \partial_i w_{2k+m}^\gamma )  = \frac{1}{3} \tr( X \cdot \mathbf{S}(w^\gamma_{2k+1}) ),
$$
where $X = X(\gamma,q_i,k,m)$ is a matrix dependent on $(\gamma,q_i,k,m)$ and
$$
\mathbf{S}(w^\gamma_{2k+1})  := W^\gamma_{2k+1} \cdot \cdots S^\gamma_n \cdot S^\gamma_0 \cdots,
$$
is the matrix conjugate to $S^\gamma_0 \cdots W^\gamma_{2k+1} \cdots S^\gamma_n$ obtained by moving all the matrices on the left side of $W^\gamma_{2k+1}$ to the right side of $S^\gamma_n$.

\begin{lem} \label{lem_trace_factorisation}
	Let $X,Y \in M_{3 \times 3}(\RR)$ be real matrices and let $\mathbf{E}_{ij} \in M_{3 \times 3}(\RR)$ be the matrix with $(i,j)^{th}$ entry $1$ and zeros everywhere else. Then
	$$
	\tr(X \cdot Y) = \sum_{i,j} x_{ij} \tr(\mathbf{E}_{ij} \cdot Y),
	$$
	where $x_{ij}$ is the $(i,j)^{th}$ entry of $X$.
\end{lem}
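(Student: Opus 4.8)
The plan is to expand $X$ in the standard basis of matrix units and then invoke the linearity of both matrix multiplication and the trace. First I would record the elementary fact that the matrices $\mathbf{E}_{ij}$ form a basis of $M_{3\times 3}(\RR)$, so that $X = \sum_{i,j} x_{ij}\, \mathbf{E}_{ij}$, this being precisely the statement that $x_{ij}$ is the $(i,j)^{th}$ entry of $X$. The whole lemma is then an immediate consequence of bilinearity, and the only real content is organising the substitution cleanly.

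Concretely, I would substitute this expansion into $\tr(X\cdot Y)$ and use that right-multiplication by $Y$ is linear, giving $X\cdot Y = \sum_{i,j} x_{ij}\,(\mathbf{E}_{ij}\cdot Y)$. Since $\tr$ is also linear, pulling the finite sum and the scalars $x_{ij}$ outside yields
\[
\tr(X\cdot Y) = \tr\!\Big(\sum_{i,j} x_{ij}\,\mathbf{E}_{ij}\cdot Y\Big) = \sum_{i,j} x_{ij}\,\tr(\mathbf{E}_{ij}\cdot Y),
\]
which is exactly the claimed identity. As a cross-check, one can instead compute the summand directly: $\tr(\mathbf{E}_{ij}\cdot Y) = \sum_k (\mathbf{E}_{ij}Y)_{kk} = \sum_k \delta_{ik} y_{jk} = y_{ji}$, so that $\sum_{i,j} x_{ij}\tr(\mathbf{E}_{ij}\cdot Y) = \sum_{i,j} x_{ij} y_{ji} = \tr(X\cdot Y)$, recovering the same conclusion through an explicit index computation.

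I do not expect any genuine obstacle here: the statement is just the observation that $(X,Y)\mapsto \tr(XY)$ is a bilinear form on $M_{3\times 3}(\RR)$ evaluated against a basis expansion of its first argument, and nothing in the proof uses the dimension $3$. The only point requiring a moment's care is index bookkeeping, namely keeping the convention that $x_{ij}$ denotes the $(i,j)^{th}$ entry consistent between the two factors so that the transpose-like swap $y_{ji}$ in the direct verification lands correctly; but this is purely notational and involves no calculation beyond the one displayed above.
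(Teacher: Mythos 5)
Your proof is correct; in fact the paper states this lemma without any proof at all, treating it as an immediate consequence of linearity, and your argument (expanding $X = \sum_{i,j} x_{ij}\,\mathbf{E}_{ij}$ and pulling the sum through $\tr$, with the index cross-check $\tr(\mathbf{E}_{ij}\cdot Y) = y_{ji}$) is exactly the standard justification that was implicitly intended. You are also right that nothing uses the dimension $3$, so the statement holds verbatim for $n\times n$ matrices.
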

%
%\begin{proof}
%	$$
%	\tr(X \cdot Y) = \tr( \left( \sum_{i,j} x_{ij} \mathbf{E}_{ij} \right) \cdot Y) =  \sum_{i,j} \tr( x_{ij} \mathbf{E}_{ij}  \cdot Y) = \sum_{i,j} x_{ij} \tr(\mathbf{E}_{ij} \cdot Y).
%	$$
%\end{proof}

By Lemma~\ref{lem_trace_factorisation} and the above discussion we can conclude that

\begin{align*}
C_k(\tau) = & \sum_{0 \leq m_1,m_2 \leq 2} \left( \sum_{q_1,q_2 \subset t_k } \epsilon(q_1,q_2) q_1^* q_2^* \left(\tr (\partial_1 w^\alpha_{2k+m_1}) \cdot \tr (\partial_2 w^\beta_{2k+m_2})\right)\right) \\
=& \frac{1}{9}  \sum_{0 \leq m_1,m_2 \leq 2} \left( \sum_{q_1,q_2 \subset t_k } \epsilon(q_1,q_2)  \bigg( \tr \Big( X(\alpha,q_1,k,m_1) \cdot \mathbf{S}(w^\alpha_{2k+1}) \Big) \cdot \tr \Big( X(\beta,q_2,k,m_2) \cdot \mathbf{S}(w^\beta_{2k+1}) \Big) \bigg) \right) \\
=& \frac{1}{9} \sum_{l,m} \sum_{0 \leq m_1,m_2 \leq 2} \left( \sum_{q_1,q_2 \subset t_k } \epsilon(q_1,q_2) \bigg( x^{(\alpha,q_1,k,m_1)}_{lm} \tr \Big( \mathbf{E}_{lm} \cdot \mathbf{S}(w^\alpha_{2k+1}) \Big) \cdot \tr \Big( X(\beta,q_2,k,m_2) \cdot \mathbf{S}(w^\beta_{2k+1}) \Big) \bigg) \right) \\
=& \frac{1}{9} \sum_{l,m} \tr \Big( \mathbf{E}_{lm} \cdot \mathbf{S}(w^\alpha_{2k+1}) \Big)  \sum_{0 \leq m_1,m_2 \leq 2} \left( \sum_{q_1,q_2 \subset t_k } \epsilon(q_1,q_2) \bigg( x^{(\alpha,q_1,k,m_1)}_{lm} \cdot \tr \Big( X(\beta,q_2,k,m_2) \cdot \mathbf{S}(w^\beta_{2k+1}) \Big) \bigg) \right) \\
=& \frac{1}{9} \sum_{l,m} \tr \Big( \mathbf{E}_{lm} \cdot \mathbf{S}(w^\alpha_{2k+1}) \Big) \cdot \tr \Big( \left( \sum_{0 \leq m_1,m_2 \leq 2} \sum_{q_1,q_2 \subset t_k } \epsilon(q_1,q_2) x^{(\alpha,q_1,k,m_1)}_{lm} X(\beta,q_2,k,m_2) \right) \cdot \mathbf{S}(w^\beta_{2k+1})  \Big) \\
=&\frac{1}{9}\sum_{l,m} \tr(\mathbf{E}_{lm} \cdot \mathbf{S}(w^\alpha_{2k+1})) \cdot \tr( U^{(l,m)}  \cdot \mathbf{S}(w^\beta_{2k+1}) ).
\end{align*}

where

\begin{equation}\label{eq:contribution-matrix-entries}
U^{(l,m)} := \sum_{0 \leq m_1,m_2 \leq 2}  \sum_{q_1,q_2 \subset t_k } \epsilon(q_1,q_2) x^{(\alpha,q_1,k,m_1)}_{lm} X(\beta,q_2,k,m_2).
\end{equation}

with $x^{(\alpha,q_1,k,m_1)}_{lm}$ being the $(l,m)^{th}$ entry of $X(\alpha,q_1,k,m_1)$.\\

Let $\mathbf{M}_k(\tau) \in M_{3\times 3} (M_{3\times 3}(\RR))$ be the $3 \times 3$ matrix whose $(l,m)^{th}$ entry is the $3 \times 3$ matrix $U^{(l,m)}$. Then $\mathbf{M}_k(\tau)$ encodes $C_k(\tau)$ in the sense that
$$
C_k(\tau) = \frac{1}{9}\sum_{l,m} \tr(\mathbf{E}_{lm} \cdot \mathbf{S}(w^\alpha_{2k+1})) \cdot \tr( (\mathbf{M}_k(\tau))_{lm}  \cdot \mathbf{S}(w^\beta_{2k+1}) ).
$$

The matrix $\mathbf{M}_k(\tau)$ is unique and uniquely determines $C_k(\tau)$, thus we call $\mathbf{M}_k(\tau)$ the \emph{contribution matrix} of the $k^{th}$ gallery of $\tau$.

\begin{exa}
	We show how to compute the contribution matrix for a gallery with an explicit example. Consider the $0{th}$ gallery of a tunnel $\tau$ where $t_0$ is a starting triangle of type $1$ with respect to the pairs $(w^\alpha_i,w^\beta_i)_{0\leq i \leq 2}$ as in Figure~\ref{example_gallery_contrib}.
	
	\begin{figure}[ht]
		\centering
		\includegraphics[height=5cm]{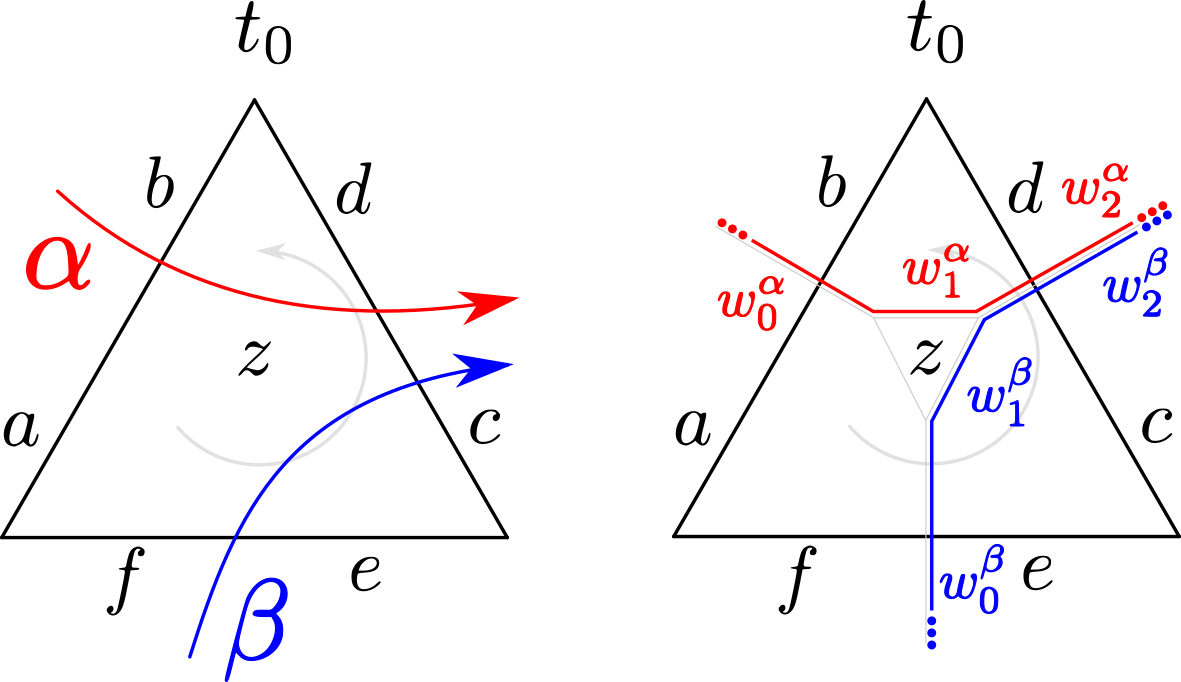}
		\caption{Example of contribution along a gallery with triangle of type $1$.}
		\label{example_gallery_contrib}
	\end{figure}
	
	For some matrices $A,B$,
	\begin{align*}
	\mu(x)(\alpha) &= [E(a,b) \cdot T(z)^{-1} \cdot E(c,d) \cdot A],\\
	\mu(x)(\beta) &= [E(e,f) \cdot T(z) \cdot E(c,d) \cdot B].
	\end{align*}
	
	We apply Lemma~\ref{lemma_basic_derivatives} and the conjugacy invariance of the trace operator to obtain:
	\begin{align*}
	q_j^* \tr (\partial_j w^\alpha_i) &= \frac{1}{3} \tr( X^{(\alpha,q_j,i)}  \cdot T(z)^{-1} \cdot E(c,d) \cdot A \cdot E(a,b) ),\\
	q_j^* \tr (\partial_j w^\beta_i) &= \frac{1}{3} \tr( X^{(\beta,q_j,i)}  \cdot T(z) \cdot E(c,d) \cdot B \cdot E(e,f) ),
	\end{align*}
	where explicitly (see Lemma~\ref{lemma_basic_derivatives} for the notation):
	\begin{align*}
	X^{(\alpha,a,0)} = M_1 = X^{(\beta,e,0)} &\qquad X^{(\alpha,b,0)} = M_2 = X^{(\beta,f,0)}\\
	X^{(\alpha,z,1)} = T(z)^{-1} \cdot M_4 \cdot T(z) &\qquad X^{(\beta,z,1)} = T(z) \cdot M_3 \cdot T(z)^{-1}\\ 
	X^{(\alpha,c,2)} = T(z)^{-1} \cdot E(c,d) \cdot M_1 \cdot (T(z)^{-1} \cdot E(c,d))^{-1} &\qquad X^{(\beta,c,2)} = T(z) \cdot E(c,d) \cdot M_1 \cdot (T(z) \cdot E(c,d))^{-1}\\
	X^{(\alpha,d,2)} = T(z)^{-1} \cdot E(c,d) \cdot M_2 \cdot (T(z)^{-1} \cdot E(c,d))^{-1} &\qquad X^{(\beta,d,2)} = T(z) \cdot E(c,d) \cdot M_2 \cdot (T(z) \cdot E(c,d))^{-1}\\
	X^{(\alpha,q_j,i)} = X^{(\beta,q_j,i)} = &\mbox{ the zero matrix otherwise}.
	\end{align*}
	
	We can now compute the entries of $\mathbf{M}_0(\tau)$ from equation (\ref{eq:contribution-matrix-entries}), for example:
	\begin{align*}
	U^{(1,1)} =  &x^{(\alpha,a,0)}_{11}\left( \epsilon(a,f)X^{(\beta,f,0)} + \epsilon(a,z)X^{(\beta,z,1)} \right) +  x^{(\alpha,b,0)}_{11}\left( \epsilon(b,d)X^{(\beta,d,2)} + \epsilon(b,z)X^{(\beta,z,1)} \right) +\\ &x^{(\alpha,z,1)}_{11}\left( \epsilon(z,f)X^{(\beta,f,0)} + \epsilon(z,e)X^{(\beta,e,0)} + \epsilon(z,c)X^{(\beta,c,2)} + \epsilon(z,d)X^{(\beta,d,2)} \right) +\\ &x^{(\alpha,c,2)}_{11}\left( \epsilon(c,e)X^{(\beta,e,0)} + \epsilon(c,z)X^{(\beta,z,1)} \right) + x^{(\alpha,d,2)}_{11}\left( \epsilon(d,z)X^{(\beta,z,1)} \right).
	\end{align*}
	
	It follows that the contribution matrix of the $0^{th}$ gallery is
	
	$$
	\mathbf{M}_0(\tau) = 
	\left(\begin{array}{ccc}
	
	\left(\begin{array}{ccc}
	3 & 0 & 0\\
	-6 & -3 & 0\\
	3(z+2) & 3(z+1) & 0
	\end{array}\right) &
	\left(\begin{array}{ccc}
	\frac{3(z+1)}{z} & 0 & 0\\
	-9 & -\frac{6(z+1)}{z} & 0\\
	9(z+1) & \frac{9(z+1)^2}{z} & \frac{3(z+1)}{z}
	\end{array}\right) &
	\left(\begin{array}{ccc}
	\frac{3(z-1)}{z} & 0 & 0\\
	0 & -\frac{3(z+2)}{z} & 0\\
	0 & \frac{9(z+1)}{z} & \frac{3(2z+1)}{z}
	\end{array}\right)\\
	\left(\begin{array}{ccc}
	0 & 0 & 0\\
	0 & 0 & 0\\
	0 & 0 & 0
	\end{array}\right) &
	\left(\begin{array}{ccc}
	0 & 0 & 0\\
	3 & 3 & 0\\
	-3(2z+1) & -6(z+1) & -3
	\end{array}\right) &
	\left(\begin{array}{ccc}
	3 & 0 & 0\\
	0 & 3 & 0\\
	0 & -9 & -6
	\end{array}\right)\\
	\left(\begin{array}{ccc}
	0 & 0 & 0\\
	0 & 0 & 0\\
	0 & 0 & 0
	\end{array}\right) &
	\left(\begin{array}{ccc}
	0 & 0 & 0\\
	0 & 0 & 0\\
	0 & 0 & 0
	\end{array}\right) &
	\left(\begin{array}{ccc}
	-3 & 0 & 0\\
	3 & 0 & 0\\
	3(z-1) & 3(z+1) & 3
	\end{array}\right)\\
	\end{array}\right)
	$$

	and
	
	$$
	C_0(\tau) = \frac{1}{9}\sum_{l,m} \tr(\mathbf{E}_{lm} \cdot T(z)^{-1} \cdot E(c,d) \cdot A \cdot E(a,b)) \cdot \tr( (\mathbf{M}_0(\tau))_{lm}  \cdot T(z) \cdot E(c,d) \cdot B \cdot E(e,f) ).
	$$
\end{exa}

\begin{lem} \label{lem:sum-of-contributions}
	Let $C_{k-1}(\tau),C_{k}(\tau)$ and $\mathbf{M}_{k-1}(\tau), \mathbf{M}_{k}(\tau)$ be the contributions and contribution matrices of two consecutive galleries in a tunnel $\tau$, and let
	\begin{itemize}
		\item $a^{(l,m)}_{ij}$ be the $(i,j)$--th entry of the matrix $A^{(l,m)} = (W^\alpha_{2k-1} \cdot W^\alpha_{2k})^{-1} \cdot \mathbf{E}_{lm} \cdot W^\alpha_{2k-1} \cdot W^\alpha_{2k}$,
		\item if $\alpha,\beta$ are oriented in the same direction along the galleries,
		$$
		B^{(l,m)} = (W^\beta_{2k-1} \cdot W^\beta_{2k})^{-1} \cdot (\mathbf{M}_{k-1}(\tau))_{lm} \cdot W^\beta_{2k-1} \cdot W^\beta_{2k},
		$$
		otherwise
		$$
		B^{(l,m)} = W^\beta_{2k+1} \cdot W^\beta_{2k}  \cdot (\mathbf{M}_{k-1}(\tau))_{lm} \cdot ( W^\beta_{2k+1} \cdot W^\beta_{2k} )^{-1}.
		$$
	\end{itemize}
	
	Then
	$$
	C_{k-1}(\tau) + C_{k}(\tau)  = \frac{1}{9}\sum_{l,m} \tr(\mathbf{E}_{lm} \cdot \mathbf{S}(w^\alpha_{2k+1})) \cdot \tr( ({\mathbf{M}_{(k-1,k)}}(\tau))_{lm}  \cdot \mathbf{S}(w^\beta_{2k+1}) ),
	$$
	where $\mathbf{M}_{(k-1,k)}(\tau) \in M_{3\times 3} (M_{3\times 3}(\RR)) $ is the matrix with $(l,m)^{th}$ entry
	$$
	(\mathbf{M}_{(k-1,k)}(\tau))_{lm} = \sum_{i,j} a^{(i,j)}_{lm} B^{(i,j)} + (\mathbf{M}_k(\tau))_{lm}.
	$$
\end{lem}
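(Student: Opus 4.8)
The plan is to re-anchor both gallery contributions at the common base segment $w^\gamma_{2k+1}$ of the $k$-th gallery and then merge them. Since $C_k(\tau)$ is, by definition of $\mathbf{M}_k(\tau)$, already written in terms of $\mathbf{S}(w^\alpha_{2k+1})$ and $\mathbf{S}(w^\beta_{2k+1})$, the entire content of the lemma is to rewrite $C_{k-1}(\tau)$, which is expressed via $\mathbf{S}(w^\alpha_{2k-1})$ and $\mathbf{S}(w^\beta_{2k-1})$, in terms of the same two anchored words. The combinatorics of the tunnel guarantee that the segments bounding the $(k-1)$-th gallery on the side towards the $k$-th gallery are exactly $w^\gamma_{2k-1}$ and $w^\gamma_{2k}$, so a single conjugation will move us between the two anchoring points.

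First I would record the key algebraic fact: $\mathbf{S}(w^\gamma_{2k-1})$ and $\mathbf{S}(w^\gamma_{2k+1})$ are cyclic rotations of one and the same monodromy product, hence conjugate. When the walk traverses the two galleries with increasing index (the case of $\alpha$, and of $\beta$ oriented as $\alpha$) one has
\[
\mathbf{S}(w^\gamma_{2k-1}) = (W^\gamma_{2k-1}W^\gamma_{2k})\,\mathbf{S}(w^\gamma_{2k+1})\,(W^\gamma_{2k-1}W^\gamma_{2k})^{-1},
\]
whereas if $\beta$ runs oppositely the conjugating factor is instead $(W^\beta_{2k+1}W^\beta_{2k})^{-1}$. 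Using the cyclic invariance of $\tr$ I would then push the conjugation off the word and onto the fixed matrices: for the $\alpha$-factor,
\[
\tr(\mathbf{E}_{lm}\,\mathbf{S}(w^\alpha_{2k-1})) = \tr\big((W^\alpha_{2k-1}W^\alpha_{2k})^{-1}\mathbf{E}_{lm}(W^\alpha_{2k-1}W^\alpha_{2k})\cdot\mathbf{S}(w^\alpha_{2k+1})\big) = \tr(A^{(l,m)}\mathbf{S}(w^\alpha_{2k+1})),
\]
which is precisely the matrix $A^{(l,m)}$ of the statement, and for the $\beta$-factor the analogous computation yields $\tr((\mathbf{M}_{k-1}(\tau))_{lm}\,\mathbf{S}(w^\beta_{2k-1})) = \tr(B^{(l,m)}\mathbf{S}(w^\beta_{2k+1}))$, with $B^{(l,m)}$ the stated conjugate (the two orientation cases reproducing exactly the two displayed formulas for $B^{(l,m)}$).

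With both factors re-anchored, $C_{k-1}(\tau) = \tfrac19\sum_{l,m}\tr(A^{(l,m)}\mathbf{S}(w^\alpha_{2k+1}))\,\tr(B^{(l,m)}\mathbf{S}(w^\beta_{2k+1}))$. I would now apply Lemma~\ref{lem_trace_factorisation} to the $\alpha$-trace, expanding $\tr(A^{(l,m)}\mathbf{S}(w^\alpha_{2k+1})) = \sum_{i,j} a^{(l,m)}_{ij}\,\tr(\mathbf{E}_{ij}\mathbf{S}(w^\alpha_{2k+1}))$, substitute, interchange the summations over $(l,m)$ and $(i,j)$, and collect the $\beta$-part into a single matrix acting on $\mathbf{S}(w^\beta_{2k+1})$; after relabelling $(i,j)\leftrightarrow(l,m)$ this gives $C_{k-1}(\tau) = \tfrac19\sum_{l,m}\tr(\mathbf{E}_{lm}\mathbf{S}(w^\alpha_{2k+1}))\,\tr\big((\sum_{i,j}a^{(i,j)}_{lm}B^{(i,j)})\,\mathbf{S}(w^\beta_{2k+1})\big)$. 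Adding $C_k(\tau)$ termwise, the $\alpha$-traces match, so the two $\beta$-matrices add, yielding exactly $(\mathbf{M}_{(k-1,k)}(\tau))_{lm} = \sum_{i,j}a^{(i,j)}_{lm}B^{(i,j)} + (\mathbf{M}_k(\tau))_{lm}$, as claimed.

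The routine parts are the trace manipulations, which are forced once the conjugation identity is in hand. The genuine obstacle is the orientation bookkeeping: one must verify that the shared pair of segments between consecutive galleries really is $(w^\gamma_{2k-1},w^\gamma_{2k})$ relative to the monodromy word, and in particular track how the indexing of the walk $w^\beta$ relates to the matrices $W^\beta_i$ when $\beta$ traverses the tunnel opposite to $\alpha$ — this is exactly what dictates whether the conjugating factor is $W^\beta_{2k-1}W^\beta_{2k}$ or $(W^\beta_{2k+1}W^\beta_{2k})^{-1}$, and hence which of the two formulas for $B^{(l,m)}$ applies.
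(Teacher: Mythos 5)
Your proposal is correct and follows essentially the same route as the paper's proof: re-anchor $C_{k-1}(\tau)$ at $\mathbf{S}(w^\gamma_{2k+1})$ via the cyclic-rotation conjugation identity (which is exactly what the paper's terse ``by conjugation'' step does), expand the $\alpha$-trace with Lemma~\ref{lem_trace_factorisation}, swap and relabel the summation indices, and add $C_k(\tau)$ termwise. Your explicit treatment of the orientation cases for $B^{(l,m)}$ is a welcome elaboration of a point the paper leaves implicit, but it is not a different argument.
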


\begin{proof}
	\begin{align*}
	C_{k-1}(\tau) & = \frac{1}{9}\sum_{l,m} \tr(\mathbf{E}_{lm} \cdot \mathbf{S}(w^\alpha_{2k-1})) \cdot \tr( (\mathbf{M}_{k-1}(\tau))_{lm}  \cdot \mathbf{S}(w^\beta_{2k-1}) )\\
	& = \frac{1}{9}\sum_{l,m} \tr(A^{(l,m)} \cdot \mathbf{S}(w^\alpha_{2k+1})) \cdot \tr( B^{(l,m)}  \cdot \mathbf{S}(w^\beta_{2k+1}) )  \qquad \mbox{ by conjugation,}\\
	& = \frac{1}{9}\sum_{l,m}  \sum_{i,j} a^{(l,m)}_{ij} \tr( \mathbf{E}_{ij} \cdot \mathbf{S}(w^\alpha_{2k+1})) \cdot \tr( B^{(l,m)}  \cdot \mathbf{S}(w^\beta_{2k+1}) ) \qquad \mbox{ from Lemma~\ref{lem_trace_factorisation},}\\
	& = \frac{1}{9}\sum_{i,j} \tr(\mathbf{E}_{ij} \cdot \mathbf{S}(w^\alpha_{2k+1})) \cdot \tr(  \sum_{l,m} a^{(l,m)}_{ij} \cdot B^{(l,m)}  \cdot \mathbf{S}(w^\beta_{2k+1}) )\\
	& = \frac{1}{9}\sum_{l,m} \tr(\mathbf{E}_{lm} \cdot \mathbf{S}(w^\alpha_{2k+1})) \cdot \tr(  \sum_{i,j} a^{(i,j)}_{lm} \cdot B^{(i,j)}  \cdot \mathbf{S}(w^\beta_{2k+1}) )  \qquad \mbox{ after renaming indices.}
	\end{align*}
	But
	$$
	C_k(\tau) = \frac{1}{9}\sum_{l,m} \tr(\mathbf{E}_{lm} \cdot \mathbf{S}(w^\alpha_{2k+1})) \cdot \tr( (\mathbf{M}_k(\tau))_{lm}  \cdot \mathbf{S}(w^\beta_{2k+1}) ),
	$$
	hence the result follows.
\end{proof}

\begin{rem}
	From a computational point of view, it is interesting to observe that the matrix $\sum_{i,j} a^{(i,j)}_{lm} \cdot B^{(i,j)}$ is the $(l,m)^{th}$ entry of $(W^\alpha_{2k-1} \cdot W^\alpha_{2k})^{-1} \cdot \mathbf{B} \cdot W^\alpha_{2k-1} \cdot W^\alpha_{2k}$, where $\mathbf{B} \in M_{3\times 3} (M_{3\times 3}(\RR)) $ is the matrix with matrix entries $B^{(i,j)}$.
\end{rem}

For each starting triangle configuration, there are only two compatible ending triangles, which makes a total of $8$ possible tunnels without transition triangles (cf. Figure~\ref{fig:length-two-tunnel}).

\begin{figure}[h]
	\centering
	\includegraphics[height=5cm]{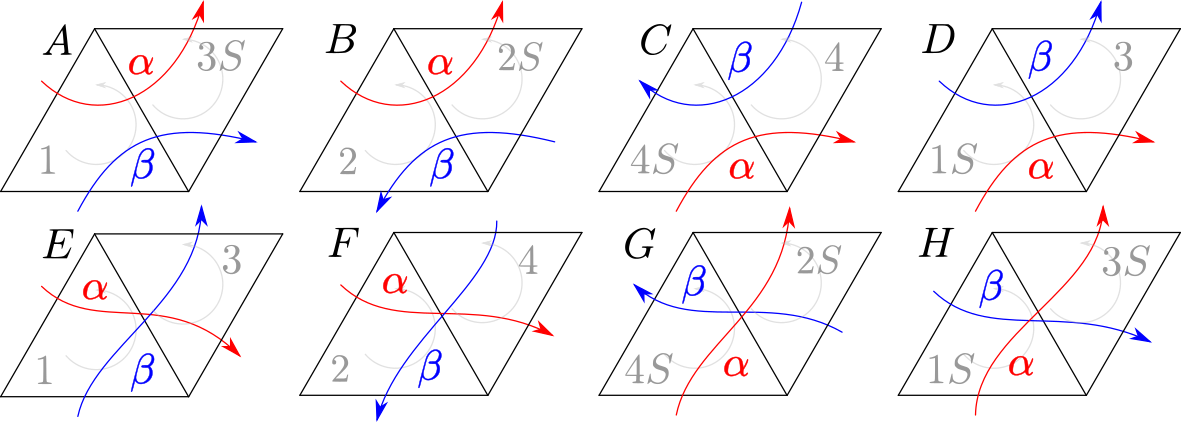}
	\caption{Length two tunnels.}
	\label{fig:length-two-tunnel}
\end{figure}

\begin{lem} \label{lem:two-galleries-contribution}
	The contribution along tunnels of length two equals
	$$
	\epsilon(p,\alpha,\beta) \left( \tr_{\alpha_p \beta_p}\circ \mu - \frac{1}{3} (\tr_\alpha \circ \mu )(\tr_\beta \circ \mu ) \right),
	$$
	where $p$ is the point of intersection (if any) of $\alpha$ and $\beta$ in the tunnel.
\end{lem}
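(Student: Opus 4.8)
The plan is to establish the identity by an explicit but structured computation, treating each of the eight length-two tunnel types of Figure~\ref{fig:length-two-tunnel} in turn. A length-two tunnel $\tau$ has only a starting triangle $t_0$ and an ending triangle $t_1$, so by Lemma~\ref{lem:sum-of-contributions} its total contribution $C_0(\tau)+C_1(\tau)$ is encoded by a single combined matrix $\mathbf{M}_{(0,1)}(\tau)$. First I would normalise the flags of $t_0$ and $t_1$ into the standard projective basis of Lemma~\ref{lem_example_monodromy}; the restricted monodromy words of $\alpha$ and $\beta$ then become explicit products of $T(z)^{\pm1}$ and $E(x,y)$, which share the $E$--factor attached to the edge common to $t_0$ and $t_1$ (the edge both curves traverse) and differ on the triangle factors only through the turning signs $\epsilon_i$ of \S\ref{subsubsec:computing-monodromy}. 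Using Lemma~\ref{lemma_basic_derivatives} to differentiate and Lemma~\ref{lem_trace_factorisation} together with (\ref{eq:contribution-matrix-entries}) to assemble the matrices, one computes $\mathbf{M}_0(\tau)$ and $\mathbf{M}_1(\tau)$ exactly as in the worked Example and then combines them via Lemma~\ref{lem:sum-of-contributions}.

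The conceptual core is the claim, to be verified configuration by configuration, that after combination the coordinate dependence disappears and
$$
\big(\mathbf{M}_{(0,1)}(\tau)\big)_{lm} = 9\,\epsilon(p,\alpha,\beta)\,\Big(\mathbf{E}_{ml} - \tfrac13\,\delta_{lm}\, I\Big),
$$
that is, $\mathbf{M}_{(0,1)}(\tau)$ is $9\,\epsilon(p,\alpha,\beta)$ times the $\mathfrak{sl}_3$ Casimir tensor, where $p$ is the transverse intersection of $\alpha$ and $\beta$ inside $\tau$ (and $\mathbf{M}_{(0,1)}(\tau)=0$ in those configurations where the curves do not cross). Granting this, the encoding formula of Lemma~\ref{lem:sum-of-contributions} yields
$$
C_0(\tau)+C_1(\tau) = \epsilon(p,\alpha,\beta)\sum_{l,m} \tr\!\big(\mathbf{E}_{lm}\,\mathbf{S}(w^\alpha_{3})\big)\,\tr\!\Big(\big(\mathbf{E}_{ml}-\tfrac13\delta_{lm}I\big)\,\mathbf{S}(w^\beta_{3})\Big),
$$
and the elementary contractions $\sum_{l,m}\tr(\mathbf{E}_{lm}A)\tr(\mathbf{E}_{ml}B)=\tr(AB)$ and $\sum_{l}\tr(\mathbf{E}_{ll}A)\tr(B)=\tr(A)\tr(B)$ collapse this to $\epsilon(p,\alpha,\beta)\big(\tr(\mathbf{S}(w^\alpha_3)\mathbf{S}(w^\beta_3)) - \tfrac13\tr\mathbf{S}(w^\alpha_3)\,\tr\mathbf{S}(w^\beta_3)\big)$.

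It then remains to identify this with the Goldman term. Each $\mathbf{S}(w^\gamma_3)$ is, by construction, a cyclic reordering, hence a conjugate, of the full monodromy word for $\gamma$, so $\tr\mathbf{S}(w^\alpha_3)=\tr_\alpha\circ\mu$ and likewise for $\beta$. Since the two reorderings begin at the shared segment $w^\gamma_3$ — the common stretch of $\alpha$ and $\beta$ just past the tunnel — they are based at a common point, so the product $\mathbf{S}(w^\alpha_3)\mathbf{S}(w^\beta_3)$ represents (up to conjugacy) the holonomy of the concatenated loop $\alpha_p\beta_p$, whence $\tr(\mathbf{S}(w^\alpha_3)\mathbf{S}(w^\beta_3))=\tr_{\alpha_p\beta_p}\circ\mu$. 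Substituting gives exactly $\epsilon(p,\alpha,\beta)\big(\tr_{\alpha_p\beta_p}\circ\mu - \tfrac13(\tr_\alpha\circ\mu)(\tr_\beta\circ\mu)\big)$, as claimed.

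The main obstacle is the displayed Casimir identity itself: the intermediate matrices $\mathbf{M}_0(\tau)$ and $\mathbf{M}_1(\tau)$ depend explicitly on the triple ratio $z$ and the edge ratios (as the Example shows), and only the conjugations by the shared factors $W^\gamma_{2k-1}W^\gamma_{2k}$ built into Lemma~\ref{lem:sum-of-contributions} force these dependencies to cancel against one another. Verifying this cancellation, confirming that the surviving constant matrix is precisely the conjugation-invariant Casimir with the sign dictated by the turning data $\epsilon_i$, and checking that it vanishes in the non-crossing configurations, is the delicate, if finite, part of the argument. The skew-symmetry $\epsilon(q_1,q_2)=-\epsilon(q_2,q_1)$ of the Fock--Goncharov form is what pairs up the triangle- and edge-derivative terms so that the $\tfrac13$ trace-correction emerges with exactly the right coefficient.
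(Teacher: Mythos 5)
Your proposal follows essentially the same route as the paper's proof: a case-by-case check over the eight length-two tunnel types, combining the two gallery matrices via Lemma~\ref{lem:sum-of-contributions}, and verifying that $\mathbf{M}_{(0,1)}(\tau)$ vanishes in the non-crossing configurations and equals $9\,\epsilon(p,\alpha,\beta)$ times the matrix with $(l,m)$ block $\mathbf{E}_{ml}-\tfrac13\delta_{lm}I$ in the crossing ones --- which is exactly the matrix displayed in the paper. Your only additions are to name this matrix as the $\mathfrak{sl}_3$ Casimir tensor and to spell out, via the trace contractions and the identification of $\mathbf{S}(w^\alpha_3)\mathbf{S}(w^\beta_3)$ with the holonomy of $\alpha_p\beta_p$, the step the paper dismisses as ``easy to check''; both are correct.
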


\begin{proof}
	The same technique applies to each case, hence we here our attention to just one of them. We will briefly explain at the end how to treat the others.
	
	Let $(w^{\alpha}_i, w^{\beta}_i)_{0\leq i \leq 4}$ be a tunnel $\tau$ through $(t_0,t_1)$ of type C (as in Figure~\ref{fig:length-two-tunnel}, namely $t_0$ is a starting triangle of type $4S$ (which is type $4$ with $\alpha$ and $\beta$ switched) with respect to the pairs $(w^{\alpha}_i, w^{\beta}_i)_{0\leq i \leq 2}$, and $t_1$ is an ending triangle of type $4$ for $(w^{\alpha}_i, w^{\beta}_i)_{2\leq i \leq 4}$. According to the labels shown in Figure~\ref{example_type_c}, we have
	\begin{align*}
	\mu(x)(\alpha) &= [E(f,e) \cdot T(z) \cdot E(d,c) \cdot T(w) \cdot E(g,h) \cdot A],\\
	\mu(x)(\beta) &= [E(j,i) \cdot T(w) \cdot E(c,d) \cdot T(z) \cdot E(a,b) \cdot B],
	\end{align*}
	for some matrices $A,B$.
	
%	\footnote{A good point to underline here is that it does not matter what $A$ and $B$ look like. They could be identity or inverse of previous matrices. Also different letters could stay for same parameters}
	
	\begin{figure}[h]
		\centering
		\includegraphics[height=5cm]{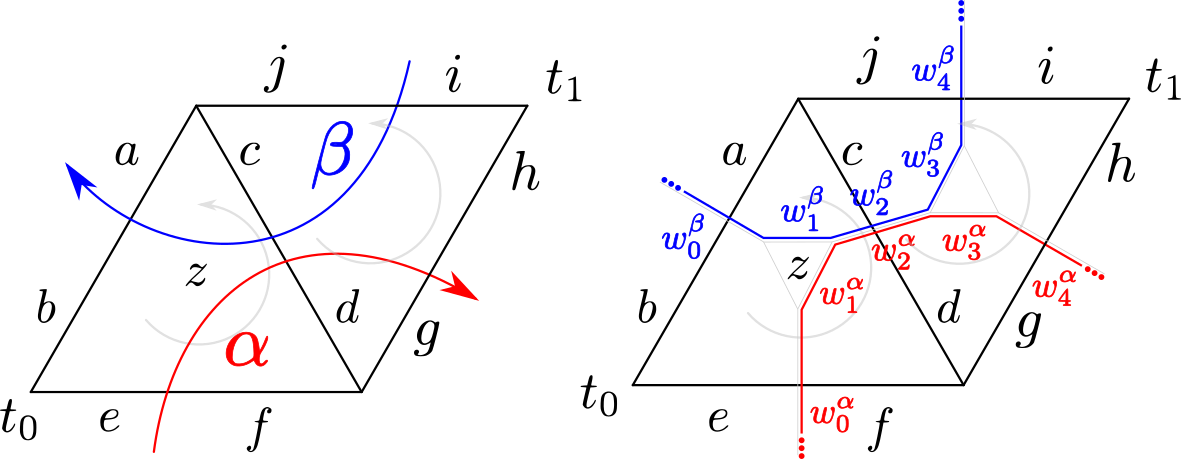}
		\caption{Example of type C.}
		\label{example_type_c}
	\end{figure}
	
	The contribution matrices $\mathbf{M}_0(\tau), \mathbf{M}_1(\tau)$ at $t_0,t_1$ are:
	$$
	\mathbf{M}_0(\tau) = 
	\left(\begin{array}{ccc}
	
	\left(\begin{array}{ccc}
	0 & \frac{3(z+1)}{z} & \frac{3(2z+1)}{z}\\
	0 & -3 & -6\\
	0 & 0 & 3
	\end{array}\right) &
	\left(\begin{array}{ccc}
	0 & 0 & \frac{9(z+1)}{z}\\
	0 & 0 & -9\\
	0 & 0 & 0
	\end{array}\right) &
	\left(\begin{array}{ccc}
	0 & 0 & \frac{9}{z}\\
	0 & 0 & 0\\
	0 & 0 & 0
	\end{array}\right)\\
	\left(\begin{array}{ccc}
	0 & 0 & 0\\
	0 & 0 & 0\\
	0 & 0 & 0
	\end{array}\right) &
	\left(\begin{array}{ccc}
	-3 & -\frac{6(z+1)}{z} & -\frac{3(z+2)}{z}\\
	0 & 3 & 3\\
	0 & 0 & 0
	\end{array}\right) &
	\left(\begin{array}{ccc}
	0 & -\frac{9}{z} & -\frac{9}{z}\\
	0 & 0 & 0\\
	0 & 0 & 0
	\end{array}\right)\\
	\left(\begin{array}{ccc}
	0 & 0 & 0\\
	0 & 0 & 0\\
	0 & 0 & 0
	\end{array}\right) &
	\left(\begin{array}{ccc}
	0 & 0 & 0\\
	0 & 0 & 0\\
	0 & 0 & 0
	\end{array}\right) &
	\left(\begin{array}{ccc}
	3 & \frac{3(z+1)}{z} & -\frac{3(z-1)}{z}\\
	0 & 0 & 3\\
	0 & 0 & -3
	\end{array}\right)\\
	\end{array}\right),
	$$
	
	$$
	\mathbf{M}_1(\tau) = 
	\left(\begin{array}{ccc}
	
	\left(\begin{array}{ccc}
	0 & 0 & 0\\
	0 & 3 & 0\\
	0 & 0 & -3
	\end{array}\right) &
	\left(\begin{array}{ccc}
	-\frac{3(w+1)}{w} & 0 & 0\\
	0 & \frac{6(w+1)}{w} & \frac{9}{w}\\
	0 & 0 & -\frac{3(w+1)}{w}
	\end{array}\right) &
	\left(\begin{array}{ccc}
	-\frac{3(2w+1)}{w} & -\frac{9(w+1)}{w} & -\frac{9}{w}\\
	0 & \frac{3(w+2)}{w} & \frac{9}{w}\\
	0 & 0 & \frac{3(w-1)}{w}
	\end{array}\right)\\
	\left(\begin{array}{ccc}
	0 & 0 & 0\\
	0 & 0 & 0\\
	0 & 0 & 0
	\end{array}\right) &
	\left(\begin{array}{ccc}
	3 & 0 & 0\\
	0 & -3 & 0\\
	0 & 0 & 0
	\end{array}\right) &
	\left(\begin{array}{ccc}
	6 & 9 & 0\\
	0 & -3 & 0\\
	0 & 0 & -3
	\end{array}\right)\\
	\left(\begin{array}{ccc}
	0 & 0 & 0\\
	0 & 0 & 0\\
	0 & 0 & 0
	\end{array}\right) &
	\left(\begin{array}{ccc}
	0 & 0 & 0\\
	0 & 0 & 0\\
	0 & 0 & 0
	\end{array}\right) &
	\left(\begin{array}{ccc}
	-3 & 0 & 0\\
	0 & 0 & 0\\
	0 & 0 & 3
	\end{array}\right)\\
	\end{array}\right).
	$$

	Hence we apply Lemma~\ref{lem:sum-of-contributions} to check that $\mathbf{M}_{(0,1)}(\tau)$ is the zero matrix, which implies that the contribution along the tunnel of type C is zero. Since there is no point of intersection $p$ in the tunnel $\epsilon(p,\alpha,\beta) = 0$ and the statement is proved for type C.
	
	Cases A, B and D are treated verbatim, while for E, F, G and H one needs only to check that
	$$
	\epsilon(p,\alpha,\beta) \frac{\mathbf{M}_{(0,1)}(\tau) }{9} = 
	\left(\begin{array}{ccc}
	
	\left(\begin{array}{ccc}
	\frac{2}{3} & 0 & 0\\
	0 & -\frac{1}{3} & 0\\
	0 & 0 & -\frac{1}{3}
	\end{array}\right) &
	\left(\begin{array}{ccc}
	0 & 0 & 0\\
	1 & 0 & 0\\
	0 & 0 & 0
	\end{array}\right) &
	\left(\begin{array}{ccc}
	0 & 0 & 0\\
	0 & 0 & 0\\
	1 & 0 & 0
	\end{array}\right)\\
	\left(\begin{array}{ccc}
	0 & 1 & 0\\
	0 & 0 & 0\\
	0 & 0 & 0
	\end{array}\right) &
	\left(\begin{array}{ccc}
	-\frac{1}{3} & 0 & 0\\
	0 & \frac{2}{3} & 0\\
	0 & 0 & -\frac{1}{3}
	\end{array}\right) &
	\left(\begin{array}{ccc}
	0 & 0 & 0\\
	0 & 0 & 0\\
	0 & 1 & 0
	\end{array}\right)\\
	\left(\begin{array}{ccc}
	0 & 0 & 1\\
	0 & 0 & 0\\
	0 & 0 & 0
	\end{array}\right) &
	\left(\begin{array}{ccc}
	0 & 0 & 0\\
	0 & 0 & 1\\
	0 & 0 & 0
	\end{array}\right) &
	\left(\begin{array}{ccc}
	-\frac{1}{3} & 0 & 0\\
	0 & -\frac{1}{3} & 0\\
	0 & 0 & \frac{2}{3}
	\end{array}\right)\\
	\end{array}\right).
	$$
	Indeed, it is easy to check that such matrix encodes precisely the contribution
	$$
	\left( \tr_{\alpha_p \beta_p}\circ \mu - \frac{1}{3} (\tr_\alpha \circ \mu )(\tr_\beta \circ \mu ) \right),
	$$
	and the proof is complete.
\end{proof}

\begin{lem} \label{lem:tunnel-contribution}
	The contribution along any tunnel equals
	$$
	\epsilon(p,\alpha,\beta) \left( \tr_{\alpha_p \beta_p}\circ \mu - \frac{1}{3} (\tr_\alpha \circ \mu )(\tr_\beta \circ \mu ) \right),
	$$
	where $p$ is the point of intersection (if any) of $\alpha$ and $\beta$ in the tunnel.
\end{lem}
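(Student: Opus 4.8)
The plan is to argue by induction on the length of the tunnel, i.e. on the number $q+1$ of galleries through which $(w_\alpha,w_\beta)$ passes. The base case $q=1$ is exactly Lemma~\ref{lem:two-galleries-contribution}, which already records both possibilities: the contribution vanishes in the configurations A--D, where $\alpha$ and $\beta$ do not cross, and equals the stated crossing term in E--H, where they do. Since by construction the two walks travel parallel through each interior transition triangle, any transverse intersection of $\alpha$ and $\beta$ inside the tunnel occurs at its end galleries; in particular the crossing data $(p,\epsilon(p,\alpha,\beta),\alpha_p\beta_p)$ depends only on the configuration at the two ends and not on the number of intervening transition triangles. This is what will let the inductive reduction preserve the right-hand side.

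For the inductive step I would repeatedly apply Lemma~\ref{lem:sum-of-contributions} to merge consecutive galleries into a single contribution matrix. Reading that lemma with the already-merged pair $(0,1)$ playing the role of ``$C_{k-1}$'' for $k=2$ (its base index $2\cdot 1+1=3$ matches $w^\alpha_{2\cdot 2-1},w^\beta_{2\cdot 2-1}$), one can telescope all $q+1$ galleries into one total contribution matrix $\mathbf{M}_{(0,\dots,q)}(\tau)$ based at the final pair $(w^\alpha_{2q+1},w^\beta_{2q+1})$, so that
\[
\sum_{k=0}^{q} C_k(\tau) = \frac{1}{9}\sum_{l,m}\tr\!\big(\mathbf{E}_{lm}\cdot \mathbf{S}(w^\alpha_{2q+1})\big)\cdot \tr\!\big((\mathbf{M}_{(0,\dots,q)}(\tau))_{lm}\cdot \mathbf{S}(w^\beta_{2q+1})\big).
\]
The whole statement therefore reduces to identifying this total matrix. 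My claim, to be proved as the heart of the induction, is that merging the gallery at an interior transition triangle $t_k$ into the running combined matrix is \emph{transparent}: the combined matrix $\mathbf{M}_{(k-1,k)}(\tau)$ equals the contribution matrix one obtains by deleting $t_k$ and treating $t_{k-1}$ as directly adjacent to $t_{k+1}$. Granting this, each transition triangle can be collapsed in turn, reducing $\tau$ to a length-two tunnel with the same end configurations, to which Lemma~\ref{lem:two-galleries-contribution} then applies.

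I would establish transparency by the same explicit computation used in Lemma~\ref{lem:two-galleries-contribution}. For a transition triangle the local walks $w^\alpha$ and $w^\beta$ cross the same two edges, so the relevant matrices $W^\gamma_{2k},W^\gamma_{2k+1}$ are each a product of one $E$-factor and one $T^{\pm 1}$-factor, and the conjugation prescription defining $A^{(l,m)}$ and $B^{(l,m)}$ in Lemma~\ref{lem:sum-of-contributions} must be evaluated for each transition configuration type and for both relative orientations of $\alpha$ and $\beta$ along the tunnel. In each case one checks, exactly as in the type C computation, that the entries $(\mathbf{M}_{(k-1,k)}(\tau))_{lm}=\sum_{i,j}a^{(i,j)}_{lm}B^{(i,j)}+(\mathbf{M}_k(\tau))_{lm}$ reproduce the contribution matrix of the shortened tunnel, using Lemmas~\ref{lemma_basic_derivatives} and \ref{lem_trace_factorisation} to expand the $M_i$-factors.

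The main obstacle is precisely this case-by-case verification of transparency: the transition configurations, together with the choice of relative orientation and the bookkeeping of which edge of $t_k$ carries which $\triangle$- and $\underline{E}$-parameters, produce several matrix identities that must each be confirmed. They are mechanical given the preceding lemmas, but they constitute the only substantive content beyond the length-two base case. Once transparency is in hand the induction closes at once, and the contribution along an arbitrary tunnel equals $\epsilon(p,\alpha,\beta)\big(\tr_{\alpha_p\beta_p}\circ\mu-\tfrac{1}{3}(\tr_\alpha\circ\mu)(\tr_\beta\circ\mu)\big)$, as claimed.
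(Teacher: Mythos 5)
Your proposal is correct and takes essentially the same route as the paper: the paper also reduces an arbitrary tunnel to the length-two case of Lemma~\ref{lem:two-galleries-contribution} by using Lemma~\ref{lem:sum-of-contributions} to verify, case by case over the allowed starting/transition type combinations (and both relative orientations), that absorbing a transition gallery leaves the running contribution matrix unchanged, i.e.\ $\mathbf{M}_{(0,1)}(\tau)=\mathbf{M}_0(\tau)$, and then concludes by induction. Your ``transparency'' of interior transition triangles is exactly this identity, so the two arguments coincide in substance.
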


\begin{proof}
	Let $(w_{\alpha}, w_{\beta})$ be a tunnel $\tau$ through $(t_0,\dots,t_q)$. Lemma~\ref{lem:two-galleries-contribution} applies for $q=1$. Hence assume $q>1$. For all $0<k<q$, $t_k$ is a transition triangle whose type depends on the relative orientation of $\alpha$ and $\beta$ along the tunnel. In particular, if $t_0$ is a starting triangle of type $1, 1S$ (resp. $2, 4S$), then only transition triangles of type $5, 7$ (resp. $6, 6S$) are allowed.
	
	Let $C_k(\tau)$ and $\mathbf{M}_k(\tau)$ be the contribution and the contribution matrix of the $k^{th}$ gallery. Once can apply Lemma~\ref{lem:sum-of-contributions} to check that $\mathbf{M}_{(0,1)}(\tau) = \mathbf{M}_0(\tau)$ for all allowed type combinations of $t_0$ and $t_1$. It follows by induction that $\mathbf{M}_{(0,\dots,k)}(\tau) = \mathbf{M}_0(\tau)$ for all $k<q$, where $\mathbf{M}_{(0,\dots,k)}(\tau) \in M_{3\times 3} (M_{3\times 3}(\RR))$ is the matrix contribution of the first $k$ galleries, namely the matrix for which
	$$
	\sum_{i = 0}^{k}C_i(\tau)  = \frac{2}{9}\sum_{l,m} \tr(\mathbf{E}_{lm} \cdot \mathbf{S}(w^\alpha_{2k+1})) \cdot \tr( ({\mathbf{M}_{(0,\dots,k)}(\tau)})_{lm}  \cdot \mathbf{S}(w^\beta_{2k+1}) ),
	$$
	In conclusion $\mathbf{M}_{(0,\dots,q-1)}(\tau) = \mathbf{M}_0(\tau)$ and we reduced to the case of length two tunnels, for which Lemma~\ref{lem:two-galleries-contribution} applies.
\end{proof}

\subsubsection{Proof part 4: parallel curves and closed tunnels} \label{subsubsec:proof-part4}

When $\alpha$ and $\beta$ are parallel, that is $[\alpha] = [\beta^n]$ for some $n \in \ZZ \setminus \{0\}$, a slightly different kind of tunnel appears.\\

We will say that $(w_{\alpha},w_{\beta})$ is a \emph{closed tunnel through} $(t_0,\dots,t_q)$ if $t_j$ is a transition triangle with respect to the pairs $(w^\alpha_i,w^\beta_i)_{2j \leq i \leq 2j+2}$ for all $j \in \{ 0,\dots, q\}$, and $w^\gamma_0 = w^\gamma_{2k+2}$ for $\gamma \in \{\alpha,\beta\}$.

\begin{lem} \label{lem_closed_tunnel_contribution}
	The contribution along a closed tunnel is $0$.
\end{lem}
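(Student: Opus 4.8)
The quantity to evaluate is the total contribution $\sum_{k=0}^{q} C_k(\tau)$ of the closed tunnel, in which every triangle $t_0,\ldots,t_q$ is a transition triangle and the walks satisfy the closure condition $w^\gamma_0 = w^\gamma_{2q+2}$ for $\gamma \in \{\alpha,\beta\}$. The plan is to reuse the accumulation machinery of Lemma~\ref{lem:sum-of-contributions}, treating the closed tunnel as the cyclic analogue of the open tunnels handled in Lemma~\ref{lem:tunnel-contribution}. The governing principle established there is that combining a gallery with a subsequent \emph{transition} gallery leaves the accumulated contribution matrix invariant, i.e. $\mathbf{M}_{(0,\ldots,k)}(\tau) = \mathbf{M}_0(\tau)$ as long as $t_1,\ldots,t_k$ are transition triangles.

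First I would apply Lemma~\ref{lem:sum-of-contributions} successively around the loop. Since a closed tunnel consists \emph{entirely} of transition triangles, the invariance above collapses the total contribution into a single trace expression of the form
$$
\sum_{k=0}^{q} C_k(\tau) = \frac{1}{9} \sum_{l,m} \tr\!\big(\mathbf{E}_{lm} \cdot \mathbf{S}(w^\alpha_{2q+1})\big)\cdot \tr\!\big((\mathbf{M}_0(\tau))_{lm} \cdot \mathbf{S}(w^\beta_{2q+1})\big),
$$
where, by the closure condition, $\mathbf{S}(w^\alpha_{2q+1})$ and $\mathbf{S}(w^\beta_{2q+1})$ are the full cyclic products, hence representatives of $\hol(\alpha)$ and $\hol(\beta)$. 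I would then show this expression vanishes by computing $\mathbf{M}_0(\tau)$ explicitly for the four admissible transition types ($5$, $6$, $6S$, $7$), exactly as in the case-by-case verification of Lemma~\ref{lem:two-galleries-contribution}. These are precisely the crossing-free configurations, so I expect the accumulated contribution matrix $\mathbf{M}_0(\tau)$ to reduce to the zero matrix, paralleling the vanishing already found for the crossing-free open tunnels of types A--D; once $\mathbf{M}_0(\tau) = 0$ the displayed trace is identically $0$.

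The main obstacle will be handling the cyclic closure rigorously. Unlike an open tunnel, a closed tunnel has no starting or ending triangle to anchor the accumulation, so I must verify that the invariance $\mathbf{M}_{(0,\ldots,q)}(\tau) = \mathbf{M}_0(\tau)$ persists when the terminal gallery is glued back to the initial one, and that no residual boundary term survives this identification. The geometric sanity check is that, because $\alpha$ and $\beta$ are parallel (so $\hol(\alpha)$ is conjugate to a power of the same primitive element as $\hol(\beta)$, and after a suitable conjugation the two holonomies share a common eigenbasis), and because a transition triangle contains no transverse intersection point, the factor $\epsilon(p,\alpha,\beta)$ never enters; the delicate part is confirming that the telescoped expression is genuinely $0$ rather than a nonzero trace of a commutator-type product. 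Establishing that the transition-type contribution matrices accumulate to zero under the cyclic gluing is therefore the crux, and once it is in hand the lemma follows immediately.
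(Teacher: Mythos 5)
Your proposal has a genuine gap, and it sits exactly at what you yourself identify as the crux. The claim that the contribution matrix $\mathbf{M}_0(\tau)$ of a \emph{single} transition gallery vanishes is false, and the appeal to ``the vanishing already found for the crossing-free open tunnels of types A--D'' rests on a conflation: in Lemma~\ref{lem:two-galleries-contribution} what vanishes for types A--D is the \emph{combined} matrix $\mathbf{M}_{(0,1)}(\tau)$ of a starting gallery together with an ending gallery, while the individual matrices $\mathbf{M}_0(\tau)$ and $\mathbf{M}_1(\tau)$ are manifestly non-zero (see the type~C computation, where both are large non-zero matrices and only their accumulation cancels). The same is true of transition galleries: a transition triangle carries coordinate pairs with non-zero $\epsilon$ and non-vanishing derivative traces, so its gallery matrix is non-zero; indeed, if transition matrices were zero, the identity $\mathbf{M}_{(0,1)}(\tau)=\mathbf{M}_0(\tau)$ used in Lemma~\ref{lem:tunnel-contribution} would force the transport operator of Lemma~\ref{lem:sum-of-contributions} to fix every starting-type matrix, which it does not. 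Consequently, even granting that the accumulation identity persists around the cycle (which itself requires a new verification, since the paper only checks it when the accumulated matrix is of starting type, and for a closed tunnel the accumulation must be seeded with a transition-type matrix), your telescoping ends with
$$
\sum_{k=0}^{q} C_k(\tau) \;=\; \frac{1}{9}\sum_{l,m}\tr\big(\mathbf{E}_{lm}\cdot\mathbf{S}(w^\alpha_{2q+1})\big)\cdot\tr\big((\mathbf{M}_0(\tau))_{lm}\cdot\mathbf{S}(w^\beta_{2q+1})\big)
$$
with $\mathbf{M}_0(\tau)\neq 0$ paired against full holonomy representatives, and you supply no mechanism for this expression to vanish. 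In an open crossing-free tunnel the vanishing is produced by the cancellation between the starting and ending configurations; a closed tunnel has neither, so that cancellation is simply unavailable along your route.

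The paper resolves this by a different device: it cuts the closed tunnel open. One modifies $\beta$ locally in a single triangle by inserting a backtracking detour $\widetilde{w_0},\widetilde{w_1},\widetilde{w_2},\widetilde{w_3}$ (whose matrices enter as $\widetilde{W}\,\widetilde{W}^{-1}$ and hence change no holonomy), checks via the product rule that $\tr(\partial_i w^\beta_1)$ splits as the sum of the four corresponding derivative terms, so that $C_0(\tau)=C_0(\widetilde{\tau})+C_{q+1}(\widetilde{\tau})$ while $C_k(\tau)=C_k(\widetilde{\tau})$ for $1\le k\le q$. This exhibits the closed-tunnel contribution as the contribution of an honest open tunnel with a starting and an ending triangle but no intersection point, and Lemma~\ref{lem:tunnel-contribution} then gives zero. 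If you want to salvage your cyclic-accumulation strategy, you would need to prove directly that the bilinear expression displayed above vanishes for the specific transition-type matrix $\mathbf{M}_0(\tau)$ and the cyclically closed products $\mathbf{S}(w^\alpha_{2q+1})$, $\mathbf{S}(w^\beta_{2q+1})$; nothing in the paper's computations establishes this, and the local-modification argument is precisely what replaces it.
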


\begin{proof}
	Let $(w_{\alpha},w_{\beta})$ be a closed tunnel $\tau$ through $(t_0,\dots,t_q)$. We will show that $(w_{\alpha},w_{\beta})$ can be modified to an ``open" tunnel with contribution $0$. More precisely, we claim that the contribution along $\tau$ equals the contribution along the ``open" tunnel $\widetilde{\tau} = (\widetilde{w_{\alpha}},\widetilde{w_{\beta}})$ through $(t_0,\dots,t_q,t_{q+1})$ defined by:
	\begin{itemize}
		\item $t_{q+1} := t_0$;
		\item $\widetilde{w_{\alpha}} := ( w_0^{\alpha} w_1^{\alpha} w_2^{\alpha} w_3^{\alpha}  \dots w_{2q+2}^{\alpha} w_1^{\alpha}  w_2^{\alpha} )$;
		\item $\widetilde{w_{\beta}} := ( \widetilde{w_0} \widetilde{w_1} w_2^{\beta} w_3^{\beta}  \dots w_{2q+2}^{\beta} \widetilde{w_2} \widetilde{w_3} )$ where $\widetilde{w_0}, \widetilde{w_1}, \widetilde{w_2}, \widetilde{w_3}$ are as in Figure~\ref{fig:closed-tunnel}.
	\end{itemize}
	
	\begin{figure}[h]
		\centering
		\includegraphics[height=5cm]{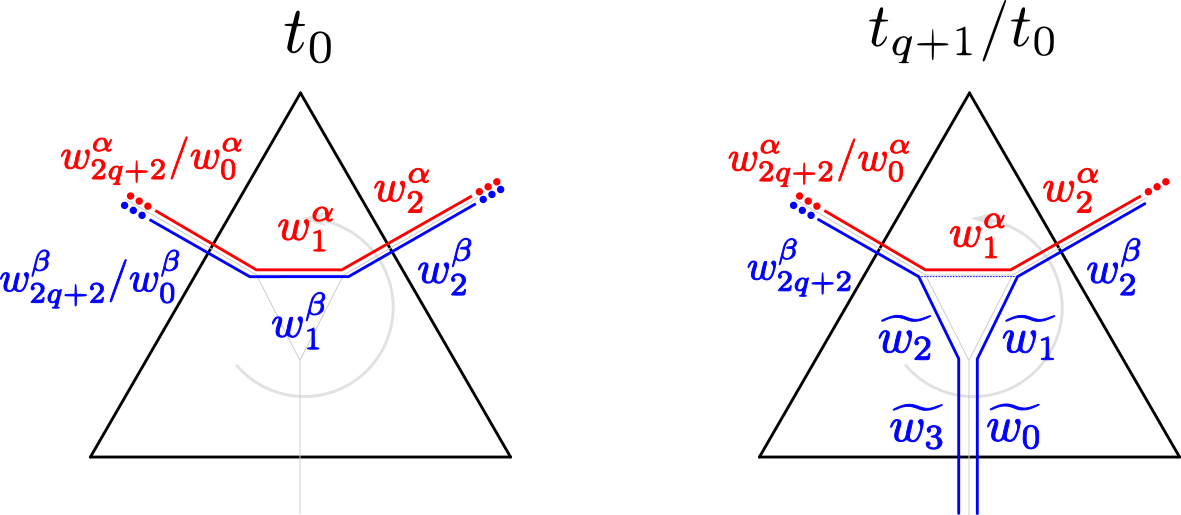}
		\caption{Local modification of $\beta$ in a closed tunnel.}
		\label{fig:closed-tunnel}
	\end{figure}
	
	It is clear that $C_k(\tau) = C_k(\widetilde{\tau})$ for all $1 \leq k \leq q$, as we have only modified $\beta$ locally at $t_0$. Nevertheless, the contribution of the $0^{th}$ gallery in $(w_{\alpha},w_{\beta})$ is
	$$
		C_0(\tau) =  \sum_{0 \leq i,j \leq 2} \left( \sum_{q_1,q_2 \subset t_0 } \epsilon(q_1,q_2) q_1^* q_2^* \left(\tr (\partial_1 w^\alpha_i) \cdot \tr (\partial_2 w^\beta_j)\right)\right).
	$$
	In such a gallery, we replace
	\begin{align*}
	\tr (\partial_i w^\beta_1) & = \tr( S^\beta_0 \cdots \partial_i W^\beta_{1} \cdots S^\beta_n  )\\
	& = \tr( S^\beta_0 \cdots \partial_i \left((W^\beta_{1})^{-1} \widetilde{W} \ \widetilde{W}^{-1} (W^\beta_{1})^{-1}\right) \cdots S^\beta_n  ) \\
	& =  \tr (\partial_i \widetilde{w_0}) + \tr (\partial_i \widetilde{w_1} ) + \tr (\partial_i \widetilde{w_2} ) + \tr (\partial_i \widetilde{w_3} ),
	\end{align*}
	where $\widetilde{W}$ is the matrix associated to the $\underline{E}$--edge corresponding to $\widetilde{w_0}$ (and $\widetilde{w_3}$). It follows that $C_0(\tau) = C_0(\widetilde{\tau}) + C_{q+1}(\widetilde{\tau})$ and the claim is proved.
	
	Finally, since $(\widetilde{w_{\alpha}},\widetilde{w_{\beta}})$ is a tunnel with no intersection points, the conclusion is implied by Lemma~\ref{lem:tunnel-contribution}.
\end{proof}

\subsubsection{Proof part 5: conclusion} \label{subsubsec:proof-part5}

Now it is time to put all the pieces together and prove Theorem~\ref{thm:compatibility-poisson-bracket}. Recall that our goal is to show that
$$
\sum_{p \in \alpha \cap \beta} \epsilon(p,\alpha,\beta) \left( \tr_{\alpha_p \beta_p}\circ \mu - \frac{1}{3} (\tr_\alpha \circ \mu )(\tr_\beta \circ \mu ) \right) = \sum_{i,j} \sum_{q_1,q_2 \in \triangle \cup \underline{E} } \epsilon(q_1,q_2) q_1^* q_2^* \left(\tr (\partial_1 s^\alpha_i) \cdot \tr (\partial_2 s^\beta_j)\right).
$$
Firstly, assume $\alpha$ and $\beta$ are not parallel. If $(s^\alpha_i,s^\beta_j)$ do not cross a common triangle, then we can discard such pair from the right hand sum as it contributes nothing. Otherwise, $(s^\alpha_i,s^\beta_j)$ is in the gallery of a unique tunnel. By Lemma~\ref{lem:tunnel-contribution}, if $\alpha$ and $\beta$ are not intersecting along such tunnel, then the sum of the contributions of all pairs in the tunnel is zero and we can discard them too. Hence we are left only with pairs which are contained in a tunnel where $\alpha$ and $\beta$ do intersect. But since each intersection point is contained in a unique tunnel, we can apply Lemma~\ref{lem:tunnel-contribution} again to conclude the proof.

When $\alpha$ and $\beta$ are parallel, the only difference from the previous argument is the possible presence of closed tunnels. However Lemma~\ref{lem_closed_tunnel_contribution} ensures that there is no contribution from those tunnels so we may discard them and follow the above construction verbatim. This concludes the proof of Theorem~\ref{thm:compatibility-poisson-bracket}.

\newpage
\section{Examples} \label{sec:Examples}

In this section we provide examples of the properties of $\Ttp(S_{g,n})$ discussed throughout this paper. As usual $S_{g,n}$ is the oriented surface of genus $g$ with $n$ punctures and we fix an ideal triangulation of $S_{g,n}$ whose set of oriented edges is $\underline{E}$ and whose set of triangles is $\triangle$.

\subsection{The once-punctured torus}
Let $S=S_{1,1}$ as depicted by the polygonal map as in Figure~\ref{fig:fund_dom_s11}. The edge pairings are indicated by common edge invariants.
	\begin{figure}[h]
		\centering
		\includegraphics[height=5cm]{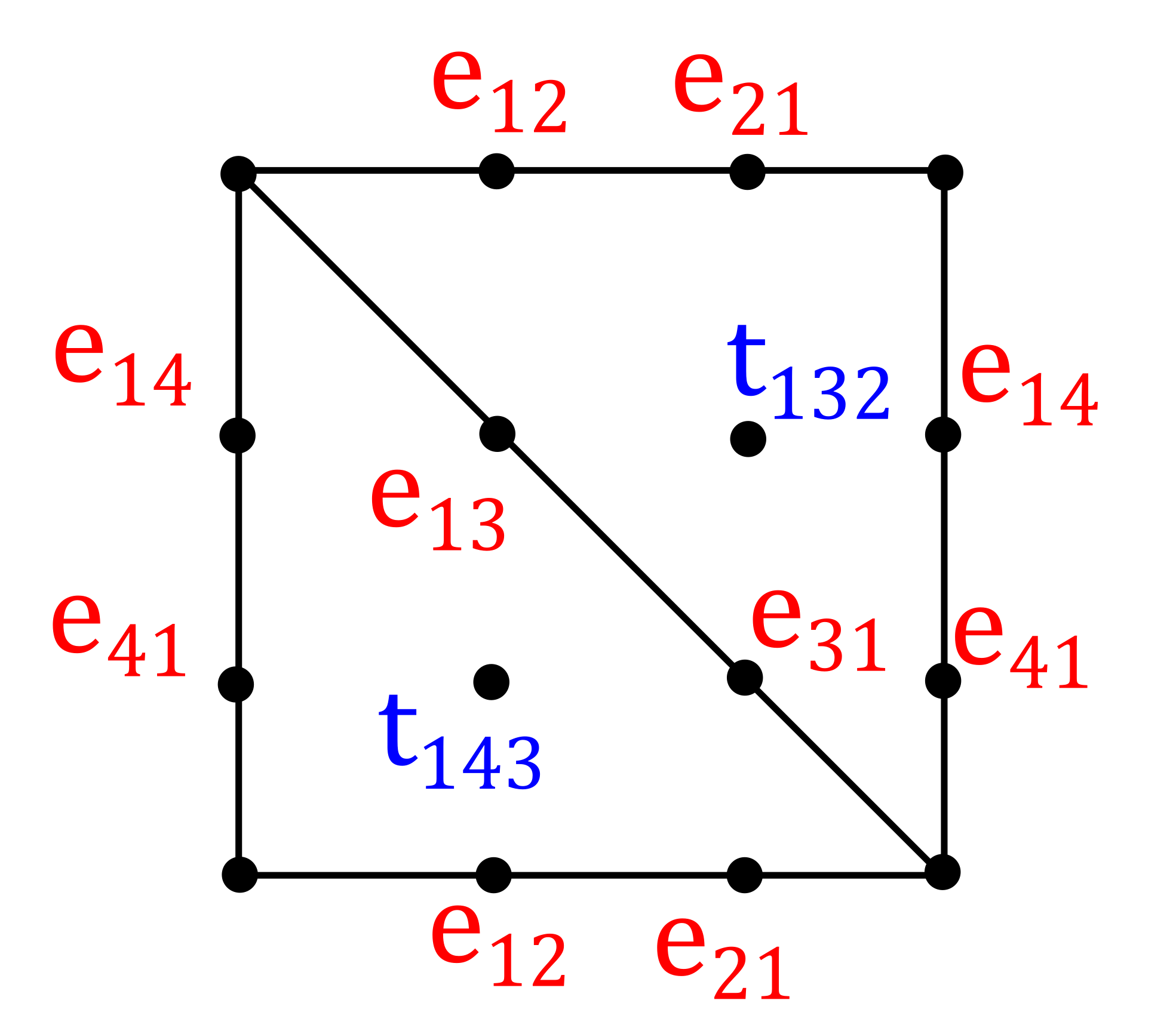}
		\caption{A polygonal map for $S_{1,1}$.}
		\label{fig:fund_dom_s11}
	\end{figure}

Fock and Goncharov's Theorem~\ref{thm:global-coord} shows that the assignment of triple ratios and edge ratios indicated in Figure~\ref{fig:fund_dom_s11} uniquely determines an element of $\Ttp(S)$. Moreover we can read off information about the corresponding holonomy group $\Gamma$ with knowledge of the edge ratios and triple ratios alone using the method of \S\ref{subsubsec:computing-monodromy}. We denote by $\alpha, \beta \in \Gamma$ the transformations corresponding to the face pairings between the vertical and horizontal edges respectively. It follows that $\Gamma \cong \langle \alpha, \beta \rangle$. The method of \S\ref{subsubsec:computing-monodromy} determines that, up to a common conjugation
\begin{align*}
	& \alpha = T(t_{143}) E(e_{13}, e_{31}) T(t_{132}) E(e_{14}, e_{41}) T(t_{143}) \\
	& \beta = E(e_{21}, e_{12}) T(t_{132}) E(e_{31}, e_{13}) T(t_{143})^{-1} \\
	\text{where} \hspace{0.5cm} & T(z) := \frac{1}{\sqrt[3]{z}}
	\left( 
	\begin{array}{ccc}
	0 & 0 & 1 \\
	0 & -1 & -1\\
	z & z+1 & 1
	\end{array}
	\right), \hspace{2cm}
	E(x,y) := \sqrt[3]{\frac{x}{y}}
	\left(
	\begin{array}{ccc}
	0 & 0 & y  \\
	0 & -1 & 0\\
	\frac{1}{x} & 0 & 0
	\end{array}
	\right)
\end{align*}
A direct calculation of the generator $\alpha \beta \alpha^{-1} \beta^{-1}$ of the peripheral subgroup of $\hol(S_{1,1})$ yields the requisite information for further inquiry into the geometry of $\Ttp(S_{1,1})$.
\[
\alpha \beta \alpha^{-1} \beta^{-1} = \frac{1}{ t_{132} t_{143} } 
\begin{bmatrix} (e_{12} e_{13} e_{21} e_{31} e_{41} e_{14})^{-1} & 0 & 0 \\
* & 1 & 0 \\
* & * & e_{12} e_{13} e_{21} e_{31} e_{41} e_{14} (t_{132} t_{143})^3
\end{bmatrix}
\]
This is the normal form we expect to arise following the construction in \S\ref{subsubsec:computing-monodromy}. Recall from \S\ref{subsec:fin_vol}, or by inspection of the above matrix, that the following monomials determine whether the end of $S_{1,1}$ is hyperbolic, special or cuspidal.
\begin{align*}
\mathcal{X}_1 :=& e_{12}e_{21}e_{14}e_{41}e_{13}e_{31} \\
\mathcal{Y}_1 :=& (t_{132}t_{143})^3 e_{12}e_{21}e_{14}e_{41}e_{13}e_{31}
\end{align*}
These monomials are our primary tools in identifying interesting subvarieties of $\Ttp(S_{1,1})$. Let us begin by identifying which coordinates give rise to structures which have minimal and maximal ends. This is a simple calculation following the classification in \S\ref{subsec:fin_vol}. The region in which coordinates give rise to structures on $S_{1,1}$ with a maximal end are:
\begin{align*}
&\mathcal{X}_1 < \mathcal{Y}_1\mathcal{X}_1  < 1 \quad \text{or} \\
&\mathcal{X}_1 > \mathcal{Y}_1\mathcal{X}_1  > 1
\end{align*}
Similarly, those structures with minimal ends are given by:
\begin{align*}
 \mathcal{Y}_1 &< 1 \quad \text{and} \quad \mathcal{X}_1  < 1 \quad \text{or} \\
 \mathcal{Y}_1\mathcal{X}_1 &< 1 \quad \text{and} \quad \mathcal{X}_1  > 1 \quad \text{or} \\
 \mathcal{Y}_1 &< 1 \quad \text{and} \quad \mathcal{X}_1  > 1 \quad \text{or} \\
 \mathcal{Y}_1\mathcal{X}_1 &> 1 \quad \text{and} \quad \mathcal{X}_1  < 1
\end{align*}
We see that generically (on structures with hyperbolic ends) there is a natural $2:1$ correspondence between structures with minimal ends and those with maximal ends, as noted in the discussion of the $\Sym(3)$-action on $\Ttp(S_{1,1})$ in \S\ref{subsubsec:sym_gp}. 

We will now identify the symplectic leaves of $\Ttp(S_{1,1})$. Following Goldman~\cite{Goldman-symplectic-1984}, or by a brief calculation, we find that $\mathcal{X}_1$ and $\mathcal{Y}_1$ are Casimirs for the Poisson structure on $\Ttp(S_{1,1})$ featured in \S\ref{sec:poisson-structure}. Note that Casimirs are constant on symplectic leaves. The condition of $\mathcal{X}_1$ and $\mathcal{Y}_1$ being constant simplifies to
\[
t_{132} t_{143} = c_0 \quad \text{ and } \quad e_{12} e_{21} e_{14} e_{41} e_{13} e_{31}= c_1 \label{level_set}
\]
for any positive real constants $c_0$ and $c_1$. In particular the symplectic leaves of $\Ttp(S_{1,1})$ are, at largest, open cells of $6$ real dimensions defined by the conditions~\ref{level_set}. Denote a generic level set of equations~\ref{level_set} by $\mathcal{A}$. Suppose there exists function $f \in C^{\infty}(A)$ such that 
\[
\{ f, \cdot \} : C^{\infty}(\mathcal{A}) \rightarrow C^{\infty}(\mathcal{A}) \label{Poisson_contraction}
\]
is the zero function. A coordinate system on $\mathcal{A}$ is determined by the functions 
\[
\mathcal{P} = \{ t^*_{143}, e^*_{12}, e^*_{21}, e^*_{14}, e^*_{41}, e^*_{13} \} \subset \C^{\infty}(\mathcal{A})
\]
where $q^*$ for $q \in \triangle \cup \underline{E}$ are \emph{coordinate functions} on $\Ttp(S_{1,1})$ defined in \S\ref{subsec:FG-poisson-structure}. Computing $\{ f, q^* \}$ for $ q^* \in \mathcal{P}$ shows that $\partial f / \partial q^* = 0$ for all such $q*$. In particular $f$ is constant on $\mathcal{A}$. Therefore, $\{ \cdot, \cdot \}$ is non-degenerate on $\mathcal{A}$ in the sense that the function \ref{Poisson_contraction} is uniformly the zero function if and only if $f$ is constant. It follows that $\mathcal{A}$ is a symplectic leaf.

Increasing once more in specificity, recall from \S\ref{subsec:fin_vol} that the finite-area structures on $S_{1,1}$ arise when $\mathcal{X}_1= \mathcal{Y}_1 =1$. Therefore the finite-area conditions for $S_{1,1}$ reduce to the particularly simple condition that $c_0=c_1=1$ in equations~\ref{level_set}.

To determine $\Teich(S_{1,1}) \subset \Ttp(S_{1,1})$ recall that the conditions for FG coordinates to represent a finite-area hyperbolic structure described in \S\ref{subsec:classical-teich} are:
	\begin{align*} 
		&\phi(x)(e) = \phi(x)(e^{-1}), \quad \forall \; e \in \underline{E} \quad \text{and } \quad \phi(x)(t) = 1, \quad \forall \; t \in \triangle; \\
		&\prod_{k=1}^{m_i} \phi(e_{ik}) = 1, \qquad \forall \; i =1,\dots,n.
	\end{align*}
In the present example this simplifies to
\[
 t_{143} = t_{132} = 1, \qquad e_{ij} = e_{ji} \quad \forall i,j \quad \text{and} \qquad e_{21}e_{41}e_{31}=1
\]
So $\Teich(S_{1,1}) \cong \RR^{2}$ as we should expect from the classical theory.

%%%%%%%%%%%%

\subsection{The thrice-punctured sphere}

We use the same setup as above with $S=S_{0,3}$. Just as in the previous example Figure~\ref{fig:fund_dom_s03} denotes a topological $S_{0,3}$ with edge pairings indicated by the common edge ratios. Moreover the edge ratios and triple ratios uniquely determine an element of $\Ttp(S_{0,3})$.

	\begin{figure}[h]
		\centering
		\includegraphics[height=5cm]{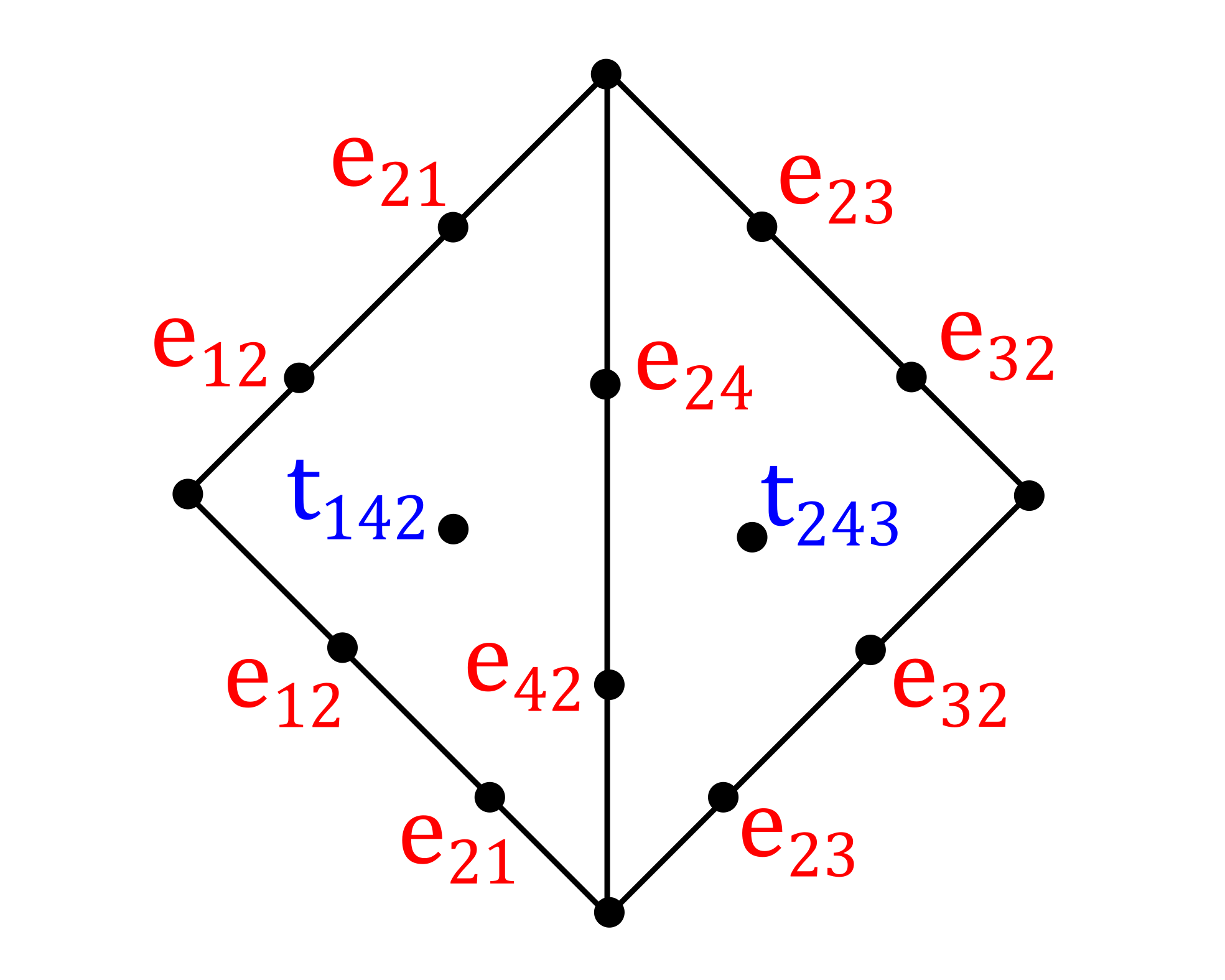}
		\caption{A polygonal map for $S_{0,3}$.}
		\label{fig:fund_dom_s03}
	\end{figure}
The surface $S_{0,3}$ has holonomy group isomorphic to the free group on two generators. Once again the holonomy group is generated by transformations corresponding to the edge pairings $\gamma$ and $\delta$ which glue the two edges on the left and the two edges on the right respecitvely. Their $SL(3, \RR)$-representatives are given below, up to a common conjugation, using the method of \S\ref{subsubsec:computing-monodromy}:
\[
\gamma =  E(e_{21}, e_{12}) T(t_{142}) = \begin{bmatrix} 
e_{12}t_{142} & e_{12}(t_{132}+1) & e_{12} \\
0 & 1 & 1\\
0 & 0 & e_{21}^{-1}
\end{bmatrix}
\]
\[
\delta = T(t_{142}) E(e_{24}, e_{42}) T(t_{243}) E(e_{32}, e_{23}) T(t_{243}) E( e_{42}, e_{24} ) T(t_{142})^{-1}
\]
We have chosen a conjugacy class of the holonomy group so that $\gamma$ is in a convenient form. However as there are three distinct peripheral subgroups of $\pi_1(S_{0,3})$, the other generators of these subgroups, $\delta$ and $\gamma \cdot \delta$ are not written as succinctly. Their eigenvalues are still easily calculable however for more complicated examples it is more convenient to make use of Theorem~\ref{thm:monodromy} to infer that the following monomials govern the boundary geometry of $S_{0,3}$:
\begin{align*}
\mathcal{X}_1 :=& e_{21} \qquad \text{and} \qquad \mathcal{Y}_1 := e_{12} t_{142};\\
\mathcal{X}_2 :=& e_{32}e_{42}e_{12}e_{24} \qquad \text{and} \qquad \mathcal{Y}_2 := e_{23}t_{243}^2 e_{24}t_{142}^2 e_{21}e_{42};\\
\mathcal{X}_3 :=& e_{23} \qquad \text{and} \qquad \mathcal{Y}_3 := e_{32}t_{243}
\end{align*}
Therefore there are at least six independent Casimirs described by the equations $\mathcal{X}_i = c_i$ and $\mathcal{Y}_i = d_i$ for positive real constants $c_i$ and $d_i$, $i= 1, 2, 3$. The equations defining the level sets of these Casimirs simplify somewhat to:
\begin{align*}
e_{21} =& c_1 \qquad \text{and} \qquad e_{12}t_{142} = d_1; \\
\frac{e_{42}e_{24}}{e_{32}e_{12} }  =& c_2 \qquad \text{and} \qquad e_{32}e_{12}  = \left( \frac{c_3 c_2 c_1 (d_1d_3)^2}{d_2} \right)^{\frac{1}{3}}; \\
e_{23}=& c_3 \qquad \text{and} \qquad e_{32}t_{243} = d_3
\end{align*}
This ensures that the maximal symplectic leaves are at most $2$-dimensional. Symplectic manifolds have even dimension so if $\{ \cdot, \cdot \}$ is not uniformly zero then the subvarieties defined by level sets of the six Casimirs above must determine symplectic leaves of the Poisson structure on $\RR^{\triangle \cup \underline{E}}$. 

A direct calculation shows that $\{ t^*_{142}, e^*_{24} \} \neq 0$ where the arguments of the Poisson bracket are two \emph{coordinate functions} defined in \S\ref{subsec:FG-poisson-structure}. In particular, the maximal symplectic leaves of $\Ttp(S_{0,3})$ are $2$-dimensional whereas the symplectic leaves of $\Ttp(S_{1,1})$ are $6$-dimensional. This allows us to distinguish between $\Ttp(S_{1,1})$ and $\Ttp(S_{0,3})$ using their intrinsic Poisson geometry despite the fact that they are homeomorphic.

The structures on $S_{0,3}$ with finite area are described by $c_i = d_i = 1$ for $i=1,2,3$ so $\Ttfin(S_{0,3})$ is defined by the conditions
\begin{align*}
e_{21}=e_{23}=1; \\
e_{42}e_{24}=e_{32}e_{12}=1; \\
e_{12}t_{142} = e_{32}t_{243} = 1
\end{align*}
Therefore $\Ttfin(S_{0,3}) \cong \RR^3$. Imposing the finite-area conditions and the criteria presented in \S\ref{subsec:classical-teich} shows that the classical Teichm\"uller space is identified with:
\[
e_{ij} = 1 \quad \forall \ i, j \qquad \text{and} \qquad t_{142} = t_{243} = 1
\]
so $\Teich(S_{0,3})$ is a single point as we should expect from the classical theory.

%%%%%%%%%%%%

\subsection{The twice-punctured torus}

The surface $S = S_{1,2}$ is denoted in Figure~\ref{fig:fund_dom_s12}.
	\begin{figure}[h]
		\centering
		\includegraphics[height=5cm]{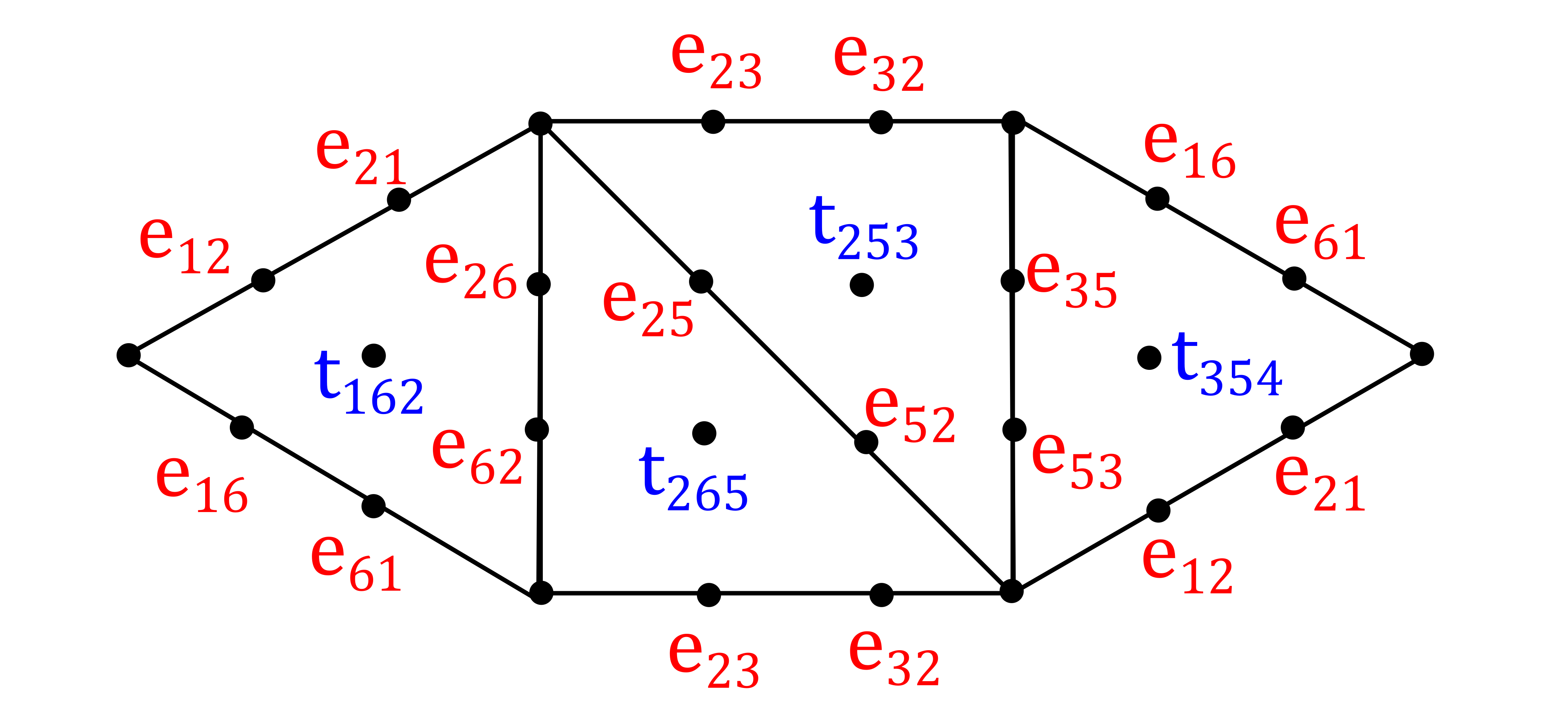}
		\caption{A polygonal map for $S_{1,2}$.}
		\label{fig:fund_dom_s12}
	\end{figure}

The four Casimirs arising from the peripheral elements of $\pi_1(S_{1,2})$ are determined by the monomials $\mathcal{X}_i$ and $\mathcal{Y}_i$ for $i=1,2$ as defined in \S\ref{subsec:fin_vol}. They are given explicitly as follows:
\begin{align*}
\mathcal{X}_1 :=& e_{21} e_{61}e_{53} e_{23} e_{25} e_{35} \qquad \text{and} \qquad \mathcal{Y}_1 := e_{12} t_{162} e_{16} t_{354}^2 e_{35} t_{253}^2 e_{32} t_{265} e_{52}e_{53} \\
\mathcal{X}_2 :=& e_{12} e_{62} e_{52} e_{32} e_{26} e_{16} \qquad \text{and} \qquad \mathcal{Y}_2 := e_{21} t_{162}^2 e_{26} t_{265}^2 e_{25} t_{253} e_{23} e_{62} e_{61} t_{354}
\end{align*}
The boundary geometry of $S_{1,2}$ is determined by the monomials $\mathcal{X}_i = c_i$ and $\mathcal{Y}_i = d_i$ for positive real constants $c_i$ and $d_i$ for $i=1,2$.

Setting $c_i  = d_i = 1$ for $i=1,2$ we find that $\Ttfin(S_{1,2})$ is defined by
\begin{align*}
e_{21} e_{61}e_{53} e_{23} e_{25} e_{35}=1 \qquad \text{and} \qquad t_{162}t_{354}^2 e_{35} t_{253}^2 t_{265} e_{53} = e_{62}e_{26} \\
e_{12} e_{62} e_{52} e_{32} e_{26} e_{16}=1 \qquad \text{and} \qquad t_{162}^2e_{26}t_{265}^2t_{253}e_{62}t_{354}=e_{53}e_{35}
\end{align*}
It follows that $\Ttfin(S_{1,2}) \cong \RR^{12}.$

Imposing both the conditions for finite-area and hyperbolicity as seen in \S\ref{subsec:classical-teich} we identify $\Teich(S_{1,2})$ with the subvariety defined by:
\begin{align*}
& e_{ij} = e_{ji} \quad \forall \ i \text{ and } j \qquad \text{and} \qquad t_{ijk} = 1 \quad \forall \ i,j \text{ and } k\\
& e_{53}^2 e_{21} e_{61} e_{23} e_{25} = 1 \qquad \text{and} \qquad e_{62} = e_{53}
\end{align*}
Therefore $\Teich(S_{1,2}) \cong \RR^4$ in accordance with the classical theory.

%%%%%%%%%%%%

\subsection{The closed genus two surface}

Now we will consider $S=S_{2,0}$ and use the method of \S\ref{subsec:closed_surfaces} to construct $\Tt(S_{2,0})$ from $\Ttp(S_{1,2})$ by gluing boundary components. 

In order to glue the boundary components of $S_{1,2}$ in the manner of Goldman~\cite{Goldman-convex-1990} we must ensure that both ends of $S_{1,2}$ are minimal. Using the notation from the previous example those conditions are, for $i=1$ and $i=2$:
\begin{align}
&\mathcal{Y}_i >1 \quad \text{and} \quad \mathcal{X}_i >1 \quad \text{or} \label{minimality1_s12} \\
&\mathcal{X}_i <1 \quad \text{and} \quad \mathcal{Y}_i <1 \quad \text{or} \\
&\mathcal{X}_i\mathcal{Y}_i <1 \quad \text{and} \quad \mathcal{X}_i > 1 \quad \text{or} \\
&\mathcal{X}_i\mathcal{Y}_i >1 \quad \text{and} \quad \mathcal{X}_i < 1 \label{minimality4_s12}
\end{align}
Note that $\Tt(S_{2,0})$ does not contain information regarding framing so we will `forget' the framing information contained in $\Ttp(S_{1,2})$. This is tantamount to restricting our attention to exactly one of the regions defined by \ref{minimality1_s12}--\ref{minimality4_s12} for each of $i=1$ and $i=2$. We assume without loss of generality that equation \ref{minimality1_s12} holds for $i=1$ and $i=2$. 

The second requirement in performing convex gluing is that the peripheral transformations at the ends of $S_{1,2}$ be conjugate. Therefore, following the classification of peripheral elements and their eigenvalues in \S\ref{subsubsec:max-min-coordinates} we assume that
\[
\mathcal{X}_1\mathcal{Y}_1^2 = \mathcal{X}_2\mathcal{Y}_2^2 \qquad \text{and}\qquad \mathcal{X}_1^2\mathcal{Y}_1 =\mathcal{X}_2^2 \mathcal{Y}_2 
\]
Given that all variables must be positive and real we have one pair of solutions of the form $\mathcal{X}_1 = \mathcal{X}_2$ and $\mathcal{Y}_1 = \mathcal{Y}_2$. The solutions $\mathcal{X}_1=\mathcal{Y}_1$ and $\mathcal{X}_2 = \mathcal{Y}_2$ are disregarded in order to ensure that the convex gluing induces a coherent orientation on $S_{2,0}$. We are left with the following conditions which parameterise convex projective structures on $S_{1,2}$ whose ends are `gluable'.
\begin{align}
e_{21} e_{61} e_{53} e_{23} e_{25} e_{35} = e_{12} e_{62} e_{52} e_{32} e_{26} e_{16} > 1 \label{gluable1_s12} \\
(e_{53} e_{35})^2 t_{354} t_{253}= (e_{26} e_{62})^2 t_{162} t_{265} > 1 \label{gluable2_s12} 
\end{align}
It remains to show that these equations define a topological cell of real dimension $16$ thus verifying Goldman's~\cite{Goldman-convex-1990} result. Following the method used in Theorem~\ref{thm:Marquis-main}, it is sufficient to note that the equations~\ref{gluable1_s12}-\ref{gluable2_s12} are independent. Recall that as long as the peripheral elements of $S_{1,2}$ are hyperbolic, there is a $2$-real parameter space of potential gluings for each pair of `gluable' ends. For an explanation of this see \S\ref{subsec:closed_surfaces} or the original proof in Goldman~\cite{Goldman-convex-1990}. Therefore $\Tt(S_{2,0})$ is a trivial $2$-dimensional vector bundle over the subvariety of $\Ttp(S_{1,2})$ defined by the conditions~\ref{gluable1_s12}-\ref{gluable2_s12}. Ultimately this shows that $\Tt(S_{2,0}) \cong \RR^{16}$, reproducing Goldman's result.

Once again we can use this to calculate a subvariety of $\Tt(S_{2,0})$ which is homeomorphic to $\Teich(S_{2,0})$. First note that any hyperbolic structure on $S_{2,1}$ must restrict to a hyperbolic structure on $S_{1,2}$. Therefore we impose the hyperbolicity and finite-area conditions on $\Ttp(S_{1,2})$ defined in \S\ref{subsec:classical-teich} to obtain:
\[
t_{ijk} =1 \quad \forall \ i,j \ \text{and } k \qquad \text{and} \qquad e_{ij}=e_{ji} \quad \forall \  i \ \text{and } j
\]
Further imposing these conditions on the `gluable' equations we are left with:
\begin{align}
& e_{35} = e_{62} \label{fin_hyp1_S20} \\
& t_{ijk} =1 \quad \forall \ i,j \ \text{and }  k; \label{fin_hyp2_S20} \\
& e_{ij}=e_{ji} \quad \forall \ i \ \text{and }  j \label{fin_hyp3_S20}
\end{align}
To recover $\Teich(S_{2,0})$ note that instead of a $2$ parameter family of convex gluings we may only glue the ends of $S_{1,2}$ by hyperbolic isometries if we wish to give rise to a hyperbolic structure on $S_{2,0}$. Therefore, $\Teich(S_{2,0})$ may be realised as a trivial real $1$-dimensional vector bundle over the subvariety defined by the conditions~\ref{fin_hyp1_S20}-\ref{fin_hyp3_S20}. In particular $\Teich(S_{2,0}) \cong \RR^{6}$ which accords with the classical theory.

%%%%%%%%%%%%
\newpage

%%%%%%%%%%%%%%%%%

\addcontentsline{toc}{section}{References}
\bibliographystyle{plain}
\bibliography{projective}

\address{School of Mathematics and Statistics F07,\\ The University of Sydney,\\ NSW 2006 Australia}

\Addresses

%%%%%%%%%%%%%%%%%%%%%%%%%%%%
%%%%%%%%%%%%%%%%%%%%%%%%%%%%

\end{document}